\title{Scaling limits of Cayley graphs with polynomially growing balls}
\date{}
\author{Romain Tessera and Matthew C. H. Tointon}
\address{Laboratoire de Math\'ematiques d'Orsay, Univ.~Paris-Sud, CNRS, Universit\'e Paris-Saclay, 91405 Orsay, France}
\email{tessera@phare.normalesup.org}
\address{Insitut de Math\'ematiques, Universit\'e de Neuch\^atel, Rue Emile-Argand 11, CH-2000 Neuch\^atel, Switzerland}
\email{matthew.tointon@unine.ch}
\thanks{The first author is supported by grant ANR-14-CE25-0004 ``GAMME". The second author is supported by grant FN 200021\_163417/1 of the Swiss National Fund for scientific research; for earlier periods of this project he was supported by ERC grant GA617129 `GeTeMo' and a Junior Research Fellowship from Homerton College, University of Cambridge.}
\newtheorem{prop}{Proposition}[section]
\newtheorem{theorem}[prop]{Theorem}
\newtheorem{lemma}[prop]{Lemma}
\newtheorem{corollary}[prop]{Corollary}
\newtheorem*{thmm}{Theorem}
\theoremstyle{definition}
\newtheorem{definition}[prop]{Definition}
\theoremstyle{remark}
\newtheorem*{remark*}{Remark}
\newtheorem*{remarks*}{Remarks}
\newtheorem{remark}[prop]{Remark}
\theoremstyle{theorem}
\newcommand{\hdim}{\text{\textup{hdim}}\,}
\newcommand{\PP}{\mathcal{P}}
\newcommand{\R}{\mathbb{R}}
\newcommand{\CC}{\mathfrak{C}}
\newcommand{\N}{\mathbb{N}}
\newcommand{\Z}{\mathbb{Z}}
\newcommand{\Q}{\mathbb{Q}}
\newcommand{\HHH}{\mathbb{H}}
\newcommand{\n}{\mathfrak{n}}
\newcommand{\g}{\mathfrak{g}}
\newcommand{\m}{\mathfrak{m}}
\newcommand{\eps}{\varepsilon}
\newcommand{\ord}{\text{\textup{ord}}}
\newcommand{\diam}{\text{\textup{diam}}}
\newcommand*{\vol}{\mathop{\textup{vol}}\nolimits}
\numberwithin{equation}{section}
\begin{document}
\maketitle
\begin{abstract}
Benjamini, Finucane and the first author have shown that if $(\Gamma_n,S_n)$ is a sequence of Cayley graphs such that $|S_n^n|\ll n^D|S_n|$, then the sequence $(\Gamma_{n},\frac{d_{S_{n}}}{n})$ is relatively compact for the Gromov--Hausdorff topology and every cluster point is a connected nilpotent Lie group equipped with a left-invariant sub-Finsler metric. In this paper we show that the dimension of such a cluster point is bounded by $D$, and that, under the stronger bound $|S_n^n|\ll n^D$, the homogeneous dimension of a cluster point is bounded by $D$. Our approach is roughly to use a well-known structure theorem for approximate groups due to Breuillard, Green and Tao to replace $S_n^n$ with a coset nilprogression of bounded rank, and then to use results about nilprogressions from a previous paper of ours to study the ultralimits of such coset nilprogressions. As an application we bound the dimension of the scaling limit of a sequence of vertex-transitive graphs of large diameter. We also recover and effectivise parts of an argument of Tao concerning the further growth of single set $S$ satisfying the bound $|S^n|\le Mn^D|S|$.
\end{abstract}

\tableofcontents

\section{Introduction}

We start by recalling Gromov's famous polynomial growth theorem.

\begin{thmm}[\cite{gromov,vdD-W}]
Let $\Gamma$ be a group generated by a finite symmetric subset $S$. If there exists an unbounded sequence of integers $m_n$, and constants $C$ and $D$ such that
$|S^{m_n}|\leq Cm_n^D$ for all $n$, then $\Gamma$ is virtually torsion-free nilpotent.
\end{thmm}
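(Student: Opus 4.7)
The plan is to follow Gromov's original argument with the ultrafilter refinement of van den Dries and Wilkie that accommodates the subsequential growth hypothesis. Fix a non-principal ultrafilter $\omega$ on $\N$ and form the asymptotic cone $(\Gamma_\infty, d_\infty, e_\infty) := \lim_\omega (\Gamma, \tfrac{1}{m_n} d_S, e)$. A standard covering argument (a doubling estimate on balls at radius comparable to $m_n$, iterated) together with $|S^{m_n}| \le C m_n^D$ shows that the closed unit ball of $\Gamma_\infty$ is totally bounded, so $\Gamma_\infty$ is proper. Left translation by $\Gamma$ descends to a transitive action on $\Gamma_\infty$ by isometries, so $\Gamma_\infty$ is homogeneous; as an ultralimit of geodesic spaces it is geodesic, hence connected and locally connected.

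Next, apply the Montgomery--Zippin--Yamabe theory to $G := \mathrm{Isom}(\Gamma_\infty)$: since $\Gamma_\infty$ is locally compact, connected, locally connected, homogeneous, and of finite topological dimension (bounded by $D$ via the polynomial volume growth), $G$ is a Lie group with compact point stabiliser $K$, and $\Gamma_\infty \simeq G/K$ as a topological $G$-space. The principal task is then to lift the action of $\Gamma$ on $\Gamma_\infty$ to a useful homomorphism $\Gamma \to G$; after passing to a finite-index subgroup and quotienting by a finite kernel, one uses Jordan's theorem on finite subgroups of compact Lie groups to arrange that the image lies in the identity component $G^0$. This yields a nontrivial finite-dimensional real linear representation of a finite-index subgroup of $\Gamma$, nontriviality being ensured because the ultralimit records the unboundedness of the sequence $m_n$ and so $\Gamma_\infty$ itself is unbounded.

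Finally, close by induction on $D$. The Tits alternative, combined with the fact that polynomial growth forbids free subgroups of rank two, forces the image in $G^0 \subset \mathrm{GL}_n(\R)$ to be virtually solvable; structure theory for finitely generated linear solvable groups of polynomial growth produces a quotient that is virtually $\Z^k$ for some $k \ge 1$. The kernel of the composite map $\Gamma' \to \Z^k$ is finitely generated, of polynomial growth of degree at most $D-1$ along the subsequence $m_n$, and so induction on $D$ concludes. The main obstacle is the second step: rigorously verifying that $\Gamma_\infty$ has finite topological dimension, correctly setting up the Montgomery--Zippin input, and then promoting the ultralimit isometric action to a Lie-group homomorphism with controlled kernel on a finite-index subgroup -- this is the technical heart where Gromov's geometric insight is indispensable, and it is precisely the step that the modern approximate-group machinery of Breuillard--Green--Tao used later in the paper is designed to bypass.
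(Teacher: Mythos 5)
There is a genuine gap, and it sits exactly where you flag the argument as ``the technical heart.'' Your sentence ``Left translation by $\Gamma$ descends to a transitive action on $\Gamma_\infty$ by isometries'' is false, and the error is not cosmetic. A fixed $\gamma\in\Gamma$ has fixed word length $\ell=d_S(e,\gamma)$, so in the rescaled metric $\tfrac{1}{m_n}d_S$ left translation by $\gamma$ moves every point by at most $\ell/m_n\to 0$; hence the ultralimit of the isometry $\gamma$ is the identity map of $\Gamma_\infty$. What acts transitively on the cone is the ultraproduct group $\Gamma_\omega=\{(\gamma_n)\in\prod_n\Gamma:\ d_S(\gamma_n,e)\ll_\omega m_n\}$ (exactly as in the paper's discussion following Definition~\ref{def:ultralimitmetricspace}), not the diagonal copy of $\Gamma$. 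Consequently, the ``lift the action of $\Gamma$ on $\Gamma_\infty$ to a homomorphism $\Gamma\to G=\mathrm{Isom}(\Gamma_\infty)$'' step, as you have set it up, produces only the trivial homomorphism, and the subsequent appeal to Jordan's theorem, the Tits alternative, and induction on $D$ has no input. Your claim that nontriviality ``is ensured because $\Gamma_\infty$ itself is unbounded'' does not rescue this: unboundedness of the cone is a statement about $\Gamma_\omega$ and says nothing about the image of $\Gamma$.

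This is in fact the central difficulty that both Gromov and van den Dries--Wilkie have to work around, and they do so in quite different (and non-obvious) ways from what your sketch suggests: one does not map $\Gamma$ into $\mathrm{Isom}(\Gamma_\infty)$ and then linearise. Roughly, one must extract a nontrivial homomorphism from a finite-index subgroup of $\Gamma$ onto $\Z$ (or onto a large finite group) by exploiting the Lie-group structure of $\mathrm{Isom}(\Gamma_\infty)$ more indirectly, together with the interplay between $\Gamma$, $\Gamma_\omega$, and the stabiliser $\Gamma_{\omega,0}$, and this step is where the real content lies. The rest of your outline (ultrafilter construction, properness and finite topological dimension of the cone via the doubling estimate, Montgomery--Zippin--Yamabe, Tits alternative, induction on $D$, and the routine upgrade from ``virtually nilpotent'' to ``virtually torsion-free nilpotent'') is the standard scaffolding of the Gromov/van den Dries--Wilkie proof, which is exactly what the paper cites; but as written the proposal does not close the essential gap.
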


Conversely, we know from Bass and Guivarc'h \cite{bass,Gui} that for every torsion-free nilpotent group $\Gamma$ equipped with a symmetric finite generating subset $S$ there exists a constant $C=C(\Gamma,S)\geq 1$ such that for every $n\in \N$ we have
$$C^{-1}n^{\hdim(\Gamma)}\leq |S^n|\leq Cn^{\hdim(\Gamma)},$$
where $\hdim(\Gamma)$ is the homogeneous dimension of $\Gamma$ (it is in particular an integer). 

More recently, Shalom and Tao \cite{st} showed that for every $D$ there exists $n_0$ such that for every group $\Gamma$ generated by a finite symmetric subset $S$, if there exists just one integer $n_1\geq n_0$ such that
\begin{equation}\label{eq:abs.poly.growth}
|S^{n_1}|\leq n_1^D
\end{equation}
then $\Gamma$ is virtually nilpotent. Breuillard, Green and Tao \cite[Corollary 11.5]{bgt} subsequently showed that \eqref{eq:abs.poly.growth} could be weakened to a ``relative'' polynomial-growth condition of the form
\begin{equation}\label{hom.poly.growth}
|S^{n_1}|\leq n_1^D|S|.
\end{equation}


One can actually go further and show that if one takes a ``large'' group $\Gamma$ satisfying \eqref{hom.poly.growth} then when  ``zooming out'' at scale $n_1$ it ``looks like'' a connected nilpotent Lie group. To make this idea precise, one can use the \emph{Gromov--Hausdorff topology}, or \emph{GH topology}, which gives a way to measure how ``close'' two metric spaces are to one another; see \S\ref{sec:GH-limit} for a precise definition. To study $\Gamma$ ``zoomed out'' at scale $n_1$, one can consider the rescaled Cayley graph $(\Gamma,\frac{d_{S}}{n_1})$ with respect to the GH topology.

The first author obtained the following result in this direction in joint work with Benjamini and Finucane.
\begin{theorem}[{\cite[Theorem 3.2.2.]{bft}}]\label{thm:bft}
Let $(\Gamma_n,S_n)$ be a sequence of Cayley graphs such that $|S_n^n|\ll n^D|S_n|$. Then for every sequence $m_n\gg n$ the sequence $(\Gamma_{n},\frac{d_{S_{n}}}{m_n})$ is GH-relatively compact, and every cluster point is a connected nilpotent Lie group equipped with a left-invariant sub-Finsler metric. \end{theorem}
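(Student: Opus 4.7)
The plan is to combine the Breuillard--Green--Tao structure theorem for approximate groups with a Pansu-type scaling theorem for nilprogressions, in three phases. First, I would reduce to a scale at which iterates of $S_n$ display bounded doubling. Second, I would apply BGT to approximate these iterates by coset nilprogressions of bounded rank and step. Third, I would take the GH-limit of the rescaled coset nilprogressions and identify it as a connected nilpotent Lie group with a left-invariant sub-Finsler metric.

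For the first phase, the hypothesis $|S_n^n|\ll n^D|S_n|$ together with a standard pigeonhole argument applied to the log-growth function $k\mapsto\log|S_n^k|$ yields a scale $k_n\in[c_D n,n]$ and a constant $K=K(D)$ such that $|S_n^{3k_n}|\le K|S_n^{k_n}|$. In particular $S_n^{k_n}$ is, up to bounded multiplicative factors, a $K$-approximate group. Applying the BGT structure theorem to it then produces a coset nilprogression $P_n$ whose rank and step are bounded in terms of $D$, with $S_n^{k_n}$ covered by boundedly many translates of $P_n$ and, conversely, $P_n\subset S_n^{O(k_n)}$. Because the scales $k_n$ and $m_n$ are comparable up to a constant, this renders the rescaled Cayley graph $(\Gamma_n,\frac{1}{m_n}d_{S_n})$ uniformly close on bounded regions to the corresponding rescaled nilprogression.

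For the second phase, I would prove a nilprogression analogue of Pansu's theorem. After choosing Mal'cev-type coordinates on $P_n$ and passing to an ultralimit of the rescaled word metric on $P_n$, one should obtain a closed ball in a simply connected nilpotent Lie group $G_\infty$ equipped with a left-invariant sub-Finsler metric coming from the ultralimit of the generator norms. Relative compactness at every ball scale follows automatically from uniform doubling---a direct consequence of the BGT approximation---via Gromov's compactness criterion.

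The hard part will be the identification of the ultralimit as a connected nilpotent Lie group with a left-invariant sub-Finsler metric. Two delicate points arise. First, because one has to work with \emph{coset} nilprogressions, one must verify that the bounded finite normal subgroup becomes invisible in the scaling limit, which should follow from the fact that its cardinality is bounded while the progression diameter tends to infinity. Second, the sub-Finsler norm appearing in the ultralimit must be shown to live on a Lie-generating subspace of the Lie algebra of $G_\infty$, and its Carnot--Carath\'eodory balls must be identified with the Hausdorff limits of the progression balls---this is essentially a ball-box estimate for nilprogressions. Finally, to upgrade convergence of individual balls to GH-convergence of the full pointed Cayley graphs, one uses the assumption $m_n\gg n$, which ensures that every fixed radius in the rescaled metric $\frac{1}{m_n}d_{S_n}$ is eventually realised inside a controlled iterate $S_n^{O(m_n)}$ to which the above analysis applies.
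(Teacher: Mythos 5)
The paper does not reprove this statement; it is quoted from \cite{bft}, with the following remark explaining only why the slightly stronger hypothesis $|S_n^n|=O(n^D)$ of \cite[Theorem 3.2.2]{bft} can be weakened to $|S_n^n|=O(n^D|S_n|)$. Your sketch follows the same broad route that the paper uses for the refined Theorems \ref{thm:relative} and \ref{thm:conj2}---pigeonhole for a bounded-doubling scale, BGT to approximate by coset nilprogressions, ultralimit analysis of the rescaled progressions---so the architecture is right. However, your choice of scale in the first phase contains a genuine gap.

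You assert that the pigeonhole gives a scale $k_n\in[c_Dn,n]$ and then claim that $k_n$ and $m_n$ are ``comparable up to a constant.'' This is the opposite of what the argument needs. A direct pigeonhole on the log-growth function yields a bounded-doubling scale somewhere in $[1,n]$, with no guarantee that it is $\asymp n$; the paper (via \cite[Lemma 8.4]{proper.progs}) in fact pigeonholes at scale roughly $n^{1/2}$ and obtains $k_n$ in a range like $[n^{1/4},n^{1/2}]$, so $k_n=o(n)$. More importantly, one needs $m_n/k_n\to\infty$, \emph{not} $m_n\asymp k_n$: the rescaled nilprogression metric $d_{H_nP_n}/(m_n/k_n)$ converges to a connected Lie group only when its scaling factor diverges, and if $m_n/k_n$ stayed bounded the ultralimit would be a discrete Cayley-type object rather than a connected group. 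Since $m_n\gg n$ in the paper's convention only means $m_n\ge cn$ for a constant $c>0$, the case $m_n\asymp n$ is allowed, and your choice $k_n\asymp n\asymp m_n$ would then fail. Taking $k_n=o(n)$ is what forces $m_n/k_n\to\infty$ for every admissible sequence $m_n$.

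Two smaller inaccuracies: the finite normal subgroup $H_n$ of the coset progression need not have bounded cardinality; what makes it disappear in the limit is that it sits inside a ball of bounded $d_{S_n}$-radius, hence of $o(1)$ radius after rescaling. And the ultralimit of the pointed rescaled progression Cayley graph is the whole Lie group (by homogeneity), not just a ball, and this group is connected but need not be simply connected---for finite $\Gamma_n$ (e.g.\ Corollary \ref{cor:toruscase}) the limit is a torus.
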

\begin{remarks*}
The authors of \cite{bft} use the terminology \emph{Carnot--Carath\'eodory metric} to mean a sub-Finsler metric (we caution that sometimes Carnot--Carath\'eodory is used more specifically to mean sub-Riemmannian). Moreover, whilst \cite[Theorem 3.2.2.]{bft} is stated for finite graphs this is in fact a misprint and the proof is also valid for infinite graphs. Finally, \cite[Theorem 3.2.2.]{bft} has the stronger hypothesis that $|S_n^n|=O(n^D)$, but the same proof yields the same result under the hypothesis of Theorem \ref{thm:bft} that $|S_n^n|=O(n^D|S_n|)$.
\end{remarks*}

\bigskip
\noindent\textsc{Main new results.} Our first result in this paper bounds by $D$ the dimension of any cluster point arising from Theorem \ref{thm:bft}. 
\begin{theorem}\label{thm:relative}
Let $(\Gamma_n,S_n)$ be a sequence of Cayley graphs such that $|S_n^n|\ll n^D|S_n|$.  Then for every sequence $m_n\gg n$ the sequence $(\Gamma_{n},\frac{d_{S_{n}}}{m_n})$ is GH-relatively compact, and every cluster point is a connected nilpotent Lie group of dimension at most $D$ equipped with a left-invariant sub-Finsler metric.
\end{theorem}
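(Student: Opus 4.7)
The approach follows the strategy signaled in the abstract: reduce to nilprogressions via the Breuillard--Green--Tao structure theorem, then pass to the limit. First, $|S_n^n|\ll n^D|S_n|$ together with a standard Pl\"unnecke--Ruzsa argument shows that some $S_n^{\kappa n}$, with $\kappa=\kappa(D)>0$, is a $K(D)$-approximate group. The Breuillard--Green--Tao structure theorem then yields a coset nilprogression $H_nP_n$ of rank and step bounded in terms of $D$, with $H_n$ a finite normal subgroup of $\langle H_nP_n\rangle$, that is commensurable with $S_n^{\kappa n}$. Any GH cluster point of $(\Gamma_n,d_{S_n}/m_n)$ thus coincides, up to a bilipschitz change of metric, with a cluster point of the corresponding rescaled sequence of coset nilprogressions.

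Theorem~\ref{thm:bft} already identifies such a cluster point with a connected nilpotent Lie group $G$ equipped with a left-invariant sub-Finsler metric, and the authors' previous analysis of nilprogressions identifies its Lie algebra with the graded Malcev algebra attached to $P_n$; in particular $\dim G$ equals the rank of $P_n$. The rank bound furnished by the structure theorem gives only $\dim G\le r(D)$, which is too crude. To obtain the sharp bound $\dim G\le D$, the polynomial growth hypothesis must be used more precisely. Passing to the quotient by $H_n$, the image of $P_n$ in $\langle H_nP_n\rangle/H_n$ is a torsion-free nilprogression of cardinality approximately $|S_n^{\kappa n}|/|H_n|$. Since $|H_n|$ ought to be of the same order as $|S_n|$ (morally the ``torsion part'' of the unit ball), the growth hypothesis translates into a polynomial growth bound of degree $D$ for this quotient nilprogression, which by standard volume estimates for nilprogressions forces its rank, hence $\dim G$, to be at most $D$.

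The principal obstacle is the last step: the comparison $|H_n|\asymp|S_n|$ must be justified with care, since $H_n$ emerges from applying the structure theorem to a large power of $S_n$ rather than to $S_n$ itself, and one must account for the contribution to $|S_n|$ of generators with nontrivial image in the quotient. Tracking the interaction between the ``archimedean'' and ``torsion'' parts of $H_nP_n$, and their respective contributions to $|S_n|$, is precisely the subtlety that prevents the same argument from bounding the homogeneous dimension of $G$ --- a stronger conclusion which, as the abstract indicates, the authors obtain only under the stronger hypothesis $|S_n^n|\ll n^D$.
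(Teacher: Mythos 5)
Your overall strategy is the right one: apply the Breuillard--Green--Tao structure theorem to a power of $S_n$ to produce a coset nilprogression $H_nP_n$ of bounded rank and step, pass to the torsion-free quotient by $H_n$, identify the dimension of the cluster point with the rank $d$ of $P_n$, and then use the growth hypothesis to force $d\le D$. You have also correctly flagged the soft spot in your argument. Two remarks: first, the identification $\dim G = \operatorname{rank}(P_n)$ only holds after a preliminary rank-reduction step ensuring that $H_nP_n$ has injectivity radius $\gg n/k_n$ (otherwise the nilprogression collapses in the scaling limit and the rank overestimates the dimension); this is the role of Propositions \ref{prop:dim.reduc.induc} and \ref{prop:reduce.dim.when.not.proper} in the paper, and you pass over it. Second and more seriously, the comparison $|H_n|\asymp|S_n|$ on which your final step hinges is false in general and is not what the paper proves: for instance if $\Gamma_n=\Z$ and $S_n=[-n,n]$ then $H_n$ is trivial while $|S_n|\to\infty$. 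There is no reason for the finite kernel produced by the structure theorem applied to $S_n^{\kappa n}$ to reflect the size of $S_n$ itself.

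What the paper does instead avoids any direct comparison between $|H_n|$ and $|S_n|$. After constructing $T_n = S_n^{3t}\cap H_nP_n^{(d+1)\eta}$ (which satisfies $|T_n|\gg_D|S_n|$ since $S_n\subset X_nT_n$ with $|X_n|=O_D(1)$) and, using the large injectivity radius of the Lie coset progression, a lift $\tilde T_n$ of $T_n$ into the torsion-free lattice $\Lambda_n$ of the ambient Lie group, one has the exact multiplicative identity $|T_n^{q_n}H_n|=|\tilde T_n^{q_n}||H_n|$ for a suitable $q_n\gg_\omega n$. Applying the relative growth lower bound of Proposition \ref{prop:growth.lower.bound.dim} (namely $|\Sigma^n|\gg_d n^d|\Sigma|$ for any symmetric generating set $\Sigma$ of a torsion-free nilpotent group with Mal'cev completion of dimension $d$) to $\tilde T_n$ in $\Lambda_n$, and chaining $|S_n^n|\ge|T_n^{q_n}H_n|=|\tilde T_n^{q_n}||H_n|\gg n^d|\tilde T_n||H_n|\ge n^d|T_n|\gg n^d|S_n|$, yields $d\le D$. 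The crucial point your proposal misses is that the bound one needs is a \emph{relative} growth lower bound for a generating set of the torsion-free part, applied to the lift $\tilde T_n$, rather than a bound on the quotient nilprogression $\hat P_n$, and that the factor $|H_n|$ cancels cleanly through the injectivity identity without ever needing to be compared to $|S_n|$.
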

\begin{remark}\label{rem:hom.dim}
In particular, the homogeneous dimension of every cluster point coming from Theorem \ref{thm:relative} is at most $(D-1)D/2+1$.
\end{remark}

In light of the Bass--Guivarc'h formula, it is tempting to wonder whether even the homogeneous dimension should be bounded by $D$. However, the following example shows that Theorem \ref{thm:relative} and Remark \ref{rem:hom.dim} are both sharp. Consider the subset $S$ of the discrete Heisenberg group $\HHH(\Z)=\left(\begin{smallmatrix}1 & \Z & \Z\\0&1&\Z\\0&0&1\end{smallmatrix}\right)$ defined by
\[
S=\left(\begin{smallmatrix}1 & [-n,n] & [-n^3,n^3]\\0&1&[-n,n]\\0&0&1\end{smallmatrix}\right),
\]
which also appears in \cite[Example 1.11]{tao}. This set $S_n$ satisfies $|S_n^n|\le n^3|S_n|$, while one easily checks that the scaling limit of $(\HHH(\Z),d_{S_n}/n)$ is the real Heisenberg group, whose homogeneous dimension is $4$ but whose dimension is indeed $3$ (note that, although $S$ is not symmetric, this can be fixed by considering the set $S\cup S^{-1}$ in its place; we leave the details to the reader).

Nonetheless, replacing the relative growth hypothesis \eqref{hom.poly.growth} with the stronger absolute growth hypothesis \eqref{eq:abs.poly.growth} we can indeed bound the homogeneous dimension by $D$, as follows.

\begin{theorem}\label{thm:conj2}
Let $(\Gamma_n,S_n)$ be a sequence of Cayley graphs such that $|S_n^n|\ll n^D$.  Then for every sequence $m_n\gg n$ the sequence $(\Gamma_{n},\frac{d_{S_{n}}}{m_n})$ is GH-relatively compact, and every cluster point is a connected nilpotent Lie group of homogeneous dimension at most $D$ equipped with a left-invariant sub-Finsler metric.
\end{theorem}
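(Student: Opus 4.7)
Since Theorem \ref{thm:bft} already establishes GH-relative compactness and the identification of cluster points as connected nilpotent Lie groups with left-invariant sub-Finsler metrics, the only new content is the bound $\hdim(G)\le D$. My plan follows the strategy outlined in the abstract: combine the Breuillard--Green--Tao structure theorem with the companion paper's fine analysis of ultralimits of nilprogressions.

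After passing to a subsequence, I apply the BGT structure theorem to $S_n^n$ to extract coset nilprogressions $H_nP_n$ of rank and step bounded only in terms of $D$, satisfying $S_n^n\subseteq H_nP_n\subseteq S_n^{K_0n}$ and $|H_nP_n|\le K_0|S_n^n|=O_D(n^D)$ for some $K_0=K_0(D)$. The finite normal subgroup $H_n$ has $d_{S_n}$-diameter at most $2K_0 n=o(m_n)$, so it collapses to a point in the ultralimit at scale $m_n$; I may therefore quotient it out and replace $H_nP_n$ by a proper nilprogression $P_n$ with $|P_n|=O_D(n^D)$.

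I then invoke the companion paper's description of the ultralimit $G$: it is a simply connected Carnot Lie group whose graded Lie algebra can be read off the normal-form presentation of $P_n$, and in particular has homogeneous dimension $\hdim(G)=\sum_iw_i$, with the $w_i$ the commutator-weights (grading degrees) of a chosen basis of generators. The same analysis yields a two-sided volume formula $|P_n|\asymp_D\prod_iL_{i,n}$ for $P_n$ in normal form, and shows that in order for the Carnot ultralimit at scale $m_n$ to have full dimension it is necessary that $L_{i,n}\asymp n^{w_i}$ (after a further subsequence and, if needed, a change of normal form). Substituting then gives $|P_n|\asymp_D n^{\sum_iw_i}=n^{\hdim(G)}$, and comparing with the bound $|P_n|=O_D(n^D)$ yields $n^{\hdim(G)}=O_D(n^D)$, whence $\hdim(G)\le D$.

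The main obstacle is the degenerate-scale analysis hidden in the last step: when the side-lengths $L_{i,n}$ fail to be comparable to the corresponding $n^{w_i}$, some directions may collapse (which only reduces $\hdim(G)$) while others may ``overshoot'' (requiring renormalisation of $P_n$). Organising these cases into a uniform bound is precisely the role of the rescaled normal form for nilprogressions developed in the companion paper, and constitutes the essential technical input to the argument. A secondary subtlety is ensuring that quotienting out $H_n$ is genuinely harmless at scale $m_n$; this uses the inclusion $H_n\subseteq S_n^{K_0n}$ together with the hypothesis $m_n\gg n$, and is essentially the same reduction carried out in the proof of Theorem \ref{thm:relative}.
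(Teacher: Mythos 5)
Your plan captures the broad strategy (use Breuillard--Green--Tao to replace $S_n^n$ by a coset nilprogression, discard the finite subgroup $H_n$ at scale $m_n$, then control the limit via nilprogression geometry), but the crux --- the bound $\hdim(G)\le D$ --- rests on an intermediate claim that is not correct as stated and elides the real technical difficulty.

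The central gap is the asserted volume formula $|P_n|\asymp_D n^{\hdim(G)}$, together with the claim that the ultralimit $G$ is Carnot with $\hdim(G)=\sum_i w_i$ read off from the generators of $P_n$. The ultralimit $G$ is not controlled that directly: the Lie brackets in the approximating Lie algebras can degenerate along the ultrafilter (e.g.\ $[e_1^{(n)},e_2^{(n)}]=\varepsilon_ne_3^{(n)}$ with $\varepsilon_n\to 0$), so the homogeneous dimension of the limit can be \emph{strictly smaller} than that of the $G_n$'s. In that situation $|P_n|$ grows like $n^{\hdim(G_n)}$, not $n^{\hdim(G)}$, and your formula fails. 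The paper resolves this by separating the two things you have conflated: it first bounds $\hdim(G_n)\le D$ for the approximating simply connected nilpotent Lie groups (Theorem \ref{thm:ReducTorsionFreeD}, via the growth lower bound in Lemma \ref{lem:growthLowerBound} applied to a lift $\tilde T_n$), and only then transfers this to the ultralimit via a genuine \emph{lower semicontinuity} result for homogeneous dimension of marked nilpotent Lie algebras (Proposition \ref{prop:LowerSemiCont} and Theorem \ref{thmUltraLimitLieAlge}). That semicontinuity argument --- controlling the kernel $\ker\pi_n$ by relations and removing central relators one at a time, with Lemma \ref{lem:xi} tracking how each removal changes $\hdim$ --- is the heart of the proof and has no analogue in your sketch; the sentence about ``organising these cases into a uniform bound'' restates the difficulty without addressing it.

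There is a second, related gap. For the growth count $|S_n^n|\gg n^{\hdim(G_n)}$ to be valid you need the Lie coset progression to have injectivity radius growing like $n/k_n$ (so that the pullback $\tilde T_n$ of $T_n$ to the lattice $\Lambda_n$ is injective out to the right scale, giving \eqref{eq:q.inj.cosets}). Achieving this requires the rank-reduction machinery of Proposition \ref{prop:dim.reduc.induc}: when the progression is not proper enough, one replaces it with a proper progression of strictly smaller rank and restarts the induction. Your proposal passes from $H_nP_n$ directly to a ``proper nilprogression'' without indicating how to enforce properness at scale $n/k_n$; without it, the counting argument that is supposed to supply the inequality $n^{\hdim}\lesssim|S_n^n|$ collapses. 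Your reduction modulo $H_n$, by contrast, is sound and matches \eqref{eq:discard.H} in the paper's proof.
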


We emphasise that the proofs of Theorems \ref{thm:relative} and \ref{thm:conj2} (and, indeed, Theorem \ref{thm:bft}) build on the fundamental structure theorem for approximate groups proved by Breuillard, Green and Tao in \cite{bgt}. 
Our new results also crucially rely on more specific results about approximate groups that we established in our previous paper \cite{proper.progs}.

The dimension and the homogeneous dimension coincide for a connected abelian Lie group. Accepting for a moment that if the groups $\Gamma_n$ appearing in Theorem \ref{thm:relative} are abelian then every cluster point naturally comes with the structure of an abelian group, it follows that in the abelian setting we do have Theorem \ref{thm:conj2} under the weaker hypothesis of Theorem \ref{thm:relative}, as follows.

\begin{theorem}\label{thm:conj2Ab}
Let $(\Gamma_n,S_n)$ be a sequence of abelian Cayley graphs such that $|S_n^n|\ll n^D|S_n|$.  Then for every sequence $m_n\gg n$ the sequence $(\Gamma_{n},\frac{d_{S_{n}}}{m_n})$ is GH-relatively compact, and every cluster point is a connected abelian Lie group of dimension at most $D$ equipped with an invariant Finsler metric.
\end{theorem}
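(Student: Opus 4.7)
The plan is to obtain Theorem \ref{thm:conj2Ab} as a refinement of Theorem \ref{thm:relative} by checking two things in the abelian setting: first, that every cluster point inherits the abelian group structure from the sequence $\Gamma_n$; second, that the sub-Finsler metric is automatically a Finsler metric on an abelian Lie group. Because a connected abelian Lie group of dimension $d$ has homogeneous dimension exactly $d$, the dimension bound supplied by Theorem \ref{thm:relative} will simultaneously serve as a homogeneous-dimension bound, so no further growth analysis is needed.

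The first step is to apply Theorem \ref{thm:relative} directly to the sequence $(\Gamma_n,S_n)$. This yields GH-relative compactness of $(\Gamma_n,d_{S_n}/m_n)$ and identifies every cluster point $G$ as a connected nilpotent Lie group of dimension at most $D$ carrying a left-invariant sub-Finsler metric. All that remains is to upgrade ``nilpotent'' to ``abelian'' and ``sub-Finsler'' to ``Finsler''.

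For abelianness, the natural route is to realise $G$ as a pointed ultralimit of the rescaled Cayley graphs $(\Gamma_n,d_{S_n}/m_n,e_n)$, as in the construction used to prove Theorem \ref{thm:bft}. The group law on $G$ produced there is obtained by passing to the ultralimit of multiplication maps, and commutativity is a closed condition preserved under such ultralimits: the identity $[g_n,h_n]=e$ holding for all $n$ forces the corresponding identity in $G$. To turn the sub-Finsler metric into a Finsler one, recall that a left-invariant sub-Finsler metric on $G$ is determined by a norm on a subspace $V\subset \mathfrak{g}$ whose iterated Lie brackets span $\mathfrak{g}$. Since $\mathfrak{g}$ is abelian, every bracket vanishes, so the bracket-generating condition forces $V=\mathfrak{g}$. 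Hence the horizontal distribution is the full tangent bundle and the metric is a genuine left-invariant Finsler metric. Left-invariance coincides with invariance in the abelian case, giving the desired conclusion.

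The main obstacle is the rigorous passage of abelianness through the construction of the limit group law in Theorem \ref{thm:bft}, since that proof goes through the Breuillard--Green--Tao structure theorem for approximate subgroups and a Mal'cev-style identification rather than a direct metric ultralimit. One must check that the locally compact group extracted from the $\Gamma_n$ in that argument genuinely retains abelianness, so that its Lie quotient $G$ is abelian; this should be automatic from functoriality of the construction applied to abelian inputs, but it is the one place where the argument requires care. Everything else (the dimension bound, the Finsler upgrade, and the equality of dimension and homogeneous dimension) then follows without additional work.
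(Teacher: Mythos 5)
Your proposal is logically valid and is, in fact, precisely the derivation the paper sketches in the introduction: ``Accepting for a moment that if the groups $\Gamma_n$ appearing in Theorem \ref{thm:relative} are abelian then every cluster point naturally comes with the structure of an abelian group, it follows that in the abelian setting we do have Theorem \ref{thm:conj2} under the weaker hypothesis of Theorem \ref{thm:relative}.'' Your two upgrade steps are both sound: the ultralimit of abelian metric groups is abelian (the paper records exactly this after Definition \ref{def:ultralimitmetricspace}), and on an abelian Lie group the bracket-generating condition trivially forces the horizontal subspace to be all of $\mathfrak{g}$, so a left-invariant sub-Finsler metric is Finsler. Since dimension and homogeneous dimension coincide for abelian Lie groups, the dimension bound in Theorem \ref{thm:relative} gives everything. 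The ``obstacle'' you flag at the end is not actually a problem, precisely because the limit group law comes from the ultralimit $G_\omega/G_{\omega,0}$ rather than from any Mal'cev-style identification; commutativity passes trivially.

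The paper, however, deliberately does not go this way. Theorem \ref{thm:conj2Ab} is proved in Section \ref{sec:abelian} by a short, self-contained argument that is completely independent of (and sequenced before) the proof of Theorem \ref{thm:relative}. The ingredients there are purely abelian: a local-growth pigeonhole (Lemma \ref{lem:ab.phole}) combined with the abelian proper Freiman theorem (Theorem \ref{thm:ab.Frei.proper}) produces a $2$-proper coset progression $H_nP_n$ comparable to $S_n^{k_n}$; Lemma \ref{lem:largeLigGrowth} rescales it so that $S_n$ itself is comparable to a coset progression $V_n$; Lemma \ref{lem:progressionPrecompact} identifies ultralimits of $(\Z^d,d_{W_n}/m_n)$ with normed vector spaces; and Lemma \ref{lem:radiusFreedom} prunes non-proper directions to ensure the homomorphism $\Z^d\to\Gamma_n$ is $\Omega_\omega(m_n)$-proper, which forces the rank $d$ to satisfy $n^d|V_n|\ll_\omega|V_n^n|\ll n^D|V_n|$ and hence $d\le D$. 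Your route buys generality for free (once the heavy machinery of Theorem \ref{thm:relative} is available) at the cost of depending on the whole nilpotent apparatus of Sections \ref{sec:overview}--\ref{sec:marked}; the paper's route buys a short, elementary argument that serves as a pedagogical warm-up and whose dimension bound comes directly out of a relative growth estimate for proper coset progressions rather than through the marked-Lie-algebra limit of Section \ref{sec:marked}. Both are correct, but they are genuinely different proofs.
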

In fact, as well as a stronger statement, the abelian case admits a much shorter proof than the general case that nonetheless captures many of the ideas we use in the general case and so may serve as a useful introduction to the general case for first-time readers. We therefore give this short proof in \S\ref{sec:abelian}.


\bigskip
\noindent\textsc{Application to scaling limits of vertex-transitive graphs of large diameter.} As a corollary of Theorem \ref{thm:conj2Ab} we obtain the following refinement of \cite[Theorems 1 \& 2]{bft}.
In \cite[Theorem 1]{bft}, Benjamini, Finucane and the first author show that if $(X_n)$ is a sequence of vertex-transitive graphs satisfying the large-diameter condition $|X_n|\ll\diam(X_n)^D$ then $(X_{n},\frac{d_{S_{n}}}{\diam(X_n)})$ is GH-relatively compact and every cluster point is a torus equipped with an invariant Finsler metric. In \cite[Theorem 2]{bft} they give a sharp bound on the dimension of the limiting torus under the assumption that the degree of the sequence $X_n$ is uniformly bounded. 
We are now able to lift this condition on the degree. Indeed, combining Theorem \ref{thm:conj2Ab} with \cite[Propositions 3.1.1 \& 3.3.1]{bft} as in the proof of \cite[Theorem 2]{bft}, we obtain the following result.
\begin{corollary}\label{cor:toruscase}
Let $(X_n)$ be a sequence of finite vertex-transitive graphs satisfying the large-diameter condition
\begin{equation}\label{eq:diambound}
|X_n|\ll\deg(X_n)\diam(X_n)^D,
\end{equation}
where $\deg(X_n)$ is the degree of $X_n$ (potentially going to infinity). 
Then the sequence $(X_{n},\frac{d_{S_{n}}}{\diam(X_n)})$ is GH-relatively compact, and every cluster point is a torus of dimension at most $D$ equipped with an invariant Finsler metric.
\end{corollary}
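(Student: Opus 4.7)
The plan is to follow the proof of \cite[Theorem 2]{bft} exactly, but with Theorem \ref{thm:conj2Ab} playing the role of the abelian scaling-limit theorem used there. The point of the new input is that Theorem \ref{thm:conj2Ab} gives a dimension bound under the relative growth hypothesis $|S_n^n|\ll n^D|S_n|$ rather than the absolute hypothesis $|S_n^n|\ll n^D$, which is precisely what is needed to accommodate the unbounded-degree regime allowed by \eqref{eq:diambound}.

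First, I would use \cite[Proposition 3.1.1]{bft} to pass from the vertex-transitive graphs $X_n$ to associated Cayley graphs $(\Gamma_n,S_n)$: one takes $\Gamma_n$ to be a transitive subgroup of $\Aut(X_n)$ acting freely (at the cost of a finite-index loss that does not affect the GH scaling limit at scale $\diam(X_n)$), with $|S_n|\asymp\deg(X_n)$ and the identification with $X_n$ being a bi-Lipschitz equivalence of bounded distortion. In particular $\diam(\Gamma_n,S_n)\asymp\diam(X_n)$ and $|\Gamma_n|\asymp|X_n|$, so the hypothesis \eqref{eq:diambound} becomes
\[
|\Gamma_n|\ll|S_n|\,\diam(\Gamma_n,S_n)^D.
\]
Taking $n_0=\diam(\Gamma_n,S_n)$ one has $S_n^{n_0}=\Gamma_n$, and the previous display gives $|S_n^{n_0}|\ll n_0^D|S_n|$; by monotonicity of $n\mapsto|S_n^n|/n^D|S_n|$ up to a constant (or by the standard doubling argument along the sequence $n\le n_0$), this yields $|S_n^n|\ll n^D|S_n|$ for all $n$.

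Next, I would invoke \cite[Proposition 3.3.1]{bft} to arrange that $\Gamma_n$ is abelian. This is the step that relies on Gromov-type structure for groups of polynomial growth to identify, at scale $\diam(X_n)$, an abelian Cayley structure on $X_n$ (again up to GH distortion tending to $0$ after rescaling). Once this is done, Theorem \ref{thm:conj2Ab} applies with $m_n=\diam(X_n)$ and produces GH-relative compactness of $(\Gamma_n,d_{S_n}/\diam(X_n))$, together with the conclusion that every cluster point is a connected abelian Lie group of dimension at most $D$ equipped with an invariant Finsler metric.

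Finally, since the $X_n$ are finite, each rescaled graph $(X_n,d_{S_n}/\diam(X_n))$ has diameter equal to $1$, so every GH cluster point is a compact metric space. A connected, compact, abelian Lie group is a torus, so the limiting abelian Lie group must be a torus of dimension at most $D$. The main obstacle in this argument is really just a bookkeeping one, namely verifying that the reduction of \cite[Propositions 3.1.1 \& 3.3.1]{bft} carries through when $\deg(X_n)\to\infty$; but this only requires that the relative growth hypothesis $|S_n^n|\ll n^D|S_n|$ be preserved (rather than an absolute one), which is exactly the content of \eqref{eq:diambound} after the reduction above, and this is what makes Theorem \ref{thm:conj2Ab} the correct replacement for the abelian scaling-limit input used in \cite{bft}.
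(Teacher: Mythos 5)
Your proposal matches the route the paper intends (the paper gives the proof only by reference): use \cite[Propositions 3.1.1 \& 3.3.1]{bft} to reduce to abelian Cayley graphs, apply Theorem \ref{thm:conj2Ab} with $m_n=\diam(X_n)$, and conclude from compactness and connectedness that the limit is a torus of dimension at most $D$. However, one step in your write-up is misjustified: the function $n\mapsto|S^n|/(n^D|S|)$ is \emph{not} monotone up to constants, nor does a doubling argument at scale $\delta_n=\diam(X_n)$ control the growth at smaller scales. (For instance, in $\Z/m\Z\times\Z/M\Z$ with $m\ll M$ and the standard generating set, $|S^n|\asymp n^2$ for $n\le m$, while $|S^{\delta}|\ll\delta^{1+\eps}|S|$ at the diameter $\delta\asymp M$.) So $|S_n^{\delta_n}|\ll\delta_n^D|S_n|$ does not directly give $|S_n^n|\ll n^D|S_n|$ for all $n$. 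The correct fix is a reindexing: pass to a subsequence along which $\delta_n$ is strictly increasing, pad the gaps with trivial Cayley graphs, and apply Theorem \ref{thm:conj2Ab} to the resulting sequence with $m_k=k$, so that the growth exponent in the hypothesis coincides with the scale $\delta_n$ at which the hypothesis \eqref{eq:diambound} actually gives control. Equivalently, observe that the proof of Theorem \ref{thm:conj2Ab} uses the relative growth bound only at a single scale per $n$, with $m_n$ bounded below by that scale, so one may as well take that scale to be $\diam(X_n)$. This is a minor repair and does not change the overall structure, which is the same as the paper's.
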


\noindent\textsc{The Hausdorff dimension of the scaling limit.} It is interesting to note that in both Theorems \ref{thm:relative} and \ref{thm:conj2} we use $D$ to control some {\it algebraic} invariant of the limit. Indeed, both the dimension and the homogeneous dimension depend only on the limiting Lie group, not on the limiting metric. It is therefore tempting to try to refine Theorems \ref{thm:relative} by relating $D$ to the Hausdorff dimension of the limit, which always lies between the dimension and the homogeneous dimension (see e.g.\ the proof of \cite[Theorem 13]{B'}). The following statement shows that this is in fact impossible in quite a strong sense.

\begin{prop}\label{prop:HausdorffNotCV} For every positive non decreasing function $f$ such that $\lim_{t\to \infty} f(t)=\infty$, there exists a sequence of symmetric generating subsets $S_n$ of the discrete Heisenberg group $\HHH(\Z)$ such that
 $|S_n|^n \ll f(n) n^3 |S_n|$, whilst 
$(\HHH(\Z),d_{S_n}/n)$ converges to the real Heisenberg group equipped with the left-invariant Carnot--Carath\'eodory metric associated to the $\ell^{\infty}$-norm on $\R^2$, whose Hausdorff dimension coincides with the homogeneous dimension (which is $4$).
\end{prop}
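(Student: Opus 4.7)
The idea is to build $S_n$ by enlarging the standard ``square'' generating set of $\HHH(\Z)$ with a short interval of central elements, the length of which is tuned to $f$. Using the matrix convention of the paper, which amounts to $(x,y,z)\cdot(x',y',z') = (x+x', y+y', z+z'+xy')$ on $\HHH(\Z) \cong \Z^3$, I would set $a_n := \lceil n/f(n)^{1/2}\rceil$ and
\[
S_n := \{(x,y,z)\in \HHH(\Z): x,y \in \{-1,0,1\},\ |z|\le a_n\}.
\]
This $S_n$ is symmetric, generates $\HHH(\Z)$ (it contains $(\pm 1, 0, 0)$ and $(0, \pm 1, 0)$), and has size $|S_n| = 9(2a_n+1) \asymp a_n$.

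\emph{Growth estimate.} Expanding a product $g_1 \cdots g_n$ with $g_i = (x_i, y_i, z_i) \in S_n$ gives an element $(X, Y, Z)$ with $X = \sum x_i$, $Y = \sum y_i$, $Z = \sum z_i + \sum_{i<j} x_i y_j$. The bounds $|x_i|, |y_i| \le 1$ and $|z_i| \le a_n$ force $|X|, |Y| \le n$ and $|Z| \le na_n + n^2$. Counting lattice points in this box yields $|S_n^n| \le C n^2(n^2 + na_n) = O(n^4)$, using $a_n = o(n)$. Since $f(n) n^3 |S_n| \asymp f(n)^{1/2} n^4$, this gives $|S_n^n| = O(f(n)^{-1/2} \cdot f(n) n^3 |S_n|) \ll f(n) n^3 |S_n|$, as required.

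\emph{Scaling limit.} Let $\tilde S := \{(x,y,0) : x, y \in \{-1,0,1\}\}$. A standard Pansu-type argument shows that $(\HHH(\Z), d_{\tilde S}/n)$ converges in the pointed GH sense to $\HHH(\R)$ equipped with the left-invariant Carnot--Carath\'eodory metric $\rho$ associated to the $\ell^\infty$-norm on $\R^2$. To transfer this to $S_n$, since $\tilde S \subset S_n$ we have $d_{S_n} \le d_{\tilde S}$ automatically. For the reverse comparison, write $g_i = g_i' c_i$ with $g_i' := (x_i, y_i, 0) \in \tilde S$ and $c_i := (0, 0, z_i)$; the $c_i$ are central, so any $g = g_1 \cdots g_k \in S_n^k$ satisfies $g = g_1' \cdots g_k' \cdot \bigl(0, 0, \sum_i z_i\bigr)$. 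Combined with the well-known estimate $d_{\tilde S}((0, 0, t)) \asymp \sqrt{|t|}$, this gives
\[
d_{\tilde S}(g) \le k + C\sqrt{k a_n}.
\]
For $g$ with $d_{S_n}(g) \le Rn$, taking $k = d_{S_n}(g)$ yields $d_{\tilde S}(g) - d_{S_n}(g) \le C\sqrt{R n a_n} = O(n/f(n)^{1/4}) = o(n)$. By left-invariance this extends to $|d_{S_n}(g,h) - d_{\tilde S}(g,h)|/n \to 0$ uniformly on rescaled balls of any fixed radius, so $(\HHH(\Z), d_{S_n}/n)$ has the same pointed GH limit as $(\HHH(\Z), d_{\tilde S}/n)$, namely $\HHH(\R)$ with the $\ell^\infty$ Carnot--Carath\'eodory metric.

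\emph{Hausdorff dimension.} It is standard (and follows from the stratified structure via a Mitchell-type argument) that any sub-Finsler Carnot--Carath\'eodory metric on $\HHH(\R)$ has Hausdorff dimension equal to its homogeneous dimension $4$. The main obstacle in this plan is the metric-comparison step: the cost of ``forgetting'' the central parts of the generators is $O(\sqrt{n a_n}) = O(n/f(n)^{1/4})$, which must be $o(n)$. This is precisely why one takes $a_n$ only slightly larger than the threshold $n/f(n)$ dictated by the growth bound; the competing demands are balanced by choosing $a_n \sim n/f(n)^{1/2}$.
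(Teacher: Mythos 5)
Your proof is correct, but your construction is genuinely different from the paper's. The paper dilates the whole generating set along with the scale: it takes $S_n = S(n, n^{3-a_n})$, which projects onto $\{-n,\ldots,n\}^2$ and has a central interval of half-length $n^{3-a_n}$ with the exponent $a_n$ chosen so that $n^{a_n}\asymp f(n)$. The key step there is an \emph{exact} coincidence of balls, $S(n,n^{3-a_n})^k = S(n,n^2)^k$ for $t_n n\le k\le n$ with $t_n\to 0$, after which one quotes Lemma~\ref{lem:squeezing} (a rescaled Pansu--Breuillard statement). Your construction instead keeps the abelianization image fixed at $\{-1,0,1\}^2$ (whose convex hull is the $\ell^\infty$ unit ball) and only enlarges the central fiber to $\{|z|\le a_n\}$ with $a_n\asymp n/f(n)^{1/2}$; the central step is a soft metric comparison $0\le d_{\tilde S}(g)-d_{S_n}(g)\le C\sqrt{k a_n}=o(n)$ on $d_{S_n}$-balls of radius $k\le Rn$, via the ball--box estimate $d_{\tilde S}((0,0,t))\asymp|t|^{1/2}$, which transfers the GH limit from the fixed set $\tilde S$ (classical Pansu) to $S_n$. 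Both arguments tune the central enlargement to the threshold allowed by the growth bound; yours is a little more elementary in that it needs no rescaling of the generating set and no exact ball identity, while the paper's exact coincidence slots directly into Lemma~\ref{lem:squeezing} and avoids quantitative estimates in the central direction. One remark: your claim ``any sub-Finsler cc-metric on $\HHH(\R)$ has Hausdorff dimension $4$'' is slightly more than you need; for the specific $\ell^\infty$ cc-metric this is the standard volume computation the paper recalls from \cite[Lemma C.3]{semmes}.
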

We refer to \S \ref{sec:Hausdorff} for the necessary background on Carnot--Carath\'eodory metrics.

It turns out that this proposition itself is sharp, in the sense that the condition $|S_n^n| \ll f(n) n^3 |S_n|$ cannot be improved to $|S_n|^n \ll n^3 |S_n|$.
This follows from the following general fact.

\begin{prop}\label{prop:LimitFinsler}
Suppose that the assumptions of Theorem \ref{thm:relative} are satisfied, and that the sequence $(\Gamma_{n},\frac{d_{S_{n}}}{m_n})$ converges to a connected Lie group $G$ of dimension exactly $D$ equipped with some left-invariant sub-Finsler metric $d$. Then $d$ is Finsler and $(G,d)$ has Hausdorff dimension $D$. 
\end{prop}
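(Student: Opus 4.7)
The plan is to show that the Hausdorff dimension $Q$ of $(G,d)$ equals $D$. This identification yields both conclusions at once: for a left-invariant sub-Finsler metric on a connected nilpotent Lie group $G$, associated with a Lie-generating subspace $\m \subset \g$, the classical formula for the Hausdorff dimension reads
\[
Q = \sum_{i \geq 1} i \cdot \dim(\g_i/\g_{i-1}),
\]
where $\g_0 = 0$, $\g_1 = \m$, and $\g_i = \g_{i-1} + [\m, \g_{i-1}]$. Subtracting $\dim G = \sum_i \dim(\g_i/\g_{i-1})$ gives
\[
Q - \dim G = \sum_{i \geq 2}(i-1)\dim(\g_i/\g_{i-1}) \geq 0,
\]
with equality iff $\m$ is itself a subalgebra of $\g$, hence (since $\m$ Lie-generates $\g$) iff $\m = \g$. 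Equivalently, $Q = \dim G$ iff $d$ is Finsler. Since we are given $\dim G = D$, the proposition reduces to proving the single inequality $Q \leq D$.

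The strategy for $Q \leq D$ is to compare volume growth in the limit with the polynomial-growth hypothesis on the $\Gamma_n$. Theorem \ref{thm:bft} (and the refinements used in the present paper) actually produces a \emph{measured} Gromov--Hausdorff convergence: after normalising the counting measure on $\Gamma_n$ by $|S_n^{m_n}|$, it converges to the Haar measure $\mu$ on $G$, normalised so that $\mu(B_G(1)) = 1$, and in particular
\[
\frac{|S_n^{r m_n}|}{|S_n^{m_n}|} \longrightarrow \mu(B_G(r)) \qquad \text{for every } r > 0.
\]
Since the scaling limit is built from coset nilprogressions supplied by the Breuillard--Green--Tao structure theorem, Pansu-style reasoning endows $(G, d)$ with a one-parameter family of dilations compatible with the sub-Finsler metric, so that $\mu(B_G(r)) = r^Q$ for all $r > 0$. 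On the other hand, the single-scale hypothesis $|S_n^n| \ll n^D |S_n|$ can be transferred to a uniform scale-doubling estimate of the form $|S_n^{r m_n}| \ll r^D |S_n^{m_n}|$ for all $r \geq 1$ and all sufficiently large $n$. Passing to the limit yields $r^Q \ll r^D$ for every $r \geq 1$, and letting $r \to \infty$ forces $Q \leq D$.

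The main obstacle is establishing the scale-doubling estimate $|S_n^{r m_n}|/|S_n^{m_n}| \ll r^D$ with constants independent of both $r$ and $n$: this amounts to propagating the polynomial-growth exponent $D$ from the single scale $n$ up to all scales $\geq m_n$. This should follow from the explicit growth bounds for coset nilprogressions developed in the authors' earlier paper on proper progressions (as used in the proofs of Theorems \ref{thm:relative} and \ref{thm:conj2}), but care will be required both in obtaining uniformity as $r \to \infty$ and in extracting the exact homogeneity $\mu(B_G(r)) = r^Q$ of the limit from the measured Gromov--Hausdorff convergence, since \emph{a priori} the limit $(G,d)$ is only a nilpotent Lie group with a left-invariant sub-Finsler metric rather than a Carnot group in the strict sense.
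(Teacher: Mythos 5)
Your first paragraph is correct and identifies the right reduction: since $\dim G \le Q \le \hdim G$ always, and $Q = \dim G$ precisely when $\m = \g$ (i.e.\ when $d$ is Finsler), the statement reduces to showing $Q \le D$. This matches the paper's intent.

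The rest of the proposal does not work, for two compounding reasons. First, the claim that the limit comes with dilations satisfying $\mu(B_G(r)) = r^Q$ for \emph{all} $r > 0$ is false. A Finsler metric on a nonabelian nilpotent Lie group is not self-similar: one has $\mu(B_G(r)) \asymp r^Q$ only as $r \to 0$, while as $r \to \infty$ the correct exponent is $\hdim G$, which is strictly larger than $Q$ as soon as $G$ is nonabelian. If the limit really were globally $Q$-homogeneous, the conclusion $Q = \dim G$ would force $G$ to be abelian, and the proposition would then be vacuous for nonabelian $G$ --- but the Heisenberg example after Remark \ref{rem:hom.dim} (with $S_n = S(n, n^3)$) shows that the proposition is genuinely nontrivial in the nonabelian case. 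Second, the proposed scale-doubling estimate $|S_n^{rm_n}| \ll r^D |S_n^{m_n}|$ for $r \ge 1$ is false: in that same Heisenberg example one has $|S_n^{rn}| \asymp r^4 |S_n^n|$ for $r \ge 1$, the exponent being $\hdim = 4$ rather than $D = 3$. The hypothesis $|S_n^n| \ll n^D|S_n|$ only controls growth up to scale $n$; for $r \ge 1$ you are working beyond that range, where the best available bound (Theorem \ref{thm:bt}) gives the homogeneous dimension, not $D$. Sending $r \to \infty$ therefore probes the wrong regime and, even if your estimates were true, would only bound $\hdim G$, not $Q$.

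The fix is to work at scales $r \le 1$ and to supply a \emph{lower} bound on $\mu(B(r))$. This is what the paper does: after reducing (via Theorem \ref{thm:ReducTorsionFreeD}, Lemma \ref{cor:ReducToP}, and Corollary \ref{cor:limreal}) to the case where $\Gamma_n$ is torsion-free nilpotent with Mal'cev completion of dimension exactly $D$, the key ingredient is Proposition \ref{prop:growth.lower.bound.dim}: for such a group, $|\Sigma^n| \ge c n^D |\Sigma|$ for \emph{every} symmetric generating set. Combined with the hypothesis $|S_n^n| \le M n^D |S_n|$, this pins the growth to exactly $n^D$ at all scales $\le n$ (Lemma \ref{lem:lowergrowth}), yielding the two-sided estimate $\mu(B(r)) \asymp r^D \mu(B(1))$ for $r \le 1$ in the ultralimit. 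The lower bound $\mu(B(r)) \gg r^D$ is what gives $Q \le D$ (by \cite[Lemma C.3]{semmes}), and then \cite[Theorem 13]{B'} gives Finsler. Your proposal gestures at ``explicit growth bounds for coset nilprogressions'' but never isolates this crucial lower bound on the relative growth of a torsion-free nilpotent group of given dimension, which is what actually closes the argument.
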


\bigskip
\noindent\textsc{Further growth of locally polynomially growing sets.} A key ingredient in the proof of Theorem \ref{thm:bft} was a result of Breuillard and the second author \cite[Theorem 1.1]{bt} implying, for example, that there exists a constant $C_D$ such that if \eqref{hom.poly.growth} holds for sufficiently large $n_1$ (depending on $D$) then $|S^{km}|\le C_D^k|S^m|$ for every $k\in\N$ and $m\ge n_1$. Tao subsequently gave a more precise description of the further growth of $S^m$, as follows.
\begin{theorem}[Tao {\cite[Theorem 1.9]{tao}}]\label{thm:tao}
Given $D>0$ there exists $N=N_{M,D}$ such that if $n\ge N$ and $S$ is a finite symmetric generating set for a group $G$ such that
\[
|S^n|\le Mn^D|S|
\]
then there exists a non-decreasing continuous piecewise-linear function $f:[0,\infty)\to[0,\infty)$ with $f(0)=0$ and at most $O_D(1)$ distinct linear pieces, each with a slope that is a natural number at most $O_D(1)$, such that
\[
\log|S^{mn}|=\log|S^n|+f(\log m)+O_D(1)
\]
for every $m\in\N$.
\end{theorem}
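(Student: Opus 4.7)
The plan is to reduce Theorem \ref{thm:tao} to an explicit description of the growth of a coset nilprogression of bounded complexity, following the same overall strategy that underlies Theorems \ref{thm:relative} and \ref{thm:conj2}. First, from the hypothesis $|S^n|\le Mn^D|S|$, one combines a dyadic pigeonhole in $\{|S^k|\}_{k\le n}$ with Pl\"unnecke--Ruzsa to find a scale $n'\asymp_D n$ at which $A:=S^{n'}$ is an $O_D(1)$-approximate group. The Breuillard--Green--Tao structure theorem then supplies a coset nilprogression $HP$ of rank and step bounded by $O_D(1)$, with $HP\subset A^{O_D(1)}$, $A\subset (HP)^{O_D(1)}$, and $|HP|\asymp_D |A|$. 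Because these comparisons have constants depending only on $D$, one has $(HP)^{\Omega_D(m)}\subset S^{mn}\subset (HP)^{O_D(m)}$, so $\log|S^{mn}|$ and $\log|(HP)^m|$ differ by a quantity which, as a function of $\log m$, is bounded by $O_D(1)$ plus a linear error that can be absorbed into the piecewise linear function $f$ after reparametrising.

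The heart of the argument is then a computation of $|(HP)^m|$. Using the upper-triangular normal form for coset nilprogressions developed in our earlier paper \cite{proper.progs}, the powers $(HP)^m$ remain coset nilprogressions whose length parameters are (up to multiplicative constants depending only on $D$) the dilated lengths $mL_1,\dots,mL_r$ of the original generators $x_1,\dots,x_r$ of $HP$, and whose size is comparable to
\[
\prod_{\alpha}\min\bigl(m^{|\alpha|}L^\alpha,\,o_\alpha\bigr),
\]
where $\alpha$ runs over a set of basic commutator weights of cardinality $O_D(1)$, $|\alpha|=\sum_i\alpha_i$ is the weight of $\alpha$, $L^\alpha=\prod_iL_i^{\alpha_i}$, and $o_\alpha\in\N\cup\{\infty\}$ is the order of the corresponding basic commutator in $G/H$.

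Taking logarithms, each summand $\log\min(m^{|\alpha|}L^\alpha,o_\alpha)$ is a continuous piecewise linear function of $\log m$ with at most two pieces: a piece of slope $|\alpha|\in\{1,\dots,O_D(1)\}$ on the interval where $m^{|\alpha|}L^\alpha\le o_\alpha$, and a piece of slope $0$ thereafter. Summing the $O_D(1)$ such contributions, and absorbing an additive $O_D(1)$ error, yields the desired function $f$: nondecreasing, continuous, piecewise linear with $O_D(1)$ distinct pieces and integer slopes lying in $\{0,1,\dots,O_D(1)\}$. The main technical obstacle will be ensuring that the implicit constants in the size formula for $|(HP)^m|$ are uniform in $m$: this requires the normal form from \cite{proper.progs} to continue to apply after dilation, which in turn relies on the fact that dilating an upper-triangular coset nilprogression by $m$ produces another coset nilprogression of the same bounded rank and step, with parameters one can control explicitly and independently of $m$.
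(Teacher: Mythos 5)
Your overall strategy is the same as the paper's: reduce to a coset nilprogression of bounded complexity via BGT, compute the growth of that nilprogression, and observe that the logarithm of that growth is piecewise linear with bounded integer slopes. The reduction step via a pigeonhole/Pl\"unnecke--Ruzsa argument and the BGT structure theorem is essentially Proposition \ref{prop:inverse}, and the comparison $|S^{mn}|\asymp_D|HP^m|$ is correct (though the paper also needs the bounded-step-doubling fact $|\hat P^{rm}|\ll_{r,D}|\hat P^m|$ to absorb the $O_D(1)$-fold loss in the power).

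The gap is in the core computation. You claim without justification that
\[
|(HP)^m|\asymp\prod_{\alpha}\min\bigl(m^{|\alpha|}L^{\alpha},o_{\alpha}\bigr),
\]
and call the uniformity of the implied constants a "technical obstacle". In fact this formula is the entire content of the theorem and is not established anywhere in the literature in this naive form. Two separate difficulties are being elided. First, the basic commutators of a rank-$d$, step-$s$ nilprogression number $r>d$ in general and are not linearly independent in the Lie algebra; the volume of the associated nilbox is therefore \emph{not} a product of side-lengths but, by Cramer's rule, is comparable to a maximum (equivalently a sum) over all $d$-element subsets $\{i_1<\dots<i_d\}$ of $\vol B_\R(e_{i_1},\dots,e_{i_d};(mL)^\chi)$. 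This is why the paper (Proposition \ref{prop:prog.growth.poly} via Lemma \ref{lem:cramer}) obtains a genuine polynomial $f(m)=\sum_{i_1<\dots<i_d}c_{i_1\dots i_d}m^{\sum_j|\chi(i_j)|}$, and the passage to piecewise linearity comes from Lemma \ref{lem:poly=piecewise.mono} (a polynomial with nonnegative coefficients is a max of monomials up to constants), not from your product formula. The slopes that appear are sums $\sum_{j=1}^d|\chi(i_j)|$ over bases, not single weights $|\alpha|$ — these happen to have the same bound, but the mechanism is different.

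Second, the $\min(\cdot,o_\alpha)$ truncation is where torsion and loss of properness enter, and this cannot be treated multiplicatively: once some $m^{|\alpha|}L^\alpha$ crosses the order $o_\alpha$, the dilated progression is no longer proper and the volume-counting breaks down globally, not just in coordinate $\alpha$. The paper handles this by an induction on rank (Proposition \ref{prop:tao.prog.version}): when the injectivity radius $k$ is finite, one uses Proposition \ref{prop:Powergood} to put $P^{ck}$ into clean form, then Proposition \ref{prop:reduce.dim.when.not.proper} to replace it by a Lie coset progression of strictly smaller rank, and iterates. Your proposal has no substitute for this induction, and without it the product-of-mins formula is simply an unproven ansatz. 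To repair the argument, you would need to replace the single formula with (i) a Cramer-rule volume estimate valid in the proper regime, and (ii) a rank-reduction mechanism controlling what happens at each scale where properness is lost.
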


Tao phrases his argument in the language of non-standard analysis, both to streamline the presentation and to give easier access to some results of \cite{bgt} that are also phrased in that language. It turns out that some of the techniques we have developed in the present paper and its predecessor \cite{proper.progs} allow us to translate Tao's argument to a finitary setting (we emphasise that it is still essentially Tao's argument). Since the details are brief, we take this opportunity to present them in the appendix.

We caution that Theorem \ref{thm:tao} is still ineffective, since the argument uses \cite[Theorem 1.6]{bgt} as a black box (see Remark \ref{rem:bgt.location} for details). Nonetheless, this is now the only source of ineffectiveness in the argument, and so Theorem \ref{thm:tao} is effective for any class of group in which there is an effective version of \cite[Theorem 1.6]{bgt} (this includes residually nilpotent groups \cite{resid} and certain linear groups \cite{sol.lin,gill-helf}, for example). In particular, an effective version of \cite[Theorem 1.6]{bgt} in general would yield an effective version of Theorem \ref{thm:tao} in general.

\bigskip
\noindent\textsc{Notation.} 
We follow the standard convention that if $X,Y$ are real quantities  and $z_1,\ldots,z_k$ are variables or constants then the expressions $X\ll_{z_1,\ldots,z_k}Y$ and $Y\gg_{z_1,\ldots,z_k}X$ each mean that there exists a constant $C>0$ depending only on $z_1,\ldots,z_k$ such that $X$ is always at most $CY$. 
We write $X \asymp_{z_1,\ldots,z_k} Y$ to mean that $X\ll_{z_1,\ldots,z_k}Y\ll_{z_1,\ldots,z_k}X$.

Moreover, the notation $O_{z_1,\ldots,z_k}(Y)$ denotes a quantity whose absolute value is at most a certain constant (depending on $z_1,\ldots,z_k$) multiple of $Y$, whilst $\Omega_{z_1,\ldots,z_k}(X)$ denotes a quantity that is at least a certain positive constant (depending on $z_1,\ldots,z_k$) multiple of $X$. Thus, for example, the meaning of the notation $X\le O(Y)$ is identical to the meaning of the notation $X\ll Y$. 

Here, and throughout this paper, we use the standard notation $AB=\{ab:a\in A,b\in B\}$, $A^n=\{a_1\cdots a_n:a_i\in A\}$ and $A^{-n}=\{a_1^{-1}\cdots a_n^{-1}:a_i\in A\}$. We also adopt the additional convention that given two subsets $A,B$ of a group we write $A\approx_{z_1,\ldots,z_k}B$ to mean that $A\subset B^{O_{z_1,\ldots,z_k}(1)}$ and $B\subset A^{O_{z_1,\ldots,z_k}(1)}$. If $A,B$ are subsets of a Lie algebra, we adopt the same convention but with additive notation, writing $A\approx_{z_1,\ldots,z_k}B$ to mean that $A\subset O_{z_1,\ldots,z_k}(1)B$ and $B\subset O_{z_1,\ldots,z_k}(1)A$.

\begin{definition}\label{def:qi} Given $C\geq 1$ and $K\geq 0$ and metric spaces $X,Y$, a map $f:X\to Y$ is said to be a \emph{$(C,K)$-quasi-isometry} if
\[C^{-1}d(x,y)-K\leq d(f(x),f(y))\leq Cd(x,y)+K,\]
for every $x,y\in X$, and every $y\in Y$ lies at distance at most $K$ from $f(X)$.
\end{definition}

\bigskip
\noindent\textsc{Overview of the paper.} In Section \ref{sec:background} we recall some notation and background material on approximate groups and progressions from our previous paper. In Section \ref{sec:GH-limit} we define Gromov--Hausdorff limits, explaining how to reduce their study to the study of \emph{ultralimits}, and then in Section \ref{sec:abelian} we give a direct proof of Theorem \ref{thm:conj2Ab}.

We give an overview of the general argument for Theorems \ref{thm:relative} and \ref{thm:conj2} in Section \ref{sec:overview}, reducing their proofs to two results labelled Proposition \ref{prop:reducConnected} and Theorem \ref{thm:homdimOfLimit}. We prove Proposition \ref{prop:reducConnected} in Section \ref{section:reducConnected}, and Theorem \ref{thm:homdimOfLimit} in Sections \ref{sec:normed} and \ref{sec:marked}.

Finally, we prove Propositions \ref{prop:HausdorffNotCV} and \ref{prop:LimitFinsler} in Section \ref{sec:Hausdorff}, and Theorem \ref{thm:tao} in the appendix.

\bigskip
\noindent\textsc{Acknowledgements.} The authors are grateful to Itai Benjamini for stimulating discussions. 

\section{Background on approximate groups}\label{sec:background}
The strenghtening of Gromov's theorem due to Breuillard, Green and Tao described in the introduction relies on the observation that if $S$ satisfies \eqref{hom.poly.growth} then there is some $n\ge n_1^{1/2}$ such that $S^n$ is a so-called \emph{approximate group}. A finite subset $A$ of a group $G$ is said to be a \emph{$K$-approximate subgroup of $G$}, or simply a \emph{$K$-approximate group}, if it is symmetric and contains the identity and there exists $X\subset G$ with $|X|\le K$ such that $A^2\subset XA$.

An important result in approximate group theory is a remarkable theorem of Breuillard, Green and Tao \cite[Corollary 2.11]{bgt} describing the algebraic structure of an arbitrary approximate group in terms of certain objects called \emph{progressions}, versions of which we now define. This result underpins the approach of the present paper.

Following \cite{nilp.frei}, we define the \emph{ordered progression} on generators $u_1,\ldots,u_d\in G$ with lengths $L_1,\ldots,L_d$ is to be
\[
P_\ord(u;L):=\{u_1^{\ell_1}\cdots u_d^{\ell_d}:|l_i|\le L_i\}.
\]
If $P$ is an ordered progression and $H$ is a finite subgroup normalised by $P$, then we say that $HP$ is an \emph{ordered coset progression}.

Following \cite{bgt}, we say that the tuple $(u;L)=(u_1,\ldots,u_d;L_1,\ldots,L_d)$ is in \emph{$C$-upper-triangular form} if, whenever $1\le i<j\le d$, for all four choices of signs $\pm$ we have
\begin{equation}\label{eq:C-upp-tri}
[u_i^{\pm1},u_j^{\pm1}]\in P_\ord\left(u_{j+1},\ldots,u_d;\textstyle{\frac{CL_{j+1}}{L_iL_j},\ldots,\frac{CL_d}{L_iL_j}}\right).
\end{equation}
We say that an ordered progression is in $C$-upper-triangular form if the corresponding tuple is. We say that a coset ordered coset progression $HP$ is in $C$-upper-triangular form if the corresponding tuple is in $C$-upper-triangular form modulo $H$.

Given $m>0$, an ordered progression $P$ on the tuple $(u;L)=(u_1,\ldots,u_d;L_1,\ldots,L_d)$ is said to be \emph{$m$-proper with respect to a homomorphism $\pi:\langle P\rangle\to N$} if the elements $\pi(u_1^{\ell_1}\cdots u_d^{\ell_d})$ are all distinct as the $\ell_i$ range over those integers with $|\ell_i|\le mL_i$. The progression $P$ is said to be $m$-proper with respect to a subgroup $H\lhd\langle HP\rangle$ if $P$ is $m$-proper with respect to the quotient homomorphism $\langle HP\rangle\to\langle HP\rangle/H$. In this case we also say that the coset ordered coset progression $HP$ is \emph{$m$-proper}. If a coset ordered coset progression $HP$ is $m$-proper for every $m<0$ then we say it is \emph{infinitely proper}.

Having made these definitions, we can now state the result of Breuillard, Green and Tao. This result is essentially \cite[Corollary 2.11]{bgt}, although we state a version of it from our earlier paper \cite[Theorem 1.4]{proper.progs} in order to be compatible with our notation.
\begin{theorem}[Breuillard--Green--Tao]
Let $A$ be a $K$-approximate group. Then there exist an $\Omega_K(1)$-proper ordered coset progression $HP\subset A^4$, of rank and step at most $O_K(1)$ and in $O_K(1)$-upper-triangular form, and a set $X\subset\langle A\rangle$ with $|X|\ll_K1$ such that $A\subset XHP$.
\end{theorem}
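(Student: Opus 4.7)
The statement is the authors' reformulation (from \cite{proper.progs}) of the Breuillard--Green--Tao structure theorem \cite[Corollary 2.11]{bgt}, and the plan is simply to reduce to that theorem and then transcribe its conclusion into the ordered/proper/upper-triangular language used here.

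First I would invoke \cite[Corollary 2.11]{bgt} directly: this produces a coset nilprogression $HP_0\subset A^4$ of rank and step $O_K(1)$ such that $A$ is covered by $O_K(1)$ translates of $HP_0$. This immediately gives the covering $A\subset XHP_0$ with $|X|\ll_K1$ and the containment in $A^4$. What remains is to replace $P_0$ by an \emph{ordered} progression $P$ which is $\Omega_K(1)$-proper and in $O_K(1)$-upper-triangular form.

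Next I would exploit the fact that $P_0$ lies in a nilpotent group of step $s=O_K(1)$. Using repeated commutator identities (or, equivalently, the Baker--Campbell--Hausdorff formula) one can rewrite any word in the generators as a product in a fixed lexicographic order, with the discrepancy being a product of iterated commutators of those generators. The key combinatorial observation is that an iterated commutator involving $u_i$ and $u_j$ enters the rewriting only through generators $u_k$ with $k>\max(i,j)$, and its exponent is bounded in terms of ratios of the $L_i,L_j$. This is precisely condition \eqref{eq:C-upp-tri}, and a careful bookkeeping (carried out in \cite{bgt} and tightened in \cite{proper.progs}) yields an ordered progression of the same rank, step, and, up to bounded multiplicative factors, the same lengths, in $O_K(1)$-upper-triangular form.

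Finally I would upgrade properness. The nilprogression from \cite[Corollary 2.11]{bgt} is proper only in a weak sense, and the stronger $\Omega_K(1)$-proper conclusion requires the refinement lemma for ordered nilprogressions developed in the authors' previous paper \cite{proper.progs}: one can shrink each length $L_i$ by a bounded multiplicative factor so that the resulting progression is $m$-proper for any prescribed $m=O_K(1)$, without losing either the containment $HP\subset A^4$ or the cover $A\subset XHP$ (possibly after replacing $X$ by a set still of size $O_K(1)$). The main technical obstacle throughout is the interaction between the three conditions: enforcing upper-triangularity changes the generators and lengths, which affects properness; enforcing properness shrinks the lengths, which can threaten the cover of $A$; and both operations must preserve the containment in $A^4$. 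Balancing these is exactly what is done in \cite[Theorem 1.4]{proper.progs}, so after the reduction to \cite[Corollary 2.11]{bgt} the remaining work is essentially a citation of that earlier result.
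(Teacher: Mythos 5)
Your proposal is correct and matches the paper's treatment: the theorem is stated as a reformulation of \cite[Corollary 2.11]{bgt}, and the paper simply cites \cite[Theorem 1.4]{proper.progs} for the ordered/proper/upper-triangular version rather than re-deriving it, exactly as you ultimately do. One small inaccuracy in your gloss of the internals of \cite{proper.progs}: properness there is not obtained by shrinking the lengths $L_i$ by bounded factors, but by \emph{reducing the rank}---when the progression fails to be $m$-proper, one deletes a generator (i.e.\ passes to a coset progression of strictly smaller rank) while keeping the new progression commensurable with the old one (cf.\ \cite[Proposition 7.3]{proper.progs}); shrinking lengths alone would not in general repair a failure of properness, since that failure can come from a relation among the generators rather than from the lengths being too large. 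This does not affect the validity of your overall approach, which is a citation chain identical to the paper's.
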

This result enters the present paper via Proposition \ref{prop:inverse}.

We adapt some of the above terminology for use in a Lie group $G$. In particular, given elements $u_1,\ldots,u_d\in G$ and positive reals $L_1,\ldots,L_d$ we define the \emph{real ordered progression} $P_\ord^\R(u;L)$ via
\[
P_\ord^\R(u;L)=\{u_1^{\ell_1}\cdots u_d^{\ell_d}:\ell_i\in[-L_i,L_i]\},
\]
calling $d$ the \emph{rank} of this progression. We say that $(u;L)$ is in $C$-upper-triangular form \emph{over $\R$} if for every $i<j\le d$ and for all four choices of signs $\pm$ we have
\[
[u_i^{\pm1},u_j^{\pm1}]\subset P_\ord^\R\left(u_{j+1},\ldots,u_d;\textstyle{\frac{CL_{j+1}}{L_iL_j},\ldots,\frac{CL_d}{L_iL_j}}\right).
\]

Finally, given group elements $u_1,\ldots,u_d$ and positive reals $L_1,\ldots,L_d$, and given integers $i_1<\ldots<i_k\in[1,d]$, we often abbreviate $(u_{i_1},\ldots,u_{i_k};L_{i_1},\ldots,L_{i_k})$ to simply $(u_{i_1},\ldots,u_{i_k};L)$.

\section{GH-limits and ultralimits}\label{sec:GH-limit}

In this section we describe two different ways of defining a limit of a sequence of metric spaces: a \emph{Gromov--Hausdorff limit} and an \emph{ultralimit}. Underpinning the entire approach of this paper is the standard fact, stated below as Lemma \ref{lemPrelim:ultra}, that in the setting we are concerned with these two types of limit coincide.

We start by describing Gromov--Hausdorff limits, or \emph{GH-limits} as we call them from now on.
 \begin{definition}\label{GH}
Given a sequence $X_n$ of compact metric spaces we will say that $X_n$ GH-converges to $X$ if the $X_n$ have bounded diameter and if there exist maps $\phi_n:X_n\to X$ such that for all $\eps$, then for  $n$ large enough, 
\begin{itemize}
\item every point of $X$ is at $\eps$-distance of a point of $\phi_n(X_n)$;
\item $(1-\eps)d(x,y)-\eps\leq d(\phi_n(x),\phi_n(y))\leq  (1+\eps)d(x,y)+\eps$ for all $x,y\in X_n$.
 \end{itemize}
\end{definition}
GH-convergence also extends naturally to (not necessarily compact) locally compact pointed metric spaces \cite[\S3]{Gromov}, as follows.
\begin{definition}\label{GH2}
Given a  sequence $(X_n,o_n)$ of locally compact pointed metric spaces, $(X_n,o_n)$ is said to converge to the locally compact pointed metric space $(X,o)$ if for every $R>0$, the sequence of balls $B(o_n,R)$ GH-converges to $B(o,R)$.
\end{definition}

Gromov gave the following useful criterion for GH-precompactness.

\begin{lemma}[Gromov's compactness criterion, {\cite[Theorem 5.41]{BH}}]
A sequence of metric spaces $(X_n)$ is relatively GH-compact if and only if the $X_n$ have bounded diameter, and are ``equi-relatively compact": for every $\eps>0$, there exists $N\in \N$ such that for all $n\in \N$, $X_n$ can be covered by at most $N$ balls of radius $\eps.$
\end{lemma}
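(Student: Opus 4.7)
The plan is to establish both directions of the equivalence separately, starting with the easier necessity direction.

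For necessity, suppose $(X_n)$ is GH-relatively compact. If the diameters were unbounded one could extract a subsequence along which $\diam(X_{n_k})\to\infty$, and any GH-cluster point would then need to have infinite diameter; but Definition \ref{GH} builds bounded diameter into the notion of GH-convergence (the maps $\phi_n$ are required to nearly preserve all distances), giving a contradiction. For equi-relative compactness, argue by contradiction: if some $\eps>0$ witnesses its failure, there is a subsequence $(X_{n_k})$ such that $X_{n_k}$ cannot be covered by $k$ balls of radius $\eps$. Extract a GH-convergent sub-subsequence with compact limit $X$, cover $X$ by finitely many balls of radius $\eps/3$, and pull this cover back through the GH-approximation maps $\phi_{n_k}$ (which for large $k$ distort distances by less than $\eps/3$) to obtain a cover of $X_{n_k}$ by a bounded number of balls of radius $\eps$, contradicting the assumption once $k$ is large enough.

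For sufficiency, assume a uniform diameter bound $D$ and equi-relative compactness, and extract a GH-convergent subsequence by a diagonal argument. For each $k\in\N$, fix an integer $N_k$ such that every $X_n$ admits a $(1/k)$-net of cardinality $N_k$ (padding with repetitions if needed), enumerated as $\{x_1^{(n,k)},\ldots,x_{N_k}^{(n,k)}\}$; arrange moreover that the $(1/(k{+}1))$-net contains the $(1/k)$-net, so that the indices form a nested structure across scales. Since every pairwise distance $d(x_i^{(n,k)},x_j^{(n,\ell)})$ lies in the compact interval $[0,D]$, a standard diagonal extraction produces a subsequence along which all these distances converge, for every choice of $k,\ell,i,j$, to a limit $d_\infty$ defining a pseudometric on the countable index set $\bigsqcup_k\{1,\ldots,N_k\}$. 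The candidate limit space $X$ is then the metric completion of the quotient of this set by the zero-distance equivalence relation.

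The concluding step is to verify that the maps $\phi_n\colon X_n\to X$ sending each point to the image of a nearest net point at scale $1/k$ (with $k$ large depending on the prescribed $\eps$) satisfy the two conditions of Definition \ref{GH}: the distortion estimate follows directly from the convergence of the net-to-net distances, and the $\eps$-surjectivity follows because the images of the $k$-th level net points are $(1/k)$-dense in $X$ by construction. The main obstacle will be the bookkeeping in the diagonal extraction: one must simultaneously guarantee convergence of every cross-scale distance $d(x_i^{(n,k)},x_j^{(n,\ell)})$, and set up the nested structure so that the limit pseudometric is well defined and the $\phi_n$ really witness GH-convergence; once this is done the verification itself is routine.
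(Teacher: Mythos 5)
The paper does not prove this lemma; it is cited from Bridson--Haefliger \cite[Theorem~5.41]{BH}, so there is no internal proof to compare against. Your argument is the standard textbook proof (and essentially the one found in \cite{BH}): necessity via a pull-back of a finite cover of a compact cluster point, sufficiency via a simultaneous diagonal extraction of nets at all scales followed by completion. That route is correct in outline.

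Two points to tighten if you were to write this out in full. First, Definition~\ref{GH} asks for a \emph{single} sequence of maps $\phi_n\colon X_n\to X$ for which the two conditions hold eventually for every $\eps$; your $\phi_n$ depend on the scale $k$, which you choose as a function of $\eps$. This mismatch is harmless but does need a word: either pick a scale $k_n\to\infty$ slowly (so that the cross-scale distance errors at scale $k_n$ are $o(1)$ in $n$), or run a further diagonal argument to merge the $\eps$-dependent families into one sequence of maps. Second, you should record why the candidate limit $X$ is compact: the $(1/k)$-level limit points form a $(1/k)$-net of size at most $N_k$ in the whole completion (each higher-level limit point lies within $1/k$ of a level-$k$ limit point, because the same is true at finite $n$ and the cross-scale distances converge), so $X$ is complete and totally bounded, hence compact. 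With these two remarks made explicit, the bookkeeping you flag at the end really is routine.
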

This fact readily extends to pointed metric spaces as follows. 
\begin{corollary}\label{lemPrelim:compact}
A sequence of pointed metric spaces $(X_n,o_n)$ is relatively GH compact if and only if for every $\eps>0$, there exists $N\in \N$ such that for all $n\in \N$, $B(o_n,1/\eps)$ can be covered by at most $N$ balls of radius $\eps.$
\end{corollary}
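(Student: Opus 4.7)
The plan is to deduce both directions of the equivalence from the bounded-diameter Gromov compactness criterion stated immediately above.

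For the forward direction, I would argue as follows. Suppose $(X_n,o_n)$ is relatively compact in the pointed sense and fix $\eps>0$. I claim that the sequence of balls $B(o_n,1/\eps)$ is relatively compact in the ordinary (unpointed) GH sense. Indeed, any subsequence extracted from $(X_n,o_n)$ admits a pointed-GH-convergent sub-subsequence by hypothesis, and Definition \ref{GH2} then guarantees that the corresponding balls of radius $1/\eps$ converge in the ordinary GH sense to $B(o,1/\eps)$, which is compact. Applying the bounded-diameter Gromov criterion to the family $\{B(o_n,1/\eps)\}_n$ then produces the desired integer $N=N(\eps)$.

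For the converse, I would argue via a diagonal extraction. Fix any subsequence of $(X_n,o_n)$. For each integer $R\ge 1$, the sequence $B(o_n,R)$ has diameter at most $2R$ and, by the covering hypothesis, is equi-relatively compact; hence Gromov's bounded-diameter criterion produces a GH-convergent sub-subsequence of balls of radius $R$. A standard diagonal extraction over $R\in\N$ then yields a single sub-subsequence $(n_k)$ along which $B(o_{n_k},R)$ GH-converges to some compact pointed metric space $(Y_R,y_R)$ for every positive integer $R$. What remains is to glue the $Y_R$ into a single locally compact pointed metric space $(X,o)$ satisfying $B(o,R)=Y_R$ for each $R$, and to verify that the original sequence pointed-GH-converges to $(X,o)$.

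This gluing step is the main obstacle: GH-limits are only defined up to isometry, so realising $Y_R$ as the closed ball of radius $R$ around $y_{R'}$ inside $Y_{R'}$ for all pairs $R<R'$ simultaneously requires a coherent choice of representatives. The cleanest resolution, which also fits the rest of the paper, is to construct $(X,o)$ directly as the pointed ultralimit of $(X_{n_k},o_{n_k})$ along a non-principal ultrafilter. The covering hypothesis guarantees that every ball about the basepoint of this ultralimit is totally bounded and complete, hence compact, and each such ball is canonically isometric to the corresponding $Y_R$. Pointed GH-convergence of $(X_{n_k},o_{n_k})$ to $(X,o)$ is then immediate from the definition of ultralimit, completing the argument.
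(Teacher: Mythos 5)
The paper gives no proof of this corollary, treating it as a routine extension of the unpointed criterion, so there is nothing to compare against directly; your proof is, as far as I can tell, correct and fills this gap sensibly. In the forward direction your use of Definition~\ref{GH2} to transfer pointed relative compactness to the balls $B(o_n,1/\eps)$ and then invoking the unpointed Gromov criterion is exactly right. In the converse direction you correctly identify the one genuinely nontrivial point, namely that the diagonal extraction hands you a family of limit balls $(Y_R)_{R\in\N}$ defined only up to isometry with no canonical nested embeddings, and resolving this by passing to the pointed ultralimit (compactness of its balls coming from completeness plus the covering hypothesis, and $Y_R\cong B(o,R)$ coming from Lemma~\ref{lemPrelim:ultra}) is a clean way to produce the limit space without the usual Hausdorff-style embedding into a common ambient space. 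Two minor points you may wish to make explicit if this were written out in full: first, the hypothesis as stated gives coverings of $B(o_n,1/\eps)$ by $\eps$-balls, so to get equi-relative compactness of $B(o_n,R)$ for a fixed $R$ at scale $\eps'$ you should take $\eps=\min(\eps',1/R)$ before invoking it; second, the claim that the ball of radius $R$ in the ultralimit is the ultralimit of the balls $B(o_{n_k},R)$ involves the usual open/closed ball bookkeeping (a sequence with $\lim_\omega d(x_{n_k},o_{n_k})\le R$ need not have $d(x_{n_k},o_{n_k})\le R$ $\omega$-a.e.), which is harmless here but worth a sentence.
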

The following standard observation is particularly useful in light of Corollary \ref{lemPrelim:compact} (recall that a metric space is called \emph{homogeneous} if its group of isometries acts transitively).
\begin{lemma}\label{prop:doublingImpliesMetricDoubling}
Let $(X,d)$ be a homogeneous discrete metric space, and assume that for all $r>0$ and $x\in X$ we have $|B(x,2r)|\leq C|B(x,r)|$. Then $B(x,2r)$ is covered by $O_C(1)$ balls of radius $r$. 
\end{lemma}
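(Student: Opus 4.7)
The plan is to run a standard maximal-packing/volume-comparison argument, exploiting homogeneity to turn the cardinality doubling into a metric doubling. Fix $x \in X$ and $r > 0$, and choose a maximal subset $\{y_1,\dots,y_N\} \subset B(x,2r)$ with the property that the open balls $B(y_i,r/2)$ are pairwise disjoint. By maximality, for every $z \in B(x,2r)$ there exists some $y_i$ with $d(z,y_i) < r$: otherwise $d(z,y_j) \ge r > r/2$ would hold for all $j$, making $\{y_1,\dots,y_N,z\}$ a strictly larger packing. Hence the balls $B(y_i,r)$ cover $B(x,2r)$, and it suffices to bound $N$ by $O_C(1)$.

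Next I would use disjointness together with the inclusion $\bigsqcup_i B(y_i,r/2) \subset B(x,5r/2)$ (by the triangle inequality, since $y_i \in B(x,2r)$) to get the cardinality bound
\[
N \cdot \min_i |B(y_i, r/2)| \;\le\; \Bigl|\bigsqcup_i B(y_i, r/2)\Bigr| \;\le\; |B(x,5r/2)|.
\]
Here is where homogeneity enters: since the isometry group of $X$ acts transitively, $|B(y,s)|$ depends only on $s$, so in particular $|B(y_i,r/2)| = |B(x,r/2)|$ for every $i$.

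Finally I would apply the doubling hypothesis iteratively. Monotonicity gives $|B(x,5r/2)| \le |B(x,4r)|$, and two applications of the hypothesis yield $|B(x,4r)| \le C|B(x,2r)| \le C^2 |B(x,r)|$. One more application gives $|B(x,r)| \le C|B(x,r/2)|$. Combining,
\[
N \;\le\; \frac{|B(x,5r/2)|}{|B(x,r/2)|} \;\le\; C^3,
\]
so $B(x,2r)$ is covered by at most $C^3 = O_C(1)$ balls of radius $r$, as required.

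There is no real obstacle here; the only mildly delicate point is making sure the packing argument is written with the right open/closed ball conventions so that discreteness causes no trouble, which is handled by working with the strict inequality $d(y_i,y_j) \ge r/2$ (equivalently, disjointness of open balls of radius $r/2$) throughout.
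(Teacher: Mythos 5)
Your proof is correct and follows essentially the same route as the paper: take a maximal packing of $B(x,2r)$, use disjointness of small balls plus homogeneity and a few applications of the doubling hypothesis to bound the number of packing centers, and conclude by maximality that the $r$-balls about those centers cover. One small inaccuracy in your closing remark: $d(y_i,y_j)\ge r/2$ is \emph{not} equivalent to disjointness of the open $r/2$-balls (you need $d(y_i,y_j)\ge r$ for that), but since the body of your argument explicitly uses disjointness of the open $r/2$-balls as the packing condition and correctly derives $d(z,y_i)<r$ from maximality, this slip does not affect the proof.
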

\begin{proof}
Let $(x_i)_{i\in I}$ be a maximal $r$-separated (i.e.\ elements are at pairwise distance at least $r$) subset of $B(x,2r)$. Since the balls $B(x_i,r/3)$ are disjoint and contained in $B(x,3r)$, we deduce that $I$ has cardinality $O_C(1)$. On the other hand we deduce from maximality that the balls $B(x_i,r)$ cover $B(x,2r)$. 
\end{proof}

\begin{corollary}\label{cor:doubloingImpliesGHrc}
Let $(X_n,o_n)$ be a sequence of pointed discrete metric spaces such that  for every $n$, the metric space $X_n$ is homogeneous and satisfies some uniform doubling property: there exists  a sequence $\eps_n\to 0$ and a constant $C$ such that for all $n\in \N$, $r\geq \eps_n$, $|B(o_n,2r)|\leq C|B(o_n,r)|$. Then the sequence of pointed metric spaces $(X_n,o_n)$ is GH-relatively compact.
\end{corollary}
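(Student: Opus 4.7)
The plan is to verify Gromov's compactness criterion in the form of Corollary \ref{lemPrelim:compact}: for each $\eps > 0$ I need to bound, uniformly in $n$, the number of $\eps$-balls required to cover $B(o_n, 1/\eps)$. Since $\eps_n \to 0$, there exists $n_0 = n_0(\eps)$ such that for all $n \geq n_0$ one has $\eps_n \leq \eps/3$, and this is the principal regime to treat.

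For such $n$, I would iterate Lemma \ref{prop:doublingImpliesMetricDoubling}. That lemma covers $B(x, 2r)$ by $O_C(1)$ balls of radius $r$ using only the doubling inequality at scales comparable to $r$ (the smallest scale invoked in its proof is $r/3$), and by homogeneity of $X_n$ the doubling assumption of the corollary holds at every centre as soon as it holds at $o_n$. Setting $k = \lceil \log_2(1/\eps^2) \rceil$ so that $2^k\eps \geq 1/\eps$, I would iterate the halving step $k$ times starting from scale $2^k\eps$, producing a cover of $B(o_n, 1/\eps)$ by at most $O_C(1)^k$ balls of radius $\eps$ --- a constant depending on $C$ and $\eps$ but not on $n$. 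The condition $\eps_n \leq \eps/3$ guarantees that every scale visited during the iteration remains above $\eps_n$, so the doubling hypothesis is always available.

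The finitely many remaining indices $n < n_0$ present no real difficulty: the doubling hypothesis forces $B(o_n, 1/\eps)$ to be finite (balls at scale $\geq \eps_n$ are finite by iterated doubling from any one such scale, and smaller balls are contained in them), so $B(o_n, 1/\eps)$ is trivially covered by finitely many singleton $\eps$-balls. Taking $N(\eps)$ to be the maximum of the uniform constant from the previous paragraph and of $|B(o_n, 1/\eps)|$ over this finite exceptional set delivers the bound required by Corollary \ref{lemPrelim:compact}. The only step that needs any genuine care is managing the threshold $\eps_n$ so that the iteration of Lemma \ref{prop:doublingImpliesMetricDoubling} never descends below it, which is precisely why the hypothesis $\eps_n \to 0$ (rather than merely $\eps_n$ bounded) is essential.
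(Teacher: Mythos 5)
Your proposal is correct and is precisely the argument the paper has in mind: the corollary is stated without proof because it follows by combining Corollary \ref{lemPrelim:compact} with an iteration of Lemma \ref{prop:doublingImpliesMetricDoubling}, exactly as you describe, including the observation that the hypothesis $\eps_n\to 0$ is what lets the iteration bottom out at scale $\eps$ once $n$ is large. The only (implicit, standard) point worth flagging is that the finiteness of $|B(o_n,1/\eps)|$ for the exceptional small $n$ is really a background assumption on the spaces $X_n$ (needed already for the doubling hypothesis to carry content), rather than something you can deduce from doubling alone.
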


In \cite[\S3]{Gromov}, Gromov discusses functoriality of these notions of convergence. In particular, he mentions a notion of equivariant GH-convergence. In \cite[\S 3]{FY}, Fukaya and Yamaguchi provide a precise notion of GH-convergence for sequences  of triplets $(X_n,G_n,o_n)$, where $(X_n,o_n)$ are locally compact pointed metric spaces, and $G_n$ is a group of isometries of $X_n$. 
 They prove the important fact that if $(X_n,o_n)$ GH-converges to $(X,o)$ and if $G_n$ is a group of isometries of $X_n$ acting transitively, then in some suitable sense, a subsequence of the triplet $(X_n,G_n,o_n)$ converges to $(X,G,o)$ where $G$ is a subgroup of isometries of $X$ acting transitively. We will recover this fact in a special case, through the convenient notion of ultralimits. 
  
Before we can define ultralimits, we need to define ultra\emph{filters}. An \emph{ultrafilter} is a map from $\omega:\PP(\N)\to \{0,1\}$, such that $\omega(\N)=1$, and which is ``additive" in the sense that $\omega(A\cup B)=\omega(A)+\omega(B)$ for all $A$ and $B$ disjoint subsets of $\N$ \cite{Com}. It is easy to check that this is equivalent to the definition used in \cite{bgt}, for example.

Given an ultrafilter $\omega$, we say that a statement holds for \emph{$\omega$-almost every $n\in\N$} if the set $A$ of $n$ for which the statement holds satisfies $\omega(A)=1$. We write $x_n=_\omega y_n$ to mean that $x_n=y_n$ for $\omega$-almost every $n$; we write $x_n\ll_\omega y_n$ to mean that there exists some real number $C$ such that $x_n\le Cy_n$ for $\omega$-almost every $n$; and we write $O_\omega(x_n)$ for a sequence that is $\ll_\omega x_n$, write $\Omega_\omega(x_n)$ for a sequence that is $\gg_\omega x_n$, and write $o_\omega(x_n)$ for a sequence that is not in $\Omega_\omega(x_n)$.

Let us highlight a slightly subtle distinction between the notation $x_n\ll_\omega y_n$ and the asymptotic notation $x_n\ll y_n$. The notation $x_n\ll y_n$ implies the existence of some \emph{universal} constant $C$ such that $x_n\le Cy_n$ for all $n$, whereas the notation $x_n\ll_\omega y_n$ merely requires that for the sequence in question there exists some constant $C$ such that $x_n\le Cy_n$ for $\omega$-almost every $n$. Thus, for example, if we let $m$ be an arbitrary integer, and then defined $f(n)=m$ for every $n$, we would have $f(n)\ll_\omega1$, but we could not write $f(n)\ll1$ because for any choice of $C$ we would violate this assertion by choosing $m=\lceil C+1\rceil$. 

Ultrafilters are used to ``force" convergence of bounded sequences of real numbers. Given such a sequence $a_n$, its limit is defined to be the unique real number $a$ such that for every $\eps>0$ we have $|a_n-a|<\eps$ for $\omega$-almost every $n$. In this case we denote $\lim_{\omega} a_n=a$, or write $a_n\to_\omega a$. A trivial observation that is nonetheless extremely useful in this paper is the following.
\begin{lemma}\label{lem:finite.omega.const}
Let $(a_n)$ be a sequence of real numbers taking only finitely many different values. Then $a_n=_\omega\lim_\omega a_n$.
\end{lemma}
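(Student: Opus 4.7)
The plan is to use finite additivity of $\omega$ together with the normalisation $\omega(\N)=1$ to show that exactly one value in the (finite) range of $(a_n)$ is attained $\omega$-almost always.

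First, let $v_1,\ldots,v_k$ be the distinct values taken by the sequence, and partition $\N$ into the finitely many sets $A_i:=\{n:a_n=v_i\}$. Applying the additivity property inductively to $\omega(A_1\cup\cdots\cup A_k)=\omega(\N)=1$, one obtains $\omega(A_1)+\cdots+\omega(A_k)=1$. Since each $\omega(A_i)\in\{0,1\}$, there is a unique index $i_0$ with $\omega(A_{i_0})=1$ and $\omega(A_i)=0$ for $i\ne i_0$. In particular $a_n=v_{i_0}$ for $\omega$-almost every $n$, i.e.\ $a_n=_\omega v_{i_0}$.

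Finally, I would check that $\lim_\omega a_n=v_{i_0}$ directly from the definition: for any $\eps>0$ the set $\{n:|a_n-v_{i_0}|<\eps\}$ contains $A_{i_0}$ and hence has $\omega$-measure $1$. Combining with the previous step yields $a_n=_\omega\lim_\omega a_n$.

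There is no real obstacle here; the only subtlety is the justification that a finitely additive $\{0,1\}$-valued function that partitions a set of measure $1$ assigns measure $1$ to exactly one piece, which is a one-line consequence of additivity. The rest is unpacking definitions.
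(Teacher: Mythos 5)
Your proof is correct. The paper states this lemma without proof, remarking only that it is a ``trivial observation,'' and your argument is precisely the standard one that the authors surely have in mind: partition $\N$ into the finitely many level sets $A_i$, use finite additivity together with $\omega(\N)=1$ and the $\{0,1\}$-valued nature of $\omega$ to isolate the unique $A_{i_0}$ of full measure, and then verify from the definition of $\lim_\omega$ that the corresponding value $v_{i_0}$ is the ultralimit. No gaps.
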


An ultrafilter is called non-principal if it vanishes on finite subsets of $\N$. Non-principal ultrafilters are known to exist, although this requires the axiom of choice \cite{Com}. From now on we fix some non-principal ultrafilter $\omega$.

\begin{definition}\label{def:ultralimitmetricspace}
Given a sequence of pointed metric spaces $(X_n,o_n)$, its ultralimit $(X_{\infty},o_{\infty})$ with respect to $\omega$ is the quotient of 
$$X_{\omega}=\{(x_n)\in \Pi_n X_n:  d(x_n,o_n)\ll_\omega1\}$$
by the equivalence relation $x_n\sim y_n$ if $d(x_n,y_n)=_{\omega} o(1)$. It is equipped with a distance defined by $d_{\infty}((x_n),(y_n))=\lim_{\omega}d(x_n,y_n).$
\end{definition}
It is a basic fact that a sequence $a_n\in \R$ converging to $a\in\R$ in the usual sense also converges to $a$ with respect to $\omega$, in the sense that $\lim_{\omega}a_n=a$. A similar fact holds for ultralimits of pointed metric spaces, as follows.

\begin{lemma}[{\cite[Exercise 5.52]{BH}}]\label{lemPrelim:ultra} If a sequence of pointed metric spaces converges in the pointed GH sense to $X$, then its ultralimit with respect to $\omega$ is isometric to $X$. 
\end{lemma}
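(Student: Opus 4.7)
The plan is to exhibit an explicit isometric bijection $\Psi\colon X_\infty\to X$ by ``pushing forward'' a representative sequence through the Gromov--Hausdorff approximation maps supplied by the hypothesis. The two key facts I will use are that each ball $B(o,R)$ is compact (so bounded sequences there admit $\omega$-limits), and that the approximation maps $\phi_n^R$ are almost isometric and almost surjective onto $B(o,R)$.

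First I would unpack Definition \ref{GH2}: for every fixed $R>0$ the hypothesis yields maps $\phi_n^R\colon B(o_n,R)\to B(o,R)$ and a sequence $\eps_n^R\to 0$ as $n\to\infty$ such that $(1-\eps_n^R)d(x,y)-\eps_n^R\le d(\phi_n^R(x),\phi_n^R(y))\le (1+\eps_n^R)d(x,y)+\eps_n^R$ for all $x,y\in B(o_n,R)$, and such that every point of $B(o,R)$ lies within $\eps_n^R$ of $\phi_n^R(B(o_n,R))$. I may further assume $\phi_n^R(o_n)=o$ up to negligible error, so that $\phi_n^R$ essentially respects basepoints.

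Next I would define $\Psi$ as follows. Given $[(x_n)]\in X_\infty$, pick $R$ with $d(o_n,x_n)\le R$ for $\omega$-almost every $n$, and set
\[
\Psi([(x_n)]):=\lim_\omega\phi_n^{R+1}(x_n),
\]
the ultralimit existing because $B(o,R+1)$ is compact. Well-definedness splits into two checks: equivalent representatives ($d(x_n,y_n)=_\omega o(1)$) give the same ultralimit, by the almost-isometry bound; and independence of $R$ follows from the fact that if $R'>R$ then $d(\phi_n^R(x_n),\phi_n^{R'}(x_n))\to_\omega 0$, which one reads off by comparing both quantities with $d(x_n,x_n)=0$ through the two approximation inequalities.

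I would then verify the isometric property by fixing $R$ dominating two chosen representatives and computing
\[
d(\Psi([(x_n)]),\Psi([(y_n)]))=\lim_\omega d(\phi_n^{R+1}(x_n),\phi_n^{R+1}(y_n))=\lim_\omega d(x_n,y_n)=d_\infty([(x_n)],[(y_n)]),
\]
the middle equality following because $\eps_n^{R+1}\to 0$ and $d(x_n,y_n)\le 2R$. Surjectivity of $\Psi$ is the dual use of the approximation: for $x\in B(o,R)$, the $\eps_n^{R+1}$-density condition lets one select $x_n\in B(o_n,R+1)$ with $d(\phi_n^{R+1}(x_n),x)\le\eps_n^{R+1}$, and then $[(x_n)]\in X_\infty$ satisfies $\Psi([(x_n)])=x$ by construction.

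The only delicate point, rather than a genuine obstacle, is the simultaneous bookkeeping of the two indices $n$ and $R$: the hypothesis only gives $\eps_n^R\to 0$ for each fixed $R$, with no uniformity in $R$, so one must be careful always to fix $R$ before passing to $\lim_\omega$. Once this convention is respected, every step above reduces to a routine manipulation of the defining inequalities of Definitions \ref{GH} and \ref{def:ultralimitmetricspace}.
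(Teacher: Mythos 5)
The paper does not prove this lemma; it cites it as Exercise 5.52 of Bridson--Haefliger, so there is no in-paper argument to compare against. Taken on its own, your proposal has the right overall shape---push representative sequences through the GH-approximation maps, use compactness of the balls $B(o,R)$ to extract an $\omega$-limit in $X$, and read off the isometry and surjectivity from the approximation inequalities---but one of the two well-definedness checks is flatly wrong, and the error propagates.

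The problem is your claim that for $R'>R$ one has $d(\phi_n^R(x_n),\phi_n^{R'}(x_n))\to_\omega 0$, which you assert ``by comparing both quantities with $d(x_n,x_n)=0$ through the two approximation inequalities.'' The approximation inequalities control distances \emph{within} the image of a single $\phi_n^R$; they say nothing about how $\phi_n^R$ and $\phi_n^{R'}$ relate to each other. There is no canonical compatibility between approximation maps at different radii, and the claim is false: take $X_n=X$ constant with a nontrivial isometry $\iota$ fixing $o$, and set $\phi_n^R=\mathrm{id}$ and $\phi_n^{R'}=\iota$. Both are perfectly valid GH-approximation maps, yet $d(\phi_n^R(x),\phi_n^{R'}(x))=d(x,\iota(x))$ is bounded away from $0$ in general. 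Because $\Psi$ is only defined modulo this choice, the later computations---the displayed isometry identity (which requires choosing a single $R$ dominating both representatives) and the surjectivity argument (which chooses $R$ depending on the target point)---are all applications of a map whose well-definedness you have not actually established.

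The standard fix is a diagonal argument: since for each fixed $R$ the approximation errors $\eps_n^R\to 0$ as $n\to\infty$, one can select a sequence $R(n)\to\infty$ slowly enough that $\eps_n^{R(n)}\to 0$, and then work throughout with the \emph{single} coherent sequence of maps $\phi_n:=\phi_n^{R(n)}$. With one family of maps in hand, well-definedness on equivalence classes is exactly your first (correct) check, and the isometry and surjectivity computations you wrote go through verbatim with $\phi_n$ in place of $\phi_n^{R+1}$. (One should also, as you did, normalise so $\phi_n(o_n)\to o$.) Alternatively, one can accept that different choices of $R$ may yield genuinely different isometries $\Psi$: the lemma only asserts existence of \emph{some} isometry, and any one fixed admissible family of approximation maps produces one. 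Either way, the ``independence of $R$'' step needs to be replaced, not merely tightened.
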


Assume from now that the sequence $(X_n,o_n)$ is a sequence of groups $(G_n,e_n,d_n)$ where $e_n$ is the identity element and $d_n$ is some left-invariant metric. Then $G_{\omega}$ is a group that acts transitively on $G_{\infty}$. The stabiliser of the identity
is $$G_{\omega,0}=\{(x_n)\in \Pi_n G_n:d(x_n,e_n)=_{\omega} o(1)\}.$$ 
If the $G_n$ are abelian groups, then $G_{\infty}$ naturally comes with an abelian group structure as well. More generally, 
 $G_{\omega,0}$ is a normal subgroup of $G_{\omega}$ if and only if for all sequences $(g_n)$ and $(h_n)$ such that $d(g_n,e_n)=_{\omega}O(1)$ and  $d(h_n,e_n)=_{\omega}o(1)$ we have $d([g_n,h_n],e_n)=_{\omega}o(1)$. In this case $(G_{\infty},d_{\infty})$ naturally comes with a metric group structure.

We close this section by recording the following basic fact about maps between metric spaces that behave well under ultra-limits. The proof is straightforward from the definition of an ultralimit, so we leave it as an exercise. 
 \begin{prop}\label{prop:ultralimitPhi}
 Let $\omega$ be a non-principal ultrafilter on $\N$.
 Assume that $(X_n,o_n)$ and $(X'_n,o'_n)$ are sequences of pointed metric spaces, and let $\phi_n:X_n\to X'_n$ be a sequence of maps.  We denote by $(X_{\infty},o_{\infty})$ and $(X'_{\infty},o'_{\infty})$ the respective ultralimits of $(X_n,o_n)$ and $(X'_n,o'_n)$. 
 \begin{itemize}
  \item[(i)]
Assume  that there exists $C\geq 1$ such that
 for every sequence $u_n,v_n\in B(o_n,O_\omega(1))$ we have
 $$d(\phi (u_n),\phi(v_n))\leq_\omega Cd(u_n,v_n)+o_\omega(1).$$
 Then there exists a $C$-Lipschitz map  $\phi_{\infty}:X_{\infty}\to Y_{\infty}$, defined as $\phi_{\infty}(\lim_{\omega}u_n)=\lim_{\omega}\phi(u_n)$ for all $u_n\in B(o_n,O_\omega(1))$.

\item [(ii)] Assume that in addition to (i) that there exist $\alpha,\beta>0$ such that for every sequence $u'_n\in B(o'_n,\beta)$ there exist $u_n\in B(o_n,\alpha)$ such that $d(u'_n,\phi_n(u_n))=o_\omega(1)$. Then $\phi_{\infty}(B(o_{\infty},\alpha))$ contains $B(o'_{\infty},\beta)$.
  \end{itemize}
 \end{prop}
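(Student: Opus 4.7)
The plan is to define $\phi_\infty$ at the level of representative sequences, by $\phi_\infty([(u_n)]):=[(\phi_n(u_n))]$, and then to verify each claimed property by a direct unpacking of the definition of $\omega$-limit.

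For part (i) I would carry out three quick checks. First, that the image really lies in $X'_\omega$: applying the hypothesis with $v_n=o_n$ gives $d(\phi_n(u_n),\phi_n(o_n))\leq_\omega C\cdot O_\omega(1)+o_\omega(1)=O_\omega(1)$, which, combined with the implicit basepoint compatibility $d(\phi_n(o_n),o'_n)=O_\omega(1)$, places $(\phi_n(u_n))$ in $X'_\omega$. Second, that the definition is independent of representative: if $d(u_n,\tilde u_n)=o_\omega(1)$, the hypothesis gives $d(\phi_n(u_n),\phi_n(\tilde u_n))\leq_\omega C\cdot o_\omega(1)+o_\omega(1)=o_\omega(1)$, so the image classes coincide. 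Third, that $\phi_\infty$ is $C$-Lipschitz: taking the $\omega$-limit of $d(\phi_n(u_n),\phi_n(v_n))\leq_\omega Cd(u_n,v_n)+o_\omega(1)$, and using that $o_\omega(1)$ sequences have $\omega$-limit zero, gives exactly the $C$-Lipschitz estimate.

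For part (ii) I would fix $x'_\infty\in B(o'_\infty,\beta)$ and first choose a representative $(u'_n)$ of $x'_\infty$ with $u'_n\in B(o'_n,\beta)$ for every $n$. This is possible because $\lim_\omega d(u'_n,o'_n)<\beta$ forces $u'_n\in B(o'_n,\beta)$ on a set of full $\omega$-measure, and one can modify $u'_n$ on the exceptional $\omega$-null set (e.g.\ by replacing it with $o'_n$ there) without changing the equivalence class. Applying the surjectivity hypothesis pointwise in $n$ then produces $u_n\in B(o_n,\alpha)$ with $d(u'_n,\phi_n(u_n))=o_\omega(1)$; the class $x_\infty:=[(u_n)]$ therefore satisfies $d_\infty(x_\infty,o_\infty)=\lim_\omega d(u_n,o_n)\leq\alpha$, and by part (i) $\phi_\infty(x_\infty)=[(\phi_n(u_n))]=[(u'_n)]=x'_\infty$.

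I do not anticipate a real obstacle: each step amounts to a one-line application of the hypothesis together with the definition of the ultralimit. The only mild bookkeeping issue is that the argument for (ii) a priori places $x_\infty$ in $\overline{B(o_\infty,\alpha)}$ rather than the open ball $B(o_\infty,\alpha)$; this is inessential for the paper's applications and can be circumvented, if desired, by running the argument with any $\beta'<\beta$ in place of $\beta$ and letting $\beta'\nearrow\beta$.
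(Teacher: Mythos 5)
Your proposal is correct, and since the paper explicitly leaves this proposition as an exercise ("The proof is straightforward from the definition of an ultralimit, so we leave it as an exercise"), there is nothing to compare against: your direct unpacking of the definitions is precisely the intended argument. You are also right to flag the two small points that the statement leaves implicit — that one needs $d(\phi_n(o_n),o'_n)=O_\omega(1)$ for the image to land in $X'_\omega$ at all (this holds in every application in the paper, where the $\phi_n$ are basepoint-preserving homomorphisms or inclusions), and the open-versus-closed-ball issue at the boundary in part (ii), which your $\beta'\nearrow\beta$ workaround handles cleanly.
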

 
 \begin{corollary}\label{corultralimitPhi}
Let $\phi_n:(G_n,d_n)\to (H_n,\delta_n)$ be a sequence of morphisms between two groups equipped with left-invariant metrics such the assumptions of Proposition \ref{prop:ultralimitPhi} (ii) are satisfied. Assume, moreover, that $G_{\omega,0}$ and $H_{\omega,0}$ are normal subgroups of respectively $G_{\omega}$ and $H_{\omega}$, and hence that the ultralimits $G_{\infty}$ and $H_{\infty}$ naturally come with group structures (this is the case, for example, if $G_n$ and $H_n$ are abelian). Then $\phi_{\infty}$ is a Lipschitz open homomorphism, and if $H_{\infty}$ is connected then $\phi_{\infty}$ is surjective, and therefore is an isomorphism. 
 \end{corollary}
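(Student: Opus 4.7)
The plan is to verify the four conclusions---Lipschitz, homomorphism, open, surjective---one at a time, almost entirely from the definitions. The Lipschitz property of $\phi_\infty$ is immediate from Proposition~\ref{prop:ultralimitPhi}(i), and the inclusion $\phi_\infty(B(o_\infty,\alpha))\supset B(o'_\infty,\beta)$ is immediate from part~(ii); both are quoted as black boxes.

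To see that $\phi_\infty$ is a group homomorphism, I would take representatives $(g_n),(h_n)\in G_\omega$ for $g_\infty,h_\infty\in G_\infty$. Since each $\phi_n$ is a morphism one has $\phi_n(g_nh_n)=\phi_n(g_n)\phi_n(h_n)$ pointwise, so all that remains is to check that the group operation descends to the ultralimit quotients. This is exactly what the normality assumptions $G_{\omega,0}\triangleleft G_\omega$ and $H_{\omega,0}\triangleleft H_\omega$ are designed to guarantee: they ensure that the product of ultralimits is the ultralimit of products, so that $\phi_\infty(g_\infty h_\infty)=\lim_\omega\phi_n(g_nh_n)=\lim_\omega\phi_n(g_n)\cdot\lim_\omega\phi_n(h_n)=\phi_\infty(g_\infty)\phi_\infty(h_\infty)$.

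Openness then follows by combining the homomorphism property with left-invariance of the metrics: for any $g_\infty\in G_\infty$,
\[
\phi_\infty\bigl(g_\infty\cdot B(o_\infty,\alpha)\bigr)=\phi_\infty(g_\infty)\cdot\phi_\infty\bigl(B(o_\infty,\alpha)\bigr)\supset\phi_\infty(g_\infty)\cdot B(o'_\infty,\beta),
\]
and since left-translates of balls form a basis for the topology on $G_\infty$ this forces $\phi_\infty$ to be open.

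For the final assertion, if $H_\infty$ is connected then $\phi_\infty(G_\infty)$ is an open subgroup of $H_\infty$; but any open subgroup of a topological group is automatically closed (its complement is a union of cosets, each a translate of the original open set), so as a non-empty clopen subset of a connected space it must coincide with $H_\infty$. Thus $\phi_\infty$ is surjective, and combining this with openness and the homomorphism property, the first isomorphism theorem lets $\phi_\infty$ descend to an isomorphism of topological groups $G_\infty/\ker\phi_\infty\to H_\infty$, which appears to be the sense in which the statement calls $\phi_\infty$ an isomorphism. No step here feels especially delicate; the only small subtlety is checking compatibility of multiplication and inversion with the ultralimit quotients, and that is precisely what the normality hypotheses are in place to ensure.
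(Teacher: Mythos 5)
The paper gives no proof of this corollary (Proposition~\ref{prop:ultralimitPhi} is itself left as an exercise), so the only question is whether your argument closes. Your Lipschitz, homomorphism and surjectivity steps are fine, but the openness step has a gap. Proposition~\ref{prop:ultralimitPhi}(ii) gives $\phi_\infty(B(o_\infty,\alpha))\supset B(o'_\infty,\beta)$ only for the one fixed pair of radii $(\alpha,\beta)$ in the hypothesis, and left-translates of the \emph{single} ball $B(o_\infty,\alpha)$ do \emph{not} form a basis for the topology of $G_\infty$: they cannot produce arbitrarily small open sets. Translating therefore shows that the image of any set containing a full $\alpha$-ball around each of its points is open --- in particular that $\phi_\infty(G_\infty)$ is an open subgroup --- but it does not show that the image of a small ball $B(o_\infty,\eps)$ with $\eps<\alpha$ is open, which is what openness of $\phi_\infty$ actually requires. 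To close this one must either know the hypothesis of Proposition~\ref{prop:ultralimitPhi}(ii) at all sufficiently small scales (which is how it is always verified in the paper, e.g.\ the $\phi_n$ are uniform quasi-isometries in Lemma~\ref{lem:progressionPrecompact}), or invoke the open mapping theorem for locally compact groups, using that the ultralimits occurring here are Lie groups. Note that what you do establish --- that the image is an open subgroup --- already feeds your open-closed-connected argument and yields surjectivity, which is the part of the corollary the paper actually uses.

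On the final clause: the corollary's ``therefore is an isomorphism'' says more than the stated hypotheses deliver, since they do not force injectivity (a coordinate projection $\Z^2\to\Z$ can satisfy them while having a kernel). Passing to $G_\infty/\ker\phi_\infty$, as you do, is the right reading of the general statement; in the paper's concrete applications the $\phi_n$ are quasi-isometries, so $\phi_\infty$ is bi-Lipschitz, hence injective, and the isomorphism is literal.
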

 
We end this section with the following standard observation.
\begin{remark}\label{rem:ScalingLength}
Recall that an ultralimit of a sequence of geodesic metric spaces is geodesic (see for instance \cite[\S 7.5]{Roe}). On the other hand if $(X_n)$
 is a sequence of connected graphs equipped with their usual geodesic distances $d_n$, $m_n$ is a sequence going to infinity, and $\omega$ is a non-principal ultrafilter, the $m_n$-rescaled ultralimit of $(X_n,d_n)$ is isometric to the $m_n$-rescaled ultralimit of the $0$-skeletons of $X_n$ equipped with the induced metric. In particular the latter is geodesic. Applied to our setting, if $(G_n,d_{S_n})$ is a sequence of groups equipped with word metrics, then $\lim_{\omega}(G_{n},d_{S_n}/m_n)$ is geodesic (hence connected).
 \end{remark}

\section{The abelian case}\label{sec:abelian}
In this section we provide the short proof of Theorem \ref{thm:conj2Ab}. This provides a gentle introduction to the methods of the rest of the paper, but is completely independent of the subsequent sections; in particular, the reader interested only in Theorem \ref{thm:conj2} could easily skip the present section.

Given an abelian group $G$, a \emph{coset progression of rank $d$} is a subset of $G$ of the form $PH$, where $H< G$ is a finite subgroup and $P=P(u,L)$, with $u=(u_1,\ldots, u_d)$ for some $u_i\in G$ and $L=(L_1,\ldots, L_d)$ for some $L_i\geq 1$, is defined via
\[
P=\left\{u_1^{\ell_1}\cdots u_d^{\ell_d} :|\ell_j|\leq L_j\right\}.
\]
Given $m>0$, this coset progression is called \emph{$m$-proper} if the morphism $\phi: \Z^d\to G/H$ mapping the canonical basis $x=(x_1,\ldots, x_i)$ to $u$ is injective in restriction to $P(x,mL)$.

The following lemma is both standard and trivial.
\begin{lemma}\label{lem:ab.prog.doub}
Let $P$ be a coset progression of rank at most $d$ inside an abelian group $G$. Then for every $k\in\N$ there exists $X_k\subset G$ with $|X_k|\ll_d1$ such that $P^{2k}\subset X_kP^k$.
\end{lemma}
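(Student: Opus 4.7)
The plan is to exploit the abelian hypothesis to collapse the $k$-th power of the coset progression to a single coset progression with the same generators and rescaled lengths, then use a trivial pigeonhole-style interval cover coordinate-by-coordinate. Concretely, since $G$ is abelian and $H\le G$, I would first observe that $P^k=P(u,L)^kH^k=P(u,L)^kH$. For products of the ordinary progression $P(u,L)$, because the group operation is commutative one can collect exponents, so $P(u,L)^k=P(u,kL)$; indeed the inclusion $\subset$ is immediate, and for $\supset$ every integer in $[-kL_i,kL_i]$ decomposes as a sum of $k$ integers each in $[-L_i,L_i]$. Hence $P^k=P(u,kL)H$ and similarly $P^{2k}=P(u,2kL)H$.

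With this reduction it suffices to cover $P(u,2kL)$ by boundedly many translates of $P(u,kL)$. For each coordinate $i\in\{1,\dots,d\}$, I would cover the set of integers in $[-2kL_i,2kL_i]$ by a bounded number (at most $4$, say) of translates $c_{i,j}+\big([-kL_i,kL_i]\cap\Z\big)$, for instance choosing $c_{i,j}\in\{-kL_i,0,kL_i\}$ (with one extra translate if $L_i$ is not an integer, to absorb the floor discrepancy). Taking Cartesian products of such translates across all $d$ coordinates and pushing them through the homomorphism $(\ell_1,\dots,\ell_d)\mapsto u_1^{\ell_1}\cdots u_d^{\ell_d}$, I obtain a set
\[
X_k=\{u_1^{c_{1,j_1}}\cdots u_d^{c_{d,j_d}}:j_i\in\{1,\dots,O(1)\}\}
\]
of cardinality at most $O_d(1)$ (concretely $\le 4^d$) such that every element of $P(u,2kL)$ lies in $X_k\cdot P(u,kL)$. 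Multiplying on the right by $H$ then yields $P^{2k}\subset X_kP^k$, as required.

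The only real subtlety is bookkeeping the rounding if the $L_i$ are non-integral, which merely inflates the constant from $3^d$ to at most $4^d$ and does not affect the conclusion. Note that commutativity is used twice: once to write $P^k=P(u,L)^kH$, and once to write $P(u,L)^k=P(u,kL)$. This is the entire obstruction to running the same argument in the non-abelian setting, and is precisely why the general (nilpotent) case later in the paper requires the much heavier machinery of upper-triangular and proper progressions rather than a one-line coordinate argument.
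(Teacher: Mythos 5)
Your argument is the standard one; the paper gives no proof (it simply labels the lemma ``standard and trivial''), so there is nothing to compare against, but your approach is exactly what is intended.

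One small slip: the identity $P(u,L)^k=P(u,kL)$ and the claim that ``every integer in $[-kL_i,kL_i]$ decomposes as a sum of $k$ integers each in $[-L_i,L_i]$'' are both false when $L_i$ is non-integral (e.g.\ $L_i=1.5$, $k=2$: the integer $3$ lies in $[-3,3]$ but cannot be written as a sum of two integers in $\{-1,0,1\}$). The correct and cleaner statement is $P(u,L)^k=P\bigl(u,k\lfloor L\rfloor\bigr)$, where $\lfloor L\rfloor=(\lfloor L_1\rfloor,\dots,\lfloor L_d\rfloor)$, which holds with equality. With this in hand, your ``extra translate'' patch is unnecessary: cover $[-2k\lfloor L_i\rfloor,2k\lfloor L_i\rfloor]\cap\Z$ by the three translates $c+\bigl([-k\lfloor L_i\rfloor,k\lfloor L_i\rfloor]\cap\Z\bigr)$ for $c\in\{-k\lfloor L_i\rfloor,0,k\lfloor L_i\rfloor\}$, take the product over coordinates, and push forward by $\ell\mapsto u_1^{\ell_1}\cdots u_d^{\ell_d}$; this gives $|X_k|\le 3^d$. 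Note also that working with the exact equality $P^k=P(u,k\lfloor L\rfloor)H$ matters here: the inclusion $P^k\subset P(u,kL)H$ alone would point the wrong way, since you need to cover $P^{2k}$ by translates of $P^k$ itself, not of some superset of it.
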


We recall the following result from our previous paper.
\begin{theorem}[{\cite[Theorem 1.9]{proper.progs}}]\label{thm:ab.Frei.proper}
Let $A$ be a $K$-approximate group such that $\langle A\rangle$ is abelian. Then for every $m>0$ there exists an $m$-proper coset progression $HP$ of rank at most $K^{O(1)}$ such that 
\[ A\subset HP\subset A^{O_{K,m}(1)}.\]
\end{theorem}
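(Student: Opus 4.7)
The plan is to split the argument into two stages: first obtain \emph{some} coset progression satisfying the sandwich $A \subset HP_0 \subset A^{O_K(1)}$ with the correct rank bound, and then refine $P_0$ to make it $m$-proper while preserving the rank and both inclusions up to constants depending on $K$ and $m$.

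For the first stage I would appeal to the abelian Freiman--Green--Ruzsa inverse theorem (equivalently, the abelian specialisation of the Breuillard--Green--Tao structure theorem recalled in Section~\ref{sec:background}): every $K$-approximate subgroup of an abelian group is contained in a coset progression $HP_0$ of rank at most $K^{O(1)}$ that itself sits inside $A^{O_K(1)}$. This produces the correct rank bound and both containments, but imposes no properness on $P_0$.

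For the second stage, write $P_0 = P(u,L)$ with generators $u = (u_1,\dots,u_d)$ and lengths $L = (L_1,\dots,L_d)$, and consider the homomorphism $\phi : \Z^d \to \langle HP_0 \rangle/H$ sending the $j$th standard basis vector to the coset $u_jH$. Properness is equivalent to a lattice condition: $P_0$ is $m$-proper with respect to $H$ precisely when the lattice of relations $\Lambda = \ker \phi$ meets the box $\prod_i [-2mL_i, 2mL_i]$ only at the origin. To enforce this I would run Minkowski's successive minima for $\Lambda$ relative to this box, producing a basis of $\Z^d$ in which $\Lambda$ is (after elementary divisor normalisation) essentially diagonal. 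The "short" directions, along which $\Lambda$ already meets the box non-trivially, get absorbed into an enlarged finite subgroup $H' \supset H$; along the remaining "long" directions we keep new generators $u'_j$ with rescaled lengths $L'_j$ chosen so that no non-trivial lattice point lies in the rescaled box. The resulting coset progression $H'P'$ has rank at most $d \le K^{O(1)}$ and is $m$-proper by construction.

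The main obstacle is quantitative control at the end of stage two: one must verify that both $A \subset H'P'$ and $H'P' \subset A^{O_{K,m}(1)}$ survive the basis change and rescaling. The lower inclusion should follow because any element of $HP_0$ rewrites as an $O_{K,m}(1)$-length word in the new generators together with elements of $H'$, using the almost-diagonal basis. The upper inclusion would come from the Pl\"unnecke--Ruzsa inequalities combined with the estimate $|H'P'| \asymp_{K,m} |HP_0|$, itself a consequence of the volume comparison between the new and old fundamental boxes provided by the successive-minima analysis. The dependence on $m$ in the final constant enters precisely through the rescaling needed to kill short relations of $\Lambda$ on the box of size $mL$, which is why the upper inclusion must be weakened from $O_K(1)$ to $O_{K,m}(1)$ while the rank is left untouched.
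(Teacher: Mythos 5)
There is a genuine gap in your stage two. You treat the ``short'' lattice vectors produced by Minkowski's theorem as though they could be identified with coordinate directions that become torsion in the quotient, and then absorbed into $H'$. This conflates two different things. A short relation $v\in\Lambda\cap B_m$ is a small lattice vector, not a coordinate axis, and the quotient $\Z^d/\Lambda$ by a rank-$s$ sublattice is $\Z^{d-s}\times T$ with $T$ finite: only the torsion summand $T$, governed by the elementary divisors, can actually be placed in $H'$, while the rank-$s$ span of the short vectors contributes no torsion at all (your own rank-one example $\Lambda=\Z(1,-1)\subset\Z^2$ illustrates this: there is a short relation but nothing to absorb into $H'$). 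Moreover, the Smith-normal-form basis that exhibits the decomposition $\Z^{d-s}\times T$ has no a priori relationship to the successive minima of $\Lambda$ with respect to the weighted box $B_m$; Minkowski gives $s$ linearly independent short vectors, which in general do not extend to a basis of $\Z^d$, and making them do so (via Mahler's theorem or a similar box-adapted basis reduction) is precisely where the losses you defer to ``the main obstacle'' actually occur. You also never verify that the resulting truncated progression is $m$-proper; that requires controlling $\Lambda$'s intersection with a \emph{new} box in a \emph{new} basis, which is not automatic from $\lambda_{s+1}\geq 1$. So stage two, as written, is a plan with a hole at the crucial step rather than a proof.

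For contrast, the cited proof in \cite{proper.progs} avoids this basis-compatibility problem by iterating a single-step reduction: if $HP$ is not $m$-proper one finds one short relation and uses it to pass to a coset progression of strictly smaller rank (or to enlarge $H$ by a single torsion element), at the cost of a bounded multiplicative enlargement of the containing power of $A$. This is essentially the abelian specialisation of the rank-reduction Proposition~\ref{prop:reduce.dim.when.not.proper} that the present paper imports. Since the rank is $K^{O(1)}$ from stage one, the loop terminates after $K^{O(1)}$ iterations and the accumulated blow-up is the $O_{K,m}(1)$ in the statement, while the rank only decreases and so remains $K^{O(1)}$ independently of $m$. If you want to pursue the one-shot geometry-of-numbers route, you would need to replace successive minima by a genuine reduced basis theorem and then redo the properness verification in the new coordinates; as it stands your argument does not establish either.
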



\begin{lemma}\label{lem:ab.phole}
Under the assumptions of Theorem \ref{thm:conj2Ab}, for all but finitely many $n\in\N$ there exists a sequence $k_n\in\N$ with $n^{1/4}\leq k_n\leq n^{1/2}$ and a $2$-proper coset progression $H_nP_n$ of rank $d_n\ll_D1$ such that
\begin{equation}\label{eq:SnHnPn}
S_n^{k_n}\subset H_nP_n\subset S_n^{O_D(k_n)},
\end{equation}
and such that, writing $P_n=P_{ord}(u^{(n)},L^{(n)})$, the rank $d_n$ is minimal in the sense that for each $1\leq i\leq d_n$, if $\bar{L}^{(n)}=(L_1^{(n)},\ldots,L_{i-1}^{(n)},0,L_{i+1}^{(n)},\ldots,L_{d_n}^{(n)})$ then $S_n^{k_n}$ is not contained in $H_nP(u^{(n)},\bar{L}^{(n)})$.
\end{lemma}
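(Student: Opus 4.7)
\emph{Strategy.} My plan is to combine a pigeonhole argument on dyadic doubling ratios with the abelian inverse theorem \ref{thm:ab.Frei.proper} from our previous paper, and then trim the resulting progression to have minimal rank.

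\emph{Pigeonhole.} The hypothesis gives $|S_n^n|\le Cn^D|S_n|$ for some constant $C=C(D)$ and $n$ large. Consider the dyadic scales $k=2^j$ for $j_0:=\lceil\tfrac14\log_2 n\rceil\le j\le j_1:=\lfloor\tfrac12\log_2 n\rfloor$. The telescoping identity
\[
\prod_{j_0\le j<j_1}\frac{|S_n^{2^{j+1}}|}{|S_n^{2^j}|}=\frac{|S_n^{2^{j_1}}|}{|S_n^{2^{j_0}}|}\le\frac{|S_n^n|}{|S_n|}\le Cn^D,
\]
together with the fact that the number of factors is $\asymp\log n$, forces one factor to be at most some constant $K=K(D)$ by pigeonhole. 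Taking $k_n=2^j$ for this $j$ yields $n^{1/4}\le k_n\le n^{1/2}$ and $|S_n^{2k_n}|\le K|S_n^{k_n}|$.

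\emph{From doubling to an inverse theorem.} Because $\Gamma_n$ is abelian, the Plünnecke--Ruzsa inequality upgrades this bound to $|S_n^{mk_n}|\le K^m|S_n^{k_n}|$ for every $m\ge 1$. A standard Ruzsa covering argument then shows that $A_n:=S_n^{2k_n}$ is an $O_D(1)$-approximate group: let $X\subset S_n^{4k_n}$ be maximal subject to the translates $x S_n^{k_n}$ being pairwise disjoint; then $|X|\le|S_n^{5k_n}|/|S_n^{k_n}|\le K^5$, and by maximality $S_n^{4k_n}\subset X\cdot S_n^{2k_n}$. Applying Theorem \ref{thm:ab.Frei.proper} to $A_n$ with $m=2$ then yields a $2$-proper coset progression $H_nP_n$ of rank $d_n\ll_D 1$ with
\[
A_n\subset H_nP_n\subset A_n^{O_D(1)}\subset S_n^{O_D(k_n)}.
\]
Combined with $S_n^{k_n}\subset A_n$, this gives the desired inclusions \eqref{eq:SnHnPn}.

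\emph{Trimming to minimal rank.} Finally, among all $2$-proper coset progressions satisfying \eqref{eq:SnHnPn} (with the implicit constant fixed from the previous step) and with rank at most $d_n$, choose one of minimal rank, still denoted $H_nP_n$, with $P_n=P_{\ord}(u^{(n)},L^{(n)})$. Suppose for contradiction that some index $i$ violated the stated minimality, i.e.\ that $S_n^{k_n}\subset H_nP(u^{(n)},\bar L^{(n)})$. Dropping the generator $u_i^{(n)}$ altogether produces a coset progression of rank $d_n-1$ that still contains $S_n^{k_n}$, still lies in $S_n^{O_D(k_n)}$, and remains $2$-proper (the injectivity of the quotient map on the $2$-dilated progression restricts to any sub-tuple of the generators), contradicting the choice of $d_n$.

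The only step with any genuine content is combining the pigeonhole step with Plünnecke to extract an $O_D(1)$-approximate group at a scale safely inside $[n^{1/4},n^{1/2}]$; the rest is either an invocation of Theorem \ref{thm:ab.Frei.proper} or routine bookkeeping.
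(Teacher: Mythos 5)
Your proof is correct and reaches the same endpoint via the same high-level outline: pigeonhole for a doubling scale in $[n^{1/4},n^{1/2}]$, upgrade to an $O_D(1)$-approximate group, apply Theorem \ref{thm:ab.Frei.proper}, and trim. The route differs in the first two steps, where the paper cites \cite[Lemma 8.4]{proper.progs} (which directly produces a scale $\ell_n$ with the five-step bound $|S_n^{5\ell_n}|\ll_D|S_n^{\ell_n}|$) and then \cite[Lemma 2.2]{bt} (Ruzsa covering from that five-step bound); these work in arbitrary groups and are reused verbatim in the non-abelian parts of the paper. Your dyadic pigeonhole yields only the one-step bound $|S_n^{2k_n}|\le K|S_n^{k_n}|$, and you then invoke Pl\"unnecke--Ruzsa to boost it to the five-step bound before running the covering argument. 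That shortcut is valid precisely because $\Gamma_n$ is abelian, and is perfectly tidy in this setting, but it is genuinely abelian-specific and would not transfer to the general argument. Your trimming step is also fine: deleting a generator is the same as setting the corresponding $L_i=0$, and $2$-properness passes to sub-tuples since $\phi$ restricted to the sub-lattice $\{x_i=0\}$ is injective on the corresponding $2$-dilated progression, being a restriction of an injective map.
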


\begin{proof}
Applying \cite[Lemma 8.4]{proper.progs} with $\lfloor n^{1/2}\rfloor$ in place of $n$ gives, for all but finitely many $n$, an integer $\ell_n$ satisfying $n^{1/4}\leq\ell_n\leq\frac{1}{2}n^{1/2}$ such that $|S_n^{5\ell_n}|/|S_n^{\ell_n}|\ll_D1$.  It then follows from \cite[Lemma 2.2]{bt} that $S_n^{2\ell_n}$ is an $O_D(1)$-approximate group. Setting $k_n=2\ell_n$, the existence of a $2$-proper coset progression $H_nP_n$ satisfying \eqref{eq:SnHnPn} then follows from Theorem \ref{thm:ab.Frei.proper}. We obtain the minimality of $d_n$ simply by deleting dimensions whenever this is possible without violating \eqref{eq:SnHnPn}.
\end{proof}

We will eventually apply the following lemma to the coset progressions coming from Lemma \ref{lem:ab.phole}.

\begin{lemma}\label{lem:largeLigGrowth}
Let $\Gamma$ be an abelian group with finite symmetric generating set $S$ containing the identity, let $k\in\N$, and let $HP=HP(u;L)$ be a $2$-proper coset progression of rank $d$ such that $S^k\subset HP$ but such that
\begin{equation}\label{eq:L.min}
S^k\not\subset HP(u;L_1,\ldots,L_{i-1},0,L_{i+1},\ldots,L_d)
\end{equation}
for every $i$. Then $L_i\ge k$ for every $i$ and $S\subset HP(u;L/k)$.
\end{lemma}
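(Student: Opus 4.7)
The main idea is to pass to the quotient $\Gamma/H$ and use the uniqueness of representations afforded by $2$-properness. Concretely, let $\phi:\Z^d\to\Gamma/H$ send the canonical basis to $(u_1H,\ldots,u_dH)$; then the hypothesis that $HP$ is $2$-proper is precisely the statement that $\phi$ is injective on the box $B_{2L}:=\prod_i[-2L_i,2L_i]$. Since $1\in S$ we have $S\subset S^k\subset HP$, so each $s\in S$ determines a unique $\ell(s)\in B_L$ with $\phi(\ell(s))=sH$. The conclusion $S\subset HP(u;L/k)$ is equivalent to $\ell(s)\in B_{L/k}$ for every $s\in S$, i.e.\ $r(s)\le 1/k$ where $r(s):=\max_i|\ell_i(s)|/L_i$, and this is what I would prove first.

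The plan for this first claim is to introduce $j^\circ(s):=\max\{j\in\{1,\ldots,k\}:j\ell(s)\in B_{2L}\}$, which is well defined and at least $1$ because $\ell(s)\in B_L$, and to conclude by a dichotomy on whether $j^\circ=k$. The key observation is that, since $\Gamma/H$ is abelian and $s^{j^\circ}\in S^{j^\circ}\subset S^k\subset HP$, one has $\phi(j^\circ\ell(s))=s^{j^\circ}H=\phi(\ell(s^{j^\circ}))$, with both arguments in $B_{2L}$; $2$-properness then forces $j^\circ\ell(s)=\ell(s^{j^\circ})\in B_L$, so in particular $j^\circ r(s)\le 1$. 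If $j^\circ=k$ we are done. If instead $j^\circ<k$, then by maximality $(j^\circ+1)\ell(s)\notin B_{2L}$, i.e.\ $(j^\circ+1)r(s)>2$; combining with $j^\circ r(s)\le 1$ gives $r(s)>1$, contradicting $r(s)\le 1$.

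Once $S\subset HP(u;L/k)$ is established, the bound $L_i\ge k$ follows quickly from the minimality hypothesis. For each $i$, pick $x_i\in S^k$ with $x_i\notin HP(u;L_1,\ldots,L_{i-1},0,L_{i+1},\ldots,L_d)$; this guarantees $\ell_i(x_i)\ne 0$. Factorising $x_i=s_1\cdots s_k$ with $s_a\in S\subset HP(u;L/k)$, each $\ell(s_a)$ lies in $B_{L/k}$, so $\sum_a\ell(s_a)\in B_L$. Applying $2$-properness to $\phi(\sum_a\ell(s_a))=x_iH=\phi(\ell(x_i))$ yields $\ell(x_i)=\sum_a\ell(s_a)$, whence $\ell_i(x_i)=\sum_a\ell_i(s_a)\ne 0$. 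Consequently $|\ell_i(s_a)|\ge 1$ for some $a$, and since $|\ell_i(s_a)|\le L_i/k$ this forces $L_i\ge k$.

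The delicate point is the definition and analysis of $j^\circ$: $2$-properness is exactly what supplies the factor of $2$ allowing us to conclude $j^\circ\ell(s)\in B_L$ rather than merely $j^\circ\ell(s)\in B_{2L}$, and it is precisely this gain that produces the contradiction $r(s)>1$ in the case $j^\circ<k$. Everything else is a bookkeeping consequence of the uniqueness of $B_L$-representatives modulo $H$.
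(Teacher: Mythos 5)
Your proof is correct, and it takes essentially the same approach as the paper's: both use $2$-properness to define unique $B_L$-representatives modulo $H$, both establish $S\subset HP(u;L/k)$ by a "first time we leave $B_L$ we are still in $B_{2L}$" argument (the paper via the minimal length of a bad sum from $U$, you via the maximal $j$ with $j\ell(s)\in B_{2L}$ -- equivalent by convexity of the box), and both then deduce $L_i\ge k$ from the minimality hypothesis \eqref{eq:L.min} in the same way.
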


\begin{proof}
Let $x_1,\ldots x_d$ be a basis of $\Z^d$, write $Q=P(x,L)$, and let $\varphi:\Z^d\to \Gamma$ be the homomorphism mapping $x_i$ to $u_i$ for each $i$. The properness of $HP$ implies that there exists a unique subset $U\subset Q$ such that $\varphi(U)=S$ modulo $H$. 

We first claim that $U^k\subset Q$. Indeed, let $\ell\in\N$ be minimal such that there exists $z_1,\ldots, z_\ell\in U$ with $z=z_1\cdots z_\ell\notin Q$, noting that by minimality we have $z_1\cdots z_{\ell-1}\in Q$ and hence $z\in Q^2\setminus Q$. If $\ell\le k$ we would have $\varphi(z)\in S^k\subset HP$, and so there would exist $z'\in Q$ with $\varphi(z)=\varphi(z')$ mod $H$, contradicting the $2$-properness of $P$. Thus $\ell>k$, and in particular
\begin{equation}\label{Uk.in.Q}
U^k\subset Q,
\end{equation}
as claimed.

This immediately implies that $S\subset HP(u;L/k)$, as required. Combined with \eqref{eq:L.min} and the properness of $HP$, it also implies that for every $i$ satisfying $1\leq i\leq d$ there exists $z\in U$ such that the $i$th coordinate of $z$ is non-zero. It then follows from \eqref{Uk.in.Q} that $z^k$, whose $i$th coordinate is at least $k$, belongs to $Q$, and so $L_i\ge k$, as required.
\end{proof}

\begin{lemma}\label{lem:progressionPrecompact}
Let $d\in\N$, and let $(\Gamma_n,V_n)$ be a sequence of abelian Cayley graphs such that for every $n$ the generating set $V_n:=P(u^{(n)},K^{(n)})H_n$ is a coset progression of rank at most $d$. Then for every sequence $m_n\to\infty$ the sequence of (pointed) metric spaces $(\Gamma_n,d_{V_n}/m_n)$ is GH-relatively compact. Moreover, for every non-principal ultrafilter $\omega$ the ultralimit $\lim_{\omega}(\Gamma_n,d_{V_n}/m_n)$ is a connected abelian Lie group equipped with a Finsler metric. 
\end{lemma}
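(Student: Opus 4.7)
The plan is to handle GH-precompactness via a doubling argument, then identify any ultralimit $G_\infty$ as the quotient of a normed $\R^{d'}$ by a closed subgroup.

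For GH-precompactness, Lemma \ref{lem:ab.prog.doub} gives that each $V_n$ is doubling with constant depending only on $d$, so Lemma \ref{prop:doublingImpliesMetricDoubling} yields uniform metric doubling of $(\Gamma_n, d_{V_n})$; after rescaling by $1/m_n$ the balls are doubling at all scales $\geq 1/m_n \to 0$, and Corollary \ref{cor:doubloingImpliesGHrc} applies.

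Now fix a non-principal ultrafilter $\omega$ and set $G_\infty = \lim_\omega(\Gamma_n, d_{V_n}/m_n)$. Since $d_n \leq d$ is integer-valued, Lemma \ref{lem:finite.omega.const} lets me assume $d_n = d' \leq d$ is constant for $\omega$-almost every $n$. I consider the group homomorphism
\[
F_n : \Z^{d'} \to \Gamma_n, \qquad \vec{\ell} \mapsto (u_1^{(n)})^{\ell_1} \cdots (u_{d'}^{(n)})^{\ell_{d'}},
\]
and equip $\Z^{d'}$ with the box word metric $\rho_n(0, \vec\ell) = \max_i \lceil |\ell_i|/K_i^{(n)} \rceil$. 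Abelianness gives the key identity
\[
V_n^k = F_n(B_{\rho_n}(0, k)) \cdot H_n \qquad (k \geq 1),
\]
unfolding $(P(u^{(n)}, K^{(n)}) H_n)^k = P(u^{(n)}, k K^{(n)}) H_n$ and noting $P(u^{(n)}, k K^{(n)}) = F_n(B_{\rho_n}(0, k))$. Hence
\[
d_{V_n}(e, g) = \min \{\rho_n(0, \vec\ell) : \exists h \in H_n,\ g = F_n(\vec\ell) h\};
\]
since $H_n \subset V_n$, the subgroup $H_n$ collapses under rescaling by $1/m_n$.

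Substituting $t_i = \ell_i/(m_n K_i^{(n)})$ and using $m_n K_i^{(n)} \to \infty$, the ultralimit of $(\Z^{d'}, \rho_n/m_n)$ is isometric to $(\R^{d'}, \|\cdot\|_\infty)$. Proposition \ref{prop:ultralimitPhi} and Corollary \ref{corultralimitPhi} then imply that $F_n$ induces a Lipschitz open group homomorphism $\Psi_\infty : (\R^{d'}, \|\cdot\|_\infty) \to G_\infty$, which is surjective because $G_\infty$ is connected by Remark \ref{rem:ScalingLength}. Its kernel $\Lambda$ is a closed subgroup of $\R^{d'}$, so $G_\infty \cong \R^{d'}/\Lambda$ is a connected abelian Lie group of dimension at most $d$, equipped with the quotient of $\|\cdot\|_\infty$ by $\Lambda$, a Finsler metric on the quotient manifold. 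The main technical point is the abelian unfolding $V_n^k = F_n(B_{\rho_n}(0, k)) H_n$; once it is in hand, the rest is a routine application of the ultralimit formalism.
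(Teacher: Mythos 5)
Your proof is correct, and it departs from the paper's in a way worth noting. Both arguments handle GH-precompactness the same way (via Lemma \ref{lem:ab.prog.doub} and the metric-doubling machinery) and both push a surjective Lipschitz open homomorphism from the ultralimit of a rank-$d'$ lattice onto $G_\infty$. The difference is how the auxiliary ultralimit is identified as a normed space. The paper first passes from $(\Z^{d'}, P(x, K^{(n)}))$ to $(\R^{d'}, P^\R(x, K^{(n)}))$ by a quasi-isometry (via \cite[Lemma 2.2.5]{bft}), then normalises to the unit polytope and applies Berestovski\v{i} to upgrade ``bi-Lipschitz to $\R^{d'}$'' to ``normed vector space.'' You short-circuit all of this by noticing that the word metric of a box progression on $\Z^{d'}$ is \emph{exactly} $\rho_n(0,\vec\ell)=\max_i\lceil|\ell_i|/K_i^{(n)}\rceil$, so that after rescaling by $m_n$ and changing variables $t_i=\ell_i/(m_nK_i^{(n)})$ the ultralimit is literally $(\R^{d'},\|\cdot\|_\infty)$ — a more elementary and more explicit route that avoids both the QI argument and the appeal to Berestovski\v{i} for the auxiliary space (the paper likely chooses the QI route because the analogous Corollary \ref{cor:QIreal} is needed in the non-abelian case, where no such closed formula is available). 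In the final step you identify the limit metric as the quotient of $\|\cdot\|_\infty$ by the closed kernel $\Lambda$ rather than invoking Berestovski\v{i} directly on $G_\infty$; this is fine, but it relies on $\Psi_\infty$ being $1$-Lipschitz and on the openness bound $\Psi_\infty(B(0,r))\supset B(e,r)$ having matching radii, which holds here because $H_n$ lies in a ball of $d_{V_n}$-radius $1=o(m_n)$ — you should say so explicitly (or simply fall back on the fact, used in the paper, that a geodesic invariant metric on a connected abelian Lie group is Finsler \cite{B}).
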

\begin{proof}
It follows from Corollary \ref{lemPrelim:compact} and Lemma \ref{lem:ab.prog.doub} that $(\Gamma_n,d_{V_n}/m_n)$ is relatively compact. 
Observe that since $H_n$ is contained in a ball of radius $1=o(m_n)$, the limit is not affected by modding out by $H_n$, so we can simply assume that it is trivial. Note also that by Lemma \ref{lem:finite.omega.const} we may assume that the rank of $V_n$ is equal to $d$ for every $n$. Consider then the morphism $\pi_n: \Z^d\to \Gamma_n$ mapping the canonical basis $x=(x_1,\ldots,x_d)$ to $u^{(n)}$, and write $W_n:=P(x^{(n)},K^{(n)})$. Being a length space, $\lim_{\omega}(\Gamma_n,d_{V_n}/m_n)$ is in particular connected, therefore Corollary \ref{corultralimitPhi} and Remark \ref{rem:ScalingLength} implies that $\lim_{\omega}\phi_n:\lim_{\omega}(\Z^d,d_{W_n}/m_n)\to \lim_{\omega}(\Gamma_n,d_{V_n}/m_n)$ exists and is a surjective morphism.  It is therefore sufficient to prove that $\lim_{\omega}(\Z^d,d_{W_n}/m_n)$ is a  normed real vector space of dimension $d$. Note that $W_n^{\R}=P^{\R}(x^{(n)},K^{(n)})$ is the convex hull of $W_n$ in $\R^d$, and by \cite[Lemma 2.2.5.]{bft}, for all $k\in \N$
\[(W_n^{\R})^k\subset W_n^k(W_n^{\R})^{O_d(1)}.\]
Hence, $(\Z^d,W_n)\hookrightarrow (\R^d,W_n^{\R})$ is a $(O_d(1),O_d(1))$-quasi-isometry. 
We deduce from Corollary \ref{corultralimitPhi} that $\lim_{\omega}(\Z^d,d_{W_n}/m_n)$ is  bi-Lipschitz isomorphic to $\lim_{\omega}(\R^d,d_{W^{\R}_n}/m_n)$. Observe that $(\R^d,W_n^{\R})$ is isometric to $(\R^d,W^{\R})$, where $W^{\R}=P^{\R}(x^{(n)},1))$, and one easily checks that $\lim_{\omega}(\R^d,d_{W^{\R}/m_n})$ is isometric to $(\R^d,\|\cdot\|)$, where the norm $\|\cdot\|$ has $W^{\R}$ as unit ball. In conclusion, $\lim_{\omega}(\Z^d,d_{W_n}/m_n)$ is bi-Lipschitz isomorphic to $\R^d$, and since its metric is invariant and geodesic (by Remark \ref{rem:ScalingLength}), it actually comes from a norm (see the proof of \cite[Theorem 2.2.4]{bft}).
\end{proof}

\begin{lemma}\label{lem:radiusFreedom}
Let $d\in\N$, let $\omega$ be a non-principal ultrafilter, and let $(\Gamma_n,V_n)$ be a sequence of abelian Cayley graphs such that for $\omega$-almost every $n$ the generating set $V_n:=P(u^{(n)},K^{(n)})H_n$ is a coset progression of rank $d$. Then, on permuting the indices of $u_i^{(n)}$ for every such $n$ if necessary, there exists $d'\leq d$ such that the coset progression $V'_n:=P(u,K^{(n)}_1,\ldots,K^{(n)}_{d'},0,\ldots,0))H_n$ is $\Omega_{\omega}(m_n)$-proper and, writing $\Gamma'_n=\langle V'_n\rangle$ the inclusion $(\Gamma_n',V_n')\hookrightarrow (\Gamma_n,V_n)$ is a $1$-Lipschitz $(1,o_\omega(m_n))$-quasi-isometry.   
\end{lemma}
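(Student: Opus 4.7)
The plan is to identify $d'$ and the discarded indices via ultralimit analysis, then verify both conclusions by comparing the ultralimits of $\Gamma_n$ and $\Gamma_n'$.

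By Lemma \ref{lem:progressionPrecompact} the ultralimit $\Gamma_\infty := \lim_{\omega}(\Gamma_n, d_{V_n}/m_n)$ is a connected abelian Lie group of some dimension $d' \le d$ equipped with a Finsler metric. The homomorphism $\pi_n : \Z^d \to \Gamma_n/H_n$ sending the canonical basis to $(u_1^{(n)}, \ldots, u_d^{(n)})$ modulo $H_n$ induces, via Corollary \ref{corultralimitPhi}, a continuous surjective morphism $\phi_\infty : \R^d \to \Gamma_\infty$; lifting to the universal cover gives an $\R$-linear map $\widetilde\phi_\infty : \R^d \to \R^{d'}$ whose kernel $N$ is a subspace of $\R^d$ of dimension $d - d'$. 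By elementary linear algebra select a subset $J \subset \{1, \ldots, d\}$ of cardinality $d - d'$ such that $\R^d = N \oplus \Span(e_j : j \in J)$; permuting indices we may assume $J = \{d'+1, \ldots, d\}$, and we set $V_n' := P(u, K_1^{(n)}, \ldots, K_{d'}^{(n)}, 0, \ldots, 0) H_n$ and $\Gamma_n' := \langle V_n' \rangle$.

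The $\Omega_\omega(m_n)$-properness of $V_n'$ will be verified by contradiction. If it failed, a diagonalisation yields $c_n \to 0$ and non-zero $v^{(n)} \in \ker(\pi_n|_{\Z^{d'}})$ with $|v_i^{(n)}| \le 2 c_n m_n K_i^{(n)}$ for every $i \le d'$. Normalising by $\mu_n := \max_{i \le d'} |v_i^{(n)}|/K_i^{(n)}$ and passing to the ultralimit of the resulting sequence in the compact $\|\cdot\|_\infty$-unit ball of $\R^d$ produces a non-zero $w \in \Span(e_i : i \le d')$ of unit norm. Because $\mu_n \le 2 c_n m_n = o(m_n)$, the relation $\pi_n(v^{(n)}) = 0$ survives rescaling by $m_n$ to give $\widetilde\phi_\infty(w) = 0$, placing $w \in N \cap \Span(e_i : i \le d') = 0$: contradiction.

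For the quasi-isometry, $V_n' \subset V_n$ gives $1$-Lipschitz automatically. By the properness just established, the ultralimit of $(\Gamma_n', d_{V_n'}/m_n)$ is a connected Lie group of dimension exactly $d'$, and the induced map into $\Gamma_\infty$ factors through $\widetilde\phi_\infty|_{\Span(e_i : i \le d')}$, which is a linear isomorphism onto $\R^{d'}$; combining this with Corollary \ref{corultralimitPhi} shows the map is an isomorphism of Lie groups. Pushing the ultralimit-level identification back to finite $n$ then yields $|d_{V_n}(x, y) - d_{V_n'}(x, y)| = o_\omega(m_n)$ for sequences $x_n, y_n \in \Gamma_n'$ of bounded rescaled distance, and coarse density follows by lifting any bounded sequence in $\Gamma_n$ through the inverse isomorphism.

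The main obstacle is to ensure the ultralimit isomorphism is actually an isometry rather than merely a bi-Lipschitz equivalence: the quotient metric on $\Gamma_\infty$ descends from $\|\cdot\|_\infty$ on $\R^d$ modulo $\ker(\phi_\infty)$, which has additional freedom to minimise in the $N$-direction compared to the quotient from $\R^{d'}$ alone. Verifying that our specific choice of $J$ (namely, that $\Span(e_j : j \in J)$ is a complement to $N$) eliminates this extra freedom requires exploiting the coordinate-by-coordinate structure of $\|\cdot\|_\infty$. A secondary technicality is the normalisation in the properness contradiction: naive rescaling of $v^{(n)}$ by $m_n$ sends the limit to zero, so dividing by $\mu_n$ (the maximal box coordinate) is essential to extract a non-trivial $w \in N \cap \Span(e_i : i \le d')$.
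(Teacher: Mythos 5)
You take a genuinely different route from the paper. The paper's proof is an induction on $d$: it exhibits a single relation $z^{(n)}\in P(x^{(n)},K^{(n)})^{r_n}$ in the kernel with $r_n=o_\omega(m_n)$, permutes so that the last coordinate carries the maximal relative slope $|z_d^{(n)}|/K_d^{(n)}$, drops only that one coordinate, verifies by an explicit division-with-remainder calculation that the metric is distorted by $o_\omega(m_n)$, and then applies the induction hypothesis to the smaller progression. Your proposal instead decides all the coordinates to drop at once by passing to the ultralimit and selecting a coordinate subspace complementary to $N=\ker\widetilde\phi_\infty$.

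However, the crux of the lemma---that the additive distortion is $o_\omega(m_n)$ with multiplicative constant exactly $1$---is acknowledged in your final paragraph but never established. What your argument proves is that $\Gamma_\infty'\to\Gamma_\infty$ is a continuous bijective homomorphism of connected abelian Lie groups, which gives bi-Lipschitz equivalence and hence a $(C,o_\omega(m_n))$-quasi-isometry for some $C=O(1)$, not $C=1$. You say that upgrading to an isometry ``requires exploiting the coordinate-by-coordinate structure of $\|\cdot\|_\infty$,'' but do not carry this out, and in fact the property fails for a generic coordinate complement: take $d=2$ and $N=\Span(1,1)$, whose only coordinate complements are $\Span(e_1)$ and $\Span(e_2)$. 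The quotient norm of $[(1,0)]$ in $(\R^2,\|\cdot\|_\infty)/N$ equals $1/2$ (realised at $(1/2,-1/2)$), whereas $\|(1,0)\|_\infty=1$, so the bijection $\Span(e_1)\to(\R^2,\|\cdot\|_\infty)/N$ is not an isometry, and by symmetry permutation does not help. Correspondingly, for $\Gamma_n=\Z^2/\langle(a_n,a_n)\rangle$ with $K_1=K_2=K$ and $a_n/K=r_n=o(m_n)$, the inclusion of $\Z u_1$ distorts the distance of $(m,0)$ from $0$ by a multiplicative factor tending to $2$. So ``pick any complement'' is not merely a gap but cannot be repaired. (For what it is worth, even the paper's choice of the maximal-slope coordinate appears to give only $q|z_d^{(n)}|/K_d^{(n)}\le r$ rather than $\le r_n$ at the relevant step, and hence a $(2,o_\omega(m_n))$-quasi-isometry; this is enough for the downstream dimension bounds, which are bi-Lipschitz invariant, but it means the lemma's $(1,o_\omega(m_n))$ assertion is more delicate than either proof treats it.)

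Two smaller points. Your dimension bookkeeping is off: $\dim N=d-d'$, so the complementary coordinate subspace must have dimension $d'$, whereas you take $|J|=d-d'$; presumably $J$ is meant to index the dropped coordinates and it is $\Span(e_i:i\notin J)$ that should complement $N$, but as written the claim ``$\R^d=N\oplus\Span(e_j:j\in J)$'' is dimensionally impossible in general. And the last step, passing from the ultralimit isomorphism to the finite-$n$ statement ``$|d_{V_n}(x,y)-d_{V_n'}(x,y)|=o_\omega(m_n)$'' together with coarse density, only directly controls sequences at bounded rescaled distance; one still needs to rule out the additive constant being forced to size $\Omega(m_n)$ by pairs (or points, for density) at unbounded rescaled distance.
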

\begin{proof} 
The lemma is trivial if $d=0$, so we may assume that $d\ge1$ and, by induction, that the lemma holds with $d-1$ in place of $d$.

If $V_n$ is $\Omega_{\omega}(m_n)$-proper the lemma is trivially satisfied, so we may assume that there exists a sequence $r_n=o_{\omega}(m_n)$ and, for $\omega$-almost every $n$, an element $z^{(n)}\in P(x^{(n)},K^{(n)})^{r_n}$ such that, writing $x_1,\ldots x_d$ for the standard basis of $\Z^d$ and $\varphi_n:\Z^d\to\Gamma_n$ for the homomorphism mapping $x_i$ to $u_i^{(n)}$ for each $i$, we have $\varphi_n(z^{(n)})=1$. On permuting the coordinates of $z^{(n)}$ we may assume that $|z_d^{(n)}|/K^{(n)}_d=\max_i|z_i^{(n)}|/K^{(n)}_i$.

Let $\Lambda_n$ be the subgroup of $\Gamma_n$ generated by $W_n=P(u,K^{(n)}_1,\ldots,K^{(n)}_{d-1},0))$. We claim that, for every $r\ge0$, every element $v$ of length $r$ in $(\Gamma_n,V_n)$ lies at distance at most $r_n+1$ (with respect to $d_{V_n}$) from an element $y\in\Lambda_n$ of length $r+r_n$ with respect to $d_{W_n}$. By definition, such a $v$ is of the form $v=u_1^{j_1}\ldots u_d^{j_d}h$ with $\max_i |j_i|/K_i^{(n)}=r$, and $h\in H_n$. Let $q$ and $t$ be integers such that $j_d=qz^{(n)}_d+t$, with $|t|<|z^{(n)}_d|$ and $0\leq q\leq |j_d|/|z^{(n)}_d|$. Now, using that $\varphi_n(z^{(n)})=1$, we obtain
\[v=yu_d^{t}h,\]
where $y=u_1^{j_1-qz^{(n)}_1}\ldots u_{d-1}^{j_{d-1}-qz^{(n)}_{d-1}}$. In particular, $v$ lies at distance at most $|t|/K^{(n)}_d+1\leq |z^{(n)}_d|/K^{(n)}_d+1\leq r_n+1$ from $y$. On the other hand, $y$ lies in the ball of radius
\begin{align*}
\max_i |j_i|/K^{(n)}_i+q\max_i|z^{(n)}_{i}|/K^{(n)}_i
      &= r+q|z^{(n)}_{d}|/K^{(n)}_d\\
     &\leq r+r_n,
\end{align*}
which proves the claim. This implies that the inclusion $(\Lambda_n,W_n)\hookrightarrow(\Gamma_n,V_n)$ is a $1$-Lipschitz  $(1,o_\omega(m_n))$-quasi-isometry. We conclude by applying the induction hypothesis to $(\Lambda_n,W_n)$. 
\end{proof}

\begin{proof}[Proof of Theorem \ref{thm:conj2Ab}]
Let $k_n$, $H_n$, $P_n$ and $d_n$ be as given by Lemma \ref{lem:ab.phole} for all but finitely many $n$, and note that by Lemma \ref{lem:largeLigGrowth} we may assume that for every such $n$ we have $L_i^{(n)}\ge_\omega k_n$ for every $i$ and $S_n\subset H_nP(u^{(n)};L^{(n)}/k_n)$. Combined with \eqref{eq:SnHnPn}, this implies that setting $K_i^{(n)}=L_i^{(n)}/k_n\geq 1$ for every $i$ and writing $V_n=P(u^{(n)},K^{(n)})H_n$ we have
\begin{equation}\label{eq:SV}
V_n^{k_n}\approx_D S_n^{k_n}.
\end{equation}
This implies in particular that the word metrics on $\Gamma_n$ associated respectively to $S_n^{m_n/k_n}$ and to $V_n^{m_n/k_n}$ are $O_D(1)$-bi-Lipschitz equivalent. Lemma \ref{lem:progressionPrecompact} therefore implies that $(\Gamma,d_{S_n}/m_n)$ is GH-precompact, as required.

Lemma \ref{lemPrelim:ultra} implies that given a cluster point $(X,d_X)$ of $(\Gamma,d_{S_n}/m_n)$ we may fix a non-principal ultrafilter $\omega$ and assume that $(\Gamma,d_{S_n}/m_n)\to_\omega(X,d_X)$. We claim that we may also assume that $S_n=_\omega V_n$. We first consider the ultralimits. On the one hand, observe that by  Corollary \ref{corultralimitPhi}
the identity map $(\Gamma,d_{S_n}/m_n)\to (\Gamma,d_{S_n^{k_n}}/(m_n/k_n))$ (resp.\ $\Gamma,d_{V_n}/m_n)\to (\Gamma,d_{V_n^{k_n}}/(m_n/k_n))$) induces an isometric isomorphism between the ultralimits. Moreover, since the identity $(\Gamma,d_{S_n^{k_n}}/(m_n/k_n))\to  (\Gamma,d_{V_n^{k_n}}/(m_n/k_n))$ is $O_D(1)$-bi-Lipschitz by (\ref{eq:SV}), by Corollary \ref{corultralimitPhi} it induces a bi-Lipschitz isomorphism between the ultralimits, which are both geodesic by Remark \ref{rem:ScalingLength}. 
Composing these maps, we deduce that the identity map $(\Gamma,d_{S_n}/m_n)\to (\Gamma,d_{V_n}/m_n)$ induces a bi-Lipschitz isomorphism between the ultralimits. Since a geodesic invariant metric on an abelian connected Lie group is Finsler \cite{B} (or see the proof of \cite[Theorem 2.2.4.]{bft}), we deduce that if one ultralimit has a Finsler metric then so does the other.

Second, observe that as $S_n\subset V_n$, we have $|S_n|\leq |V_n|$. On the other hand, since $k_n=o(n)$, (\ref{eq:SV}) implies that there exists $c\gg_D1$ such that $V_n^{cn}\subset S_n^n$, which combines with Lemma \ref{lem:ab.prog.doub} to imply that $|V_n^n|\ll |S_n^n|$. The assumption that $|S_n^n|\ll n^D|S_n|$ therefore implies that $|V_n^n|\ll n^D|V_n|$, and so we may assume that $S_n=V_n$, as claimed.

Since $d_n\ll_D1$, Lemma \ref{lem:finite.omega.const} implies that there exists $d\ll_D1$ such that $d_n=_\omega d$. Lemma \ref {lem:radiusFreedom} then reduces matters to the case where $S_n$ is $\Omega_{\omega}(m_n)$-proper. One consequence is that $S_n^n\gg_{\omega} n^{d}|S_n|$, so that $d\le D$. A second consequence is that, writing $W_n=P(x,K^{(n)})$, the morphism $\lim_{\omega} (\Z^d,d_{W_n}/m_n)\to \lim_{\omega}(\Gamma_n,d_{S_n}/m_n)$, which is surjective by Corollary \ref{corultralimitPhi} is injective in restriction to a ball of radius $\Omega_\omega(1)$. By the end of the proof of Lemma \ref{lem:progressionPrecompact}, we have that $\lim_{\omega} (\Z^d,d_{W_n}/m_n)$ is a normed vector space of dimension $d$. Hence the dimension 
of any cluster point of $(\Gamma_n,d_{S_n}/m_n)$ equals $d\le D$, as required.
\end{proof}

\section{Progressions in free nilpotent groups}\label{sec:free.nilp}

Throughout this paper $N=N_{d,s}$ denotes the free $s$-nilpotent group on $d$ generators $x=(x_1,\ldots x_d)$, and $L=(L_1,\ldots, L_d)$ is a tuple of positive integers. 
Let $N^{\R}=N_{d,s}^{\R}$ be the Malcev completion of $N$, namely the free $s$-nilpotent Lie group of rank $d$, and let $\n$ be its Lie algebra.

We recall that the Baker--Campbell--Hausdorff formula states that for elements $X,Y$ in a Lie algebra we have
\begin{equation}\label{eq:bch}
\textstyle\exp(X)\exp(Y)=\exp(X+Y+\frac{1}{2}[X,Y]+\frac{1}{12}[X,[X,Y]]+\cdots).
\end{equation}
The precise values of the rationals appearing later in the series \eqref{eq:bch} are not important for our arguments; all that matters is that in a nilpotent Lie group the series is finite and depends only on the nilpotency class of the group.

For each $i$ we set $f_i=\log x_i$. Following \cite[\S11.1]{hall} and \cite[\S1]{bg}, we extend $f_1,\ldots,f_d$ to a list $\overline f=f_1,\ldots,f_r$ of so-called \emph{basic commutators} in the $f_i$. We first define general commutators in the $f_i$ recursively, starting by declaring $f_1,\ldots,f_d$ themselves to be commutators. As part of the recursive definition, we assign to each commutator $\alpha$ a \emph{weight vector} $\chi(\alpha)=(\chi_1(\alpha),\ldots,\chi_r(\alpha))$; for $f_1,\ldots,f_d$, these weight vectors are simply given by $\chi_i(f_i)=1$ and $\chi_j(f_i)=0$ for $i\ne j$. Now, given commutators $\alpha$ and $\beta$ whose weight vectors have already been defined, the Lie bracket $[\alpha,\beta]$ is also defined to be a commutator, with weight vector $\chi(\alpha)+\chi(\beta)$. We define the \emph{total weight} $|\chi(\alpha)|$ of a commutator $\alpha$ to be $\|\chi(\alpha)\|_1$.

We also now declare $f_1,\ldots,f_d$ to be \emph{basic} commutators. Having defined the basic commutators $f_1,\ldots,f_m$ of total weight less than $k$, we define the basic commutators of total weight $k$ to be those commutators of the form $[f_i,f_j]$ with $i>j$ and $|\chi(f_i)|+|\chi(f_j)|=k$, and such that if $f_i$ is of the form $[f_s,f_t]$ then $j\ge t$. We order these arbitrarily subject to the constraint that basic commutators with the same weight vector are consecutive, and abbreviate $\chi(i)=\chi(f_i)$. Note that the arbitrariness of the order implies that the list of basic commutators is not uniquely defined. We caution that the basic commutators in the $f_i$ are not in general equal to the logarithms of the basic commutators in the $x_i$. It is well known that $(f_1,\ldots,f_r)$ is a basis of $\n$, and more generally that the basic commutators of total weight at least $i$ form a basis for $\n_i$, the $i$th term of the lower central series of $\n$; see \cite[\S11]{hall} or \cite{Hall2}.

As in our previous paper \cite{proper.progs}, given $L\in\R^d$ and $\chi\in\N^d$ we use the notation $L^\chi$ to denote the tuple $L_1^{\chi_1}\cdots L_d^{\chi_d}$. Thus $B_\R(\overline f;L^\chi)$ is the set of linear combinations of $\sum_i\lambda_i f_i$ where  $|\lambda_i|\leq L^{\chi(i)}$. We also write $Lf=(L^{\chi(1)}f_1,\ldots,L^{\chi(r)}f_r)$. We remark that $B_\R(\overline f;L^\chi)$ is the continuous version of the \emph{nilbox} $\mathfrak{B}(f_1,\ldots,f_d;L)$ appearing in \cite{bg}.

\begin{lemma}\label{lem:real.B.upper-tri}
For every $L\in\N^d$ we have $[B_\R(\overline f;L^\chi),B_\R(\overline f;L^\chi)]\subset O_{d,s}(1)B_\R(\overline f;L^\chi)$.
\end{lemma}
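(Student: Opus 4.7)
The plan is to expand $[X,Y]$ for $X,Y\in B_\R(\overline f;L^\chi)$ by bilinearity, then re-express each bracket $[f_i,f_j]$ in the basic commutator basis. Writing $X=\sum_i\lambda_if_i$ and $Y=\sum_j\mu_jf_j$ with $|\lambda_i|\le L^{\chi(i)}$ and $|\mu_j|\le L^{\chi(j)}$, bilinearity yields
\[
[X,Y]=\sum_{i,j}\lambda_i\mu_j[f_i,f_j].
\]

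For each pair $(i,j)$ I would analyze $[f_i,f_j]$ separately. Since $\n$ is $s$-step nilpotent, $[f_i,f_j]=0$ whenever $|\chi(i)|+|\chi(j)|>s$. Otherwise $[f_i,f_j]$ is a (not necessarily basic) commutator in $f_1,\ldots,f_d$ of weight vector exactly $\chi(i)+\chi(j)$. The key input is the multi-graded refinement of the Hall basis theorem (see \cite[\S11]{hall} or \cite{Hall2}): for every $\nu\in\N^d$ with $|\nu|\le s$, the basic commutators $f_k$ with $\chi(k)=\nu$ form a $\Z$-basis of the $\nu$-homogeneous component of $\n$. This provides integers $c_{ij}^k$, with absolute value bounded by some $C=C(d,s)$, such that
\[
[f_i,f_j]=\sum_{k\,:\,\chi(k)=\chi(i)+\chi(j)}c_{ij}^k f_k.
\]

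Substituting back and collecting coefficients gives $[X,Y]=\sum_k\alpha_k f_k$. Using the identity $L^{\chi(i)}L^{\chi(j)}=L^{\chi(i)+\chi(j)}=L^{\chi(k)}$ for the pairs contributing to $\alpha_k$, and the fact that the total number of basic commutators is $r=O_{d,s}(1)$, I would conclude
\[
|\alpha_k|\le\sum_{(i,j)\,:\,\chi(i)+\chi(j)=\chi(k)}|c_{ij}^k|\,L^{\chi(i)}L^{\chi(j)}\le Cr^2\,L^{\chi(k)}=O_{d,s}(1)\,L^{\chi(k)}.
\]
This is exactly the statement $[X,Y]\in O_{d,s}(1)B_\R(\overline f;L^\chi)$, where dilation of subsets of the Lie algebra is understood in the additive sense introduced in the notation section.

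I do not anticipate any genuine obstacle: the only non-bookkeeping ingredient is the multi-graded Hall basis property, which is classical, and the crucial homogeneity estimate on the coefficients is automatic from $L^{\chi(i)+\chi(j)}=L^{\chi(k)}$, which is precisely why the $L^{\chi(\cdot)}$ scaling on $\overline f$ was chosen to match the $\N^d$-grading of $\n$ in the first place.
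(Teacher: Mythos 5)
Your argument is correct, but it proceeds quite differently from the paper. The paper first observes that $B_\R(\overline f;L^\chi)=B_\R(\overline{Lf};1)$, uses the grading automorphism of $\n$ sending each $f_i$ to $L^{-\chi(i)}f_i$ to reduce to the case $L=1$, and then simply cites compactness: the bracket is continuous, so the image of the compact set $B_\R(\overline f;1)\times B_\R(\overline f;1)$ is bounded and therefore contained in a fixed multiple of $B_\R(\overline f;1)$. Your proof instead expands $[X,Y]$ explicitly via bilinearity, re-expresses each $[f_i,f_j]$ in the basic commutator basis using the multi-graded Hall basis theorem, and obtains the bound from the homogeneity identity $L^{\chi(i)}L^{\chi(j)}=L^{\chi(k)}$ together with a uniform bound on the structure constants. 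The two proofs trade off differently: the paper's is shorter and pushes all the work into the compactness black box, while yours is effective and exposes exactly why the $L^\chi$ scaling is the right one — namely that it makes $B_\R(\overline f;L^\chi)$ the image of the fixed box $B_\R(\overline f;1)$ under a graded Lie automorphism, which is precisely the same homogeneity you exploit coefficient-by-coefficient. Both are fine; one small stylistic remark is that you could shorten step 3 by noting that the structure constants $c_{ij}^k$ are determined once and for all by the free Lie ring $\n_{d,s}$, so that boundedness in $d,s$ is immediate without appealing to the fine-grading statement by name (though citing it is perfectly legitimate).
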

\begin{proof}
Since $B_\R(\overline f;L^\chi)=B_\R(\overline{Lf};1)$, we may assume that $L=1$. The lemma then follows from the fact that $B_\R(\overline f;1)$ is a compact neighbourhood of the origin.
\end{proof}

\begin{lemma}\label{lem:freeequivalences}
For every $n\in\N$ we have
\begin{itemize}
\item[(i)] $P_\ord^{\R}(x,L)^n\approx_{d,s}\exp(B_\R(\overline f;(nL)^\chi)\approx_{d,s}\exp(B_\R(f;nL))$,
\item[(ii)] $P_\ord^{\R}(x,L)^n\approx_{d,s}P_\ord^{\R}(x,nL)$, and
\item[(iii)] $\exp(B_\R(\overline f;(nL)^\chi)\subset\exp(B_\R(\overline f;L^\chi)^{O_{d,s,n}(1)}$.
\end{itemize}
\end{lemma}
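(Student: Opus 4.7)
The crux of this lemma will be part (i); parts (ii) and (iii) will reduce to it quickly. I will dispose of (iii) first by noting that every basic commutator has weight at most $s$, so $(nL)^{\chi(j)} = n^{|\chi(j)|}L^{\chi(j)} \leq n^s L^{\chi(j)}$, giving $B_\R(\overline f;(nL)^\chi) \subseteq n^s B_\R(\overline f;L^\chi)$; then the identity $\exp(n^s Z) = \exp(Z)^{n^s}$, valid because $\R$-scalar multiples of the same element commute, yields $\exp B_\R(\overline f;(nL)^\chi) \subseteq (\exp B_\R(\overline f;L^\chi))^{n^s}$ with $n^s = O_{d,s,n}(1)$. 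For (ii), I will apply (i) twice, once with the given $n$ and once with $n=1$ and $L$ replaced by $nL$, so that both $P_\ord^\R(x,L)^n$ and $P_\ord^\R(x,nL)$ become $\approx_{d,s}$-equivalent to the common intermediate $\exp B_\R(\overline f;(nL)^\chi)$.

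The first equivalence of (i), namely $P_\ord^\R(x,L)^n \approx_{d,s} \exp B_\R(\overline f;(nL)^\chi)$, I will establish in both directions via the Baker--Campbell--Hausdorff formula, which terminates after $s$ steps in $\n$. For the forward inclusion, I will expand a product $\prod_{k=1}^{nd}\exp(\ell_k f_{i_k})$ with $|\ell_k| \leq L_{i_k}$ via BCH; a count of the resulting multinomial-type sums shows that the coefficient of the basic commutator $f_j$ in the exponent is a sum of $O_{d,s}(n^{|\chi(j)|})$ terms each bounded by $L^{\chi(j)}$, hence $O_{d,s}((nL)^{\chi(j)})$, with the $O_{d,s}(1)$ factor absorbed by splitting the exponent into a bounded number of commuting pieces. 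For the reverse inclusion, I will pass to Malcev coordinates to write $\exp(Z) = \exp(\mu_1 f_1)\cdots\exp(\mu_r f_r)$ with $|\mu_j|\ll_{d,s}(nL)^{\chi(j)}$, and then realise each factor as a bounded product inside $P_\ord^\R(x,L)^{O(n)}$: for $j\leq d$, $\exp(\mu_j f_j) = x_j^{\mu_j}$ splits directly into $O(n)$ pieces of length at most $L_j$; for $j > d$, writing $f_j = [f_{j_1},f_{j_2}]$, I will use the identity $[\exp(af_{j_1}),\exp(bf_{j_2})] = \exp(abf_j + \text{higher-weight terms})$ with $ab = \mu_j$, $|a|\leq(nL)^{\chi(j_1)}$, $|b|\leq(nL)^{\chi(j_2)}$, and induct on the basic commutator structure.

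The second equivalence of (i), $\exp B_\R(\overline f;(nL)^\chi) \approx_{d,s} \exp B_\R(f;nL)$, follows by the same methodology: the inclusion $\supseteq$ is immediate since $f\subseteq\overline f$ and the weight-one bounds match, and the reverse inclusion is handled by the same inductive commutator-realisation scheme, now building higher-weight basic commutators from elements of $\exp B_\R(f;nL)$ rather than from single generators. The main obstacle will be the bookkeeping in this inductive construction: each commutator realisation introduces cascades of higher-weight correction terms, and one must verify that the total number of factors remains $O_{d,s}(n)$. This works out because the step-$s$ nilpotency of $\n$ ensures the cascade terminates after finitely many layers, and the weight scaling $(nL)^{\chi(j)} = n^{|\chi(j)|}L^{\chi(j)}$ in the nilbox precisely allocates the ``budget'' at each weight needed to realise it from $O(n)$ generators via iterated commutators.
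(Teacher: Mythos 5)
Your proposal is correct in outline, but it takes a genuinely different and substantially more computational route than the paper. The paper first reduces to the case $L=1$ via the elementary observations $P_\ord^{\R}(x,nL)=P_\ord^{\R}(x^L,n)$, $B_\R(\overline f;(nL)^\chi)=B_\R(\overline{Lf};n^\chi)$ and $B_\R(f;nL)=B_\R(Lf;n)$, together with the automorphism of $N^\R$ sending $x_i^{L_i}$ to $x_i$; once $L=1$, part (iii) and the $n=1$ case of (i) are disposed of merely by noting that all sets involved are compact generating sets (the constant in (iii) is allowed to depend on $n$), and the uniform-in-$n$ case of (i) is obtained by citing the proof of Guivarc'h's Theorem II.1. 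Your proof of (ii) from (i) is the same as the paper's, and your proof of (iii) — passing via $B_\R(\overline f;(nL)^\chi)\subseteq n^s B_\R(\overline f;L^\chi)$ and $\exp(n^sZ)=(\exp Z)^{n^s}$ — is arguably cleaner and more explicit than the compactness argument. The substantial difference is in (i): you propose to establish it directly via BCH bookkeeping, which amounts to re-deriving the ball-box estimates that Guivarc'h's theorem provides, whereas the paper quotes them. Your description of the forward inclusion (counting multinomial contributions to each basic commutator coordinate and using $\lceil C\rceil$-fold splitting of the exponent to absorb the $O_{d,s}(1)$ scale factor) is sound. However, the reverse inclusion — realising each Malcev coordinate $\exp(\mu_j f_j)$ inside $P_\ord^{\R}(x,L)^{O_{d,s}(n)}$ by iterated commutators, with the cascade of higher-weight corrections converging — is precisely the nontrivial core of Guivarc'h's result, and you leave it at the level of ``the bookkeeping works out''. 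This is a real gap in the write-up: the claim that the total number of factors stays $O_{d,s}(n)$ after iterating the correction process through all $s$ weight layers needs an explicit argument. If you do wish to keep the self-contained BCH approach, I would strongly recommend first performing the $L\mapsto 1$ normalisation as the paper does — it removes the $L$-dependent budget allocation entirely and makes the remaining induction on weight much lighter.
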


\begin{proof}
Observe that for all $n\in \N$ we have $P_\ord^{\R}(x,nL)=P_\ord^{\R}(x^L,n)$, $B_\R(\overline f;(nL)^\chi)=B_\R(\overline{Lf};n^\chi)$, and $B_\R(f;nL)=B_\R(Lf;n)$, which reduces the the proof of these statements to the case where $L=1$. Then (iii) and the $n=1$ case of (i) simply follow from the fact that all terms are compact generating sets (explicit constants can also be obtained through the Baker--Campbell--Hausdorff formula); the general case of (i) follows from the proof of \cite[Theorem II.1]{Gui}. Finally, (i) implies that $P_\ord^{\R}(x,nL)\approx_{d,s}\exp(B_\R(\overline f;(nL)^\chi)\approx_{d,s}P_\ord^{\R}(x,L)^n$, which implies (ii).
\end{proof}

\begin{prop}\label{prop:FreeReducCase}
For all $k\in \N$ we have
\begin{equation}\label{eq:FreeReducCase1}
P_\ord^{\R}(x;L)^k\subset  P_\ord(x^L;1)^{O_{d,s}(k)}P_\ord^{\R}(x;L)^{O_{d,s}(1)}\subset P_\ord(x;L)^{O_{d,s}(k)}P_\ord^{\R}(x;L)^{O_{d,s}(1)}.
\end{equation}
Moreover,
\begin{equation}\label{eq:FreeReducCase2}
P_\ord^{\R}(x;L)^k\cap N\subset  P_\ord(x;L)^{O_{d,s}(k)}.
\end{equation}
\end{prop}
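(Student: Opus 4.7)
The plan is to prove both statements simultaneously by induction on the nilpotency class $s$. The second inclusion in \eqref{eq:FreeReducCase1} follows immediately from $P_\ord(x^L;1)\subset P_\ord(x;L)$ (which uses $L_i\ge 1$), so I focus on the first inclusion. The base case $s=1$ is immediate by commutativity: for $g=x_1^{\ell_1}\cdots x_d^{\ell_d}\in P_\ord^\R(x;L)^k$ with $|\ell_i|\le kL_i$, divide $\ell_i=p_iL_i+r_i$ with $p_i\in\Z$, $|p_i|\le k$, $|r_i|\le L_i$, and rearrange. For the inductive step, I would project to $\bar N=N/\gamma_s(N)$, which is free $(s-1)$-nilpotent; the induction hypothesis then gives $g=hg'z$ with $h\in P_\ord(x^L;1)^{O_{d,s}(k)}$, $g'\in P_\ord^\R(x;L)^{O_{d,s}(1)}$, and $z\in\gamma_s(N^\R)\cap P_\ord^\R(x;L)^{O_{d,s}(k)}$. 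The entire remaining task is to decompose the central element $z$ suitably.

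Since $\gamma_s(N^\R)$ is abelian and corresponds via $\exp$ to the linear span of the weight-$s$ basic commutators, Lemma~\ref{lem:freeequivalences}(i) writes $z=\exp\bigl(\sum_{|\chi(i)|=s}a_if_i\bigr)$ with $|a_i|\le O_{d,s}((kL)^{\chi(i)})$. I split $a_i=p_iL^{\chi(i)}+\rho_i$ with $p_i\in\Z$, $|p_i|\le O_{d,s}(k^s)$, and $|\rho_i|\le L^{\chi(i)}$. By centrality, $z=z_{\mathrm{int}}\cdot z_{\mathrm{frac}}$ where $z_{\mathrm{frac}}=\exp(\sum_i\rho_if_i)\in P_\ord^\R(x;L)^{O_{d,s}(1)}$ (again by Lemma~\ref{lem:freeequivalences}(i)), and $z_{\mathrm{int}}=\prod_i\exp(p_iL^{\chi(i)}f_i)$. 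The key commutator identity is that for a right-normed weight-$s$ Lie bracket $f_i=[f_{j_1},[f_{j_2},\ldots,[f_{j_{s-1}},f_{j_s}]\ldots]]$ with all $j_r\le d$, every BCH correction has weight $>s$ and hence vanishes, giving the exact equality
\[
[x_{j_1}^{\lambda_1L_{j_1}},[x_{j_2}^{\lambda_2L_{j_2}},\ldots,[x_{j_{s-1}}^{\lambda_{s-1}L_{j_{s-1}}},x_{j_s}^{\lambda_sL_{j_s}}]\ldots]]=\exp(\lambda_1\cdots\lambda_sL^{\chi(i)}f_i)
\]
for any $\lambda_1,\ldots,\lambda_s\in\R$. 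Every other weight-$s$ basic commutator may be expressed as a $\Q$-linear combination of such right-normed brackets (with coefficients depending only on $d,s$) via repeated applications of the Jacobi identity, so it suffices to treat the right-normed case.

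The hard part will be realising $\exp(p_iL^{\chi(i)}f_i)$ for arbitrary $p_i\in\Z$ with $|p_i|\le O(k^s)$ inside $P_\ord(x^L;1)^{O_{d,s}(k)}$. A direct factorisation $p_i=\lambda_1\cdots\lambda_s$ with each $|\lambda_r|\le O(k)$ may be impossible (for instance, if $p_i$ is a prime exceeding $k^{s-1}$). I circumvent this via the base-$k$ expansion
\[
p_i=\sum_{m=0}^{s-1}q_mk^m,\qquad q_m\in\Z,\ |q_m|\le k,
\]
so that centrality yields $\exp(p_iL^{\chi(i)}f_i)=\prod_m\exp(q_mk^mL^{\chi(i)}f_i)$. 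Each factor admits the factorisation $q_mk^m=q_m\cdot\underbrace{k\cdots k}_m\cdot\underbrace{1\cdots 1}_{s-1-m}$ into $s$ integer factors each of absolute value at most $k$, and the commutator identity above then realises it as a word of length $O_s(1)$ in letters of the form $x_{j_\bullet}^{\mu L_{j_\bullet}}$ with $|\mu|\le k$, hence in $P_\ord(x^L;1)^{O_{d,s}(k)}$. Summing over the finitely many indices $i$ and $m$ places $z_{\mathrm{int}}$ in $P_\ord(x^L;1)^{O_{d,s}(k)}$, and combining with the inductive reduction yields $g\in P_\ord(x^L;1)^{O_{d,s}(k)}\cdot P_\ord^\R(x;L)^{O_{d,s}(1)}$, possibly after absorbing commutators between the factors using Lemma~\ref{lem:real.B.upper-tri}.

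Finally, \eqref{eq:FreeReducCase2} is handled by the same framework: when $g\in N$ we may choose all lifts in $N$, so the central coefficients $a_i$ lie in $\Z$; the remainders $\rho_i$ are then integers with $|\rho_i|<L^{\chi(i)}$, so $z_{\mathrm{frac}}$ lies in $P_\ord(x;L)^{O_{d,s}(1)}$, and $z_{\mathrm{int}}$ lies in $P_\ord(x;L)^{O_{d,s}(k)}$ by the same base-$k$ argument, giving $g\in P_\ord(x;L)^{O_{d,s}(k)}$ as required.
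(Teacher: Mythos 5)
Your approach is genuinely different from the paper's. Where the paper embeds $N$ as a cocompact lattice in $N^\R$, picks a compact fundamental domain $D$ with $ND=N^\R$, and reads off \eqref{eq:FreeReducCase1} from the quasi-isometry of the retraction $N^\R\to N$, you induct on the nilpotency class, reduce modulo the centre $\gamma_s(N^\R)$, and handle the central error term via BCH-exact iterated commutators and a base-$k$ expansion of the integer coefficients. Your argument for \eqref{eq:FreeReducCase1} is correct, and in fact the Jacobi coefficients $c_m$ you describe as rational are integers by the Hall basis theorem, which is what makes the base-$k$ absorption into $z_{\mathrm{int}}$ work cleanly.

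There is, however, a genuine gap at the very end, in \eqref{eq:FreeReducCase2}: you assert that $z_{\mathrm{frac}}=\exp\bigl(\sum_i\rho_if_i\bigr)$ lies in $P_\ord(x;L)^{O_{d,s}(1)}$ simply because the $\rho_i$ are integers with $|\rho_i|\le L^{\chi(i)}$. That hypothesis only places $z_{\mathrm{frac}}$ in $P_\ord^{\R}(x;L)^{O_{d,s}(1)}\cap N$ via Lemma \ref{lem:freeequivalences}(i), and upgrading this to $P_\ord(x;L)^{O_{d,s}(1)}$ is precisely an instance of \eqref{eq:FreeReducCase2} with $k=O(1)$, so the step as written is circular. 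Nor does your commutator identity apply directly, since you would need a factorisation $\rho_ic_m=\mu_1\cdots\mu_s$ with $|\mu_r|\le O(L_{j_r})$, which can fail (for instance if $\rho_ic_m$ is a prime exceeding every $L_{j_r}$). The fix is to replace the single factorisation by a mixed-radix expansion $\rho_ic_m=\sum_{r=1}^s\mu_r\prod_{r'>r}L_{j_{r'}}$ with $|\mu_r|\le O(L_{j_r})$, and to realise each summand via your commutator identity with $\lambda_{r'}=1$ for $r'<r$, $\lambda_r=\mu_r$ and $\lambda_{r'}=L_{j_{r'}}$ for $r'>r$; this yields a word of length $O_{d,s}(1)$ in $P_\ord(x;L)$. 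In fact, the same mixed-radix device with radices $kL_{j_1},\ldots,kL_{j_s}$ applied to the full coefficients $a_ic_m$ handles the whole central term at once, makes the $z_{\mathrm{int}}/z_{\mathrm{frac}}$ split unnecessary for \eqref{eq:FreeReducCase2}, and also subsumes your base-$k$ argument for $z_{\mathrm{int}}$.
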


\begin{proof}
The second inclusion of \eqref{eq:FreeReducCase1} is trivial. To prove the first inclusion, note first that applying the automorphism of $N_{\R}$ that maps $x_i^{L_i}$ to $x_i$ reduces the statement to the case where $L_i=1$, and so it suffices to prove that
$$P_\ord^{\R}(x;1)^k\subset  P_\ord(x;1)^{O_{d,s}(k)}P_\ord^{\R}(x;1)^{O_{d,s}(1)}.$$

Let $D$ be a compact subset such that $N D=N^{\R}$. 
Observe that the map $\phi:N_{\R} \to N$ that sends $g\in N_{\R}$ to the unique $\gamma\in N$ such that $g\in \gamma D$ is a left-inverse of the inclusion $N\to N_{\R}$. Since the latter is a quasi-isometry for the word metric associated to $P_\ord^{\R}(x;1)$ and $P_\ord(x;1)$, we deduce that  $\phi$ itself is a quasi-isometry. In particular, the ball $P_\ord^{\R}(x;1)^k$ of radius $k$ must me contained in the preimage  by $\phi$ of a ball of radius $O_{d,s}(k)$.
This implies that 
$$P_\ord^{\R}(x;1)^k\subset  P_\ord(x;1)^{O_{d,s}(k)}D,$$
and so the first inclusion of \eqref{eq:FreeReducCase1} follows from the fact that $P_\ord^{\R}(x;1)$ generates $N^{\R}$. The inclusion \eqref{eq:FreeReducCase2} then follows from the fact that $P_\ord^{\R}(x;L)\cap N=P_\ord(x;L)$.
\end{proof}

\bigskip

Note that  ordered progressions are ``almost" symmetric in the sense that for group elements $x_1,\ldots, x_d$ and integers $L_1,\ldots, L_d$  the ordered progression $P_\ord(x,L)$ satisfies 
\begin{equation}\label{eq:inverseProg}
P_\ord(x,L)^{-1}\subset P_\ord(x,L)^d
\end{equation}
(a similar statement holds for real ordered progressions). We deduce the following statement.
\begin{corollary}\label{cor:prop:FreeReducCase}
Denote $P=P_\ord(x;L)\cup P_\ord(x;L)^{-1}$ and $P_{\R}=P_\ord^{\R}(x;L)\cup P_\ord^{\R}(x;L)^{-1}$ Then for all $k\in \N$ we have
\[
P^k\subset P_{\R}^k\subset   P^{O_{d,s}(k)}P_{\R}^{O_{d,s}(1)}.
\]
In particular, the inclusion $(N,P)\to (N_{\R},P_{\R})$ is a $(O_{d,s}(1),O_{d,s}(1))$-quasi-isometry.
\end{corollary}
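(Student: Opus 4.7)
The plan is to deduce both inclusions immediately from Proposition \ref{prop:FreeReducCase} together with the near-symmetry identity (\ref{eq:inverseProg}), and then to read off the quasi-isometry from the same two containments.

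For the containments themselves, $P^k \subset P_\R^k$ is trivial from $P \subset P_\R$. For the reverse, I would first apply the real analogue of (\ref{eq:inverseProg}) to get $P_\ord^\R(x;L)^{-1} \subset P_\ord^\R(x;L)^d$, hence $P_\R \subset P_\ord^\R(x;L)^d$ and therefore $P_\R^k \subset P_\ord^\R(x;L)^{dk}$. Plugging this into (\ref{eq:FreeReducCase1}) with $dk$ in place of $k$ then yields
\[
P_\R^k \subset P_\ord(x;L)^{O_{d,s}(k)} P_\ord^\R(x;L)^{O_{d,s}(1)} \subset P^{O_{d,s}(k)} P_\R^{O_{d,s}(1)},
\]
as required.

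For the quasi-isometry statement, Definition \ref{def:qi} requires a two-sided comparison of the two word metrics on $N$ together with a bound on the $P_\R$-distance from $N_\R$ to $N$. The upper bound $d_{P_\R} \leq d_P$ on $N \times N$ is immediate from $P \subset P_\R$, so the inclusion is $1$-Lipschitz. For the lower bound, given $x, y \in N$ with $d_{P_\R}(x,y) = k$, the element $x^{-1}y$ lies in $P_\R^k \cap N \subset P_\ord^\R(x;L)^{dk} \cap N$, and (\ref{eq:FreeReducCase2}) of Proposition \ref{prop:FreeReducCase} gives $P_\ord^\R(x;L)^{dk} \cap N \subset P_\ord(x;L)^{O_{d,s}(k)} \subset P^{O_{d,s}(k)}$, yielding $d_P(x,y) \leq O_{d,s}(1) \cdot d_{P_\R}(x,y)$.

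Finally, for coverage, any $g \in N_\R$ lies in some $P_\R^k$ (since $P_\ord^\R(x;L)$ is a compact neighbourhood of $1$ by Lemma \ref{lem:freeequivalences}(i) and $N_\R$ is connected), and factoring $g = pq$ with $p \in P^{O_{d,s}(k)} \subset N$ and $q \in P_\R^{O_{d,s}(1)}$ via the containment above yields $d_{P_\R}(g, N) \leq d_{P_\R}(1, q) \leq O_{d,s}(1)$. The only subtle point of the proof is that this additive constant must be uniform in $k$, which is precisely what the independence of the second factor in (\ref{eq:FreeReducCase1}) from $k$ guarantees; I do not anticipate any other obstacle.
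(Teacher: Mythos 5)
Your proof is correct and follows essentially the route the paper intends (the paper gives no explicit proof, only the pointer to Proposition~\ref{prop:FreeReducCase} and the near-symmetry observation \eqref{eq:inverseProg}): use \eqref{eq:inverseProg} (and its real analogue) to reduce the two-sided sets $P$, $P_\R$ to the one-sided ordered progressions, then invoke \eqref{eq:FreeReducCase1} and \eqref{eq:FreeReducCase2} to get the containments, and read off the quasi-isometry from these together with the observation that the second factor $P_\R^{O_{d,s}(1)}$ is uniform in $k$.
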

Let us also note the equally trivial fact that for $m\in\N$ we have
\begin{equation}\label{eq:dilateProg}
P_\ord(x,mL)\subset P_\ord(x,L)^{dm}.
\end{equation}

\bigskip

We end this section with an application of Proposition \ref{prop:FreeReducCase}. Breuillard and the second author \cite{bt} have shown that doubling of a Cayley graph at some sufficiently large scale implies uniform doubling at all subsequent scales, as follows.
\begin{theorem}[{\cite[Theorem 1.1]{bt}}]\label{thm:bt}
For every $K\ge1$ there exist $n_0=n_0(K)\in\N$ and $\theta(K) \geq 1$, such that if $S$ is a finite symmetric set inside some group, and if there exists $n\ge n_0$ for which
\begin{equation}\label{doubling}
|S^{2n+1}|\le K|S^n|,
\end{equation}
then for every $m\ge n$ and every $c\in\N$ we have $|S^{cm}|\leq\theta(K)^c|S^m|$.
\end{theorem}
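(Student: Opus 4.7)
The plan is to approximate $S^{n}$ by an ordered coset progression via the Breuillard--Green--Tao structure theorem, and then control its growth by comparing it with an ordered progression in a free nilpotent group of rank and step bounded in terms of $K$, to which the quantitative scaling results of this section apply.

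First I would use a pigeonhole/Ruzsa-type argument to upgrade the doubling hypothesis $|S^{2n+1}|\le K|S^n|$ to the existence of an integer $n_1\asymp n$ such that $A:=S^{n_1}$ is a $K^{O(1)}$-approximate group. The structure theorem (Theorem 2.1 in the paper) then produces an $\Omega_K(1)$-proper ordered coset progression $HP\subset A^4$, of rank and step at most $r=O_K(1)$ and in $O_K(1)$-upper-triangular form, together with a set $X$ with $|X|\ll_K 1$ satisfying $A\subset XHP$.

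The central step is to establish the polynomial growth bound
\[
|HP^c|\asymp_K c^{D(K)}|HP|\qquad(c\in\N),
\]
where $D(K)=\sum_{i=1}^{r}|\chi(i)|$ is the homogeneous dimension of the free nilpotent Lie group $N_{r,r}^{\R}$. The strategy is to lift the analysis to the free nilpotent group via the homomorphism $\phi:N_{r,r}\to\langle P\rangle/H$ sending the free generators $x_i$ to $u_iH$. The properness of $HP$ and the $O_K(1)$-upper-triangular form, combined with the scaling identity $P_\ord(x;L)^c\approx_K P_\ord(x;cL)$ from Lemma \ref{lem:freeequivalences}(ii), allow one to transfer the volume computation available in the free nilpotent group, namely $|P_\ord(x;cL)|\asymp c^{D(K)}|P_\ord(x;L)|$ obtained by counting lattice points in the Malcev coordinates furnished by the basic commutators $\overline f$, to the required bound on $|HP^c|$.

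With this growth bound in hand, the theorem follows by a short argument. The inclusions $HP\subset A^4$ and $A\subset XHP$ imply that the word metrics on $\langle S\rangle$ associated with $S^{n_1}$ and with $HP$ are $(O_K(1),O_K(1))$-quasi-isometric, so $|S^m|\asymp_K|HP^{\lceil m/n_1\rceil}|$ for every $m\ge n$. Applying the growth bound at scales $m$ and $cm$ then yields
\[
\frac{|S^{cm}|}{|S^m|}\asymp_K\frac{(\lceil cm/n_1\rceil)^{D(K)}}{(\lceil m/n_1\rceil)^{D(K)}}\asymp_K c^{D(K)}\le\theta(K)^c
\]
for $\theta(K)$ chosen sufficiently large in terms of $D(K)$, which depends only on $K$. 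The main obstacle is the polynomial growth bound on $|HP^c|$: while the scaling identities hold cleanly in the free nilpotent Lie group $N^{\R}_{r,r}$, transferring them to the concrete ordered coset progression $HP$ inside an arbitrary group requires both the $O_K(1)$-upper-triangular form and the properness of $HP$ in an essential way.
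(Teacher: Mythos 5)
The overall strategy — replace $S^n$ by a coset (nil)progression via the structure theorem and lift to the free nilpotent group — is in the right spirit, and it resembles what the paper does. (Note that this paper does not actually re-prove Theorem \ref{thm:bt} from scratch: it cites \cite{bt} for the reduction to Proposition \ref{prop:doublingLargerScale} and only supplies an alternative proof of that proposition. Your proposal aims at Theorem \ref{thm:bt} directly.) The central technical claim, however, is false, and this is a genuine gap rather than a cosmetic one.

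The two-sided bound $|HP^c|\asymp_K c^{D(K)}|HP|$, with $D(K)=\sum_i|\chi(i)|$ the homogeneous dimension of the free nilpotent Lie group $N^\R_{r,r}$, does not hold. The upper bound $|HP^c|\ll_K c^{D(K)}|HP|$ can be extracted from surjectivity of the lift to $N_{r,r}$ together with Proposition \ref{prop:prog.as.nilbox}, but the matching lower bound fails for at least two independent reasons. First, $\langle P\rangle/H$ is typically a proper quotient of $N_{r,r}$ whose homogeneous dimension is strictly smaller than $D(K)$, so even the asymptotic growth degree is wrong. Second, and more fundamentally, the BGT theorem only gives you $\Omega_K(1)$-properness; once $c$ exceeds the properness scale, the growth of $|HP^c|$ slows down further, and if $\langle HP\rangle$ is finite it saturates altogether. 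Concretely, take $\langle HP\rangle=\Z\times(\Z/N\Z)$ with $HP=\{-1,0,1\}\times\{-1,0,1\}$ (so $r=2$, $D(K)=2$): then $|HP^c|\asymp c^2$ only while $c<N$, and $|HP^c|\asymp cN\ll c^2$ for $c>N$. Your final display,
\[
\frac{|S^{cm}|}{|S^m|}\asymp_K\frac{(\lceil cm/n_1\rceil)^{D(K)}}{(\lceil m/n_1\rceil)^{D(K)}},
\]
uses the $\asymp$ in both directions (an upper bound on $|S^{cm}|$ requires the upper half, a lower bound on $|S^m|$ requires the lower half), so the argument collapses.

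What is actually needed, and what both \cite{bt} and this paper prove via Proposition \ref{prop:doublingLargerScale}, is the weaker, scale-free statement that $|HP^{2c}|\ll_K|HP^c|$ for \emph{every} $c\ge1$ (equivalently, $S^m$ is an $O_K(1)$-approximate group for every $m\ge n$). That is a doubling bound comparing consecutive scales, not a polynomial growth bound relative to scale $1$, and it is immune to the failures above: torsion and loss of properness only make doubling constants smaller. The paper obtains it precisely from the ingredients you already have in play: Lemma \ref{lem:guiv.doub} gives metric doubling for $P_\ord^\R(x;1)$ in the free Lie group via Guivarc'h, Corollary \ref{cor:prop:FreeReducCase} gives a $(O_{d,s}(1),O_{d,s}(1))$-quasi-isometry $(N,P)\hookrightarrow(N_\R,P_\R)$, and Lemma \ref{lem:metricdoublingQI} transports metric doubling along a quasi-isometry. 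If you replace your growth estimate by this uniform doubling claim, using the quasi-isometric comparison with the free group exactly as you set it up, the proof of the theorem then follows by iterating $|S^{2m}|\le\theta(K)^2|S^m|$.
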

Using the main result of \cite{bgt}, they reduce Theorem \ref{thm:bt} to the following statement.
 
\begin{prop}[{\cite[Theorem 2.8]{bt}}]\label{prop:doublingLargerScale}
Let $K\ge1$ and $s\ge1$. Suppose that $S$ is $K$-approximate subgroup in some $s$-step nilpotent group $G$. Then for every $m\ge1$ the set $S^{m}$ is an $O_{K,s}(1)$-approximate subgroup.
\end{prop}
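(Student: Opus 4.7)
The strategy is to invoke the Breuillard--Green--Tao structure theorem to replace $S$ by a coset progression of bounded rank in the $s$-step nilpotent group $G$, and then to exploit the uniform doubling of powers of such progressions, which follows from the analysis of Section~\ref{sec:free.nilp}.

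First I would apply the BGT structure theorem to $S$ to obtain an $\Omega_K(1)$-proper ordered coset progression $HP \subset S^4$ of rank $d \ll_K 1$ and step at most $s$, in $O_K(1)$-upper-triangular form, together with a set $X$ with $|X| \ll_K 1$ such that $S \subset XHP$. Incorporating the elements of $X$ as additional generators and carefully arranging the order and lengths to preserve the upper-triangular form, one obtains, for each $m \geq 1$, an augmented ordered coset progression $\widetilde{H}\widetilde{P}_m$ of rank $\ll_K 1$ in $O_{K,s}(1)$-upper-triangular form with
\[
S^m \subset \widetilde{H}\widetilde{P}_m \subset S^{O_{K,s}(m)}.
\]

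The main technical step is the following uniform comparison, valid for any $s$-step ordered coset progression $HQ$ of rank $d$ in upper-triangular form: for every $n \geq 1$, the iterated product $(HQ)^n$ and the dilated progression $HQ_n := H \cdot P_{\ord}(u, nL)$ each cover the other with $O_{d,s}(1)$ translates uniformly in $n$, and moreover $HQ_{2n}$ is covered by $O_{d,s}(1)$ translates of $HQ_n$. To prove this I would lift $HQ$ to the free $s$-step nilpotent group $N_{d,s}$ via the surjective homomorphism $\pi \colon N_{d,s} \to \langle HQ\rangle/H$ sending the free generators $x_i$ to $u_i H$. By Lemma~\ref{lem:freeequivalences}, in the Malcev completion $N_{d,s}^{\R}$ one has $P_{\ord}^{\R}(x, L)^n \approx_{d,s} P_{\ord}^{\R}(x, nL) \approx_{d,s} \exp(B_{\R}(\overline f, (nL)^\chi))$, and all three sets have comparable volume, so a standard volume-packing argument in a Lie group of polynomial growth gives that each pair covers the other with $O_{d,s}(1)$ translates. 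A parallel argument using the Malcev dilation $\delta_2$ gives that $P_{\ord}^{\R}(x, 2nL)$ is covered by $O_{d,s}(1)$ translates of $P_{\ord}^{\R}(x, nL)$. Intersecting with $N_{d,s}$ and invoking Corollary~\ref{cor:prop:FreeReducCase} transfers these to the discrete free group, and pushing forward via $\pi$ (using that $H$ is normalised by $P$) descends them to $G$.

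Combining, for every $m \geq 1$ one has $S^{2m} \subset \widetilde{H}\widetilde{P}_{2m}$; iterating the doubling statement $O_{K,s}(1)$ times, this is covered by $O_{K,s}(1)$ translates of $\widetilde{H}\widetilde{P}_{m/c}$ for any fixed $c \geq 1$, and choosing $c \ll_{K,s} 1$ large enough that $\widetilde{H}\widetilde{P}_{m/c} \subset S^m$ then shows $S^{2m}$ is covered by $O_{K,s}(1)$ translates of $S^m$. Together with the symmetry of $S^m$ and its containment of the identity, this shows that $S^m$ is an $O_{K,s}(1)$-approximate group. The main obstacle lies in the construction of the augmented progression $\widetilde{H}\widetilde{P}$ with bounded rank and upper-triangular form uniform in $m$: this requires a careful choice of generators and lengths adapted to $X$ and to the commutators $[X, X]$ and $[X, HP]$, leveraging both the $s$-step nilpotency of $G$ and the upper-triangular form of $HP$ provided by BGT.
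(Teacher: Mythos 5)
Your overall plan has the right shape---lift to a free nilpotent group, use Guivarc'h-type volume doubling there, transfer it back through a quasi-isometry---and this matches the paper's argument closely. But you flag, and then leave unresolved, a genuine gap: the reduction from $S$ to a progression with no leftover translate set. Applying the general BGT theorem yields $S\subset XHP$ with an auxiliary set $X$ of size $O_K(1)$, and absorbing $X$ into a new ordered coset progression of bounded rank in upper-triangular form is not a cosmetic step. You would need to control all iterated commutators involving the elements of $X$, and show that the resulting object is again a progression of rank $\ll_{K,s}1$; this is essentially the content of a nilpotent Freiman theorem, not something that follows by ``a careful choice of generators and lengths.'' The paper sidesteps this entirely: since $S$ is assumed to live inside an $s$-step nilpotent group, it invokes Tointon's nilpotent Freiman theorem \cite[Theorem 1.5]{nilp.frei}, which directly yields a normal subgroup $H\lhd G$ and a progression $P_{\ord}(u;L)$ of rank $d\le K^{O_s(1)}$ with the clean two-sided sandwich
\[
S\subset H\bigl(P_{\ord}(u;L)\cup P_{\ord}(u;L)^{-1}\bigr)\subset S^{K^{O_s(1)}},
\]
with no translate set $X$ to absorb. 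After that, the paper proceeds exactly as you envision: it sets $\pi:N_{d,s}\to G$, $P=P_{\ord}(x;L)\cup P_{\ord}(x;L)^{-1}$, uses Corollary~\ref{cor:prop:FreeReducCase} (the $(O_{d,s}(1),O_{d,s}(1))$-quasi-isometry $(N,P)\hookrightarrow(N_{\R},P_{\R})$), Lemma~\ref{lem:guiv.doub} (Guivarc'h volume doubling gives metric doubling of $P_\R$) and Lemma~\ref{lem:metricdoublingQI} (doubling transfers through quasi-isometries) to get, for $r\ge r_0\ll_{K,s}1$, a set $X_r\subset G$ of size $O_{K,s}(1)$ with $\pi(P)^{2r}\subset X_r\pi(P)^r$. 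Combined with $S\subset H\pi(P)^{r_0}\subset S^{M_0}$ and normality of $H$, this gives $S^{2M_0m}\subset X_{mr_0}^{2M_0}S^{M_0m}$.

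So: same engine, but you are missing the reduction that drives it. Replace the BGT-plus-absorption step with the nilpotent Freiman theorem (or, if you insist on doing it from scratch, accept that carrying out the absorption is a substantial argument in its own right) and the rest of your outline goes through. One small further point: your ``dilated progression'' $HQ_n=H\cdot P_{\ord}(u,nL)$ and the iterated product $(HQ)^n$ are compared via Lemma~\ref{lem:freeequivalences}(ii) in the real setting and Proposition~\ref{prop:FreeReducCase}/Corollary~\ref{cor:prop:FreeReducCase} in the discrete one; a Malcev dilation $\delta_2$ is not needed, since the doubling constant for $P_\R$ already comes uniformly from Guivarc'h's theorem after the length-normalising automorphism (cf.\ the proof of Lemma~\ref{lem:guiv.doub}).
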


The proof of Proposition \ref{prop:doublingLargerScale} in \cite{bt} requires some fairly involved computations with nilprogressions. We take the opportunity here to note that Proposition \ref{prop:FreeReducCase} yields an alternative proof of Proposition \ref{prop:doublingLargerScale}. We make use of the following well-known lemma, the proof of which is left as an exercise.

\begin{lemma}\label{lem:metricdoublingQI}
Let $r_0\geq 1$, $j\in \N$ and $\phi:(X,d_X)\to (Y,d_Y)$ be a $(C,K)$-quasi-isometry between two metric spaces, and assume that for all all $r\geq r_0$, all balls of radius $2r$ in $Y$ are covered by at most $j$ balls of radius $r$. Then there exists $j'=O_{j,C,K}(1)$ and $r'_0=O_{j,C,K}(r_0)$ such that for all $r'\geq r'_0$, all balls of radius $2r'$ in $X$ are covered by at most $j'$ balls of radius $r'$.
\end{lemma}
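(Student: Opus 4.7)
The plan is to transport the ball from $X$ to $Y$ via $\phi$, apply the doubling hypothesis there, and pull back, keeping track of how the $(C,K)$-quasi-isometry estimate distorts radii.

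First I would record two elementary facts. The preimage under $\phi$ of a ball in $Y$ is small in $X$: if $x_1,x_2\in \phi^{-1}(B_Y(y,\rho))$ then $d_Y(\phi(x_1),\phi(x_2))\leq 2\rho$, whence the quasi-isometry estimate gives
\[
d_X(x_1,x_2)\leq C(2\rho+K)=2C\rho+CK,
\]
so that, provided it is non-empty, $\phi^{-1}(B_Y(y,\rho))$ is contained in a $d_X$-ball of radius $2C\rho+CK$ centred at any one of its points. The second fact is that the doubling hypothesis on $Y$ iterates: a straightforward induction (splitting radii in half while all intermediate radii remain at least $r_0$) shows that for every $r\geq r_0$ and every $R\geq r$ any ball of radius $R$ in $Y$ can be covered by at most $j^{\lceil \log_2(R/r)\rceil}$ balls of radius $r$.

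With these in hand I would run the main argument as follows. Given $x\in X$ and $r'$ large, the image $\phi(B_X(x,2r'))$ lies inside $B_Y(\phi(x),2Cr'+K)$. Set $\rho:=r'/(4C)$ and define
\[
r'_0:=4C\max(r_0,K),
\]
so that, as soon as $r'\geq r'_0$, both $\rho\geq r_0$ and $2C\rho+CK\leq r'$ hold. The iterated doubling then covers $B_Y(\phi(x),2Cr'+K)$ by at most
\[
j':=j^{\lceil \log_2(8C^2+4CK/r')\rceil}=O_{j,C}(1)
\]
balls $B_Y(y_i,\rho)$. The sets $\phi^{-1}(B_Y(y_i,\rho))\cap B_X(x,2r')$ then cover $B_X(x,2r')$, and by the first observation each non-empty one is contained in a $d_X$-ball of radius $2C\rho+CK\leq r'$, producing the desired covering of $B_X(x,2r')$ by at most $j'$ balls of radius $r'$.

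I do not expect any genuine obstacle; the only mild point of care is the simultaneous choice of the intermediate radius $\rho$, which must be large enough ($\rho\geq r_0$) for the iterated doubling in $Y$ to apply, yet small enough ($2C\rho+CK\leq r'$) for the quasi-isometric preimages of $\rho$-balls to still fit inside $r'$-balls of $X$. This tension is exactly what fixes the asymptotics $r'_0=O_{C,K}(r_0)$ and $j'=O_{j,C}(1)=O_{j,C,K}(1)$ claimed in the statement.
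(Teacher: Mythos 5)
The paper leaves this lemma's proof as an exercise, so there is no argument to compare against; your argument is correct and is the natural one. The only cosmetic imprecision is the parenthetical description of the iterated-doubling step: halving $R$ down can overshoot $r_0$ at the last step, whereas the clean induction is that balls of radius $2^{m}\rho$ are covered by $j^{m}$ balls of radius $\rho$ for any $\rho\geq r_0$ and $m\geq 0$; but the statement of the iterated-doubling claim you invoke, and the count $j^{\lceil\log_2(R/\rho)\rceil}$, are correct, and the rest of the argument (choice of $\rho=r'/(4C)$, $r'_0=4C\max(r_0,K)$, and the verification that $2C\rho+CK\leq r'$, $\rho\geq r_0$, and $R/\rho\leq 8C^2+1$) goes through, using $r_0\geq 1$ to absorb the additive $K$ into $O_{C,K}(r_0)$.
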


\begin{lemma}\label{lem:guiv.doub}
We keep the notation of Corollary \ref{cor:prop:FreeReducCase}. 
For all $r\geq 1$ there exists $X\subset N_{\R}$ of cardinality $O_{s,d}(1)$ such that
$$P_{\R}^{2r}\subset XP_{\R}^r.$$
\end{lemma}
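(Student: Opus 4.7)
My approach will be a volume-packing argument using Haar measure on $N_{\R}$. Since $N_{\R}$ is connected, simply connected and nilpotent, the exponential map $\exp \colon \n \to N_{\R}$ is a global diffeomorphism pushing Lebesgue measure on $\n$ (in the basis $\overline f$) to a bi-invariant Haar measure $\mu$. The first step will be to upgrade Lemma \ref{lem:freeequivalences}(i) to the volume estimate
\[
\mu(P_{\R}^n) \asymp_{d,s} n^Q \prod_{i=1}^{\dim \n} L^{\chi(i)}
\]
for all integers $n \geq n_0$, where $Q = \sum_i |\chi(i)|$ is the homogeneous dimension of $N_{\R}$ and $n_0 = O_{d,s}(1)$ is a threshold independent of $L$. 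Indeed, $\mu(\exp B_\R(\overline f;(nL)^\chi))$ equals the right-hand side up to a constant depending only on $d$ and $s$, by direct computation of a box volume; sandwiching $P_{\R}^n$ between such exponential boxes at two nearby scales via Lemma \ref{lem:freeequivalences}(i) then transfers the estimate to $P_{\R}^n$. Crucially, the $L$-dependent factor will cancel in every ratio $\mu(P_{\R}^{cn})/\mu(P_{\R}^n)$, yielding a measure-doubling estimate uniform in $L$.

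For the main case, fix $r \geq 2n_0$ and set $m = \lfloor r/2 \rfloor$, so that $n_0 \leq m \leq r/2$ and $m \geq r/3$. I will choose $X \subset P_{\R}^{2r}$ maximal such that the left-translates $\{y P_{\R}^m : y \in X\}$ are pairwise disjoint. Disjointness combined with the volume estimate gives
\[
|X| \leq \frac{\mu(P_{\R}^{2r+m})}{\mu(P_{\R}^m)} \leq C_{d,s} \left(\frac{2r+m}{m}\right)^Q \leq C_{d,s} \cdot 7^Q = O_{d,s}(1).
\]
Maximality, together with the symmetry $(P_{\R}^m)^{-1} = P_{\R}^m$, will then imply that every $z \in P_{\R}^{2r}$ lies in some $y P_{\R}^m (P_{\R}^m)^{-1} = y P_{\R}^{2m} \subset y P_{\R}^r$ with $y \in X$, yielding the required covering.

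The remaining finitely many small-$r$ cases (those $r < 2n_0$, bounded by a constant depending only on $d, s$) I will handle via the dilation automorphism of $N_{\R}$ defined on the generators by $f_i \mapsto L_i^{-1} f_i$ and extended multiplicatively on the basic commutators. This is a well-defined Lie algebra automorphism by the freeness of $\n$, and it reduces us to the case $L = \mathbf{1}$, where both $P_{\R}^{2r}$ and $P_{\R}^r$ become fixed compact subsets of $N_{\R}$ depending only on $d$ and $s$, so that the covering number is a finite constant depending only on $d, s$. The main technical care point will be the uniform volume comparison in the first paragraph: the implicit constants arising when chaining the $\approx_{d,s}$ comparisons of Lemma \ref{lem:freeequivalences}(i) and passing to Haar measure must all be $O_{d,s}(1)$ and, crucially, independent of $L$, so that the $L$-factor genuinely cancels in the final ratio.
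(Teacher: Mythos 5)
Your approach via a precise Haar-measure estimate and a direct packing argument is a genuine alternative to the paper's route, and your main-case argument (for $r$ above the threshold $2n_0$) is sound. The paper instead applies the dilation automorphism at the outset to reduce to $L = \mathbf{1}$, then invokes Guivarc'h's volume asymptotics to obtain measure doubling, and finally applies the abstract measure-to-metric-doubling statement Lemma \ref{prop:doublingImpliesMetricDoubling}. Your version is more explicit about the $L$-dependence --- you show the $L$-factor cancels in the relevant ratios rather than normalizing it away at the start --- which costs a little more work but makes the uniformity transparent. The two proofs are clearly cousins: your packing argument is essentially the proof of Lemma \ref{prop:doublingImpliesMetricDoubling}, unpacked and run directly on $(N_{\R},\mu)$.

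There is, however, a real gap in your treatment of the small-$r$ case. You claim that after reducing to $L = \mathbf{1}$, the covering number of $P_{\R}^{2r}$ by translates of $P_{\R}^r$ is a finite constant for each fixed $r < 2n_0$. This is false at $r = 1$ when $s \geq 2$: the set $P_{\R} = P_\ord^{\R}(x;\mathbf{1}) \cup P_\ord^{\R}(x;\mathbf{1})^{-1}$ is the image of a $d$-parameter family inside the $(\dim N_{\R})$-dimensional group, so it has Haar measure zero, whereas already $P_{\R}^2$ has positive measure (one can check this explicitly in the Heisenberg case $d = 2$, $s = 2$). Hence no finite $X$ can satisfy $P_{\R}^2 \subset X P_{\R}$, and the statement of Lemma \ref{lem:guiv.doub} as printed (``for all $r \geq 1$'') is actually false for $s \geq 2$. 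This is not a defect unique to you: the paper's own proof hits the same wall, since the cited doubling inequality $|P_{\R}^{2r}| \leq K|P_{\R}^r|$ cannot hold at $r = 1$ when $|P_{\R}^1| = 0$. The lemma is evidently intended for $r$ above some $O_{d,s}(1)$ threshold, which is all that is needed in its use in Proposition \ref{prop:doublingLargerScale} (where Lemma \ref{lem:metricdoublingQI} already carries an adjustable $r_0$). With the statement so amended, your main-case packing argument gives a complete proof and the problematic small-$r$ paragraph can simply be dropped.
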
 
\begin{proof}
Applying the automorphism of $N_{\R}$ that maps $x_i^{L_i}$ to $x_i$ reduces the statement to the case where $L_i=1$. In that case, it follows from \cite[Theorem II.1]{Gui} that there exists some $K=O_{d,s}(1)$ such that $|P_{\R}^{2r}|\le K|P_{\R}^r|$ for every $r$, and so the result follows from Lemma \ref{prop:doublingImpliesMetricDoubling}.
\end{proof}

\begin{proof}[Proof of Proposition \ref{prop:doublingLargerScale}]
By the first paragraph of the proof of \cite[Theorem 2.8]{bt}, it is sufficient to find a positive integer $M_0$ depending only on $K$ and $s$ such that $S^{M_0m}$ is an $O_{K,s}(1)$-approximate subgroup for all $m \ge1$.

We may assume that $S$ generates $G$, in which case it follows from \cite[Theorem 1.5]{nilp.frei} that there exists a normal subgroup $H\lhd G$, as well as $u_1,\ldots,u_d\in G$ and $L_1,\ldots,L_d\in\N$ with $d\le K^{O_s(1)}$ such that
\begin{equation}\label{eq:nilp.Frei}
S\subset H(P_\ord(u;L)\cup P_\ord(u;L)^{-1})\subset S^{K^{O_s(1)}}.
\end{equation}
Write $N=N_{d,s}$ for the free $s$-step nilpotent group on $d$ generators $x_1,\ldots,x_d$, let $\pi:N\to G$ be the homomorphism taking $x_i$ to $u_i$ for every $i$, and write $P=P_\ord(x;L)\cup P_\ord(x;L)^{-1}$. It follows from Corollary \ref{cor:prop:FreeReducCase}, Lemma \ref{lem:metricdoublingQI} and Lemma \ref{lem:guiv.doub} that there exists $r_0\ll_{K,s}1$ such that for every $r\ge r_0$ there exists $X_r\in G$ of cardinality $O_{K,s}(1)$ such that $\pi(P)^{2r}\subset X_r\pi(P)^r$. It also follows from \eqref{eq:nilp.Frei} that there exists $M_0\ll_{K,s}1$ such that $S\subset H\pi(P)^{r_0}\subset S^{M_0}$, and hence
\begin{align*}
S^{2M_0m}&\subset H\pi(P)^{2M_0r_0m}\\
    &\subset X_{mr_0}^{2M_0}H\pi(P)^{r_0m}\\
    &\subset X_{mr_0}^{2M_0}S^{M_0m},
\end{align*}
as required.
\end{proof}

\section{Overview of the general case}\label{sec:overview}
In this section we give an overview of the proofs of Theorems \ref{thm:relative} and \ref{thm:conj2}. The majority of the argument is contained in the following two results, which we prove in the next few sections.
\begin{prop}\label{prop:reducConnected}
Let $(\Gamma_n,S_n)$ and $(m_n)$ be as in Theorem \ref{thm:relative}. Then there exists a natural number $d\ll_D1$ and, for $\omega$-almost every $n$, a simply connected nilpotent Lie group $G_n$ of dimension $d$ and homogeneous dimension at most $D$ such that there is a basis $e^{(n)}$ of the Lie algebra of $G_n$ and a tuple $L^{(n)}\in\N^{d_n}$ such that $(e^{(n)};L^{(n)})$ is in $O_D(1)$-upper-triangular form. There also exists a sequence of integers $j_n\to\infty$ such that, denoting  $Q_n=P^{\R}_\ord(u^{(n)};L^{(n)})$ with $u_i^{(n)}= \exp e_i^{(n)}$, the sequence $(G_n,d_{Q_n}/j_n)$ is relatively compact for the GH topology, $\lim_\omega(G_n,d_{Q_n}/j_n)$ is a connected nilpotent Lie group with a geodesic metric, and
\[
\dim\lim_\omega(\Gamma_n,d_{S_n}/m_n)\leq\dim\lim_\omega(G_n,d_{Q_n}/j_n).
\]
Moreover, under the stronger assumption of Theorem \ref{thm:conj2}, we have in addition
\[
\hdim\lim_\omega(\Gamma_n,d_{S_n}/m_n)\le\hdim\lim_\omega(G_n,d_{Q_n}/j_n).
\]
\end{prop}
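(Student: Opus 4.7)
The plan is to mirror the abelian argument of Section \ref{sec:abelian}, replacing the lattice $\Z^d$ by a simply connected quotient of the free $s$-step nilpotent Lie group $N^{\R}_{d,s}$ studied in Section \ref{sec:free.nilp}; fix throughout the non-principal ultrafilter $\omega$ used to form the relevant ultralimit.

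First, by pigeonholing as in Lemma \ref{lem:ab.phole} combined with \cite[Lemma 2.2]{bt}, I would locate a scale $k_n$ with $n^{1/4}\le k_n\le n^{1/2}$ at which $S_n^{k_n}$ is an $O_D(1)$-approximate group. The Breuillard--Green--Tao structure theorem stated in Section \ref{sec:background} then supplies an $\Omega_D(1)$-proper ordered coset progression $H_n P_\ord(u^{(n)};L^{(n)})$ of rank $d_n\ll_D 1$ and step $s_n\ll_D 1$ in $O_D(1)$-upper-triangular form, sandwiched between $S_n^{k_n}$ and $S_n^{O_D(k_n)}$. Using Lemma \ref{lem:finite.omega.const} I may assume that $d_n=d$ and $s_n=s$ for $\omega$-almost every $n$, and by deleting dimensions the rank may be taken minimal with respect to this sandwich. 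A nilpotent analog of Lemma \ref{lem:largeLigGrowth} should then force $L_i^{(n)}\ge k_n$ and $S_n\subset H_n P_\ord(u^{(n)};L^{(n)}/k_n)$, so that setting $K_i^{(n)}=L_i^{(n)}/k_n$ and $V_n=H_n P_\ord(u^{(n)};K^{(n)})$ gives $V_n^{k_n}\approx_D S_n^{k_n}$. As in the proof of Theorem \ref{thm:conj2Ab}, $S_n$ may then be replaced by $V_n$ without affecting the ultralimit up to bi-Lipschitz equivalence, and the growth hypothesis transfers to $V_n$.

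Next I would lift to the free nilpotent Lie group $F=N^{\R}_{d,s}$, in which the associated real progression is quasi-isometric via Corollary \ref{cor:prop:FreeReducCase} to the discrete progression underlying $V_n$ modulo $H_n$. The heart of the argument is a nilpotent analog of Lemma \ref{lem:radiusFreedom}: with $j_n\asymp m_n/k_n$, I would iteratively detect short relations---elements of $F$ lying at word-distance $o_\omega(j_n)$ from the identity in the real progression metric yet nontrivial in $F$---and quotient $F$ by the closed connected normal Lie subgroup they generate, maintaining the $O_D(1)$-upper-triangular form at each step. The output is a simply connected nilpotent Lie group $G_n$ of dimension $d\ll_D 1$, carrying a basis $e^{(n)}$ of its Lie algebra and a tuple $L^{(n)}$ such that $Q_n=P^{\R}_\ord(u^{(n)};L^{(n)})$ is, in the appropriate real sense, $\Omega_\omega(j_n)$-proper. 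GH-relative compactness of $(G_n,d_{Q_n}/j_n)$ follows from a doubling argument using Lemma \ref{lem:guiv.doub}, and Corollary \ref{corultralimitPhi} yields a $1$-Lipschitz surjective homomorphism from $\lim_\omega(G_n,d_{Q_n}/j_n)$ onto $\lim_\omega(\Gamma_n,d_{V_n}/m_n)$, from which the dimension inequality follows.

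The homogeneous dimension bound under the stronger hypothesis of Theorem \ref{thm:conj2} comes from a volume comparison: the properness of $Q_n$ together with the Bass--Guivarc'h growth of $G_n$ relates $|V_n^n|$ to $j_n^{\hdim G_n}$, and $|S_n^n|\ll n^D$ then forces $\hdim G_n\le D$. The main obstacle will be the nilpotent properness-reduction step. In the abelian case, quotienting a rank-$d$ progression by a single short relation trivially preserves all structure and lowers the rank by one. In the nilpotent setting a short relation may involve basic commutators of different weights, so preserving the upper-triangular structure while ensuring that each successive quotient is by a genuine closed connected normal Lie subgroup will require careful inductive selection of generators and a careful use of the Malcev-basis machinery of Section \ref{sec:free.nilp}.
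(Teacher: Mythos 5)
Your overall plan — pass from $(\Gamma_n,S_n)$ to a Lie (coset) progression at an intermediate scale, lift to a simply connected nilpotent Lie group, and then produce the surjection onto $\lim_\omega(\Gamma_n,d_{S_n}/m_n)$ — is the right skeleton, but there are two concrete gaps that would need serious work before the proof closes, and the paper handles both in a materially different way.

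First, the sandwich $S_n^{k_n}\subset H_nP_n\subset S_n^{O_D(k_n)}$ you assert after invoking the Breuillard--Green--Tao theorem is not available in the nilpotent case. BGT and Proposition~\ref{prop:inverse} only give $S_n^{k_n}\subset X_n H_nP_n$ with $|X_n|\ll_D1$; the translates by $X_n$ may carry the approximate group far outside $H_nP_n^{O(1)}$, so one cannot drop $X_n$ and cannot replace $S_n$ by a progression $V_n$ with $V_n^{k_n}\approx_D S_n^{k_n}$. This is exactly the step where the abelian argument is strictly easier, because Theorem~\ref{thm:ab.Frei.proper} has no $X$. The paper absorbs the $X$-factor differently: it defines $T_n=S_n^{3t}\cap H_nP_n^{(d+1)\eta}$ and proves Lemmas~\ref{lem:T>HP} and~\ref{lem:T<HP}, which give $H_nP_n\subset T_n^{k_n}\subset H_nP_n^{O_D(1)}$, and then Lemma~\ref{cor:ReducToP} supplies a surjection from the ultralimit of $(\langle H_nP_n\rangle,d_{T_n}/m_n)$ onto the one you want. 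Your plan has no analogue of this intersection trick, and without it the step ``replace $S_n$ by $V_n$'' is unjustified.

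Second, the two ``nilpotent analogs'' you call for are exactly the technical substance of the proof, and neither can be expected to transfer naively. For Lemma~\ref{lem:largeLigGrowth}: the abelian proof deduces $S\subset HP(u;L/k)$ from $U^k\subset Q$ because coordinates add in $\Z^d$; in a nilpotent group the coordinates of $z_1\cdots z_k$ pick up commutator corrections of higher weight, so the $1/k$ rescaling of all lengths is neither the correct scaling nor easily seen to contain $S$. The paper never scales lengths by $1/k$; the $T_n$ device above makes that unnecessary. For Lemma~\ref{lem:radiusFreedom}: iteratively detecting short relations in the free nilpotent Lie group and quotienting is indeed the crux, and you acknowledge it, but the fix in the paper is not an ad hoc induction. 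It is encapsulated in the notion of a Lie coset progression with injectivity radius (Definition~\ref{def:Lie.CP}), together with Proposition~\ref{prop:dim.reduc.induc}, whose proof in turn rests on two nontrivial ingredients: Proposition~\ref{prop:Powergood}, showing a power of a Lie progression in upper-triangular form is again comparable to one (needed so that ``short relations at scale $j_n$'' become ``the unit-scale progression fails to be $O(1)$-proper''), and the properness-reduction result Proposition~\ref{prop:reduce.dim.when.not.proper} from the earlier paper. These are what keep the basis, the upper-triangular form, and the rank bound coherent through each reduction; your sketch is silent on all of this. If you want a roadmap for the general case, it is really Theorem~\ref{thm:ReducTorsionFreeD} plus Corollaries~\ref{cor:QIreal} and~\ref{cor:limreal} that should replace Lemmas~\ref{lem:largeLigGrowth} and~\ref{lem:radiusFreedom}, rather than direct translations of the latter.

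Finally, your claim of a surjective morphism from $\lim_\omega(G_n,d_{Q_n}/j_n)$ directly onto $\lim_\omega(\Gamma_n,d_{V_n}/m_n)$ should really be a two-step chain: an isomorphism $\lim_\omega(\Lambda_n,d_{\tilde T_n}/m_n)\cong\lim_\omega(G_n,d_{Q_n}/j_n)$ (using the quasi-isometry of Corollary~\ref{cor:QIreal} and the injectivity radius to lift $T_n$ to a set $\tilde T_n$ in $\Lambda_n$), followed by the surjection from $\lim_\omega(\langle H_nP_n\rangle,d_{T_n}/m_n)$ onto $\lim_\omega(\Gamma_n,d_{S_n}/m_n)$ of Lemma~\ref{cor:ReducToP}. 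The dimension and homogeneous-dimension inequalities then come from composing those maps, together with the bounds $d\le D$ and $\hdim G_n\le_\omega D$ supplied by the growth lower bounds Lemma~\ref{lem:growthLowerBound} and Proposition~\ref{prop:growth.lower.bound.dim} inside the proof of Theorem~\ref{thm:ReducTorsionFreeD}; the latter argument requires the $\tilde T_n$ lift, not just ``properness of $Q_n$'', because it is $|\tilde T_n^{q_n}|$ that is compared to $|S_n^n|$.
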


\begin{theorem}\label{thm:homdimOfLimit}
Let $G_n$ be a sequence of simply connected nilpotent Lie groups of dimension $d$, and let  $e(=e^{(n)})$ be a basis of the Lie algebra $\g_n$ of $G_n$, and let $(j_n)$ be a sequence of integers tending to $\infty$. Suppose that for each $n$ the convex hull $\Omega_n$ of $\{\pm e^{(n)}_1,\ldots, \pm e^{(n)}_d\}$ satisfies $[\Omega_n,\Omega_n]\subset O_\omega(1)\Omega_n$.
Write $u_i=\exp(e_i)$, and let $Q_n=P_\ord^{\R}(u;1)$. Then $(G_n,d_{Q_n}/j_n)$ is relatively compact for the GH topology, $\lim_\omega(G_n,d_{Q_n}/j_n)$ is a connected nilpotent Lie group with a geodesic metric,
\[
\dim\lim_{\omega} (G_n,d_{Q_n}/j_n)=d,
\]
and if $\hdim(G_n)\le_\omega D$, then
\[
\hdim\lim_{\omega} (G_n,d_{Q_n}/j_n)\le D.
\]
\end{theorem}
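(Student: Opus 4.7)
The plan is to dominate each $G_n$ by the free $d$-step nilpotent Lie group $N := N_{d,d}^{\R}$ (recall that $\g_n$ has nilpotency class at most $d = \dim \g_n$) via the surjection $\pi_n: N \twoheadrightarrow G_n$ sending the free generators $x_i$ to $u_i^{(n)}$, and to transfer the analysis through this surjection at the level of ultralimits. Writing $P = P_\ord^{\R}(x;1) \cup P_\ord^{\R}(x;1)^{-1}$, the map $\pi_n$ is $1$-Lipschitz from $(N, d_P)$ onto $(G_n, d_{Q_n})$. Lemma \ref{lem:guiv.doub} provides uniform doubling for balls in $(N,d_P)$, which transfers to $(G_n,d_{Q_n})$ via the surjection, and Corollary \ref{cor:doubloingImpliesGHrc} then yields GH-relative compactness of $(G_n,d_{Q_n}/j_n)$. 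Remark \ref{rem:ScalingLength} shows that the ultralimit is geodesic and hence connected.

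The next step is to identify the ultralimit as a connected nilpotent Lie group. The ultralimit $\lim_\omega(N, d_P/j_n)$ is independent of $\omega$ and coincides with $N$ equipped with the Carnot--Carath\'eodory metric associated to the $\ell^\infty$-norm on $\R^d$ (this follows from Pansu's theorem applied to the integer lattice in $N$, or equivalently from Theorem \ref{thm:bft} applied to this lattice). Corollary \ref{corultralimitPhi} then lifts $\pi_n$ to a continuous surjective homomorphism $\pi_\infty$ from $N$ onto $\lim_\omega(G_n,d_{Q_n}/j_n)$, so that the latter is a Hausdorff topological quotient of a connected nilpotent Lie group by a closed normal subgroup, and so is itself a connected nilpotent Lie group.

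For the dimension and homogeneous dimension bounds, I would assign to each basis vector $e_i^{(n)}$ the weight $w_i^{(n)} \in \{1,\ldots,d\}$ given by the largest $k$ with $e_i^{(n)} \in \g_n^{(k)}$, so that the Bass--Guivarc'h formula gives $\hdim G_n = \sum_i w_i^{(n)}$ (and this is at most $D$ in the second scenario). Using the upper-triangular form of $(e^{(n)};L^{(n)})$, which via $[\Omega_n,\Omega_n]\subset O_\omega(1)\Omega_n$ bounds all structure constants of $\g_n$ uniformly in $n$, I would consider the map $\Phi_n: \R^d \to G_n$ defined by $\Phi_n(a) = \prod_i \exp(a_i e_i^{(n)})$ and show, via estimates analogous to those in Lemma \ref{lem:freeequivalences}, that it is a $(O_\omega(1), O_\omega(1))$-quasi-isometry from the weighted box $\{a : |a_i| \le j_n^{w_i^{(n)}}\} \subset \R^d$ onto a metric ball of radius $\asymp j_n$ in $(G_n, d_{Q_n})$. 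Invoking Lemma \ref{lem:finite.omega.const} to stabilise the integer weights $w_i^{(n)}$ for $\omega$-almost every $n$, and then passing to ultralimits via Proposition \ref{prop:ultralimitPhi} and Corollary \ref{corultralimitPhi}, would realise $\lim_\omega(G_n,d_{Q_n}/j_n)$ as a connected Lie group of dimension exactly $d$ and homogeneous dimension $\sum_i w_i^{(n)} \le D$.

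The principal obstacle will be verifying that this weighted-box picture genuinely survives the ultralimit, i.e., that the different weight strata remain faithful and that no direction collapses through degeneration of $\Phi_n$. Controlling this requires carefully tracking how the upper-triangular hypothesis propagates under iterated commutators and exponentials, and ensuring that $\ker \pi_\infty$ is supported on a subspace complementary to the span of the (appropriately rescaled) images of the $e_i^{(n)}$, rather than collapsing some of those directions in the limit.
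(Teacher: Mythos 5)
Your first two paragraphs are essentially sound and track what the paper does in Proposition~\ref{prop:LimIsAGroup}: lift to the free nilpotent group $N_{d,s}^\R$, use the uniform doubling there (Lemma~\ref{lem:guiv.doub}), and push down via Corollary~\ref{corultralimitPhi} to see that the ultralimit is a connected nilpotent Lie group with a geodesic metric. That part is fine.

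The third paragraph, however, contains a genuine gap, and it is not the peripheral ``obstacle'' you flag at the end but the central claim. Your assertion that ``the Bass--Guivarc'h formula gives $\hdim G_n = \sum_i w_i^{(n)}$'' is false unless the basis $e^{(n)}$ is \emph{adapted} to the lower central series (i.e.\ for each $k$ the basis vectors lying in $\g_n^{(k)}$ actually span $\g_n^{(k)}$). What is true in general is only the inequality $\sum_i w_i^{(n)} \le \hdim(\g_n)$. Neither the hypothesis $[\Omega_n,\Omega_n]\subset O_\omega(1)\Omega_n$ nor even upper-triangular form forces adaptedness. For a concrete counterexample, take $\g = \R^2 \oplus \mathfrak{h}_3$ with $[e_3,e_4]=e_5$, $e_1,e_2,e_5$ central, and use the upper-triangular basis $(e_3,e_4,e_1,e_2,e_5+e_2)$. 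Then no basis vector lies in $\g^{(2)}=\R e_5$, so all $w_i=1$ and $\sum w_i=5$, whereas $\hdim\g=6$. In this situation your map $\Phi_n$ restricted to the box $\{|a_i|\le j_n^{w_i}\}=\{|a_i|\le j_n\}$, whose volume is $\asymp j_n^5$, cannot be a quasi-isometry onto a ball of radius $\asymp j_n$, whose volume is $\asymp j_n^6$. So the weighted-box parametrisation fails before you even get to taking ultralimits, and with it the claim that the limit has homogeneous dimension $\sum_i w_i^{(n)}$. There is also a separate subtlety you do not address: even with an adapted basis, the bracket structure can degenerate along $\omega$ (e.g.\ $[e_1,e_2]=\tfrac{1}{n}e_3$), in which case $\hdim$ of the ultralimit is strictly smaller than $\hdim G_n$, so identifying the limit's $\hdim$ with a fixed $\sum w_i$ is not the right target.

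The paper takes a structurally different route that avoids these issues entirely. Rather than parametrising balls in $G_n$ by boxes in $\R^d$ and trying to read off $\hdim$ from weights, it works at the level of the Lie algebras: it constructs a sequence of norms on $\g_n$ whose unit balls are sub-multiplicative for the bracket (Lemma~\ref{lem:normLieAlgebra}), shows that the ultralimit $\g_\infty$ of the normed Lie algebras is itself a nilpotent Lie algebra canonically identified with the Lie algebra of $\lim_\omega(G_n,d_{Q_n}/j_n)$ (Lemma~\ref{lem:def.of.ultralim.of.Lie.alg} and the BCH formula), and then proves that the homogeneous dimension is \emph{lower semicontinuous} under this kind of limit (Theorem~\ref{thmUltraLimitLieAlge}). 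That last step is proved in the ``marked Lie algebras'' framework of Section~\ref{sec:marked}: one realises the ultralimit of normed Lie algebras as a limit of ideals of the free Lie algebra $\n$ (Proposition~\ref{prop:markedlimits/ultralimits}), and then uses that passing to a quotient by a central element $u$ reduces $\hdim$ by exactly $\xi(u)$ (Lemma~\ref{lem:xi}) and that $\xi$ is upper semicontinuous (Lemma~\ref{lem:xiUpper}). This gives $\hdim\g_\infty \le \lim_\omega \hdim\g_n \le D$ directly, with no adaptedness assumption on the basis and no need to track how iterated commutators propagate. The dimension-$d$ statement comes out for free from the fact that normed vector spaces of a fixed dimension are uniformly bi-Lipschitz to Euclidean space. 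If you want to salvage a box-type argument, it would have to be run in the free Lie algebra with the basic-commutator weights $\chi$ (as in Lemma~\ref{lem:normLieAlgebra}) rather than in $\R^d$ with your LC-filtration weights, but then you would still need a separate argument (such as Section~\ref{sec:marked}) to relate $\hdim$ of the limit to $\hdim(\g_n)$.
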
  

\begin{proof}[Proof of Theorems \ref{thm:relative} and \ref{thm:conj2}]
It follows from Theorem \ref{thm:bft} that the sequence $(\Gamma_n,d_{S_n}/m_n)$ is relatively compact and that every cluster point is a connected nilpotent Lie group with a left-invariant sub-Finsler metric. Given such a cluster point $(G,d)$, on restricting to a subsequence we may assume that $(\Gamma_{n},\frac{d_{S_{n}}}{m_n})\to(G,d)$, and hence by Lemma \ref{lemPrelim:ultra} that $(\Gamma_{n},\frac{d_{S_{n}}}{m_n})\to_\omega(G,d)$. The theorems then follow from Proposition \ref{prop:reducConnected} and Theorem \ref{thm:homdimOfLimit}, noting that upon replacing the pairs $(e^{(n)},L^{(n)})$ coming from Proposition \ref{prop:reducConnected} with $(L^{(n)}e^{(n)},1)$ they indeed satisfy the hypotheses of Theorem \ref{thm:homdimOfLimit}. 
\end{proof}

Proposition \ref{prop:reducConnected} and Theorem \ref{thm:homdimOfLimit} each feature a statement about the relative compactness of a sequence $(G_n,d_{Q_n}/j_n)$ and the form of its ultralimit. Underpinning this aspect of these results is the following statement.

\begin{prop}\label{prop:LimIsAGroup}
Let $(j_n)$ be a sequence of integers going to infinity, and let $(\Gamma_n,P_n)$ be a sequence of Cayley graphs such that either $\Gamma_n$ is a finitely generated nilpotent group of bounded step and $P_n$ is an ordered progression of bounded rank, or $\Gamma_n$ is a connected nilpotent Lie group of bounded step and $P_n$ is a {\it real} ordered progression of bounded rank. Let $d_{P_n}$ denote the word metric associated to $P_n$. Then the sequence $(\Gamma_n,\frac{1}{j_n}d_{P_n})$ is relatively compact for the GH topology and $\lim_{\omega} (\Gamma_n,\frac{1}{j_n}d_{P_n})$ is a connected nilpotent Lie group equipped with a geodesic metric.
\end{prop}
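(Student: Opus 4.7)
The overall strategy is to realize each $(\Gamma_n, P_n)$ as a Lipschitz quotient of the free nilpotent (Lie) group equipped with its canonical ordered progression, and then to analyze the free case explicitly using the tools of Section \ref{sec:free.nilp}. Using Lemma \ref{lem:finite.omega.const} I would first restrict to an $\omega$-generic subsequence on which the rank $d$ of $P_n$ and the nilpotency step $s$ of $\Gamma_n$ are constant. Writing $N = N_{d,s}$ in the discrete case or $N = N_{d,s}^{\R}$ in the Lie case, the universal property of the free object yields, for each $n$, a surjective homomorphism $\pi_n \colon N \to \Gamma_n$ sending the standard generators $x_1,\ldots,x_d$ to the generators of $P_n$. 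By construction $\pi_n$ sends the canonical progression $P^{(n)} := P_\ord^{(\R)}(x;L^{(n)})$ onto $P_n$, and is therefore an equivariant $1$-Lipschitz surjection between the corresponding Cayley graphs.

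For relative compactness, by Corollary \ref{cor:doubloingImpliesGHrc} it suffices to establish uniform metric doubling of $(\Gamma_n, d_{P_n}/j_n)$. Since $\pi_n$ is a $1$-Lipschitz equivariant surjection, the doubling property transfers from $(N, d_{P^{(n)}}/j_n)$ to $(\Gamma_n, d_{P_n}/j_n)$. In the real case, this is exactly Lemma \ref{lem:guiv.doub} combined with Lemma \ref{prop:doublingImpliesMetricDoubling}; in the discrete case, Corollary \ref{cor:prop:FreeReducCase} provides a uniform quasi-isometry to the real case, and Lemma \ref{lem:metricdoublingQI} then transfers the metric doubling property. The ultralimit is geodesic, hence connected, by Remark \ref{rem:ScalingLength}, and proper (hence locally compact) by the doubling bound.

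The central task is then to establish the Lie group structure of the limit. I would first treat $N_\infty := \lim_\omega(N^{\R}, d_{P^{(n)}}/j_n)$ directly. By Lemma \ref{lem:freeequivalences}, the ball of radius $1$ in $(N^{\R}, d_{P^{(n)}}/j_n)$ corresponds, up to bounded error, to $\exp(B_\R(\overline f;(M^{(n)})^\chi))$, where I set $M_i^{(n)} := (j_n L^{(n)})^{\chi(i)}$ to encode the characteristic scale of the $f_i$-coordinate. After the anisotropic rescaling $\tilde\lambda_i := \lambda_i/M_i^{(n)}$, the key combinatorial observation is that the weight vector $\chi$ is additive under Lie brackets: whenever $f_k$ appears in $[f_i,f_j]$, one has $\chi(k)=\chi(i)+\chi(j)$ and hence $M_k^{(n)} = M_i^{(n)} M_j^{(n)}$. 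Consequently, when expressed in rescaled coordinates both the Lie bracket and (by the nilpotency of $\n$) the full BCH formula have structure constants independent of $n$. Passing to the $\omega$-limit and invoking continuity, $N_\infty$ is canonically identified with $\R^r$ (where $r = \dim N^{\R}$) equipped with the nilpotent Lie group structure inherited from $N^{\R}$ via BCH, and in particular $N_{\omega,0} \lhd N_\omega$.

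Finally, I would descend to $\Gamma_\infty$ by applying Proposition \ref{prop:ultralimitPhi} and Corollary \ref{corultralimitPhi} to $\pi_n$ (after first passing to the real case via Corollary \ref{cor:prop:FreeReducCase} in the discrete setting). The resulting $\pi_\infty \colon N_\infty \to \Gamma_\infty$ is a Lipschitz open continuous homomorphism, and is surjective because $\Gamma_\infty$ is connected; in particular the group law on $\Gamma_\infty$ is well defined and $\Gamma_\infty \cong N_\infty/\ker\pi_\infty$ is the quotient of a Lie group by a closed normal subgroup, hence itself a nilpotent Lie group of step at most $s$. The main obstacle in the plan is the Lie group structure of $N_\infty$: a priori the component-wise rescalings $M_i^{(n)}$ could interact badly across the strata of $\n$, but the additivity of $\chi$ under Lie brackets is precisely what keeps the rescaled BCH structure constants stable under the $\omega$-limit.
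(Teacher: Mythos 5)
Your proposal is correct and follows essentially the same strategy as the paper's proof: lift to the free nilpotent (Lie) group $N$ via the canonical surjection $\pi_n$, transfer doubling from $N$ to $\Gamma_n$, identify the ultralimit of the free sequence with $N^\R$ itself, verify that $\Gamma_{\omega,0}\lhd\Gamma_\omega$ by lifting sequences to $N^\R$, and descend via Corollary \ref{corultralimitPhi}; in the discrete case both reduce to the connected case via Corollary \ref{cor:prop:FreeReducCase}. The only presentational difference is that you perform the anisotropic Lie-algebra rescaling $\tilde\lambda_i=\lambda_i/M^{(n)}_i$ and track the BCH structure constants explicitly (using additivity of $\chi$ under brackets), whereas the paper achieves the identical normalization by applying the automorphism $x_i^{L^{(n)}_i}\mapsto x_i$ and then cites Lemma \ref{lem:freeequivalences} for the resulting commutator estimate; these are the same idea, and your more detailed BCH analysis is a harmless (arguably clarifying) elaboration of the paper's ``easy computation.''
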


\begin{proof}
We first treat the connected case.
Assume therefore that $P_n=P^{\R}_\ord(u,L)$,  where $u=(u_1,\ldots, u_d)\in \Gamma_n^d$ and $L^{(n)}=(L_1^{(n)},\ldots,L_d^{(n)})\in (1,\infty)^d$, and we assume that $\Gamma_n$ has step at most $s$. Let $N_{\R}$ be the free connected nilpotent Lie group of step $s$ and rank $d$, let $x_1,\ldots x_r$ be elements inducing a basis of $N_{\R}/[N_{\R},N_{\R}]$ and let $\pi_n:N_{\R}\to \Gamma_n$ be the canonical  projection mapping $x_i$ to $u_i$. To prove the relative compactness of $(G_n,P_n)$ it is then sufficient to prove it for $(N_{\R},P^{\R}_\ord(x,L^{(n)}))$. Applying the automorphism of $N_{\R}$ that maps $x_i^{L_i^{(n)}}$ to $x_i$ reduces this statement to the case where $L_i^{(n)}=1$. However, as in the proof of Lemma \ref{lem:guiv.doub}, it follows from \cite[Theorem II.1]{Gui} and the argument of \cite[Lemma 2.2]{bt}
that $(N_{\R},P_\ord^{\R}(x;1))$ has the doubling property, and so its relative compactness is a consequence of Corollary \ref{lemPrelim:compact}.

The ultralimit \[\pi_{\infty}: \lim_{\omega}(N_{\R},d_{P_\ord^{\R}(x;L^{(n)})}/j_n)\simeq N_{\R}\to \lim_{\omega}(\Gamma_n,d_{P_\ord^{\R}(u;L^{(n)})}/j_n)\] of the sequence $\pi_n$ induces a transitive action by isometries of $N_{\R}$ on $\lim_{\omega}(\Gamma_n,d_{P_\ord^{\R}(u;L^{(n)})}/j_n)$.  Therefore, to conclude it is enough to show that the limit is a group, or equivalently that for all sequences $g_n,h_n\in \Gamma_n$ such that $d(g_n,e_n)=O_{\omega}(1)$, and  $d(h_n,e_n)=o_{\omega}(1)$, then $d([g_n,h_n],e_n)=o_{\omega}(1)$. Once again, up to lifting these sequences in $N_{\R}$, we are reduced to proving this statement for sequences in $(N_{\R},P^{\R}_\ord(x,1))$, for which it follows by an easy computation based on Lemma \ref{lem:freeequivalences}. 

The discrete case similarly reduces to the case of $N=N_{r,s}$, the free step $s$ nilpotent group generated by $x_1,\ldots x_r$, equipped with the word metric associated to $P_\ord(x;L)$. Reasoning as in the proof of Proposition \ref{prop:FreeReducCase}, we see that the injection $(N,P_\ord(x;L^{(n)}))\to (N_{\R},P^{\R}_\ord(x,L^{(n)}))$ is a $(O(1),O(1))$-quasi-isometry, so that the proof reduces to the connected case.
 \end{proof}

\section{Reduction to simply connected nilpotent Lie groups}\label{section:reducConnected}


The aim of this section is to prove Proposition \ref{prop:reducConnected}. An important tool in the proof is the notion of a \emph{Lie progression}.
\begin{definition}[Lie coset progression]\label{def:Lie.CP}
Let $m\in \N\cup \{\infty\}$, let $\Gamma$ be a group, and let $y_1,\ldots,y_d\in\Gamma$. An ordered coset progression $HP_\ord(y;L)$ is said to be an \emph{$m$-proper Lie coset progression of rank $d$ and homogeneous dimension $D$ in $C$-upper-triangular form} if there exists a connected, simply connected nilpotent Lie group $G$ with Lie algebra $\g$ of homogeneous dimension $D$, with basis $e_1,\ldots,e_d$ such that $(e;L)$ is in $C$-upper-triangular form and $\exp\langle e_1,\ldots,e_d\rangle$ is a subgroup of $G$, and such that, writing $u_i=\exp e_i$ for each $i$, there exists a map $\varphi:\langle u_1,\ldots,u_d\rangle\to\langle y_1,\ldots y_d\rangle$ that is a homomorphism modulo $H$ such that $\varphi(u_i)=y_i$ for each $i$ and such that $P_\ord(u;L)$ is $m$-proper with respect to $\varphi$ modulo $H$. We also define the \emph{injectivity radius} of this Lie coset progression to be the supremum of those $j\in\R$ such that $\varphi$ is injective modulo $H$ on $P_\ord(u;L)^j$.
If $H$ is trivial then we say simply that $P_\ord(y;L)$ is an \emph{$m$-proper Lie progression of rank $d$ and homogeneous dimension $D$ in $C$-upper-triangular form}. 
\end{definition}

\begin{remark}\label{rem:two.types.of.tri.form}It follows from  \cite[Proposition 4.1]{proper.progs} and that an $m$-proper Lie coset progression of rank $d$ in $C$-upper-triangular form is $m$-proper and in $O_{C,d}(1)$-upper-triangular form as an ordered coset progression.
\end{remark}

\begin{remark}\label{rem:inj/proper}
Adopting the notation from Definition \ref{def:Lie.CP}, it follows from Remark \ref{rem:two.types.of.tri.form} and \cite[Lemma 2.1]{proper.progs} that if $HP$ is an $m$-proper Lie coset progression then it has injectivity radius $\Omega_{C,d}(m^{1/d})$.
\end{remark}

The following result essentially reduces the proof of Proposition \ref{prop:reducConnected} to the study of Lie progressions.

\begin{theorem}\label{thm:ReducTorsionFreeD}
Let $D>0$. Let $(\Gamma_n,S_n)$ be a sequence of Cayley graphs such that
\begin{equation}\label{eq:ReducTorsionFreeD}
|S_n^n|\ll_\omega n^D|S_n|.
\end{equation}
Then there exist integers $t\ll_D1$ and a sequence of positive integers $k_n=o_\omega(n)$ such that for $\omega$-almost every $n$ there exists a Lie coset progression $H_nP_n\subset\Gamma_n$ of rank at most $D$ in $O_D(1)$-upper-triangular form with injectivity radius $\Omega_\omega(n/k_n)$, and a finite subset $X_n\subset S_n^t$ such that $|X_n|\ll_D1$ and such that for every $r\ge1$ we have
\[
X_nH_nP_n^{r}\subset S_n^{rk_n}\subset X_nH_nP_n^{O_D(r)}
\]
Moreover, if $|S_n^n|\ll_\omega n^D$ then $\omega$-almost every $P_n$ has homogeneous dimension at most $D$.
\end{theorem}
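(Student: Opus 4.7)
The plan is to combine a pigeonhole step with the Lie version of the Breuillard--Green--Tao structure theorem from our previous paper \cite{proper.progs}, and then to extract the rank bound (and, under the stronger hypothesis, the homogeneous dimension bound) from standard growth estimates for Lie coset progressions.

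First I would apply a pigeonhole argument to the hypothesis $|S_n^n| \ll_\omega n^D|S_n|$ in the spirit of \cite[Lemma 8.4]{proper.progs}, tuning the parameters so as to produce a sequence $k_n$ with $k_n \le n^{1/C}$ for a sufficiently large constant $C = C(D)$ (in particular $k_n = o_\omega(n)$) such that $|S_n^{2k_n}|/|S_n^{k_n}| \ll_{C,D} 1$. By \cite[Lemma 2.2]{bt}, the set $S_n^{2k_n}$ is then an $O_{C,D}(1)$-approximate group. I would then invoke the Lie version of the structure theorem from \cite{proper.progs} to obtain an $m_n$-proper Lie coset progression $H_nP_n$ of rank $d_n \ll_D 1$ and in $O_D(1)$-upper-triangular form, with $m_n\to_\omega\infty$ (so that the injectivity radius is $\Omega_\omega(n/k_n)$, by Remark \ref{rem:inj/proper}), together with a finite set $X_n \subset S_n^t$ with $|X_n|, t \ll_D 1$ satisfying the base inclusions $X_nH_nP_n \subset S_n^{tk_n} \subset X_nH_nP_n^{O_D(1)}$.

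To upgrade to nested inclusions for every $r \ge 1$, I would iterate the base inclusions. The key point is that $(X_nH_nP_n)^r$ can be rewritten as $X_n'H_nP_n^{O_D(r)}$ for a suitable $X_n'$ of cardinality $O_D(1)^r$, by redistributing the $X_n$-factors using commutator estimates inside the approximate group $H_nP_n$, and then absorbing $X_n'$ back into a single copy of $X_n$ by exploiting the doubling of $H_nP_n^{O_D(r)}$. I expect this bookkeeping to be the main technical obstacle.

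For the rank bound, I would exploit the relative lower bound $|H_nP_n^r|/|H_nP_n| \gg_D r^{d_n}$ (already visible in the abelianisation) at scales within the injectivity radius. Combining with the nested inclusions and the hypothesis yields $(n/k_n)^{d_n} \ll_D |S_n^n|/|S_n^{k_n}| \le |S_n^n|/|S_n| \ll n^D$, so that $d_n \le D\log n/\log(n/k_n) + O_D(1/\log(n/k_n))$. For $k_n \le n^{1/C}$ with $C$ chosen large enough compared to $D$, the right-hand side is strictly less than $D+1$ for large $n$, and the integrality of $d_n$ forces $d_n \le D$. For the homogeneous dimension bound under the strong hypothesis $|S_n^n| \ll_\omega n^D$, I would use Guivarc'h's polynomial volume estimate in the associated simply connected nilpotent Lie group to obtain the \emph{absolute} lower bound $|H_nP_n^r| \gg_D r^{D_P}$ for $r$ within the injectivity radius (and at least some constant), where $D_P$ denotes the homogeneous dimension of the Lie group. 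Combined with $|H_nP_n^{n/k_n}| \ll_D |S_n^n| \ll n^D$, this gives $(n/k_n)^{D_P} \ll n^D$ and forces $D_P \le D$ by the same argument. The weak-versus-strong dichotomy thus reflects precisely the difference between the relative and absolute volume bounds: the relative bound only detects the rank, whereas the absolute bound detects the full homogeneous dimension of the ambient Lie group.
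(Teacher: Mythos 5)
There is a genuine gap at the step where you claim to produce, directly from the structure theorem in \cite{proper.progs}, an $m_n$-proper Lie coset progression with $m_n\to_\omega\infty$ and with inclusion constants depending only on $D$. The structure theorem gives, for each \emph{fixed} $m$, an $m$-proper progression satisfying $A\subset HP\subset A^{O_{K,m}(1)}$; if you let $m=m_n\to\infty$, the exponent $O_{K,m}(1)$ blows up with $n$ and the inclusions are no longer uniform, which wrecks the definition of $k_n$. Moreover, even granting $m_n\to_\omega\infty$, Remark \ref{rem:inj/proper} only gives injectivity radius $\Omega_{C,d}(m_n^{1/d})$, which need not be $\Omega_\omega(n/k_n)$; your parenthetical remark conflates ``tends to infinity'' with the specific rate needed. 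The missing ingredient is precisely Proposition \ref{prop:dim.reduc.induc}: one applies Proposition \ref{prop:inverse} (a form of the structure theorem that is calibrated to deliver the iterated inclusions for free, so your worry about redistributing $X_n$-factors does not arise), fixes the rank by Lemma \ref{lem:finite.omega.const}, and then, whenever the injectivity radius is $o_\omega(n/k_n)$, one replaces the progression by one of strictly smaller rank via Propositions \ref{prop:Powergood} and \ref{prop:reduce.dim.when.not.proper}, at the cost of enlarging $k_n$ within $o_\omega(n)$. This terminates because the rank is bounded, and it is the only mechanism in the proof that produces the stated injectivity radius.

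This gap then propagates to your rank bound: your inequality $d_n\log(n/k_n)\le D\log n+O_D(1)$ only forces $d_n\le D$ if $k_n\le n^{1/C}$ for a suitably large $C$, but the rank-reduction process above replaces $k_n$ by a sequence that is merely $o_\omega(n)$, so you cannot assume polynomial smallness. The paper avoids this by a different bookkeeping: it forms $T_n=S_n^{3t}\cap H_nP_n^{(d+1)\eta}$, pulls it back to $\tilde T_n$ in the lattice $\Lambda_n$ of the ambient simply connected Lie group, and then uses the inclusion $T_n^{k_n}\subset H_nP_n^\rho$ (with $\rho\ll_D1$, via Lemmas \ref{lem:T>HP}--\ref{lem:T<HP}) to ensure that $\tilde T_n^{q_n}$ is still within the injectivity radius for some $q_n\asymp_\omega n$, \emph{regardless of how close $k_n$ is to $n$}. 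The relative lower bound from Proposition \ref{prop:growth.lower.bound.dim} applied to $\tilde T_n^{q_n}$ then gives $|S_n^n|\gg_\omega n^d|S_n|$ directly, and the absolute lower bound from Lemma \ref{lem:growthLowerBound} gives $|S_n^n|\gg_\omega n^{\hdim G_n}$, yielding the two conclusions without any assumption on how fast $k_n$ grows. Your intuition about the relative/absolute dichotomy is exactly right; what needs repair is the mechanism by which the injectivity radius is secured, and the growth comparison should be run through $\tilde T_n$ rather than through $H_nP_n$ directly (in particular the factor $|H_n|$, which you elide, is handled cleanly by working with $\tilde T_n$ and cancels at the end).
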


We spend the first part of this section proving Theorem \ref{thm:ReducTorsionFreeD}. The starting point is the following result from our first paper.
\begin{prop}[{\cite[Proposition 8.3]{proper.progs}}]\label{prop:inverse}
Let $M,D'>0$, and let $S$ be a finite symmetric generating set for a group $G$ such that $1\in S$. Then there exists $N'=N'_{M,D'}$ such that if $|S^{k}|\le M{k}^{D'}|S|$ for some $k\ge N'$ then there exist integers $t,\eta\ll_{D'}1$, a set $X\subset S^{t}$ such that $|X|\ll_{D'}1$, and a $1$-proper Lie coset progression $HP$ of rank at most $O_{D'}(1)$ in $O_{D'}(1)$-upper-triangular form such that 
\begin{equation}\label{eq:XHPandS}
XHP^r\subset S^{rk}\subset XHP^{\eta r},
\end{equation}
for every $r\in\N$.
\end{prop}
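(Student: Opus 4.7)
The plan is to proceed in three stages. First, I would use a pigeonhole argument to extract from the hypothesis $|S^k|\le Mk^{D'}|S|$ a slightly smaller scale at which $S$ has bounded doubling. Concretely, if $|S^{5\ell}|/|S^\ell|$ were larger than some sufficiently large $K=K(D')$ for every $\ell$ in a long geometric sequence inside $[1,k]$, then iterating would force $|S^k|/|S|$ to exceed $k^{D'}$; hence there exists $\ell\in[k^{1/4},k^{1/2}]$ (say) with $|S^{5\ell}|/|S^\ell|\ll_{D'}1$, which by the tripling lemma in \cite[Lemma 2.2]{bt} makes $S^{2\ell}$ an $O_{D'}(1)$-approximate group. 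Invoking Theorem \ref{thm:bt} (the Breuillard--Tointon uniform-doubling result) then propagates this to bounded doubling at every scale $m\ge\ell$; in particular $S^{2k}$ is itself an $O_{M,D'}(1)$-approximate group.

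Second, I would apply the Breuillard--Green--Tao structure theorem stated in Section \ref{sec:background} to $A=S^{2k}$. This yields an $\Omega_{D'}(1)$-proper ordered coset progression $HP_\ord(u;L)\subset A^4=S^{8k}$, of rank and step $O_{D'}(1)$ and in $O_{D'}(1)$-upper-triangular form, together with $X\subset\langle A\rangle$ with $|X|\ll_{D'}1$ such that $A\subset X H P_\ord(u;L)$. Since $A\subset S^{2k}$, one can replace $X$ by a set still of size $O_{D'}(1)$ that is contained in $S^{t}$ for some $t\ll_{D'}1$ (for instance, by multiplying each coset representative on the right by an element of $HP$ chosen so that the product lies in $S^{2k}$, and then expressing the result as a product of at most $8$ elements of $S^k$; a little more care, or replacing $S^k$ by $S^{k'}$ for $k'\asymp_{D'}k$, suffices).

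Third, and this is the heart of the argument and where the machinery of \cite{proper.progs} enters, I would upgrade the ordered coset progression to a \emph{Lie} coset progression in the sense of Definition \ref{def:Lie.CP}. Given a tuple $(u;L)$ in $C$-upper-triangular form, the upper-triangularity relations \eqref{eq:C-upp-tri} are precisely what is needed to define a compatible nilpotent Lie algebra structure: one forms the free nilpotent Lie algebra on symbols $e_1,\ldots,e_d$, imposes the relations $[e_i,e_j]=\sum_{k>j}c_{ij}^k e_k$ read off from \eqref{eq:C-upp-tri} (after a dilation making the constants $O_{D'}(1)$), and checks that the resulting simply connected Lie group $G$ with $u_i=\exp e_i$ admits a well-defined homomorphism $\varphi:\langle u_1,\ldots,u_d\rangle\to\langle y_1,\ldots,y_d\rangle$ modulo $H$ sending $u_i$ to the original generators $y_i$. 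Properness of the original ordered coset progression, combined with the techniques for improving properness from \cite{proper.progs}, then gives $1$-properness of the resulting Lie coset progression, at the cost only of shrinking $L$ by an $O_{D'}(1)$ factor (which is absorbed into $\eta$).

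Finally, I would verify the two containments $XHP^r\subset S^{rk}\subset XHP^{\eta r}$. The lower containment follows because $HP\subset S^{8k}$, so $HP^r\subset S^{8kr}$ and by rescaling generators by an $O_{D'}(1)$ factor we can arrange $HP^r\subset S^{rk}$. The upper containment uses that $S^{rk}\subset(S^k)^r\subset(XHP)^r$ and then repeatedly applies $XHP\cdot HP\subset XHP^{O_{D'}(1)}$ via the approximate-normality inherent in a $K$-approximate group structure to collapse this back to $XHP^{\eta r}$ for some $\eta\ll_{D'}1$. The main obstacle I expect is the bookkeeping in the third step: making the Lie-algebraic structure constants well-defined and genuinely $O_{D'}(1)$ despite the freedom in choosing representatives of the basic commutators, while simultaneously preserving $1$-properness and the fact that $X$ lives in a bounded power of $S$. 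This is exactly what is handled carefully in \cite{proper.progs}, and the bulk of a complete proof would consist in importing those lemmas verbatim.
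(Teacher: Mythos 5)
The paper itself does not reprove this proposition; it cites \cite[Proposition 8.3]{proper.progs} and records two small modifications (using \cite[Theorem 7.2]{proper.progs} in place of \cite[Theorem 1.8]{proper.progs} to obtain the Lie-progression conclusion, and dropping a generator-deletion step). Your proposed architecture --- pigeonhole to a bounded-doubling scale, pass to an $O_{D'}(1)$-approximate group, apply Breuillard--Green--Tao to get an ordered coset progression, then upgrade it to a Lie coset progression using the machinery of \cite{proper.progs} --- is the right shape and presumably tracks that proof.

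There is, however, a genuine gap in the step where you claim to replace $X$ by a set of the same size lying in $S^{t}$ with $t\ll_{D'}1$. The covering set from BGT lives a priori only in $\langle A\rangle$; the usual Ruzsa covering trick lets one take $X\subset A(HP)^{-1}\subset S^{O(k)}$, which is a power of $S$ growing with $k$, not a bounded power. Your suggested fix --- multiplying each $x\in X$ on the right by an element of $HP$ so that the product lands in $S^{2k}$, then writing it as a product of at most $8$ elements of $S^{k}$ --- still leaves $X\subset S^{O(k)}$, since a ``bounded number of elements of $S^{k}$'' is not the same as a bounded power of $S$. And the constraint $X\subset S^{t}$ with $t\ll_{D'}1$ is not cosmetic: it is used essentially in Lemmas \ref{lem:T>HP} and \ref{lem:T<HP} and in the proof of Theorem \ref{thm:ReducTorsionFreeD}, where one forms $T_n=S_n^{3t}\cap H_nP_n^{(d+1)\eta}$, a set that loses all content if $t$ is allowed to grow with $k$. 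What your sketch is missing is roughly the following: once the two-sided comparison \eqref{eq:XHPandS} holds for all $r\in\N$, the subgroup $\langle HP\rangle$ is forced to have index at most $|X|\ll_{D'}1$ in $\Gamma$, and then a Schreier-type argument produces a full set of coset representatives inside $S^{|X|}$; the boundedness of $t$ comes from the bounded index, not from $k$. Carrying this out while simultaneously preserving \eqref{eq:XHPandS}, $1$-properness and the upper-triangular form is exactly the kind of bookkeeping that is actually done in the proof of \cite[Proposition 8.3]{proper.progs}, and cannot be dispatched by ``a little more care'' in the way you indicate.
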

\begin{proof}[Remarks on the proof]
The proposition here does not quite follow from the statement of \cite[Proposition 8.3]{proper.progs}, as that proposition does not specify that the progression of the conclusion should be a Lie progression. To obtain this extra information requires two minor modifications of the proof. The first modification is to replace \cite[Theorem 1.8]{proper.progs} with \cite[Theorem 7.2]{proper.progs} in the proof (\cite[Theorem 7.2]{proper.progs} says explicitly that the progression in the conclusion is a Lie progression, whereas this is suppressed in the statement of \cite[Theorem 1.8]{proper.progs}). The second modification is to remove from the proof of \cite[Proposition 8.3]{proper.progs} the stage where we delete generators that are not necessary for \cite[(8.5)]{proper.progs} to hold.
\end{proof}

The next result allows us to modify the progression coming from Proposition \ref{prop:inverse} to make it proper in some sense.
\begin{prop}\label{prop:dim.reduc.induc}
Let $(\Gamma_n)$ be a sequence of groups, let $\eta\in\N$, and suppose that for $\omega$-almost every $n$ we have a finite symmetric generating set $S_n$ for $\Gamma_n$, a finite subset $X_n$ of $\Gamma_n$, and a Lie coset progression $H_nP_n$ in $\Gamma_n$ of rank $d$ in $C$-upper-triangular form such that
\begin{equation}\label{eq:dim.reduc.induc.assump}
X_nH_nP_n\subset S_n^{k_n r}\subset X_nH_nP_n^{\eta r}
\end{equation}
for every $r\in\N$. Suppose that the injectivity radius of $H_nP_n$ is $o_\omega(n/k_n)$. Then for $\omega$-almost every $n$ there exists $k_n'=o_\omega(n)$ and a Lie coset progression $H_n'P_n'$ of rank strictly less than $d$ and injectivity radius $\Omega_\omega(n/k_n')$ in $O_d(1)$-upper-triangular form such that
\begin{equation}\label{eq:dim.reduc.induc.concl}
X_nH_n'(P_n')^r\subset S_n^{k_n' r}\subset X_nH_n'(P_n')^{O_{C,d}(\eta r)}
\end{equation}
for every $r\in\N$. 
\end{prop}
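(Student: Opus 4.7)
My strategy mirrors Lemma \ref{lem:radiusFreedom} from the abelian case, with adaptations for the nilpotent setting. Since the injectivity radius of $H_n P_n$ is $o_\omega(n/k_n)$, I first fix a sequence $j_n=o_\omega(n/k_n)$ such that for $\omega$-almost every $n$ there is a tuple $\ell^{(n)}\in\Z^d$ with $|\ell_i^{(n)}|\le j_nL_i^{(n)}$ and $w_n:=(u_1^{(n)})^{\ell_1^{(n)}}\cdots(u_d^{(n)})^{\ell_d^{(n)}}$ nontrivial in $G_n$ but $\varphi_n(w_n)\in H_n$; here I write $u_i^{(n)}=\exp e_i^{(n)}$ as in Definition \ref{def:Lie.CP}. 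Let $d^*_n$ be the largest index with $\ell_{d^*_n}^{(n)}\ne 0$ and set $k_n':=Ck_nj_n=o_\omega(n)$ for a constant $C=O_{C,d}(\eta)$ to be chosen later. The key rewriting rule is then $(u_{d^*_n}^{(n)})^{\ell_{d^*_n}^{(n)}}\equiv(u_{d^*_n-1}^{(n)})^{-\ell_{d^*_n-1}^{(n)}}\cdots(u_1^{(n)})^{-\ell_1^{(n)}}\pmod{H_n}$.

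To construct $H_n'P_n'$, I let $\mathfrak{i}_n$ be the Lie algebra ideal of $\g_n$ generated by $\log w_n$ together with every other element coming from a short relation in the kernel of $\varphi_n$ modulo $H_n$. The quotient $\g_n':=\g_n/\mathfrak{i}_n$ is a nilpotent Lie algebra of dimension strictly less than $d$, of homogeneous dimension at most $\hdim(\g_n)$, and inheriting the upper-triangular filtration with a possibly larger triangularity constant. I would pick a basis $e'$ of $\g_n'$ adapted to this filtration by Gaussian-eliminating the generators of $\mathfrak{i}_n$ from $e^{(n)}$, and carry over the corresponding lengths to get $L'$. Setting $u_i':=\exp e_i'$ and $P_n':=P_\ord(u';L')$, I would define $H_n'$ as the subgroup of $\Gamma_n$ generated by $H_n$ together with the $\varphi_n$-images of bounded-size representatives of the short-relation lattice; $H_n'$ would be finite because the $1$-properness of the original $P_n$ plus the homogeneous-dimension bound cap the number of necessary enlargements by $O_{C,d}(1)$.

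The outer inclusion in (\ref{eq:dim.reduc.induc.concl}) is verified by taking any $g\in S_n^{k_n'r}\subset X_nH_nP_n^{C\eta j_nr}$ and writing it in normal form $xh(u_1^{(n)})^{a_1}\cdots(u_d^{(n)})^{a_d}$ with $|a_i|\ll_{C,d}\eta j_n r L_i^{(n)}$. Dividing $a_{d^*_n}=q\ell_{d^*_n}^{(n)}+t$ with $|t|<|\ell_{d^*_n}^{(n)}|$ and $|q|\le|a_{d^*_n}|/|\ell_{d^*_n}^{(n)}|$, the rewriting rule replaces $(u_{d^*_n}^{(n)})^{q\ell_{d^*_n}^{(n)}}$ by a word in lower-indexed generators times $q$ elements of $H_n$; the residue $(u_{d^*_n}^{(n)})^t$ is absorbed into $H_n'$ by the construction of $\mathfrak{i}_n$; and commutator corrections encountered while returning to normal form in $G_n'$ land in the span of generators of index greater than $d^*_n$ by upper-triangularity, being controlled inductively on the nilpotency class. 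This places $g$ in $X_nH_n'(P_n')^{O_{C,d}(\eta r)}$. The inner inclusion is easier: a lift of each $u_i'$ to $G_n$ lies in $P_n^{O_{C,d}(1)}$, hence in $S_n^{O_{C,d}(k_n)}$, and choosing $C$ sufficiently large yields $(P_n')^r\subset S_n^{k_n'r}$.

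The main obstacle is keeping $H_n'$ finite: the residue $(u_{d^*_n}^{(n)})^t$ is not torsion in general, so it must be absorbed into the quotient $\g_n'$ rather than added to $H_n$ directly, which forces the careful choice of $\mathfrak{i}_n$ to contain every short relation and requires a dimension count leveraging the bound on the rank of $P_n$ and the homogeneous-dimension structure of $\g_n$. Granting this, the injectivity radius of $H_n'P_n'$ is $\Omega_\omega(n/k_n')$, since any shorter relation in $P_n'$ would pull back to a short relation in $P_n$ missed by $\mathfrak{i}_n$, contradicting its construction.
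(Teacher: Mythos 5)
Your approach diverges from the paper's and contains several genuine gaps.

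The paper's proof is modular: it first runs an outer induction on the rank $d$, which reduces the task to proving the conclusion \emph{without} the injectivity-radius claim (since rank-$0$ Lie coset progressions have infinite injectivity radius, the induction closes). Then it applies Proposition~\ref{prop:Powergood} to replace $H_nP_n^{j_n}$ by a Lie coset progression $H_nQ_n$ of the same rank, in controlled upper-triangular form, which is \emph{not $\xi$-proper for a constant $\xi\ll_{C,d}1$} --- this is the crucial normalisation that converts the ``there is a relation at scale $j_n$'' hypothesis into ``there is a relation at bounded scale.'' Finally it invokes Proposition~\ref{prop:reduce.dim.when.not.proper} as a black box to reduce the rank. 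All the combinatorial rewriting and normal-form bookkeeping you attempt by hand is contained in those two black boxes from \cite{proper.progs}.

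Your version tries to build the quotient directly, and this is where the gaps appear. First, the construction of the ideal $\mathfrak{i}_n$ (``generated by $\log w_n$ together with every other element coming from a short relation'') is not well-defined: you need a quantitative threshold for ``short,'' and after quotienting you also need to re-verify that the new lengths $L'$ and basis $e'$ are in $O_d(1)$-upper-triangular form, which ``Gaussian elimination'' does not give you for free. This is precisely the content of Proposition~\ref{prop:reduce.dim.when.not.proper}. Second, your treatment of the residue $(u_{d^*_n}^{(n)})^t$ does not work: you correctly note it is not torsion and cannot be added to $H_n$, but ``absorbing it into the quotient $\g_n'$'' would mean killing $e_{d^*_n}^{(n)}$ entirely, which over-collapses the direction (equivalently, it would force $t=0$; if $t\neq 0$ you cannot quotient by a non-lattice direction and keep a lattice). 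The correct move is not to kill it but to account for it metrically, as in Lemma~\ref{lem:radiusFreedom}, or --- in the non-abelian setting --- to sidestep the issue entirely via the two cited propositions. Third, your claim that $H_n'$ is finite ``because the $1$-properness plus the homogeneous-dimension bound cap the number of enlargements'' does not hold up; the homogeneous dimension plays no role in the finiteness of $H_n'$, and no actual count is given. Finally, your injectivity-radius argument (``any shorter relation in $P_n'$ would pull back to a short relation in $P_n$'') presupposes a clean correspondence of short relations under the quotient, which you have not established and which fails if the lengths $L'$ are not chosen compatibly; the paper avoids this delicate lifting entirely via the outer induction on $d$.
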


There are two main ingredients in our proof of Proposition \ref{prop:dim.reduc.induc}. The first is one of the key results from our previous paper, as follows.
\begin{prop}[{\cite[Proposition 7.3]{proper.progs}}]\label{prop:reduce.dim.when.not.proper}
Let $H_0P_0$ be a Lie coset progression of rank $d$ in $C$-upper-triangular form that is not $m$-proper. Then there exists an $m$-proper Lie coset progression $HP$ of rank strictly less than $d$ and in $O_d(1)$-upper-triangular form such that
\[
H_0P_0\subset HP\subset H_0P_0^{O_{C,d,m}(1)}.
\]
\end{prop}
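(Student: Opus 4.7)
The plan is to apply Proposition \ref{prop:reduce.dim.when.not.proper} to $H_nP_n$ for a carefully chosen properness parameter $m_n$, producing the required Lie coset progression $H_n'P_n'$ of strictly smaller rank.

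First, I will select $m_n$. By Remark \ref{rem:inj/proper} there is a constant $c_0=c_0(C,d)>0$ such that any $m$-proper Lie coset progression of rank $d$ in $C$-upper-triangular form has injectivity radius at least $c_0 m^{1/d}$. Writing $\rho_n$ for the injectivity radius of $H_nP_n$, so that $\rho_n=o_\omega(n/k_n)$ by hypothesis, any $m_n>(\rho_n/c_0)^d$ witnesses that $H_nP_n$ is not $m_n$-proper. I will arrange for $m_n$ to satisfy this inequality while also $m_n\to_\omega\infty$ and $m_n=o_\omega((n/k_n)^d)$; both demands are consistent precisely because $\rho_n^d=o_\omega((n/k_n)^d)$. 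Proposition \ref{prop:reduce.dim.when.not.proper} then supplies a Lie coset progression $H_n'P_n'$ of rank strictly less than $d$, in $O_d(1)$-upper-triangular form, which is $m_n$-proper and satisfies $H_nP_n\subset H_n'P_n'\subset H_nP_n^{C_n}$ with $C_n=O_{C,d,m_n}(1)$. Remark \ref{rem:inj/proper} gives the injectivity radius of this new progression as $\Omega_{C,d}(m_n^{1/d})$.

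I then set $k_n':=k_nC_n$. The forward inclusion $X_nH_n'(P_n')^r\subset S_n^{k_n'r}$ follows by chaining $(H_n'P_n')^r\subset(H_nP_n^{C_n})^r=H_nP_n^{C_nr}$ (using normalisation of $H_n$ by $P_n$) with the hypothesis $X_nH_nP_n^{C_nr}\subset S_n^{k_nC_nr}$. For the reverse inclusion $S_n^{k_n'r}\subset X_nH_n'(P_n')^{O_{C,d}(\eta r)}$, the hypothesis gives $S_n^{k_n'r}\subset X_nH_nP_n^{\eta C_nr}$, and the crux is to absorb the factor $C_n$ into $P_n'$ itself: the proof of Proposition \ref{prop:reduce.dim.when.not.proper} selects the generators of $P_n'$ so that their lengths are on the order of $C_n$ times those of $P_n$ in a suitable basis, whence $P_n^{C_n}\subset(P_n')^{O_{C,d}(1)}$ and iterating yields $H_nP_n^{\eta C_nr}\subset H_n'(P_n')^{O_{C,d}(\eta r)}$. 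Finally, one verifies that $m_n^{1/d}\geq\Omega_\omega(n/k_n')$, so the injectivity radius of $H_n'P_n'$ meets the required lower bound.

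The main obstacle is the delicate balance in choosing $m_n$: it must be large enough to witness non-$m_n$-properness and to force the injectivity radius of $H_n'P_n'$ to be $\Omega_\omega(n/k_n')$, yet small enough that $C_n=O_{C,d,m_n}(1)$ keeps $k_n'=k_nC_n=o_\omega(n)$. Since the constant $O_{C,d,m}(1)$ produced by Proposition \ref{prop:reduce.dim.when.not.proper} grows only polynomially in $m$ (as follows from the construction in \cite{proper.progs}), these three constraints can be reconciled by taking $m_n$ to grow at a rate just below the threshold $(n/k_n)^d$. A secondary technical point is to verify carefully the identification $P_n^{C_n}\subset(P_n')^{O_{C,d}(1)}$, since this is what ultimately allows the exponent in the second inclusion to depend only on $C$, $d$ and $\eta$ rather than on $C_n$.
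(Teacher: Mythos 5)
Your proposal does not prove the statement in question. Proposition \ref{prop:reduce.dim.when.not.proper} concerns a \emph{single} Lie coset progression $H_0P_0$ of rank $d$ in $C$-upper-triangular form that fails to be $m$-proper, and asserts the existence of a lower-rank $m$-proper Lie coset progression $HP$ sandwiching it. Your argument, by contrast, is set up with a sequence $H_nP_n$ together with generating sets $S_n$, subsets $X_n$, parameters $k_n$ and $\eta$, an ultrafilter $\omega$, and conclusions about injectivity radii being $\Omega_\omega(n/k_n')$; this is precisely the statement and conclusion of Proposition \ref{prop:dim.reduc.induc}, not of Proposition \ref{prop:reduce.dim.when.not.proper}. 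Worse, the very first step of your proposal is to \emph{invoke} Proposition \ref{prop:reduce.dim.when.not.proper} itself with a chosen parameter $m_n$ --- one cannot prove a statement by applying it as a black box. You appear to have confused the target proposition with one of its downstream applications. (For the record, in this paper Proposition \ref{prop:reduce.dim.when.not.proper} carries no internal proof at all; it is imported verbatim from \cite[Proposition 7.3]{proper.progs}, so there is no in-paper argument to compare against. An actual proof would have to exploit the failure of $m$-properness to produce a bounded relation in the kernel of the defining map $\varphi$ modulo $H_0$, quotient the ambient simply connected nilpotent Lie group by a suitable saturation of that relation, and exhibit the image of the progression as a Lie coset progression of strictly smaller rank in controlled upper-triangular form; none of that appears here.)

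As a secondary remark, even read as an attempt on Proposition \ref{prop:dim.reduc.induc} your route diverges from the paper's and leaves a gap. The paper first invokes Proposition \ref{prop:Powergood} to replace $H_nP_n$ by $H_nQ_n\approx_{C,d}H_nP_n^{j_n}$ in such a way that $H_nQ_n$ fails to be $\xi$-proper for a threshold $\xi\ll_{C,d}1$ that is \emph{independent of $n$}; Proposition \ref{prop:reduce.dim.when.not.proper} is then applied with the fixed parameter $m=\xi$, so the resulting constant $O_{C,d,\xi}(1)$ is genuinely $O_{C,d}(1)$ and $k_n'\ll_\omega j_nk_n=o_\omega(n)$ comes for free. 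Your plan instead feeds a divergent $m_n$ into Proposition \ref{prop:reduce.dim.when.not.proper} and then has to control $C_n=O_{C,d,m_n}(1)$ as $m_n\to\infty$ and absorb $P_n^{C_n}$ into $(P_n')^{O_{C,d}(1)}$; both steps you justify only by peering inside the cited proof (``grows only polynomially in $m$'', ``selects the generators of $P_n'$ so that their lengths are on the order of $C_n$ times those of $P_n$''), which is not available from the statement alone. The detour through Proposition \ref{prop:Powergood} is exactly what makes such appeals unnecessary.
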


The second ingredient is an important property of Lie progressions in upper-triangular form, namely that a power of such a progression is itself roughly equal to such a progression, as follows.

\begin{prop}\label{prop:Powergood}
Let $G$ be a nilpotent Lie group with Lie algebra $\g$ with basis $e_1,\ldots,e_d$ such that $\Lambda=\langle e_1,\ldots,e_d\rangle$ satisfies $[\Lambda,\Lambda]\subset\Lambda$, and let $L_1,\ldots,L_d\in\N$. Then for each $k\in\N$ there exists a basis $e_1',\ldots,e_d'$ for $\g$ and $L'_1,\ldots,L'_d\in\N$ such that $(e',L')$ is in $1$-upper triangular form and such that, writing $u_i=\exp e_i$ and $u_i'=\exp e_i'$ for each $i$, we have
\[
P_\ord(u;L)^k\approx_{d} P_\ord(u',L').
\]
%
\end{prop}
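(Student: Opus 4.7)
The plan is to pick the new basis $e'_1,\ldots,e'_d$ as a Mal'cev basis of $\g$ adapted to the lower central series, chosen from among the basic commutators in $e_1,\ldots,e_d$, and to take each $L'_m$ as the floor of a small constant times $(kL)^{\chi_m}$, where $\chi_m\in\N^d$ is the weight vector of $e'_m$ viewed as a basic commutator. Concretely, let $s$ be the nilpotency class of $\g$, enumerate the basic commutators $\alpha_1,\ldots,\alpha_r$ in the formal symbols $e_1,\ldots,e_d$ in order of increasing total weight (as in Section \ref{sec:free.nilp}), and evaluate them in $\g$. Since the weight-$w$ basic commutators span $\g_w$, a greedy selection yields indices $i(1)<\cdots<i(d)$ for which $e'_m:=\alpha_{i(m)}$ projects to a basis of $\g_{w(m)}/\g_{w(m)+1}$; this produces a Mal'cev basis ordered by total weight. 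Set $L'_m:=\max(1,\lfloor c(kL)^{\chi_m}\rfloor)$ with $c=c(d,s)>0$ a constant to be chosen.

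To verify the $1$-upper-triangular form for $(u';L')$ with $u'_m=\exp e'_m$, I would compute $[u'_i,u'_j]$ via the Baker--Campbell--Hausdorff formula and observe that its dominant term $[e'_i,e'_j]$ lies in $\g_{|\chi_i|+|\chi_j|}$; expanding in the Mal'cev basis involves only $e'_m$ with $|\chi_m|\ge|\chi_i|+|\chi_j|$, which by our weight-ordering forces $m>j$. The coefficients are bounded by a constant $A=A(\g,e)$ depending only on $\g$ and the original basis, and the higher-order BCH contributions are iterated brackets landing even deeper in the lower central series, so they contribute similarly. The required bound $|a_m|\le L'_m/(L'_iL'_j)\asymp(1/c)(kL)^{\chi_m-\chi_i-\chi_j}$ then holds once $c$ is chosen small enough, since $|\chi_m|\ge|\chi_i|+|\chi_j|$ keeps the right-hand side bounded below by $\Omega(1/c)$.

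For the equivalence $P_\ord(u;L)^k\approx_d P_\ord(u';L')$, I would use Lemma \ref{lem:freeequivalences} via the canonical projection from the free nilpotent Lie group $N_{d,s}^{\R}$ sending $x_i\mapsto u_i$. This yields $P_\ord^{\R}(u;L)^k\approx_{d,s}\exp(B_\R(\overline{\alpha};(kL)^\chi))$, the exponential of the box spanned by all evaluated basic commutators $\alpha_j$ with coefficients bounded by $(kL)^{\chi(\alpha_j)}$; re-expressing each $\alpha_j$ in the Mal'cev basis $e'$ shows this is contained in $\exp(B_\R(e';O((kL)^{\chi})))\approx_{d,s}P_\ord^{\R}(u';L')$, and the reverse inclusion is immediate since each $e'_m$ is itself one of the $\alpha_j$. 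Corollary \ref{cor:prop:FreeReducCase}, applied through the cover, then converts this to the required statement for discrete progressions. The main obstacle is the bookkeeping in the expansion of $\alpha_j$ in the Mal'cev basis: relations in $\g$ need not be homogeneous for the multi-weight grading, so one cannot expect the coefficient of $e'_m$ in $\alpha_j$ to vanish unless $\chi_m=\chi_j$. The lower central filtration is always respected, however, so a nonvanishing coefficient forces $|\chi_m|\ge|\chi_j|$, and the resulting non-homogeneous powers of $(kL_i)$ are absorbed into the constant $c$ using the freedom of the $\approx_d$ equivalence.
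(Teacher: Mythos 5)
Your overall strategy is close to the paper's in spirit (pass through the free nilpotent Lie algebra, control $P_\ord(u;L)^k$ by a ``box'' $B_\R(\overline e;(kL)^\chi)$, then replace the box by a nilbox in $1$-upper-triangular form), but the crucial step --- producing the new tuple $(e';L')$ --- is done by a different mechanism than the paper's, and that mechanism has a genuine gap.

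The gap is in your verification of $1$-upper-triangular form for $(e';L')$ when $e'$ is selected from the evaluated basic commutators of $e_1,\ldots,e_d$. You correctly observe that relations in $\g$ destroy the multi-weight grading, so $[e'_i,e'_j]$ can have a nonzero coefficient $a_m$ on some $e'_m$ with $\chi_m\neq\chi_i+\chi_j$ (only $|\chi_m|\geq|\chi_i|+|\chi_j|$ is guaranteed). The required bound is $|a_m|\leq L'_m/(L'_iL'_j)\asymp(1/c)(kL)^{\chi_m-\chi_i-\chi_j}$. But $\chi_m-\chi_i-\chi_j$ can have negative components even when its sum is nonnegative (e.g.\ $\chi_m=(1,1)$, $\chi_i+\chi_j=(2,0)$), in which case $(kL)^{\chi_m-\chi_i-\chi_j}$ can be made arbitrarily small by taking $L_1$ large and $L_2$ small. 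No choice of constant $c$ depending only on $d$ (or even on $d$ and $\g$) can absorb this, since the failure scales with the tuple $L$. Your final sentence asserts this can be ``absorbed into the constant $c$,'' but that is precisely where the argument breaks. A secondary concern is that a greedy selection of basic commutators that projects to a basis of each graded quotient $\g_w/\g_{w+1}$ need not give a $\Z$-basis of the lattice $\Lambda=\langle e_1,\ldots,e_d\rangle$, only of a finite-index sublattice; this would make $\langle u'\rangle$ a proper subgroup of $\langle u\rangle$, which is incompatible with $P_\ord(u;L)^k\subset P_\ord(u';L')^{O_d(1)}$.

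The paper avoids both problems by working with a convex body rather than with the basic-commutator basis directly. Lemma \ref{lem:convexReal} produces a symmetric convex body $\Omega\subset\g$ satisfying $[\Omega,\Omega]\subset\Omega$, \emph{strictly thick with respect to $\Lambda$}, with $\exp(\Omega)\approx_d P_\ord^\R(u;L)^k$; the bracket-stability of $\Omega$ is established upstairs in the \emph{free} nilpotent Lie algebra, where the multi-grading is preserved, so the difficulty you ran into never arises. Then \cite[Proposition 5.1]{proper.progs} is invoked to manufacture, directly from $\Omega$ and the lattice $\Lambda$, a genuine $\Z$-basis $e'$ of $\Lambda$ and scales $L'$ with $(e';L')$ in $1$-upper-triangular form and $B_\R(e';L')\approx_d\Omega$. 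That lemma is where the work of finding a good lattice basis lives, and it is precisely the piece your argument would need to replace the greedy commutator selection.
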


In proving Proposition \ref{prop:Powergood} we make use of the following lemma. We recall from our previous paper that we say a convex body $\Omega$ in a vector space $V$ is \emph{strictly thick} with respect to a lattice $\Lambda$ if there exists some $\lambda<1$ such that $\lambda\Omega\cap\Lambda$ generates $\Lambda$.

\begin{lemma}\label{lem:convexReal}
In the set-up of Proposition \ref{prop:Powergood}, for all $k\in \N$ there exists a symmetric convex body $\Omega\subset\g$, strictly thick with respect to $\langle e_1,\ldots,e_d\rangle$, such that $[\Omega,\Omega]\subset \Omega$ and $P_\ord^{\R}(u;L)^k\approx_d \exp(\Omega)$.
\end{lemma}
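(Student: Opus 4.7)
The plan is to lift to the free nilpotent Lie algebra, identify $P_\ord^\R(u;L)^k$ with the exponential of a natural ``box'' $\Omega_0\subset\g$ via Lemma \ref{lem:freeequivalences}, and then to enlarge $\Omega_0$ by a coordinate-wise weighting so as to secure exact bracket closure and strict thickness simultaneously without destroying the approximation. Let $s$ be the step of $G$, let $N_\R$ be the free $s$-step nilpotent Lie group on $d$ generators $x_1,\ldots,x_d$ with $f_i=\log x_i$ extended to a list $\overline f=f_1,\ldots,f_r$ of basic commutators, and let $\pi:N_\R\to G$ be the Lie group homomorphism sending $x_i\mapsto u_i$. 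Write $\bar c_i:=\pi(f_i)\in\g$, so $\bar c_i=e_i$ for $i\le d$. Applying $\pi$ to Lemma \ref{lem:freeequivalences}(i) and commuting $\pi$ past $\exp$ gives
\[P_\ord^\R(u;L)^k\approx_d\exp(\Omega_0),\quad\text{where}\quad\Omega_0:=\pi(B_\R(\overline f;(kL)^\chi))\subset\g,\]
and Lemma \ref{lem:real.B.upper-tri} then gives $[\Omega_0,\Omega_0]\subset O_d(1)\Omega_0$; but we need exact bracket containment, and strict thickness of $\Omega_0$ can fail in the edge case $kL_i=1$.

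I would therefore introduce weights $\alpha_i\ge 1$ defined inductively in $|\chi(i)|$. Writing $[f_j,f_\ell]=\sum_i\gamma_{j\ell}^if_i$ in $\n$, set $\alpha_i:=\max(1,2/(kL_i))$ for $i\le d$, and for $|\chi(i)|>1$ set
\[\alpha_i:=\max\Bigl(1,\sum_{\chi(j)+\chi(\ell)=\chi(i)}\alpha_j\alpha_\ell|\gamma_{j\ell}^i|\Bigr).\]
Since $d$, $s$, $r$ and the $\gamma_{j\ell}^i$ depend only on $d$, this inductive definition yields $\alpha_i=O_d(1)$ for each $i$. Now define
\[\Omega:=\Bigl\{\sum_i\mu_i\bar c_i:|\mu_i|\leq\alpha_i(kL)^{\chi(i)}\Bigr\}\subset\g,\]
which is the image of a box under the surjective linear map $\R^r\to\g$, $(\mu_i)\mapsto\sum_i\mu_i\bar c_i$, hence a symmetric convex body.

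The four required properties then follow routinely. For bracket closure, bilinearity gives $[v,w]=\sum_k\bigl(\sum_{i,j}\mu_i\nu_j\gamma_{ij}^k\bigr)\bar c_k$, and the absolute value of the $\bar c_k$-coefficient is at most $(kL)^{\chi(k)}\sum_{i,j}\alpha_i\alpha_j|\gamma_{ij}^k|\leq\alpha_k(kL)^{\chi(k)}$ by the recursion, so $[v,w]\in\Omega$. For strict thickness, $\alpha_ikL_i\geq 2$ for each $i\leq d$ forces $\pm 2e_i\in\Omega$, so $\tfrac12\Omega\cap\Lambda$ contains $\{\pm e_1,\ldots,\pm e_d\}$, which generate $\Lambda$. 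For the approximation, $1\leq\alpha_i\leq O_d(1)$ yields $\Omega_0\subset\Omega\subset C\Omega_0$ for some $C=O_d(1)$, and an elementary coordinate-wise check (using $C^{|\chi(i)|}\ge C$ since $C\ge 1$) shows $C\Omega_0\subset\pi(B_\R(\overline f;(CkL)^\chi))$; re-applying Lemma \ref{lem:freeequivalences}(i) to the larger box and then projecting by $\pi$ gives $\exp(\Omega)\subset P_\ord^\R(u;L)^{O_d(k)}$, so $\exp(\Omega)\approx_d\exp(\Omega_0)\approx_d P_\ord^\R(u;L)^k$. The main obstacle will be precisely this simultaneous balancing act—exact bracket closure plus strict thickness with no loss of the approximation—and the weighting scheme above is designed exactly to absorb both the $O_d(1)$-loss in Lemma \ref{lem:real.B.upper-tri} and the edge case $kL_i=1$ into a single inductive definition.
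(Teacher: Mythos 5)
Your proof is correct and follows the same overall plan as the paper's: lift to the free nilpotent Lie algebra, identify $P_\ord^\R(u;L)^k$ with the exponential of a nilbox via Lemma \ref{lem:freeequivalences}, and then modify the side lengths of that box to upgrade the containment $[\Omega_0,\Omega_0]\subset O_d(1)\Omega_0$ to exact bracket closure while only losing an $O_d(1)$ factor. Where you diverge is in the rebalancing step: the paper invokes the argument of Proposition~5.1 of \cite{proper.progs} (the routine used there to bring a tuple into $1$-upper-triangular form) as a black box, whereas you spell out an explicit inductive weighting $\alpha_i$, working upward through the total weight $|\chi(i)|$ and absorbing both the structure constants $\gamma^i_{j\ell}$ of the free nilpotent Lie algebra and the edge case $kL_i=1$ into the recursion. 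This is a genuine, if modest, difference in execution: your version is self-contained and makes the dependence on the structure constants transparent, while the paper's version is shorter by deferring to machinery already set up elsewhere. One additional point in your favour is that you explicitly justify strict thickness (via $\alpha_i kL_i\ge 2$ forcing $\pm e_i\in\tfrac12\Omega$), a requirement the paper's one-line conclusion somewhat glosses over. Minor notational slips: you write $\pi$ where you mean the Lie-algebra map $\pi_\n$ in a couple of places (e.g.\ $\bar c_i:=\pi(f_i)$, $\Omega_0:=\pi(B_\R(\overline f;(kL)^\chi))$), but the intent is clear and the argument is unaffected.
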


\begin{proof}
Note that the step $s$ of $G$ satisfies $s\ll_d1$, so that any bound that in principle depends on $d$ and $s$ may in fact be taken to depend only on $d$. Write $\pi:N^{\R}\to G$ for the homomorphism mapping $x_i$ to $u_i$ for each $i$, write $\pi_\n:\n\to\g$ for the corresponding homomorphism of Lie algebras, write $f_i=\log x_i$ for each $i$, and extend $(f_i)$ to a list of basic commutators as in Section \ref{sec:free.nilp}. It follows from Lemmas \ref{lem:real.B.upper-tri} and \ref{lem:freeequivalences} that $[B_\R(f;(kL)^\chi),B_\R(f;(kL)^\chi)]\subset O_d(1)B_\R(f;(kL)^\chi)$ and $P_\ord^{\R}(x;L)^k\approx\exp(B_\R(f;(kL)^\chi))$. The same argument used in the proof of \cite[Proposition 5.1]{proper.progs} to put the tuple $(e,L)$ into $1$-upper-triangular form then implies that there exists some $L'\in\Z^r$ satisfying $(kL)^{\chi(i)}\le L'_i\le O_d((kL)^{\chi(i)})$ such that
\[
[B_\R(f;L'),B_\R(f;L')]\subset B_\R(f;L'),
\]
and then Lemma \ref{lem:freeequivalences} (iii) implies that $\exp(B_\R(f;L'))\approx_d\exp(B_\R(f;(kL)^\chi))$. The desired conclusion therefore follows by taking $\Omega=\pi_\n(B_\R(f;L'))$.
%
%
%
%
\end{proof}

\begin{proof}[Proof of Proposition \ref{prop:Powergood}]
Let $\Omega$ be given by Lemma \ref{lem:convexReal}, so that
\begin{equation}\label{eq:convexReal}
P_\ord^{\R}(u;L)^k\approx_d \exp(\Omega).
\end{equation}
By
\cite[Proposition 5.1]{proper.progs} there exists a basis $e'$ of the lattice $\langle e_1,\ldots,e_d\rangle \subset\g$, and $L'\in \Z^d$ such that $(e';L')$ is in $1$-upper-triangular form and such that
\[
\Omega\subset B_\R(e';L')\subset O_d(1)\Omega.
\]
The Baker--Campbell--Hausdorff formula then implies that
\begin{equation}\label{eq:Powergood.1}
\exp(\Omega)\approx_d\exp(B_{\R}(e';L')),
\end{equation}
whilst Lemma \ref{lem:freeequivalences} (i) implies that
\begin{equation}\label{eq:Powergood.2}
\exp(B_{\R}(e';L'))\approx_d P_\ord^{\R}(u',L').
\end{equation}
Proposition \ref{prop:Powergood} then follows from \eqref{eq:convexReal}, \eqref{eq:Powergood.1} and \eqref{eq:Powergood.2}.
\end{proof}

\begin{proof}[Proof of Proposition \ref{prop:dim.reduc.induc}]
Since a Lie coset progression of rank $0$ has infinite injectivity radius, by induction on $d$ it suffices to prove the proposition without the conclusion that $H_n'P_n'$ has injectivity radius $\Omega_\omega(n/k_n')$.


The fact that $H_nP_n$ has injectivity radius $o_\omega(n/k_n)$ implies in particular that if we define $j_n$ to be the minimum integer such that $\varphi_n^{-1}(H_n)\cap\tilde{P}_n^{j_n}\ne\{e\}$ then
\begin{equation}\label{eq:j<<<n/k}
j_n=o_\omega(n/k_n).
\end{equation}
Proposition \ref{prop:Powergood} and Remark \ref{rem:inj/proper} then imply that there exist Lie coset progressions $H_nQ_n$ of rank $d$ in $O_{C,d}(1)$-upper triangular form such that
\begin{equation}\label{eq:HPHQ}
H_nQ_n\approx_{C,d}H_nP_n^{j_n}
\end{equation}
for $\omega$-almost every $n$, and $\xi>0$ depending only on $C$ and $d$ such that for $\omega$-almost every $n$ the Lie coset progression $H_nQ_n$ is not $\xi$-proper. Proposition \ref{prop:reduce.dim.when.not.proper} therefore implies that there exist Lie coset progressions $H_n'P_n'$ of rank strictly less than $d$ and in $O_d(1)$-upper-triangular form such that
\begin{equation}\label{eq:HP'HQ}
H_n'P_n'\approx_{C,d}H_nQ_n
\end{equation}
for $\omega$-almost every $n$. It then follows from \eqref{eq:dim.reduc.induc.assump}, \eqref{eq:HPHQ} and \eqref{eq:HP'HQ} that \eqref{eq:dim.reduc.induc.concl} holds for some $k_n'\ll_\omega j_nk_n$, which by \eqref{eq:j<<<n/k} is in $o_\omega(n)$, as required.
\end{proof}

\begin{lemma}\label{lem:T>HP}
Let $d,\eta,k,t$ be positive integers. Let $S$ be a finite symmetric generating subset for a group $\Gamma$, let $HP\subset \Gamma$ be a $1$-proper Lie coset progression of rank at most $d$, and let $X\subset S^t$ be such that
\[
XHP^r\subset S^{rk}\subset XHP^{\eta r}
\]
for every $r\in\N$. Then, writing $T=S^{3t}\cap HP^{(d+1)\eta}$, we have 
$$HP\subset (S^{2t+1}\cap HP^{(d+1)\eta})^k\subset T^k.$$
In particular, 
for all $r\geq 1$ we have 
\begin{equation}\label{eq:bounded.dist}
S^{rk}\subset S^tT^{\eta rk}.
\end{equation}
\end{lemma}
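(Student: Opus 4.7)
The plan is to construct, for any $g \in HP$, a decomposition $g = z_1 z_2 \cdots z_k$ with each factor in $S^{2t+1} \cap HP^{(d+1)\eta}$; the second conclusion $S^{rk} \subset S^t T^{\eta rk}$ will then follow immediately, since any $g \in S^{rk} \subset XHP^{\eta r}$ writes as $g = x h$ with $x \in X \subset S^t$ and $h$ a product of $\eta r$ elements of $HP$, each of which lies in $T^k$ by the first conclusion.

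First I would enlarge $X$ to include the identity (harmless since $1 \in S \subset S^t$ and this preserves the hypotheses), so that $HP \subset XHP \subset S^k$. Given $g \in HP$, I write $g = s_1 s_2 \cdots s_k$ with $s_i \in S$ and form partial products $p_i = s_1 \cdots s_i$. Using the hypothesis $S^k \subset XHP^\eta$, for each $i \in \{0,1,\ldots,k\}$ I pick $x_i \in X$ and $r_i \in HP^\eta$ with $p_i = x_i r_i$. At the endpoints, $p_0 = 1$ and $p_k = g \in HP \subset HP^\eta$, so I can take $x_0 = x_k = 1$ and thus $r_0 = 1$, $r_k = g$. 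Setting $z_i := x_{i-1}^{-1} s_i x_i$, two properties are immediate: $z_i$ lies in $S^t \cdot S \cdot S^t = S^{2t+1}$, and the product telescopes to $\prod_{i=1}^k z_i = x_0^{-1} p_k x_k = g$.

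The main work is showing that each $z_i$ also lies in $HP^{(d+1)\eta}$. To this end I would substitute $p_{i-1}^{-1} = r_{i-1}^{-1} x_{i-1}^{-1}$ and $p_i = x_i r_i$ into $z_i = x_{i-1}^{-1} p_{i-1}^{-1} p_i x_i$, obtaining
\[z_i = x_{i-1}^{-1} r_{i-1}^{-1}(x_{i-1}^{-1} x_i) r_i x_i.\]
I would then combine two ingredients to place this in $HP^{(d+1)\eta}$: the inclusion $(HP^\eta)^{-1} \subset HP^{d\eta}$ --- which follows from the ordered-progression bound $P^{-1} \subset P^d$ together with the normality of $H$ in the subgroup normalised by $P$ --- and the upper-triangular structure of the Lie coset progression (see Remark \ref{rem:two.types.of.tri.form}), which controls how conjugates of the $r_j \in HP^\eta$ by elements of $X \subset S^t$ sit inside small powers of $HP$. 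The main obstacle I anticipate is precisely this last step: performing the bookkeeping carefully enough to end with the sharp exponent $(d+1)\eta$ rather than a larger constant depending on $d$, which will require leveraging the specific algebraic structure of the Lie coset progression rather than only its combinatorial definition as an ordered coset progression.
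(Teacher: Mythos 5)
There is a genuine gap, and you in fact anticipate where it is: showing that $z_i := x_{i-1}^{-1}s_i x_i$ lies in $HP^{(d+1)\eta}$. Your rewriting
\[
z_i = x_{i-1}^{-1}\,r_{i-1}^{-1}\,(x_{i-1}^{-1}x_i)\,r_i\,x_i
\]
has stray $x$'s that cannot be absorbed into a small power of $HP$. There is no reason why conjugating $r_j\in HP^\eta$ by an arbitrary element of $X\subset S^t$ should land anywhere near $HP$: the upper-triangular form of the Lie coset progression says nothing about conjugation by elements of $S^t$, and $\langle HP\rangle$ need not even be normal. So the ``second ingredient'' you propose does not exist, and this is not merely a bookkeeping issue about constants.

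The problem comes from building the partial products in the wrong direction. Set instead $g_0=1$ and $g_i=s_is_{i-1}\cdots s_1$ (prepending, so $g_i=s_ig_{i-1}$), and decompose $g_i=x_iz_i$ with $x_i\in X$, $z_i\in HP^\eta$, choosing $x_0=x_k=1$ (possible since $g_0=1$ and $g_k=g\in HP\subset HP^\eta$). Now consider the increment $z_iz_{i-1}^{-1}$. Because $g_ig_{i-1}^{-1}=s_i$ is a single generator, one computes
\[
z_iz_{i-1}^{-1}=(x_i^{-1}g_i)(g_{i-1}^{-1}x_{i-1})=x_i^{-1}s_ix_{i-1}\in S^{2t+1},
\]
and at the same time
\[
z_iz_{i-1}^{-1}\in HP^\eta(HP^\eta)^{-1}\subset HP^\eta\cdot HP^{d\eta}=HP^{(d+1)\eta},
\]
using $(HP^\eta)^{-1}\subset HP^{d\eta}$ exactly as you describe. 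The two descriptions now agree on the nose, with no uncontrolled conjugation. Since $z_0=1$ and $z_k=g$, the telescoping product $(z_kz_{k-1}^{-1})(z_{k-1}z_{k-2}^{-1})\cdots(z_1z_0^{-1})=g$ exhibits $g$ as a product of $k$ elements of $S^{2t+1}\cap HP^{(d+1)\eta}\subset T$. The ``in particular'' clause then follows exactly as you outline. In short: your scheme is the right one, but the partial products must be accumulated on the left so that the $S$-increment and the $HP$-increment coincide.
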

\begin{proof}
Let $z\in HP$. Since $HP\subset S^{k}$,  we can write $z=s_k\ldots s_1$, with $s_i\in S$. Let $g_0=1$, and for every $1\leq i\leq k$ write $g_i=s_is_{i-1}\ldots s_1$, observing that therefore $z=g_{k}$. Hence for all $1\leq i\leq k$ there exists $x_i\in X$ and $z_i\in HP^{\eta}$ such that $g_i=x_iz_i$. We therefore have that $z_{i}z_{i-1}^{-1}=x_is_ix_{i-1}^{-1}$, which belongs to $S^{2t+1}\cap HP^{(d+1)\eta}$ by \eqref{eq:inverseProg}.
\end{proof}
\begin{lemma}\label{lem:T<HP}
Under the assumptions of Lemma \ref{lem:T>HP}, and writing $q=\max\{3t\eta,(d+1)\eta\}$, we have $T^k\subset HP^{(d+3)^{|X|}q}$.
\end{lemma}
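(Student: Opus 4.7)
The plan is to cover $T^k$ by finitely many translates of $HP^{3t\eta}$ and then bound the locations of those translates by a walk-in-coset-graph argument. First I would note that since $T \subset HP^{(d+1)\eta} \subset \langle HP\rangle =: \Lambda$, we have $T^k \subset \Lambda$. Applying the second hypothesis inclusion of Lemma \ref{lem:T>HP} with $r = 3t$ gives $T^k \subset S^{3tk} \subset XHP^{3t\eta}$. Hence any $g \in T^k$ admits a \emph{label} $x \in X \cap \Lambda$ with $g \in xHP^{3t\eta}$, and I would write $X^*$ for the set of labels that actually arise, so $|X^*| \leq |X|$.

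Next I would trace partial products. Given $g = t_k \cdots t_1 \in T^k$, since $e \in T$ the partial products $g_i := t_i \cdots t_1$ (with $g_0 = e$) all lie in $T^k$ and admit labels $x_i \in X^*$. Writing $g_j = x_j v_j$ with $v_j \in HP^{3t\eta}$, the identity $g_{i+1} = t_{i+1} g_i$ rearranges to
\[
x_{i+1} = t_{i+1}\, x_i\, (v_i v_{i+1}^{-1}) \in HP^{(d+1)\eta}\cdot x_i \cdot HP^{3t\eta(d+1)},
\]
using the inverse bound $(HP^r)^{-1} \subset HP^{rd}$ (from \eqref{eq:inverseProg} together with normality of $H$). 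This is the key estimate: the $HP$-length of $x_{i+1}$ exceeds that of $x_i$ by at most $(d+1)\eta + 3t\eta(d+1) \leq q(d+2)$. Combined with the initial bound $x_0 \in HP^{3t\eta d} \subset HP^{qd}$ forced by $e \in x_0 HP^{3t\eta}$, this controls the growth of the labels along the walk.

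Since the sequence $x_0, x_1, \ldots, x_k$ is a walk on $X^*$ and the underlying vertex set has cardinality at most $|X|$, every $x_i$ is reachable from $x_0$ by a simple path of length $\leq |X|-1$. Summing the edge costs over such a path gives $x \in HP^{qd + (|X|-1)q(d+2)}$ for every $x \in X^*$, and hence
\[
T^k \subset X^* HP^{3t\eta} \subset HP^{q(d + 1 + (|X|-1)(d+2))} \subset HP^{q(d+3)^{|X|}},
\]
the last inclusion being a one-line induction on $|X|$.

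The hard part will be the edge-cost estimate. In a nilpotent group of bounded step, moving an element by conjugation blows up its $HP$-length by an amount that depends on the conjugator, and accumulating such blow-ups along a path of length $|X|-1$ would introduce a dependence on $k$ (through the $HP$-length of the intermediate labels) and destroy the bound. The crucial observation is that $g_{i+1} = t_{i+1} g_i$ can be rearranged as a three-fold \emph{multiplication} $x_{i+1} = t_{i+1}\, x_i\, (v_i v_{i+1}^{-1})$ rather than as a conjugation of $x_i$, so the incremental $HP$-cost is a fixed $O(q)$ independent of the current label, yielding the clean additive bookkeeping that the lemma requires.
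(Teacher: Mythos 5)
Your proof is correct, and it takes a genuinely different route from the paper's. The paper argues greedily on the cover: starting from $m=q$ and $X'=X$, whenever some $x\in X'\setminus\{1\}$ has $xHP^m\cap HP^{2m}\neq\varnothing$, it absorbs that translate into the central ball by replacing $m$ with $(d+3)m$ and discarding $x$; after at most $|X|$ steps this produces $X'$ and $m\le (d+3)^{|X|}q$ satisfying the separation property $xHP^m\cap HP^{2m}=\varnothing$ for every $x\in X'\setminus\{1\}$, and an induction on $j$ then yields $T^j\subset HP^m$ for $j=1,\ldots,k$. You instead track the $X$-label of each partial product $g_0,\ldots,g_k$, note that the label sequence is a walk on a directed graph with at most $|X|$ vertices in which the edge relation is exactly your rearrangement identity $x_{i+1}=t_{i+1}x_i(v_iv_{i+1}^{-1})$, shorten that walk to a simple path of length at most $|X|-1$ (cycle removal preserves adjacency because identical vertices carry identical two-sided $HP$-neighbourhoods), and sum a uniform per-edge cost. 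A pleasant byproduct is that your bookkeeping yields the bound $q\bigl(d+1+(|X|-1)(d+2)\bigr)$, which is linear rather than exponential in $|X|$ and hence strictly stronger than the stated $(d+3)^{|X|}q$. One small point worth making explicit: your step ``$g_i\in T^k$ for all $i\le k$'' uses $e\in T$, i.e.\ $1\in S$; this is not written into the hypotheses of Lemma \ref{lem:T>HP}, but the paper's own proof tacitly assumes the same (it uses $T^j\subset T^k$), and $1\in S$ does hold in the setting where the lemma is applied via Proposition \ref{prop:inverse}.
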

\begin{proof}
Since $T^k\subset S^{3tk}$ by definition, we have
\begin{equation}\label{eq:T<P}
T^k\subset XHP^{3t\eta}.
\end{equation}
We first claim that, writing $q=\max\{3t\eta,(d+1)\eta\}$, there exist $X'\subset X$ and $m\in\N$ satisfying $q\le m\le(d+3)^{|X|}q$ such that
\begin{equation}\label{eq:T<P.2}
XHP^{3t\eta}\subset X'HP^m
\end{equation}
and
\begin{equation}\label{eq:T<P.3}
xHP^m\cap HP^{2m}=\varnothing
\end{equation}
for every $x\in X'\backslash\{1\}$.

To prove this claim, we first take $X'=X$ and $m=q$. These choices certainly satsify \eqref{eq:T<P.2}. If they also satisfy \eqref{eq:T<P.3} then the claim is proved. If not, there exists $x\in X'\backslash\{1\}$ such that $xHP^m\cap HP^{2m}\ne\varnothing$, which by \eqref{eq:inverseProg} means that
\[
xHP^m\subset HP^{(d+3)m}.
\]
If we replace $X'$ by $X'\backslash\{x\}$ and $m$ by $(d+3)m$ then \eqref{eq:T<P.2} is therefore still satisfied, and so we check again whether \eqref{eq:T<P.3} is satisfied, and repeat if necessary. This process terminates after at most $|X|$ steps, and so the claim is proved.

We now claim that $T^j\subset HP^m$ for $j=1,\ldots, k$, which of course implies in particular that $T^k\subset HP^{(d+3)^{|X|}q}$, as required. The case $j=1$ follows from the definition of $T$ and the fact that $m\ge q$, so by induction we may assume that $T^{j-1}\subset HP^m$. This then implies that
\begin{align*}
T^j&\subset HP^{m+(d+1)\eta}\cap X'HP^m &\text{(by \eqref{eq:T<P} and \eqref{eq:T<P.2})}\\
    &\subset HP^m     &\text{(by definition of $X'$ and $m$),}
\end{align*}
and the claim is proved.
\end{proof}
%


In order to use the growth bounds in the hypothesis of Theorem \ref{thm:ReducTorsionFreeD}, we use two lower bounds on the growth of sets in nilpotent groups of given dimension. The first of these gives a lower bound on the growth in terms of the homogeneous dimension, as follows.

\begin{lemma}\label{lem:growthLowerBound}
Let $\Lambda$ be a torsion-free nilpotent group of rank $r$ and step $s$, and let $G$ be its Mal'cev completion. Then for every symmetric generating subset $\Sigma$ of $\Lambda$ we have
$$|\Sigma^n|\gg_{s,r} n^{\hdim G}$$
for $n\in\N$.
\end{lemma}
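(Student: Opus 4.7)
The strategy is to construct roughly $n^{\hdim G}$ distinct elements of $\Sigma^n$, with constants depending only on $s$ and $r$.

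First, since $\Sigma$ generates $\Lambda$ and the abelianization $\Lambda/[\Lambda,\Lambda]$ is torsion-free of rank $r$, I would extract $\sigma_1,\ldots,\sigma_r\in\Sigma$ whose images in $\Lambda/[\Lambda,\Lambda]\otimes\Q\cong\Q^r$ are $\Q$-linearly independent; any generating set of $\Q^r$ contains an $r$-element $\Q$-independent subset. By the standard fact that a Lie subalgebra of the nilpotent Lie algebra $\g$ that projects onto $\g/[\g,\g]$ must equal $\g$, the logarithms $\log\sigma_1,\ldots,\log\sigma_r$ generate $\g$ as a Lie algebra, so their iterated commutators of length at most $s$ span $\g$ as an $\R$-vector space. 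I would then pick a basis $f_1,\ldots,f_d$ of $\g$ (with $d=\dim G$) adapted to the lower central series $\g_1\supset\cdots\supset\g_s$, each $f_j$ chosen to be an iterated commutator of weight $w(j)\in\{1,\ldots,s\}$, so that $\sum_j w(j)=\hdim G$.

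The heart of the argument is a standard Hall--Petresco-type identity: writing $c_j\in\Lambda$ for the group commutator in the $\sigma_i$'s corresponding to $f_j$, one has $c_j^{m^{w(j)}}$ congruent, modulo elements of weight exceeding $w(j)$, to the same-shape commutator in $\sigma_{i_1}^m,\ldots,\sigma_{i_{w(j)}}^m$, which is a word of length $O_{r,s}(m)$ in $\Sigma$. Consequently, for each tuple $\mathbf a=(a_1,\ldots,a_d)\in\Z^d$ with $|a_j|\leq C_{r,s}^{-1}n^{w(j)}$, I claim the element $g(\mathbf a):=c_1^{a_1}\cdots c_d^{a_d}$ lies in $\Sigma^n$: one concatenates the commutator words realizing the $c_j^{a_j}$ in order of increasing weight, absorbing the higher-weight error terms into the later factors.

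To finish, I would show that distinct tuples give distinct group elements via Mal'cev coordinates of the second kind: the map $\R^d\to G$, $(t_j)\mapsto\exp(t_1f_1)\cdots\exp(t_df_d)$, is a diffeomorphism, and the composite $\Z^d\to G\to\R^d$, $(a_j)\mapsto g(\mathbf a)\mapsto(t_j)$, is a polynomial of the triangular form $t_j=a_j+P_j(a_1,\ldots,a_{j-1})$, hence injective. Counting the admissible tuples then yields $|\Sigma^n|\gg_{r,s}\prod_j n^{w(j)}=n^{\hdim G}$. The main obstacle will be the bookkeeping in the concatenation step: each Hall--Petresco approximation produces errors of weight strictly greater than $w(j)$, which must be corrected by adjusting later, higher-weight factors without spoiling either the length bound or the triangular Mal'cev structure needed for distinctness. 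This is a standard but delicate induction on weight.
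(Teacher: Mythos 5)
Your proposal is correct in outline, but it takes a genuinely different and more laborious route than the paper's proof, and the difference is worth noting. You construct all $\asymp n^{\hdim G}$ elements of $\Sigma^n$ at once by choosing an adapted basis $f_1,\ldots,f_d$ of iterated commutators, forming $g(\mathbf a)=c_1^{a_1}\cdots c_d^{a_d}$, and proving injectivity via triangular Mal'cev coordinates. The hard part, which you correctly flag, is the concatenation step: the Hall--Petresco-type identity for $c_j^{a_j}$ holds only modulo higher-weight errors, and these must be pushed into the higher-weight factors while simultaneously preserving both the word-length bound and the triangular coordinate structure. That ``delicate induction on weight'' is a real piece of bookkeeping, not a formality. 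The paper avoids it entirely by inducting on the step $s$ and working only with the \emph{central} bottom term $C^s(\Lambda)$ at each stage: for a weight-$s$ commutator $c=[s_{j_1},\ldots,s_{j_s}]$ one has the \emph{exact} identity $[s_{j_1}^{k_1},\ldots,s_{j_s}^{k_s}]=c^{k_1\cdots k_s}$ (no error terms, because everything lands in the centre), which immediately produces $\Omega(n^s)$ powers of $c$ in $\Sigma^{O(n)}$, hence $\Omega(n^{qs})$ elements of $C^s(\Lambda)$ where $q=\dim C^s(\Lambda)\otimes\Q$; the rest follows by applying the inductive hypothesis to $\Lambda/C^s(\Lambda)$ and fibring. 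In short, your construction is the direct Bass--Guivarc'h-style lower bound and would work, but the paper's proof sidesteps the error-propagation issue by peeling off one central layer at a time, which is why it fits in a few lines. If you pursue your approach, you should spell out the error-absorption induction rather than asserting it; otherwise, consider restructuring along the paper's lines.
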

\begin{proof}
Note that $\Sigma$ contains a set of the form $(s_1^{\pm 1},\ldots,s_r^{\pm 1})$, where the $s_i$ generate $\Lambda/[\Lambda,\Lambda]$. Let $q=\dim C^s(\Lambda)\otimes \Q$. As iterated commutators $c_i$ of length $s$ form a generating set of $C^s(\Lambda)\otimes\Q$, we can extract a basis $(c_{i_1},\ldots, c_{i_q})$. Let $c_i=[s_{j_1},\ldots, s_{j_s}]$. For all $(k_1,\ldots, k_s)\in \N^s$, we have 
$$[s_{j_1}^{k_1},\ldots, s_{j_s}^{k_s}]=c_i^{k_1 k_2\ldots k_s}.$$
Hence the ball of radius $n$ contains $\Omega _{r,s}(n^s)$ elements of the form $c_i^{k}$. We deduce that it contains $\Omega_{r,s}(n^{qs})$ elements of $C^s(\Lambda)$. Hence the lemma follows by induction on $s$.
\end{proof}

The second such result gives a lower bound on the \emph{relative} growth in terms of the dimension.
\begin{prop}\label{prop:growth.lower.bound.dim}
Let $d\in \N$, there exists $c=c(d)>0$ such that the following holds.
Let $\Lambda$ be a torsion-free nilpotent group with Mal'cev completion $G$ of dimension $d$. Then for every symmetric generating subset $\Sigma$ of $\Lambda$ we have
\[
|\Sigma^n|\geq cn^d|\Sigma|
\]
for $n\in\N$.
\end{prop}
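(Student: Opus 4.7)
I would prove this by induction on the dimension $d$ of the Malcev completion $G$, with the case $d = 0$ being trivial.

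I first handle the abelian case (step $s = 1$, so $\Lambda$ is finitely generated abelian of rank $d$) by a further induction on $d$. The case $d = 1$ follows from the classical bound $|n\Sigma| \geq n(|\Sigma| - 1) + 1$ for symmetric finite subsets of $\Z$ containing $0$. For $d > 1$ one projects onto a rank-$(d-1)$ torsion-free quotient $\pi \colon \Lambda \to \bar\Lambda$ (killing a pure rank-$1$ direction), applies the inductive bound $|\bar\Sigma^n| \gg_d n^{d-1}|\bar\Sigma|$ to $\bar\Sigma = \pi(\Sigma)$, and uses the fibration $\Sigma = \bigsqcup_{\bar\sigma \in \bar\Sigma}F_{\bar\sigma}$ to lift it. Since each difference $F_{\bar\sigma} - \tilde\sigma$ (for a chosen lift $\tilde\sigma \in F_{\bar\sigma}$) is a subset of the one-dimensional kernel $\simeq \Z$, Freiman's inequality in $\Z$ implies that for every factorization $\bar\omega = \bar\sigma_1 + \cdots + \bar\sigma_n \in n\bar\Sigma$ the fiber of $\bar\omega$ in $n\Sigma$ has cardinality at least $\sum_i |F_{\bar\sigma_i}| - (n-1)$. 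Restricting to $\bar\omega$ of the form $\lfloor n/2\rfloor \bar\sigma^* + \bar\omega'$ for a fixed $\bar\sigma^*$ of maximal fiber size (so $|F_{\bar\sigma^*}| \geq |\Sigma|/|\bar\Sigma|$) and $\bar\omega' \in \lceil n/2\rceil\bar\Sigma$, which are all distinct since translation by $\lfloor n/2\rfloor\bar\sigma^*$ is injective, then yields $|n\Sigma| \gg_d n^d|\Sigma|$. When all fibers happen to be singletons, one uses instead an element of $\Sigma$ with non-trivial component in the killed direction, which must exist since $\Sigma$ generates $\Lambda$.

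For the inductive step with $\Lambda$ of step $s \geq 2$, let $\Lambda_s$ denote the last non-trivial term of the lower central series of $\Lambda$; it is central and torsion-free abelian of some rank $\rho = d_s \geq 1$. The quotient $\bar\Lambda := \Lambda/\Lambda_s$ is then torsion-free nilpotent of step $s-1$ with Malcev completion of dimension $d-\rho$, and the induction hypothesis yields $|\bar\Sigma^n| \gg n^{d-\rho}|\bar\Sigma|$. Writing $\Sigma = \bigsqcup_{\bar\sigma \in \bar\Sigma}F_{\bar\sigma}$ and setting $A_{\bar\sigma} := F_{\bar\sigma}\tilde\sigma^{-1} \subset \Lambda_s$ for a chosen lift $\tilde\sigma \in F_{\bar\sigma}$, the centrality of $\Lambda_s$ implies that for any factorization $\bar\omega = \bar\sigma_1 \cdots \bar\sigma_n \in \bar\Sigma^n$ the fiber of $\bar\omega$ in $\Sigma^n$ contains a translate of the sumset $A_{\bar\sigma_1} + \cdots + A_{\bar\sigma_n} \subset \Lambda_s$. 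Moreover, iterated $s$-fold commutators of elements of $\Sigma$ lie in $\Lambda_s \cap \Sigma^{O_s(1)}$ and span $\Lambda_s \otimes \Q$, so their integer powers contribute a further $\gg n^{\rho}$ elements of $\Lambda_s \cap \Sigma^{O_s(n)}$ to the fiber.

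The proof concludes by combining the two contributions in $\Lambda_s \cong \Z^\rho$ using the abelian case of the proposition applied to sumsets in $\Lambda_s$, and summing over the subfamily $\{(\bar\sigma^*)^{\lfloor n/2\rfloor}\bar\omega' : \bar\omega' \in \bar\Sigma^{\lceil n/2\rceil}\}$ of $\bar\Sigma^n$, where $\bar\sigma^*$ is a fixed element of maximal fiber size. The main obstacle is the sumset bookkeeping in $\Lambda_s$ in this last step, especially when the $A_{\bar\sigma}$ all lie in a proper sub-lattice of $\Lambda_s$: one then relies on the commutator contribution to recover the full rank $\rho$, and the resulting sumset estimate in $\Lambda_s \cong \Z^\rho$ has to be carried out carefully so as to preserve both the factor $n^\rho$ (coming from central growth) and the factor $|\Sigma|/|\bar\Sigma|$ (coming from the fibers). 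This delicate combination of fiber and commutator contributions is the technical heart of the argument.
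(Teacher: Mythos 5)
Your strategy differs from the paper's in a way that, while probably salvageable, creates exactly the technical difficulties you flag at the end. The paper runs a single induction on $d$ with no abelian/non-abelian dichotomy. It chooses a normal subgroup $\Lambda'\le\Lambda$ of \emph{codimension one} with $\Lambda/\Lambda'\cong\Z$, applies the elementary sumset bound (Lemma~\ref{lem:sumset.lower.bound}) to $\pi(\Sigma)^n$ in the one-dimensional quotient, and applies the inductive hypothesis to the symmetric set $\Sigma^2\cap\Lambda'$ inside $\Lambda'$, which has rank $d-1$. Two simple observations glue this together: $|\Sigma|\le|\pi(\Sigma)|\,|\Sigma^2\cap\Lambda'|$, and $R_n(\Sigma^2\cap\Lambda')^n\subset\Sigma^{3n}$ with the union disjoint, $R_n$ being a transversal for the cosets of $\Lambda'$ met by $\Sigma^n$. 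You reverse the roles: you project onto a large quotient (rank $d-1$, or $\Lambda/\Lambda_s$), so the fibers are low-rank and the induction lives upstairs. This forces you to extract the last factor of $n^{\rho}|\Sigma|/|\bar\Sigma|$ from the fibers, which is precisely where the argument gets hard.

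Concretely, there are genuine gaps. In your abelian base case, the Cauchy--Davenport estimate for a fiber gives $\lfloor n/2\rfloor|F_{\bar\sigma^*}|-(n-1)$, which is nonpositive whenever the largest fiber $F_{\bar\sigma^*}$ has at most two elements; since $|F_{\bar\sigma^*}|\ge|\Sigma|/|\bar\Sigma|$ is only a lower bound, this happens in particular whenever all fibers are singletons, which is possible with $\Sigma$ still generating (e.g.\ $\Sigma=\{(0,\pm1),\pm(1,2)\}\subset\Z^2$). Your proposed fix---``use an element of $\Sigma$ with non-trivial component in the killed direction''---does not constitute an argument: the problem is to produce a factor $\gg n$ uniformly on $\gg n^{d-1}|\bar\Sigma|$ fibers simultaneously, and the mere existence of such an element does not obviously do that. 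In the non-abelian step, $\Lambda/\Lambda_s$ need not be torsion-free (take $\Lambda=\langle a,b,c\mid [a,b]=c^2,\ [a,c]=[b,c]=1\rangle$), so you would need the isolator of $\Lambda_s$ rather than $\Lambda_s$ itself. Finally, and most importantly, you explicitly leave the core sumset estimate in $\Lambda_s\cong\Z^{\rho}$---combining the translate-of-$A_{\bar\sigma^*}$ contribution with the commutator box while keeping both the $n^{\rho}$ and the $|\Sigma|/|\bar\Sigma|$ factors---as ``the technical heart of the argument.'' That is where the real work is, and the paper's codimension-one reduction is designed precisely to avoid ever having to do it.
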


The proof is, unsurprisingly, by induction on $d$. The induction step is based on the following standard result from additive combinatorcs.
\begin{lemma}\label{lem:sumset.lower.bound}
Let $A$ and $B$ be finite subsets of $\Z$. Then $|A+B|\ge |A|+|B|-1$. In particular, by induction on $n$ we have $|nA|\ge n(|A|-1)$.
\end{lemma}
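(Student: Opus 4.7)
The plan is to give the classical order-based proof of the first inequality, and then deduce the ``in particular'' clause by a one-line induction.

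First I would write $A=\{a_1<a_2<\cdots<a_m\}$ and $B=\{b_1<b_2<\cdots<b_n\}$ with $m=|A|$ and $n=|B|$. The key observation is that the $m+n-1$ real numbers
\[
a_1+b_1<a_1+b_2<\cdots<a_1+b_n<a_2+b_n<a_3+b_n<\cdots<a_m+b_n
\]
are strictly increasing (the first block increases because the $b_j$ do, the second because the $a_i$ do) and all lie in $A+B$. This already gives $|A+B|\ge m+n-1=|A|+|B|-1$.

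For the ``in particular'' clause, I would proceed by induction on $n\ge 1$. The case $n=1$ is the trivial inequality $|A|\ge |A|-1$. Assuming $|(n-1)A|\ge (n-1)(|A|-1)$, applying the first part of the lemma with $(n-1)A$ and $A$ in place of $A$ and $B$ yields
\[
|nA|=|(n-1)A+A|\ge |(n-1)A|+|A|-1\ge (n-1)(|A|-1)+(|A|-1)=n(|A|-1),
\]
completing the induction.

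There is no real obstacle here; the only thing to be careful about is that the chain displayed above genuinely exhausts $m+n-1$ distinct values, which is immediate from the fact that $A,B\subset\Z$ are linearly ordered. Note that the argument uses only the ordered-group structure of $\Z$, so the same proof would work in any totally ordered abelian group.
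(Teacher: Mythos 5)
Your proof is correct and is essentially the same as the paper's: both exhibit a strictly increasing chain of $|A|+|B|-1$ elements of $A+B$ by first varying one coordinate and then the other (you vary $b$ first while the paper varies $a$ first, a cosmetic difference). You additionally spell out the induction for the ``in particular'' clause, which the paper leaves implicit.
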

\begin{proof}
Label the elements of $A$ as $a_1<\ldots<a_p$ and the elements of $B$ as $b_1<\ldots<b_q$, and note that the elements
\[
a_1+b_1,a_2+b_1,\ldots,a_p+b_1,a_p+b_2,\ldots,a_p+b_q
\]
are strictly increasing, and therefore, in particular, distinct.
\end{proof}

\begin{proof}[Proof of Proposition \ref{prop:growth.lower.bound.dim}]
Let $\Lambda'$ be a subgroup of $\Lambda$ such that $\Lambda/\Lambda'\cong\Z$ and such that the Mal'cev completion of $\Lambda'$ has codimension $1$ in $G$, and write $\pi:\Lambda\to\Lambda/\Lambda'$ for the quotient homomorphism. Let $R$ be a set of representatives in $\Lambda$ of the cosets of $\Lambda'$ that have non-trivial intersection with $\Sigma$, and note that by symmetry of $\Sigma$ we have $\Sigma\subset R(\Sigma^2\cap\Lambda')$. Since $|R|=|\pi(\Sigma)|$, this implies that
\begin{equation}\label{eq:sigma.split}
|\Sigma|\le|\pi(\Sigma)||\Sigma^2\cap\Lambda'|.
\end{equation}
On the other hand, given $n\in\N$ let $R_n$ be a set of representatives in $\Lambda$ of the cosets of $\Lambda'$ that have non-trivial intersection with $\Sigma^n$. Since $|R_n|=|\pi(\Sigma)^n|$, Lemma \ref{lem:sumset.lower.bound} implies that $|R_n|\gg n|\pi(\Sigma)|$, and by induction we may assume that $|(\Sigma^2\cap\Lambda')^n|\gg_dn^{d-1}|\Sigma^2\cap\Lambda'|$. Since $R_n(\Sigma^2\cap\Lambda')^n\subset\Sigma^{3n}$, this combines with \eqref{eq:sigma.split} to imply that
\[
|\Sigma^{3n}|\gg_dn^d|\Sigma|,
\]
which implies the required bound.
\end{proof}

\begin{proof}[Proof of Theorem \ref{thm:ReducTorsionFreeD}]
For $\omega$-almost every $n$ we may apply Proposition \ref{prop:inverse} with $D'=2D$ to obtain integers $t,\eta$ depending only on $D$, and for $\omega$-almost every $n$ a subset $X_n\subset S_n^t$ with $|X_n|\ll_D1$ and a $1$-proper Lie coset progression $H_nP_n$ of rank $O_D(1)$ in $O_D(1)$-upper triangular form such that $X_nH_nP_n^{r}\subset S_n^{r\sqrt{n}}\subset X_nH_nP_n^{\eta r}$ for all $r$. By Lemma \ref{lem:finite.omega.const}, on passing to an appropriate subsequence we may assume that each of these Lie coset progressions has the same rank $d\ll_D1$. By Proposition \ref{prop:dim.reduc.induc}, on replacing $\sqrt{n}$ with some sequence $k_n=o_\omega(n)$ we may assume that $H_nP_n$ has injectivity radius $\Omega_\omega(n/k_n)$. It remains to show that $d\le D$, and that if $|S_n^n|\ll_\omega n^D$ then the homogeneous dimension of $\omega$-almost every $P_n$ is at most $D$.

By the definition of a Lie coset progression, for $\omega$-almost every $n$ we have a connected, simply connected nilpotent Lie group $G_n$ with Lie algebra with basis $e_1^{(n)},\ldots,e_d^{(n)}$ such that $(e^{(n)};L^{(n)})$ is in $C$-upper-triangular form and $\exp\langle e_1^{(n)},\ldots,e_d^{(n)}\rangle$ is a subgroup of $G_n$, and such that, writing $u_i^{(n)}=\exp e_i^{(n)}$ for each $i$ and $\Lambda_n=\langle u_1^{(n)},\ldots,u_d^{(n)}\rangle$, there exists a map $\varphi_n:\Lambda_n\to\langle y_1^{(n)},\ldots y_d^{(n)}\rangle$ that is a homomorphism modulo $H_n$ such that $\varphi_n(u_i^{(n)})=y_i^{(n)}$ for each $i$, such that $P_n=P_\ord(y^{(n)};L^{(n)})$, and such that $\varphi_n$ is injective on $P_\ord(u^{(n)};L^{(n)})^{\Omega_\omega(n/k_n)}$ modulo $H_n$.

Writing $T_n=S_n^{3t}\cap H_nP_n^{(d+1)\eta}$ for every $n$ where this is defined, Lemmas \ref{lem:T>HP} and \ref{lem:T<HP} imply that there exists $\rho\in\N$ depending only on $D$ such that
\begin{equation}\label{eq:rtf.T=HP}
H_nP_n\subset T^{k_n}\subset H_nP_n^{\rho}.
\end{equation}
The fact that the injectivity radius tends to infinity with respect to $\omega$ implies that for $\omega$-almost every $n$ there exists a unique subset $\tilde{T}_n\subset P_\ord(u^{(n)};L^{(n)})^{(d+1)\eta}$ such that $\varphi_n(\tilde{T}_n)H_n=T_nH_n$. The second containment of \eqref{eq:rtf.T=HP} then implies that $\varphi_n$ is injective modulo $H_n$ in restriction to $\tilde{T}_n^{q_n}$ for some sequence $q_n\gg_\omega n$, which implies in particular that
\begin{equation}\label{eq:q.inj.cosets}
|\tilde{T}_n^{q_n}||H_n|=_\omega|T_n^{q_n}H_n|.
\end{equation}
We may assume that $q_n\le_\omega n/4t$, which combined with the fact that $T_n\subset S_n^{3t}$ and $H_n\subset S_n^{o_\omega(n)}$ implies that $T_n^{q_n}H_n\subset_\omega S_n^n$, and hence
\begin{equation}\label{eq:T.upper.bound}
|T_n^{q_n}H_n|\le_\omega|S_n^n|.
\end{equation}
Since $q_n\ge_\omega k_n$, it follows from \eqref{eq:rtf.T=HP} that $\tilde{T}_n^{q_n}$ contains $P_\ord(u^{(n)};L^{(n)})$, and hence generates $\Lambda_n$, and so Lemma \ref{lem:growthLowerBound} implies that
\begin{equation}\label{eq:growth.lb.hdim}
|\tilde{T}_n^{q_n}|\gg_\omega n^{\hdim G_n}
\end{equation}
and Proposition \ref{prop:growth.lower.bound.dim} implies that
\begin{equation}\label{eq:rel/growth.lb.dim}
|\tilde{T}_n^{q_n}|\gg_\omega n^d|\tilde{T}_n|.
\end{equation}
Finally, the fact that $S_n\subset X_nH_nP_n^\eta$ and $X_n\subset S_n^t$ implies that $S_n\subset X_n(S_n^{t+1}\cap H_nP_n^\eta)\subset X_nT_n$, and hence
\begin{equation}\label{eq:T.bigger.than.S}
|T_n|\ge|S_n|/|X_n|\gg_D|S_n|.
\end{equation}
Combining these inequalities gives
\begin{align*}
|S_n^n|&\ge_\omega|T_n^{q_n}H_n|&\text{(by \eqref{eq:T.upper.bound})}\\
   &=_\omega|\tilde{T}_n^{q_n}||H_n|&\text{(by \eqref{eq:q.inj.cosets})}\\
   &\gg_\omega n^d|\tilde{T}_n||H_n|&\text{(by \eqref{eq:rel/growth.lb.dim})}\\
   &\ge_\omega n^d|T_n|&\text{(by definition of $\tilde T_n$)}\\
   &\gg_\omega n^d|S_n|&\text{(by \eqref{eq:T.bigger.than.S})},
\end{align*}
and so \eqref{eq:ReducTorsionFreeD} implies that $d\le_\omega D$, as required. Moreover, \eqref{eq:q.inj.cosets}, \eqref{eq:T.upper.bound} and \eqref{eq:growth.lb.hdim} imply that if $|S_n^n|\ll_\omega n^D$ then $\hdim G_n\le_\omega D$, as required.
\end{proof}

Having reduced the proof of Proposition \ref{prop:reducConnected} to the study of Lie progressions via Theorem \ref{thm:ReducTorsionFreeD}, we now study these progressions a little.

\begin{prop}\label{prop:goodReducReal}
Let $G$ be a connected, simply connected nilpotent Lie group with Lie algebra $\g$ with basis $e_1,\ldots,e_d$, and let $L_1,\ldots,L_d\in\N$ be such that $(e;L)$ is in $C$-upper-triangular form. Write $u_i=\exp e_i$ for each $i$, and write $\Lambda=\langle u_1,\ldots,u_d\rangle$. Then for every $k\in\N$ we have
$$P_\ord(u,L)^k\subset P_\ord^{\R}(u;L)^k\cap\Lambda$$
and
$$P_\ord^{\R}(u;L)^k\cap\Lambda\subset  P_\ord(u,L)^{O_{d,C}(k)}.$$
\end{prop}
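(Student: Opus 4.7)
\medskip

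\textbf{Plan.} The first inclusion $P_\ord(u,L)^k\subset P_\ord^\R(u;L)^k\cap\Lambda$ is immediate: integers are reals, and products of the $u_i^{\pm1}$ lie in $\Lambda$. For the second inclusion, the plan is to mimic the fundamental-domain argument used in the proof of Proposition \ref{prop:FreeReducCase}, replacing the pair $(N\hookrightarrow N^\R, P_\ord(x;L))$ by $(\Lambda\hookrightarrow G, P_\ord(u;L))$. The upper-triangular hypothesis will play exactly the role that freeness played in Proposition \ref{prop:FreeReducCase}: namely, ensuring that $\Lambda$ is a cocompact discrete subgroup of $G$ so that this fundamental-domain trick applies.

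The first step is to establish that $\Lambda$ is a cocompact lattice in $G$. Since $G$ is simply connected nilpotent and $e_1,\ldots,e_d$ is a basis of $\g$, the map $\Phi:\R^d\to G$ defined by $\Phi(\beta)=u_1^{\beta_1}\cdots u_d^{\beta_d}$ (canonical coordinates of the second kind) is a diffeomorphism. It is therefore enough to show that $\Lambda=\Phi(\Z^d)$. The inclusion $\Phi(\Z^d)\subset\Lambda$ is obvious. Conversely, the $C$-upper-triangular form of $(e;L)$ gives $[u_i^{\pm1},u_j^{\pm1}]\in P_\ord(u_{j+1},\ldots,u_d;\cdot)$, so any word in the $u_i^{\pm1}$ can be sorted into the form $u_1^{\beta_1}\cdots u_d^{\beta_d}$ with $\beta_j\in\Z$ by iteratively pushing commutators into later generators. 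Hence $\Lambda=\Phi(\Z^d)$ is discrete with compact quotient $G/\Lambda\cong\Phi([0,1)^d)$.

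Next, following the proof of Proposition \ref{prop:FreeReducCase}, fix a compact fundamental domain $D\subset G$ for $\Lambda$ containing the identity, with $D\subset P_\ord^\R(u;L)^{O_{d,C}(1)}$, and let $\phi:G\to\Lambda$ be the left-inverse of the inclusion sending $g$ to the unique $\gamma\in\Lambda$ with $g\in\gamma D$. The inclusion $\iota:(\Lambda,d_{P_\ord(u;L)})\hookrightarrow(G,d_{P_\ord^\R(u;L)})$ is trivially $1$-Lipschitz. If I can show $\phi$ is $O_{d,C}(1)$-Lipschitz, then $\iota$ is an $(O_{d,C}(1),O_{d,C}(1))$-quasi-isometry; applied to $g\in P_\ord^\R(u;L)^k\cap\Lambda$, this gives $d_{P_\ord(u;L)}(g,1)\leq O_{d,C}(k)$, i.e.\ $g\in P_\ord(u;L)^{O_{d,C}(k)}$ (using \eqref{eq:inverseProg} to deal with inverses). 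Lipschitzness of $\phi$ reduces via the triangle inequality to the ``base case'' $\Lambda\cap P_\ord^\R(u;L)^{O_{d,C}(1)}\subset P_\ord(u;L)^{O_{d,C}(1)}$, since $\phi(g)^{-1}\phi(gv)\in D^{-1}vD\cap\Lambda\subset\Lambda\cap P_\ord^\R(u;L)^{O_{d,C}(1)}$ whenever $v\in P_\ord^\R(u;L)$.

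The main obstacle is the base case. To prove it, I would use the Baker--Campbell--Hausdorff formula together with the $C$-upper-triangular form to control the $\Phi$-coordinates: an element of $P_\ord^\R(u;L)^M$ is a product of $M$ factors, each of which is $\exp$ of a Lie algebra element in the box $B_\R(e;L)$, and iterating BCH with upper-triangular control on commutators confines the product to $\Phi$ of a box whose $j$-th side length is $O_{d,C,M}(L_j)$. Integer points in this box are then in $P_\ord(u;L)^{O_{d,C,M}(1)}$ by \eqref{eq:dilateProg}. The bookkeeping for the BCH expansion, in particular keeping the dependence on $M$ bounded in terms of $d$ and $C$ alone once $M=O_{d,C}(1)$, is the technical heart of the argument.
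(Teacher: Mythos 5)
Your proposal takes a genuinely different (more direct) route than the paper but arrives at the same ideas. The paper avoids proving that $\Lambda$ is a lattice in $G$ by first applying Proposition~\ref{prop:FreeReducCase} — which is itself the fundamental-domain argument, but carried out in the free nilpotent Lie group $N^\R$, where the rescaling automorphism makes the constants manifestly uniform — and then projecting the resulting decomposition $P_\ord^\R(x;L)^k\subset P_\ord(x;L)^{O_d(k)}P_\ord^\R(x;L)^{O_d(1)}$ down to $G$. The bounded real factor is then absorbed via Lemma~\ref{lem:pp.L2.1}, which is exactly the content of what you call the ``technical heart'': the upper-triangular form implies $P_\ord^\R(u;L)^n\subset P_\ord^\R(u;O_{C,d,n}(L))$. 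You instead reprove the fundamental-domain step directly for $\Lambda\hookrightarrow G$, which works but then forces you to establish separately (a) that $\Lambda=\Phi(\Z^d)$ and (b) the BCH absorption, rather than inheriting uniformity from the free group. One point that deserves more care in your write-up: you assert that the $C$-upper-triangular form of the Lie-algebra tuple $(e;L)$ ``gives $[u_i^{\pm1},u_j^{\pm1}]\in P_\ord(u_{j+1},\ldots,u_d;\cdot)$'' with integer exponents, which you then use to sort words into $\Phi(\Z^d)$. What the hypothesis directly yields (cf.\ Lemma~\ref{lem:pp.L2.1}) is only the $\R$-version $(u;L)$ in upper-triangular form over $\R$; passing to integer exponents — equivalently, $\Phi(\Z^d)$ being a subgroup — is an additional fact that you should justify explicitly (it is what makes $P_\ord^\R(u;L')\cap\Lambda\subset P_\ord(u;L')$, a containment the paper's proof also uses silently at the intersection step). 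Once you invoke Lemma~\ref{lem:pp.L2.1} for the absorption and make the $\Lambda=\Phi(\Z^d)$ claim precise, your argument is sound and essentially unwinds the paper's reduction-to-the-free-group step into a self-contained calculation in $G$.
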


\begin{lemma}\label{lem:pp.L2.1}
In the setting of Proposition \ref{prop:goodReducReal} we have $P_\ord^{\R}(u;L)^n\subset P_\ord^{\R}(u;O_{C,d,n}(L))$ for every $n\in\N$.
\end{lemma}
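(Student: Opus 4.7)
The plan is to prove the inclusion by induction on $n$, with base case $n=1$ trivial (constant $1$). For the inductive step, write an element of $P_\ord^{\R}(u;L)^{n+1}$ as $g h$ where $g \in P_\ord^{\R}(u;L)^{n}$ and $h = u_1^{b_1}\cdots u_d^{b_d}$ with $|b_i| \leq L_i$. By the inductive hypothesis, $g = u_1^{a_1}\cdots u_d^{a_d}$ with $|a_i| \leq M_n L_i$ for some $M_n = M_n(C,d,n)$. The content of the induction step is therefore to rewrite the product $u_1^{a_1}\cdots u_d^{a_d} \cdot u_1^{b_1}\cdots u_d^{b_d}$ as a single ordered product $u_1^{c_1}\cdots u_d^{c_d}$ with $|c_i| \leq M_{n+1}L_i$.

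To carry this out I would use a bubble-sort / collection process based on the commutator identity $u_j^{a} u_i^{b} = u_i^{b} u_j^{a}\,[u_j^{a}, u_i^{b}]$ for $i < j$: starting from the leftmost out-of-order pair and proceeding rightward, swap adjacent generators of the wrong order, absorbing the resulting commutators into the tail. The key quantitative input that this requires is the following claim: if $(u;L)$ is in $C$-upper-triangular form over $\R$, and if $|a| \leq A L_j$, $|b| \leq B L_i$ with $i<j$, then there is a constant $C' = C'(C,d)$ such that
\[
[u_j^{a}, u_i^{b}] \in P_\ord^{\R}\bigl(u_{j+1},\ldots,u_d;\, C'(1+AB)\,L_{j+1},\ldots,C'(1+AB)\,L_d\bigr).
\]
Once this claim is available, one iteration of collection introduces at most $d^2$ new commutator factors, each of which contributes at most a constant factor (depending on $C,d,M_n$) to the $L$-scale of the progression; carrying through the bookkeeping then yields an $M_{n+1}$ that depends only on $C$, $d$, and $n$.

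The main obstacle is the claimed bound on $[u_j^{a}, u_i^{b}]$, which does not follow immediately from the $C$-upper-triangular hypothesis as stated (which only controls $[u_i^{\pm1},u_j^{\pm1}]$). I would establish it by passing through the Baker--Campbell--Hausdorff formula \eqref{eq:bch}: $\log[u_j^a, u_i^b]$ is a finite polynomial expression, of degree depending only on the nilpotency step, in iterated Lie brackets of $a e_j$ and $b e_i$, starting with $ab[e_j,e_i]$ and with all higher terms of total bracket-weight at least three. Translating the $C$-upper-triangular hypothesis into the Lie algebra (so that each coordinate of $[e_i,e_j]$ on $e_k$, $k > j$, is bounded by $O_{C,d}(L_k/(L_iL_j))$, with analogous bounds for higher brackets obtained by iteration), one finds that the coordinate of $\log[u_j^a,u_i^b]$ on each $e_k$ with $k > j$ is bounded by $O_{C,d}((1+AB)L_k)$. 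Exponentiating back and using Lemma \ref{lem:freeequivalences}(i) applied in the subgroup generated by $u_{j+1},\ldots,u_d$ then yields the claim.

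An alternative, arguably cleaner route would be to reduce to the discrete analogue already established in Proposition \ref{prop:Powergood} by lifting to the free nilpotent Lie group $N^{\R}_{d,s}$ of rank $d$ and step $s$ via the canonical surjection $\pi:N^{\R}\to G$ sending a free generator $x_i$ to $u_i$, applying Lemma \ref{lem:freeequivalences} in $N^{\R}$, and then pushing the resulting bound forward through $\pi$; the upper-triangular hypothesis on $(e;L)$ is used exactly to ensure that basic commutators in the $x_i$ of weight $\geq 2$ project into the tail $P_\ord^{\R}(u_{j+1},\ldots,u_d;\cdot)$ with the correct exponent bounds.
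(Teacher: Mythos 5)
The paper's own proof is a two-line citation: it observes that the argument of \cite[Proposition 4.1]{proper.progs} upgrades the $C$-upper-triangular hypothesis on $(e;L)$ to $O_{C,d}(1)$-upper-triangular form over $\R$ for $(u;L)$, and then invokes the proof of \cite[Lemma 2.1]{proper.progs} (a collection argument over $\Z$) word-for-word over $\R$. Your proposal essentially re-derives these cited results from scratch, so while the strategy is ``the same'' at the level of the underlying mathematics (a collection process controlled via BCH and iterated brackets), it is a different presentation in that it attempts to make the argument self-contained rather than deferring to the earlier paper.

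Your reconstruction has one quantitative error that you should correct: the claim that
\[
[u_j^{a}, u_i^{b}] \in P_\ord^{\R}\bigl(u_{j+1},\ldots,u_d;\, C'(1+AB)\,L_{j+1},\ldots,C'(1+AB)\,L_d\bigr)
\]
is too optimistic. The BCH expansion of $\log[u_j^a,u_i^b]$ contains, besides the first-order term $ab[e_j,e_i]$ (which indeed contributes $O_{C,d}(AB\,L_k)$ to the $e_k$-coordinate), higher-order terms such as $a^2b\,[e_j,[e_j,e_i]]$. Iterating the upper-triangular bound gives that $[e_j,[e_j,e_i]]$ has $e_k$-coordinate $O_{C,d}(L_k/(L_iL_j^2))$, so this term contributes $O_{C,d}(A^2B\,L_k)$, not $O_{C,d}(AB\,L_k)$. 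The correct statement is a bound of the form $O_{C,d}(1+\max(A,B)^s)\,L_k$ where $s$ is the step, which is a polynomial of degree up to $s$ in $A,B$. Since $s\le d$ and in your induction $A,B$ are themselves $O_{C,d,n}(1)$, the corrected bound still delivers the conclusion; but as written your intermediate claim is false.

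One further caution: the ``alternative, arguably cleaner route'' via Proposition \ref{prop:Powergood} does not directly yield the lemma. That proposition produces a \emph{new} basis $e'$ and a \emph{new} progression $P_\ord(u',L')$ approximately equal to $P_\ord(u;L)^k$; it does not say that $P_\ord(u;L)^k$ is contained in a dilate of the \emph{original} progression $P_\ord(u;L)$. To make such a reduction rigorous you would still have to compare $P_\ord(u',L')$ back against $P_\ord(u;O(L))$, which is essentially the content of the lemma you are trying to prove. So the first route (direct collection, with the corrected bound on $[u_j^a,u_i^b]$) is the one to pursue.
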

\begin{proof}
The argument of \cite[Proposition 4.1]{proper.progs} implies that $(u;L)$ is in $O_{C,d}(1)$-upper-triangular form over $\R$. The result then follows from using this in place of the upper-triangular form over $\Z$ in the proof of \cite[Lemma 2.1]{proper.progs}.
\end{proof}

\begin{proof}[Proof of Proposition \ref{prop:goodReducReal}]
The first conclusion is trivial.
Write $s$ for the nilpotency class of $\Lambda$, noting that $s\ll_d1$, and write $\pi:N=N_{d,s}\to \Lambda$ for the homomorphism mapping $x_i$ to $u_i$ for each $i$. Then note that
\begin{align*}
P_\ord^{\R}(u;L)^k&\subset P_\ord(u;L)^{O_{d}(k)}P_\ord^{\R}(u;L)^{O_d(1)}
                                &\text{(by Proposition \ref{prop:FreeReducCase})}\\
         &\subset P_\ord(u;L)^{O_{d}(k)}P_\ord^{\R}(u;O_{d,C}(L))
                                &\text{(by Lemma \ref{lem:pp.L2.1})},
\end{align*}
and hence that
\begin{align*}
P_\ord^{\R}(u;L)^k\cap\Lambda&\subset P_\ord(u;L)^{O_{d}(k)}P_\ord(u;O_{d,C}(L))\\
           &\subset P_\ord(u;L)^{O_{d,C}(k)} &\text{(by \eqref{eq:dilateProg})},
\end{align*}
which gives the second conclusion.
%
%
%
%
\end{proof}

\begin{corollary}\label{cor:QIreal}
Under the assumptions of Proposition \ref{prop:goodReducReal} the embedding of Cayley graphs $$(\Lambda,P_\ord(u,L)) \hookrightarrow (G,P_\ord^{\R}(u;L))$$ is an $(O_{d,C}(1), O_{d,C}(1))$-quasi-isometry (see Definition \ref{def:qi}).
\end{corollary}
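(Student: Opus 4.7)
The goal is to verify both parts of Definition \ref{def:qi} for the inclusion $\iota\colon (\Lambda, P_\ord(u,L)) \hookrightarrow (G, P_\ord^{\R}(u;L))$, and the main input is Proposition \ref{prop:goodReducReal}.

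The bi-Lipschitz estimates are almost immediate from Proposition \ref{prop:goodReducReal}. First, since $P_\ord(u,L) \subset P_\ord^{\R}(u;L)$, the map $\iota$ is $1$-Lipschitz, giving the upper bound $d_G(\iota(x), \iota(y)) \le d_\Lambda(x, y)$. Conversely, if $d_G(\iota(x), \iota(y)) \le k$, then, after using \eqref{eq:inverseProg} to absorb the non-symmetry of ordered progressions (at the cost of a factor bounded by $d$), we have $x^{-1}y \in P_\ord^{\R}(u;L)^{O(k)} \cap \Lambda$, which by the second inclusion in Proposition \ref{prop:goodReducReal} lies in $P_\ord(u,L)^{O_{d,C}(k)}$, giving $d_\Lambda(x,y) \le O_{d,C}(k)$.

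The remaining condition to check is density: every $g \in G$ should lie within $d_{P_\ord^{\R}(u;L)}$-distance $O_{d,C}(1)$ of $\iota(\Lambda)$. The plan is to reduce this to the free nilpotent case handled by Corollary \ref{cor:prop:FreeReducCase}. Let $s \le d$ denote the nilpotency class of $G$ (bounded by $d$ since $\dim \g = d$), let $N = N_{d,s}$ with Malcev completion $N^{\R}$, and let $\pi_\Z\colon N \to \Lambda$ and $\pi_\R\colon N^{\R} \to G$ be the canonical homomorphisms sending $x_i \mapsto u_i$. These are well defined by the universal property (the latter since $G$ has step at most $s$), and $\pi_\R$ is surjective because its image is a connected Lie subgroup of $G$ whose Lie algebra contains the basis $e_1, \ldots, e_d$. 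Both maps are $1$-Lipschitz for the word metrics associated to $P_\ord(x;L)$ and $P_\ord^{\R}(x;L)$ respectively. By Corollary \ref{cor:prop:FreeReducCase} there exists $K \ll_{d,s} 1$ such that every point of $N^{\R}$ lies within $P_\ord^{\R}(x;L)$-distance $K$ of $N$. Given $g \in G$, pick $\tilde g \in N^{\R}$ with $\pi_\R(\tilde g) = g$, choose $\tilde\lambda \in N$ with $d_{N^{\R}}(\tilde g, \tilde\lambda) \le K$, and set $\lambda := \pi_\Z(\tilde\lambda) \in \Lambda$; then $d_G(g, \lambda) \le K \ll_{d,C} 1$ by $1$-Lipschitzness of $\pi_\R$.

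The only minor subtlety throughout is the non-symmetry of the ordered progressions, which we handle by repeated appeal to \eqref{eq:inverseProg}; otherwise the argument is a routine combination of Proposition \ref{prop:goodReducReal} with a pushforward through the surjections $\pi_\Z, \pi_\R$.
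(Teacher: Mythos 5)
Your proof is correct and takes the natural route. The paper states Corollary \ref{cor:QIreal} without explicit proof, but the two ingredients you identify are exactly what the context provides: the bi-Lipschitz bounds are an immediate reading of the two inclusions of Proposition \ref{prop:goodReducReal} (after controlling the near-symmetry of ordered progressions via \eqref{eq:inverseProg}), and the density of $\Lambda$ in $G$ is obtained by lifting to the free nilpotent group, where Corollary \ref{cor:prop:FreeReducCase} already gives a quasi-isometry with constants $O_{d,s}(1)$, and then pushing forward through the $1$-Lipschitz surjections $\pi_\Z$, $\pi_\R$. Your remarks that $s\le d$ (so the constants are genuinely $O_{d,C}(1)$), that $\pi_\R$ is surjective because its image is a connected subgroup whose Lie algebra contains the basis $e$, and that $\pi_\R|_N=\pi_\Z$ are all correct and are precisely the points one needs to check. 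This is exactly the strategy used elsewhere in Section \ref{sec:overview} (see the end of the proof of Proposition \ref{prop:LimIsAGroup}) and what the authors evidently intend here.
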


\begin{corollary}\label{cor:limreal}
Let $(G_n)$ be sequence of connected, simply connected nilpotent Lie groups of dimension $d$. For each $n$ write $\g_n$ for the Lie algebra of $G_n$, and let $e^{(n)}=e_1^{(n)},\ldots,e_d^{(n)}$ be a basis for $\g_n$ and $L^{(n)}=(L_1^{(n)},\ldots,L_d^{(n)})\in\N^d$ a $d$-tuple of integers such that $(e^{(n)};L^{(n)})$ is in $C$-upper-triangular form. Write $u_i^{(n)}=\exp e_i^{(n)}$ for each $i$, and write $\Lambda_n=\langle u_1^{(n)},\ldots,u_d^{(n)}\rangle$.  Let $S_n$ be a generating set for $\Lambda_n$, and suppose that $(k_n)$ is a sequence of positive integers such that $k_n=o(n)$ and such that $S_n^{k_n}\approx_{\omega} P_\ord(u^{(n)},L^{(n)})$. Then for every sequence $(m_n)$ of integers such that $m_n\gg n$ the sequences $(\Lambda_{n},d_{S_{n}}/m_n)$ and $\left(G_{n},\frac{d_{P_\ord^{\R}(u^{(n)};L^{(n)})}}{m_n/k_n}\right)$ are precompact and
\[
\lim_\omega\left(\Lambda_{n},\frac{d_{S_{n}}}{m_n}\right)\cong\lim_\omega\left(G_{n},\frac{d_{P_\ord^{\R}(u^{(n)};L^{(n)})}}{m_n/k_n}\right).
\]
\end{corollary}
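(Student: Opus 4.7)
The strategy is to assemble a chain of $O_\omega(1)$-bi-Lipschitz maps between rescaled Cayley graphs, each extending to a bi-Lipschitz isomorphism of ultralimits via Corollary \ref{corultralimitPhi}, while using Proposition \ref{prop:LimIsAGroup} at one end of the chain for precompactness. Write $P_n = P_\ord(u^{(n)}, L^{(n)})$, $P_n^\R = P_\ord^\R(u^{(n)}; L^{(n)})$, and $j_n = \lfloor m_n/k_n \rfloor$, which tends to infinity since $k_n = o(n)$ and $m_n \gg n$.

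Applied to $(G_n, P_n^\R)$—whose rank is $d$ and whose step is at most $d$, uniformly in $n$—Proposition \ref{prop:LimIsAGroup} yields precompactness of $(G_n, d_{P_n^\R}/j_n)$ and identifies its ultralimit as a connected nilpotent Lie group with a geodesic metric. On the discrete side, the elementary relation $d_{S_n^{k_n}} \le d_{S_n}/k_n \le d_{S_n^{k_n}} + 1$ shows that $d_{S_n}/m_n$ and $d_{S_n^{k_n}}/j_n$ differ pointwise by $O(1/j_n) = o(1)$, so the identity induces an isometric identification of the ultralimits of $(\Lambda_n, d_{S_n}/m_n)$ and $(\Lambda_n, d_{S_n^{k_n}}/j_n)$. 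The hypothesis $S_n^{k_n} \approx_\omega P_n$ then gives that $d_{S_n^{k_n}}$ and $d_{P_n}$ on $\Lambda_n$ are $O_\omega(1)$-bi-Lipschitz equivalent, so Corollary \ref{corultralimitPhi} promotes the identity to a bi-Lipschitz isomorphism between $\lim_\omega(\Lambda_n, d_{S_n^{k_n}}/j_n)$ and $\lim_\omega(\Lambda_n, d_{P_n}/j_n)$.

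The main step is the passage from $\Lambda_n$ to $G_n$: Corollary \ref{cor:QIreal} asserts that the inclusion $(\Lambda_n, d_{P_n}) \hookrightarrow (G_n, d_{P_n^\R})$ is an $(O_{d,C}(1), O_{d,C}(1))$-quasi-isometry with constants uniform in $n$. After dividing both metrics by $j_n$, the multiplicative constant persists but the additive defect becomes $o(1)$, and the coboundedness clause of the quasi-isometry (every point of $G_n$ lies within $d_{P_n^\R}$-distance $O_{d,C}(1)$ of $\Lambda_n$) becomes the approximate-surjectivity hypothesis of Proposition \ref{prop:ultralimitPhi}(ii) after rescaling. Since the inclusion is a group homomorphism and both ultralimits carry group structures (as established inside the proof of Proposition \ref{prop:LimIsAGroup}), Corollary \ref{corultralimitPhi} completes the chain with a bi-Lipschitz isomorphism of ultralimits; precompactness of $(\Lambda_n, d_{S_n}/m_n)$ is then inherited from that of $(G_n, d_{P_n^\R}/j_n)$.

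The only real obstacle is the bookkeeping in the final step: verifying that the approximate-surjectivity clause of Proposition \ref{prop:ultralimitPhi}(ii) applies (immediate from Corollary \ref{cor:QIreal}) and that the infinitesimal stabilisers are normal so that Corollary \ref{corultralimitPhi} is available—this last point follows from the nilpotency of $\Lambda_n$ and $G_n$, exactly as in the verification carried out inside the proof of Proposition \ref{prop:LimIsAGroup}.
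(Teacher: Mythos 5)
Your proposal is correct and follows essentially the same route as the paper: Proposition \ref{prop:LimIsAGroup} for precompactness of $\left(G_n, d_{P_\ord^{\R}(u^{(n)};L^{(n)})}/j_n\right)$, Corollary \ref{cor:QIreal} for the quasi-isometry $(\Lambda_n,d_{P_n})\hookrightarrow(G_n,d_{P_n^{\R}})$, and Corollary \ref{corultralimitPhi} to promote that and the intermediate $O_\omega(1)$-bi-Lipschitz identifications to isomorphisms of ultralimits. (You merely spell out the rescaling step from $d_{S_n}/m_n$ to $d_{S_n^{k_n}}/j_n$ and the use of $S_n^{k_n}\approx_\omega P_n$, which the paper leaves implicit; note that the inequalities in the rescaling step should read $d_{S_n}/k_n\le d_{S_n^{k_n}}\le d_{S_n}/k_n+1$, though this does not affect the $o(1)$ conclusion.)
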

\begin{proof}
Proposition \ref{prop:LimIsAGroup} implies precompactness of the sequence $\left(G_{n},\frac{d_{P_\ord^{\R}(u^{(n)};L^{(n)})}}{m_n/k_n}\right)$, which by Corollary \ref{cor:QIreal} implies that of $(\Lambda_{n},d_{S_{n}}/m_n)$. The isomorphism of the limits then follows from Corollary \ref{cor:QIreal}.
\end{proof}

\begin{lemma}\label{cor:ReducToP}
Let $(\Gamma_n,S_n)$ and $(m_n)$ be as in Theorem \ref{thm:bft}, and write $T_n=S_n^{3t}\cap H_nP_n^{(d+1)\eta}$ for the $\omega$-almost every $n$ for which this is therefore defined by Theorem \ref{thm:ReducTorsionFreeD}. Then the sequence $(\langle H_nP_n\rangle, d_{T_n}/m_n)$ is precompact, and for $\omega$-almost every $n$ there exists a surjective continuous morphism from $\lim_{\omega}(\langle H_nP_n\rangle, d_{T_n}/m_n)$ to 
$\lim_{\omega}(\Gamma_n,d_{S_n}/m_n)$, both being connected nilpotent Lie groups (equipped with left-invariant sub-Fisnler metrics). 
\end{lemma}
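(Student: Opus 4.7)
The plan is to exploit the tight relationship between $T_n$ and $H_nP_n$ captured by Lemmas \ref{lem:T>HP} and \ref{lem:T<HP}, which together imply that $T_n^{k_n}$ is comparable to $H_nP_n$ up to constants depending only on $D$. The morphism itself will simply be the ultralimit of the inclusions $\iota_n:\langle H_nP_n\rangle\hookrightarrow\Gamma_n$.

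The first step is to verify the hypotheses of Theorem \ref{thm:bft} for the sequence $(\langle H_nP_n\rangle, T_n)$, in order to deduce GH-precompactness of $(\langle H_nP_n\rangle, d_{T_n}/m_n)$ together with the desired sub-Finsler structure on its ultralimit. The containment $T_n\subset S_n^{3t}$ combined with the hypothesis $|S_n^n|\ll n^D|S_n|$ (and Proposition \ref{prop:doublingLargerScale} to handle the multiplicative factor $3t$) gives $|T_n^n|\leq|S_n^{3tn}|\ll_D n^D|S_n|$. In the other direction, Theorem \ref{thm:ReducTorsionFreeD} gives $S_n\subset S_n^{k_n}\subset X_nH_nP_n^\eta$, so every $s\in S_n$ factors as $s=xhp$ with $x\in X_n\subset S_n^t$ and $hp\in H_nP_n^\eta\cap X_n^{-1}S_n\subset S_n^{3t}\cap H_nP_n^{(d+1)\eta}=T_n$, hence $S_n\subset X_nT_n$ and $|T_n|\gg_D|S_n|$. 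Therefore $|T_n^n|\ll_D n^D|T_n|$, and Theorem \ref{thm:bft} applies. The target $\lim_\omega(\Gamma_n,d_{S_n}/m_n)$ has the same Lie-group-with-sub-Finsler-metric structure directly from Theorem \ref{thm:bft}.

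The second step is to construct $\iota_\infty$ and verify its properties. Since $T_n\subset S_n^{3t}$, each inclusion $\iota_n$ is $3t$-Lipschitz from $d_{T_n}/m_n$ to $d_{S_n}/m_n$, so Proposition \ref{prop:ultralimitPhi}(i) yields a $3t$-Lipschitz ultralimit map $\iota_\infty$. Because both ultralimits are connected nilpotent Lie groups, the infinitesimal subgroup $G_{\omega,0}$ is normal in $G_\omega$ on each side (as discussed at the end of Section \ref{sec:GH-limit}), so since each $\iota_n$ is a group homomorphism, $\iota_\infty$ is automatically a continuous group homomorphism. For surjectivity, given a representative $(g_n)$ of a point of $\lim_\omega(\Gamma_n,d_{S_n}/m_n)$ we have $g_n\in S_n^{Rm_n}$ for some $R=O_\omega(1)$. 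Setting $r_n=\lceil Rm_n/k_n\rceil$, the containment $S_n^{r_nk_n}\subset X_nH_nP_n^{O_D(r_n)}$ from Theorem \ref{thm:ReducTorsionFreeD} lets me write $g_n=x_ny_n$ with $x_n\in X_n\subset S_n^t$ and $y_n\in H_nP_n^{O_D(r_n)}\subset\langle H_nP_n\rangle$. Since $t$ is bounded independently of $n$, $d_{S_n}(1,x_n)/m_n\to_\omega 0$, so $(g_n)$ and $(y_n)$ define the same point of the target ultralimit. Meanwhile the inclusion $H_nP_n\subset T_n^{k_n}$ from Lemma \ref{lem:T>HP} gives $y_n\in T_n^{O_D(r_nk_n)}\subset T_n^{O_D(Rm_n)}$, so $d_{T_n}(1,y_n)/m_n=O_D(R)$ is bounded and $(y_n)$ represents a bona fide point of the source ultralimit that $\iota_\infty$ sends to $(g_n)$.

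The main obstacle is establishing the bound $|T_n^n|\ll_D n^D|T_n|$ cleanly, since this is what triggers Theorem \ref{thm:bft} on the source side and bestows the Lie-group-with-sub-Finsler-metric structure for free; once this is in place, the rest of the argument is essentially bookkeeping with the containments guaranteed by Theorem \ref{thm:ReducTorsionFreeD}.
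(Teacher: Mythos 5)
Your proof is correct and follows essentially the same strategy as the paper: a Lipschitz inclusion together with the approximate-surjectivity criterion of Proposition~\ref{prop:ultralimitPhi}/Corollary~\ref{corultralimitPhi} to produce the surjective morphism of ultralimits. The one genuine divergence is the precompactness step. The paper's own proof disposes of this with a one-liner, ``$|T_n^n|\le|S_n|^{3t}\ll_\omega n^{3tD}$'', which as written does not parse (presumably $|S_n^{3tn}|$ is meant, and even then the absolute bound does not follow from the relative hypothesis $|S_n^n|\ll n^D|S_n|$ without an intermediate step). You instead derive $|T_n^n|\ll_D n^D|S_n|$ from $T_n\subset S_n^{3t}$ together with moderate growth, and then convert to the required relative form $|T_n^n|\ll_D n^D|T_n|$ via $S_n\subset X_nT_n$, which is the same containment the paper uses in the proof of Proposition~\ref{prop:reducConnected}. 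This is cleaner. One small correction: for the bound $|S_n^{3tn}|\ll_D n^D|S_n|$ the relevant black box is Theorem~\ref{thm:bt} (moderate growth), not Proposition~\ref{prop:doublingLargerScale}; the latter concerns approximate-group constants of powers rather than volume bounds and is not what you need here.

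Your surjectivity argument, writing $g_n=x_ny_n$ with $x_n\in X_n\subset S_n^t$ and $y_n\in H_nP_n^{O_D(r_n)}\subset T_n^{O_D(Rm_n)}$, is the same in substance as the paper's appeal to \eqref{eq:bounded.dist}, just unpacked by hand. Your remark that $\iota_\infty$ is ``automatically a continuous group homomorphism'' because both ultralimits are Lie groups is a slight shortcut: Corollary~\ref{corultralimitPhi} is where this is actually packaged, and it needs the normality of $G_{\omega,0}$ on each side, which you correctly identify; citing Corollary~\ref{corultralimitPhi} directly (as the paper does) would make this cleaner and also gives openness of $\iota_\infty$, from which surjectivity onto the connected target follows with no explicit decomposition at all.
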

\begin{proof}
To see that $(\langle H_nP_n\rangle, d_{T_n}/m_n)$ is precompact, simply note that $|T_n^n|\le|S_n|^{3t}\ll_\omega n^{3tD}$ and apply Theorem \ref{thm:bft}. To prove the existence of the surjective morphism, first note that the inclusion $\psi_n:(\langle H_nP_n\rangle, d_{T_n})\to (\Gamma_n,d_{S_n})$ is $3t$-Lipschitz, whilst Lemma \ref{lem:T>HP}---specifically \eqref{eq:bounded.dist}---implies that for all $r\geq 1$ and $u\in S_n^{rk_n}$ there exists $v\in T_n^{\pm\eta rk_n}$ such that 
$d_{S_n}(u,v)\le t$. The desired morphism is therefore given by Corollary \ref{corultralimitPhi}.
\end{proof}


\begin{proof}[Proof of Proposition \ref{prop:reducConnected}]
Applying Theorem \ref{thm:ReducTorsionFreeD}, we obtain a sequence
\begin{equation}\label{eq:k=o(n)}
k_n=o(n),
\end{equation}
positive integers $d,t,\eta$ depending only on $D$, and, for $\omega$-almost every $n$, a Lie coset progression $H_nP_n\subset \Gamma_n$ of rank $d$ and injectivity radius $\Omega_\omega(n/k_n)$ in $O_D(1)$-upper-triangular form, and, if $|S_n^n|\ll n_D$, of homogeneous dimension at most $D$. We also obtain a finite subset $X_n\subset S_n^t$ such that $|X_n|\ll_D1$ and such that for every $r\ge1$ we have
\[
X_nH_nP_n^{r}\subset S_n^{rk_n}\subset X_nH_nP_n^{\eta r}.
\]
Defining $T_n=S_n^{3t}\cap H_nP_n^{(d+1)\eta}$, it follows from Lemmas \ref{lem:T>HP} and \ref{lem:T<HP} that
\begin{equation}\label{eq:T=P}
H_nP_n\subset T_n^{k_n}\subset H_nP_n^{O_D(1)},
\end{equation}
and from Lemma \ref{cor:ReducToP} that $(\langle H_nP_n\rangle, d_{T_n}/m_n)$ is precompact and
\begin{equation}\label{eq:reduce.to.X=1.d}
\dim\lim_{\omega}(\Gamma_n,d_{S_n}/m_n)\le\dim\lim_{\omega}(\langle H_nP_n\rangle, d_{T_n}/m_n)
\end{equation}
and
\begin{equation}\label{eq:reduce.to.X=1}
\hdim\lim_{\omega}(\Gamma_n,d_{S_n}/m_n)\le\hdim\lim_{\omega}(\langle H_nP_n\rangle, d_{T_n}/m_n).
\end{equation}
By the definition of a Lie coset progression, for each $n$ for which $P_n$ is defined there exists a connected, simply connected nilpotent Lie group $G_n$---of homogeneous dimension at most $D$ if $|S_n^n|\ll n_D$---with Lie algebra $\g_n$ with basis $e_1^{(n)},\ldots,e_{d}^{(n)}$ and positive integers $L_1^{(n)},\ldots,L_{d}^{(n)}$ such that $(e^{(n)};L^{(n)})$ is in $O_D(1)$-upper-triangular form, such that $\exp\langle e_1^{(n)},\ldots,e_{d}^{(n)}\rangle$ is a subgroup of $G_n$, and such that, writing $u_i^{(n)}=e_i^{(n)}$ and $\Lambda_n=\langle u_1^{(n)},\ldots,u_{d}^{(n)}\rangle$ for each $i$, there exists a map $\varphi_n:\Lambda_n\to\langle y_1^{(n)},\ldots y_{d}^{(n)}\rangle$ that is a homomorphism modulo $H_n$ such that $\varphi_n(u_i^{(n)})=y_i^{(n)}$ for each $i$ and such that $P_n=P_\ord(y^{(n)};L^{(n)})$. Writing $Q_n=P^{\R}_\ord(u^{(n)},L^{(n)})$, we claim that $(G_n,Q_n)$ satisfies the proposition with $j_n=m_n/k_n$.
It follows from \eqref{eq:k=o(n)} that $H_n\subset T_n^{o(n)}$, writing $\Gamma_n'=\langle H_nP_n\rangle/H_n$, and writing $\psi_n:\langle H_nP_n\rangle\to\Gamma_n'$ for the quotient homomorphism, we have
\begin{equation}\label{eq:discard.H}
\lim_{\omega}(\langle H_nP_n\rangle, d_{T_n}/m_n)\cong\lim_{\omega}(\Gamma_n',d_{\psi_n(T_n)}/m_n).
\end{equation}
Since the injectivity radius of $H_nP_n$ tends to infinity with respect to $\omega$, and since $\psi_n(T_n)\subset\psi_n(P_n^{O_D(1)})$, for $\omega$-almost every $n$ there exists a unique subset $\tilde{T}_n\subset P_\ord(u^{(n)};L^{(n)})^{O_D(1)}$ such that $\psi_n\circ\varphi_n(\tilde{T}_n)=\psi_n(T_n)$.
It also follows from \eqref{eq:T=P} and the increasing injectivity radius that for $\omega$-almost every $n$ we have
\[
\tilde{T}_n^{k_n}\approx_DP_\ord(u^{(n)};L^{(n)}),
\]
and so Corollary \ref{cor:limreal} implies that taking $j_n=m_n/k_n$, which tends to infinity by \eqref{eq:k=o(n)}, the sequences $(\Lambda_n,d_{\tilde{T}_n}/m_n)$ and $(G_n, d_{Q_n}/j_n)$ are both precompact and
\begin{equation}\label{eq:limreal}
\lim_{\omega}(\Lambda_n,d_{\tilde{T}_n}/m_n)\cong\lim_{\omega}(G_n, d_{Q_n}/j_n).
\end{equation}
Since $(\Gamma_n',d_{\psi_n(T_n)}/m_n)$ is the image of $(\Lambda_n,d_{\tilde{T}_n}/m_n)$ under $\psi_n\circ\varphi_n$, we have
\begin{equation}\label{eq:Lambda.to.Gamma.d}
\dim\lim_{\omega}(\Gamma_n',d_{\psi_n(T_n)}/m_n)\le\dim\lim_{\omega}(\Lambda_n,d_{\tilde{T}_n}/m_n)
\end{equation}
and
\begin{equation}\label{eq:Lambda.to.Gamma}
\hdim\lim_{\omega}(\Gamma_n',d_{\psi_n(T_n)}/m_n)\le\hdim\lim_{\omega}(\Lambda_n,d_{\tilde{T}_n}/m_n).
\end{equation}
It therefore follows from combining \eqref{eq:reduce.to.X=1.d}, \eqref{eq:reduce.to.X=1}, \eqref{eq:discard.H}, \eqref{eq:limreal}, \eqref{eq:Lambda.to.Gamma.d} and \eqref{eq:Lambda.to.Gamma} that $(G_n,Q_n)$ satisfies the proposition, as claimed.
\end{proof}

We close this section by noting that the proof of Theorem \ref{thm:ReducTorsionFreeD} actually gives a more precise result that we record here for potential future use.

\begin{prop}\label{prop:FiniteJump}
Let $M,D>0$. There exists $N=N_{M,D}\in\N$ such that whenever $n\ge N$ and $(\Gamma,S)$ is a Cayley graph such that $|S^n|\leq Mn^D|S|$, the following holds. 
For every $\delta>0$ there exist $j=O_{D,\delta}(1)$, an increasing sequence $n^{\delta}=k_1<k_2<\ldots <k_j<k_{j+1}=n$ such that for every $i=1\ldots j$, there exists
a Lie coset progression $H_iP_i\subset \Gamma$ of rank at most $O_{D,\delta}(1)$ in $O_{D,\delta}(1)$-upper-triangular form, and a finite subset $X_i\subset S^{O_{D,\delta}(1)}$ such that $|X_i|\ll_{D,\delta}1$ and such that
\begin{itemize}
\item for all $r\geq 1$, $X_iH_iP_i^r\subset S_i^{k_ir}\subset XHP^{O_{D,\delta}(r)}$;
\item $H_iP_i$ has injectivity radius $\Omega_{D,\delta}(k_{i+1}/k_i)$.
\end{itemize}
\end{prop}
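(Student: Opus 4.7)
The plan is to adapt the proof of Theorem~\ref{thm:ReducTorsionFreeD} to a finite, quantitative setting via an induction on rank, eliminating the use of the ultrafilter. The key observation is that the iterative application of dimension reduction in that theorem's proof passes through a finite sequence of intermediate Lie coset progressions of strictly decreasing rank, attached to successively larger scales; this sequence of scales and progressions is exactly the data recorded by the proposition.

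First I would apply Proposition~\ref{prop:inverse} at the scale $k_1 = \lceil n^\delta \rceil$ with $D' = D/\delta$. The hypothesis $|S^n| \le Mn^D|S|$ yields $|S^{k_1}| \le M k_1^{D/\delta}|S|$, so for $n$ large enough (in terms of $M$, $D$, $\delta$), this produces an initial $1$-proper Lie coset progression $H_1 P_1$ of rank $d_1 = O_{D,\delta}(1)$ in $O_{D,\delta}(1)$-upper-triangular form, a subset $X_1 \subset S^{O_{D,\delta}(1)}$ with $|X_1| \ll_{D,\delta} 1$, and an exponent $\eta_1 = O_{D,\delta}(1)$ such that $X_1 H_1 P_1^r \subset S^{rk_1} \subset X_1 H_1 P_1^{\eta_1 r}$ for every $r \ge 1$.

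I would then iterate. Given data $(H_i P_i, k_i, X_i)$ with rank $d_i$ and injectivity radius $\rho_i$, there are two cases. If $\rho_i \ge cn/k_i$ for a fixed constant $c = c(D,\delta)$, I set $k_{i+1} = n$ and terminate; the upper containment $S^{nr/k_i} \subset X_iH_iP_i^{\eta_i n r/k_i}$ follows by using the existing bound at the appropriate value of $r$, and the injectivity condition $\rho_i = \Omega_{D,\delta}(k_{i+1}/k_i)$ is immediate from the choice of $c$. If instead $\rho_i < cn/k_i$, I mimic the proof of Proposition~\ref{prop:dim.reduc.induc}: Proposition~\ref{prop:Powergood} replaces $P_i^{\rho_i}$ by a Lie coset progression $Q_i$ of the same rank in $O_{D,\delta}(1)$-upper-triangular form; by Remark~\ref{rem:inj/proper}, the progression $Q_i$ fails to be $\xi$-proper for some threshold $\xi = O_{D,\delta}(1)$; and Proposition~\ref{prop:reduce.dim.when.not.proper} then yields a new Lie coset progression $H_{i+1}P_{i+1}$ of rank $d_{i+1} < d_i$ in $O_{D,\delta}(1)$-upper-triangular form, at a scale $k_{i+1}$ comparable up to constants to $\rho_i k_i$, while keeping $X_{i+1} = X_i$.

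Since the rank $d_i$ decreases strictly and is bounded initially by $d_1 = O_{D,\delta}(1)$, the process terminates after at most $j \le d_1 = O_{D,\delta}(1)$ iterations. All the sets $X_i$ coincide with $X_1 \subset S^{O_{D,\delta}(1)}$, so the size and containment bounds hold uniformly. The main obstacle will be the bookkeeping of constants: tuning thresholds so that the sequence $(k_i)$ is \emph{strictly} increasing (which, when some $\rho_i$ is close to $1$, may require either an artificial floor on the scale jump or the grouping of consecutive dimension-reduction steps), and verifying that all implicit constants remain $O_{D,\delta}(1)$ through the bounded number of iterations so that the two required containments and the injectivity-radius bound hold simultaneously with $k_{j+1} = n$ at termination.
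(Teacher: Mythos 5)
Your proposal is essentially identical in approach to the paper's own (implicit) proof: the paper states Proposition~\ref{prop:FiniteJump} with no written-out argument, remarking only that it follows from the proof of Theorem~\ref{thm:ReducTorsionFreeD}, and your plan---apply Proposition~\ref{prop:inverse} at scale $n^\delta$ with $D'=D/\delta$, then iterate the rank-reduction step of Proposition~\ref{prop:dim.reduc.induc} (via Propositions~\ref{prop:Powergood} and \ref{prop:reduce.dim.when.not.proper} and Remark~\ref{rem:inj/proper}), terminating in $O_{D,\delta}(1)$ steps by strict rank decrease---is precisely the finitary unwinding of that proof. Your closing caveats are fair but minor: the threshold $N$ should indeed depend on $\delta$ as well (so that $n^\delta\ge N'_{M,D/\delta}$), and the concern about strict increase of the $k_i$ is harmless because the required injectivity bound $\Omega_{D,\delta}(k_{i+1}/k_i)$ is automatically satisfied whenever $k_{i+1}/k_i$ is bounded.
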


\section{Reduction to ultralimits of normed Lie algebras}\label{sec:normed}

In this section we give an overview of the proof of Theorem \ref{thm:homdimOfLimit}. The basic strategy is to bound the homogeneous dimension (resp.\ the dimension) of the ultralimit by considering its Lie algebra as the ultralimit of the normed Lie algebras $(\g_n,\|\cdot \|_n)$, where $\|\cdot\|_n$ is a suitable sequence of norms.

There are three main steps. The first step is to define the ultralimit of the $(\g_n,\|\cdot \|_n)$, as follows. Before being a Lie algebra, or even a real vector space, $(\g_n,\|\cdot \|_n)$ is an abelian group equipped with an invariant distance $d_n$ (associated to its norm). Hence we can define $(\g_{\infty},d_{\infty})$ as the ultralimit $\lim_{\omega}(\g_n,d_n)$ as in Definition \ref{def:ultralimitmetricspace}. It is easy to see that $\g_{\infty}$ naturally comes with a real vector space structure, and that $d_{\infty}$ is associated to a norm $\|\cdot\|_{\infty}$. Note moreover that if the dimension of $\g_n$ is in $O_{\omega}(1)$, then the dimension of $\g_{\infty}$ is finite and actually equals $\lim_{\omega}\dim(\g_n)$.  Indeed, up to passing to a subsequence, we can assume that the dimension is fixed, but then all the $(\g_n,\|\cdot \|_n)$ are uniformly bi-Lipschitz equivalent to one another, and hence to $\g_{\infty}$, which therefore has the same dimension. 

 In order for the Lie bracket to {\it converge} as well along the ultrafilter, we need $\|\cdot\|_n$ to be in some way compatible with the Lie bracket. This  compatibility condition turns out to translate geometrically into a ``triangular form'' property for the unit ball, as follows.

\begin{lemma}\label{lem:def.of.ultralim.of.Lie.alg}
Let $(\g_n,\|\cdot \|_n)$ be a sequence of normed Lie algebras of bounded dimension, and suppose that for $\omega$-almost every $n$, the unit ball $B_n=B_{\|\cdot \|_n}(0,1)$ satisfies $[B_n,B_n]\subset O_\omega(1)B_n$. Then the bracket operation on $\g_{\infty}$
\[
[\lim_{\omega}u_n,\lim_{\omega}v_n]=\lim_{\omega}[u_n,v_n]
\]
is well defined and makes $\g_\infty$ into a Lie algebra. Moreover, if the Lie algebras $\g_n$ are nilpotent, then so is $\g_{\infty}$.
\end{lemma}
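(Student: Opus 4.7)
The plan is to extract from the hypothesis $[B_n,B_n]\subset O_\omega(1)B_n$ a uniform bilinear Lipschitz estimate on the bracket; everything else will be routine verification. Unfolding the $O_\omega$ notation, there is an absolute constant $C$ such that for $\omega$-almost every $n$ we have $[B_n,B_n]\subset CB_n$. Applying this to $u/\|u\|_n$ and $v/\|v\|_n$ and invoking bilinearity of the bracket then yields, for $\omega$-almost every $n$,
\[\|[u,v]\|_n\le C\|u\|_n\|v\|_n\qquad\text{for all }u,v\in\g_n.\]

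With this estimate in hand, I would first check well-definedness of the bracket on $\g_\infty$. Representatives of elements of $\g_\infty$ are sequences $(u_n)$ with $\|u_n\|_n\ll_\omega 1$, so the bilinear estimate immediately gives $\|[u_n,v_n]\|_n\ll_\omega 1$, producing a bona fide element of $\g_\omega$. To see independence of representatives, suppose $(u_n'),(v_n')$ are alternative representatives, meaning $\|u_n'-u_n\|_n,\|v_n'-v_n\|_n=o_\omega(1)$; expanding
\[[u_n',v_n']-[u_n,v_n]=[u_n'-u_n,v_n']+[u_n,v_n'-v_n]\]
by bilinearity and applying the estimate term by term shows that this difference has norm $o_\omega(1)$, hence represents the zero element of $\g_\infty$. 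The Lie algebra axioms (bilinearity, antisymmetry, Jacobi identity) are identities between polynomial expressions in finitely many brackets that hold in each $\g_n$, and they pass to $\g_\infty$ by taking ultralimits of each side, with the bilinear estimate guaranteeing that every intermediate quantity remains bounded.

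Finally, for the nilpotency assertion, boundedness of $\dim\g_n$ automatically gives a uniform bound on the nilpotency class, since a nilpotent Lie algebra of dimension $d$ has class at most $d$. Thus there exists an integer $s$, depending only on the uniform bound on $\dim\g_n$, such that every $(s+1)$-fold iterated bracket vanishes in $\g_n$ for $\omega$-almost every $n$; taking ultralimits then yields the same vanishing in $\g_\infty$, so $\g_\infty$ is nilpotent of class at most $s$. I do not anticipate any substantive obstacle: the only slightly delicate point is tracking the $O_\omega$ and $o_\omega$ notation carefully enough to see that the product $O_\omega(1)\cdot o_\omega(1)$ collapses to $o_\omega(1)$ when verifying independence of representatives, which is clear.
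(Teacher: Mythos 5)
Your proposal is correct and takes essentially the same approach as the paper: the paper phrases the well-definedness argument as ``the $o_\omega(1)$ sequences form an ideal in the $O_\omega(1)$ sequences,'' which is exactly the content of your bilinear estimate combined with your expansion of $[u_n',v_n']-[u_n,v_n]$, and the nilpotency argument (bounded dimension gives bounded class, which passes to the ultralimit) is identical. Your write-up just makes the details more explicit.
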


The next step is to show that the Lie algebras $\g_n$ appearing in Theorem \ref{thm:homdimOfLimit} satisfy the conditions of Lemma \ref{lem:def.of.ultralim.of.Lie.alg}, as follows.

\begin{lemma}\label{lem:normLieAlgebra}
Under the assumptions of Theorem \ref{thm:homdimOfLimit} there exists a sequence of symmetric convex bodies $B_n\subset\g_n$ such that, denoting $\|\cdot\|_n$ the norm whose unit ball is $B_n$,
\begin{itemize}
\item[(i)] $[B_n,B_n]\subset B_n$;
\item[(ii)] for every sequence $g_n\in G_n$ we have $g_n\in Q_n^{O(j_n)}$ if and only if $\|\log g_n\|_{n}=O(1)$;
\item[(iii)] for every sequence $g_n\in G_n$ we have $g_n\in Q_n^{o(j_n)}$ if and only if $\|\log g_n\|_{n}=o(1)$.
\end{itemize}
\end{lemma}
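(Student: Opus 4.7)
The strategy is to construct $B_n$ via a real-coefficient variant of the construction in Lemma~\ref{lem:convexReal}, producing a symmetric convex body in $\g_n$ on which brackets close exactly, and which plays the role of the ``log-ball of radius~$O(1)$'' for the rescaled word metric on $G_n$.

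First, let $s\leq d$ denote the common nilpotency step of the groups $G_n$, and let $\pi:N_{d,s}^{\R}\to G_n$ be the projection sending the standard generators $x_1,\ldots,x_d$ of the free connected nilpotent Lie group to $u_1^{(n)},\ldots,u_d^{(n)}$. Applying Lemma~\ref{lem:freeequivalences}(i) in $N_{d,s}^{\R}$ and projecting via~$\pi$ yields
\[
Q_n^{j_n}\approx_d\exp(\Omega_0^{(n)})\quad\text{where}\quad \Omega_0^{(n)}:=\pi_\n\bigl(B_\R(\overline f;j_n^\chi)\bigr),
\]
and Lemma~\ref{lem:real.B.upper-tri} projected through $\pi_\n$ gives $[\Omega_0^{(n)},\Omega_0^{(n)}]\subset O_d(1)\Omega_0^{(n)}$. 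I would then follow the proof of Lemma~\ref{lem:convexReal}: the hypothesis $[\Omega_n,\Omega_n]\subset O_\omega(1)\Omega_n$ means that $(e^{(n)};1)$ is in $O_\omega(1)$-upper-triangular form over~$\R$, and I would apply the argument of \cite[Proposition 5.1]{proper.progs}---which uses only the real-coefficient upper-triangular hypothesis, not the integrality of $\langle e_1^{(n)},\ldots,e_d^{(n)}\rangle$---to produce positive real weights $L'_i\asymp_{d,\omega}j_n^{w_i^{(n)}}$, where $w_i^{(n)}$ is the step of $e_i^{(n)}$ in the lower central series of $\g_n$, such that the symmetric convex body
\[
B_n:=B_\R(e^{(n)};L')=\Bigl\{v=\textstyle\sum_i\lambda_i e_i^{(n)}:|\lambda_i|\leq L'_i\Bigr\}
\]
satisfies $[B_n,B_n]\subset B_n$ on the nose (property~(i)) and $\exp(B_n)\approx_{d,\omega}\exp(\Omega_0^{(n)})\approx_d Q_n^{j_n}$.

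For (ii) and (iii), since $[B_n,B_n]\subset B_n$ and the BCH series~\eqref{eq:bch} terminates at depth $s\leq d$, an easy induction on $k$ gives $\exp(B_n)^k\subset\exp(O_d(k)B_n)$ for every $k\in\N$, while the reverse inclusion $\exp(kB_n)\subset\exp(B_n)^k$ is immediate from $\exp(kv)=\exp(v)^k$. Combined with $Q_n^{j_n}\approx_{d,\omega}\exp(B_n)$, this yields, for every $C>0$,
\[
Q_n^{Cj_n}\subset\exp\bigl(O_{d,\omega}(C)B_n\bigr)\quad\text{and}\quad\exp(CB_n)\subset Q_n^{O_{d,\omega}(C)j_n},
\]
with the implicit constants independent of~$C$. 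Property (ii) follows by letting $C$ be an arbitrary but fixed positive constant; property (iii) follows from the same inclusions applied for $C=\varepsilon$ with $\varepsilon\to 0$, using the characterisation that $\|\log g_n\|_n=o(1)$ iff $\log g_n\in\varepsilon B_n$ for every $\varepsilon>0$ eventually with respect to~$\omega$.

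The main obstacle will be verifying that the argument of \cite[Proposition 5.1]{proper.progs} indeed adapts cleanly to produce real-valued weights $L'_i$ comparable (up to $O_\omega(1)$ factors only) to $j_n^{w_i^{(n)}}$, starting from the real-coefficient upper-triangular form. This amounts to retracing that proof while tracking how the single $O_\omega(1)$ constant from $[\Omega_n,\Omega_n]\subset O_\omega(1)\Omega_n$ propagates through the recursive construction of the~$L'_i$; since the step $s$ is bounded by $d$, the accumulated constant remains $O_{d,\omega}(1)$ and, crucially, independent of $j_n$, so that scaling $B_n$ by a fixed $O_{d,\omega}(1)$ factor at the end (if needed) preserves properties (ii) and (iii).
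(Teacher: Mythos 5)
Your overall architecture is close to the paper's: push a box from the free nilpotent Lie algebra down to $\g_n$ and compare $Q_n^{j_n}$ with $\exp(B_n)$ via Lemma~\ref{lem:freeequivalences}. The detour through a real-coefficient variant of \cite[Proposition 5.1]{proper.progs} to get exact containment in (i) is unnecessary: the paper simply takes $B_n=\pi_n\bigl(B_\R(\overline f;j_n^\chi)\bigr)$ and is content with an $O_d(1)$ factor from Lemma~\ref{lem:real.B.upper-tri}, which is all that Lemma~\ref{lem:def.of.ultralim.of.Lie.alg} requires. That detour is not itself a mistake.

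The genuine gap is the key reduction used for (ii) and (iii). The claim that ``an easy induction on $k$'' gives $\exp(B_n)^k\subset\exp(O_d(k)B_n)$ is false whenever $\g_n$ is non-abelian. Iterated applications of BCH produce $O(k^j)$ iterated brackets of length $j$, so one only gets $\exp(B_n)^k\subset\exp\bigl(O_d(k^s)B_n\bigr)$, $s$ the step. This is not a lossy estimate: the Haar measure of $\exp(B_n)^k$ grows like $k^{\hdim\g_n}$, whereas that of $\exp\bigl(O_d(k)B_n\bigr)$ grows like $k^{\dim\g_n}$, and $\hdim\g_n>\dim\g_n$ unless $\g_n$ is abelian. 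Consequently the two inclusions $Q_n^{Cj_n}\subset\exp\bigl(O(C)B_n\bigr)$ and $\exp(CB_n)\subset Q_n^{O(C)j_n}$ that you assert ``with implicit constants independent of $C$'' are also wrong as stated; the correct powers of $C$ are $C^{s}$ and $C^{1/s}$ (in the regimes $C\ge 1$ and $C\le 1$ respectively). The paper handles precisely this point via the anisotropic estimate $\eps B_\R(\overline f;(j_nL)^{\chi})\subset B_\R(\overline f;(\eps^{1/d}j_nL)^{\chi})$ for $0\le\eps\le 1$, coming from the multilinearity of the basic commutators. Your conclusions (ii) and (iii) do survive once the exponents are corrected --- if $\eps_n\to 0$ then so do $\eps_n^{s}$ and $\eps_n^{1/s}$ --- but your argument as written never confronts the anisotropic scaling, and the linear-in-$k$ containment on which it rests is simply false.
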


The final step is to bound the homogeneous dimension of the ultralimit Lie algebra, as follows.
\begin{theorem}\label{thmUltraLimitLieAlge}
Let $(\g_n,\|\cdot\|_{B_n})$ be a sequence of normed nilpotent Lie algebras of dimension $d$ such that the unit balls $B_n$ satisfy $[B_n,B_n]\subset O(1)B_n$. Then  
$$\hdim(\g_{\infty})\leq \lim_{\omega} \hdim(\g_n).$$
\end{theorem}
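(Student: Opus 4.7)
The plan is to bound $\hdim(\g_\infty)$ by controlling the dimensions of the successive terms of its lower central series via those of the $\g_n$. Writing $\h^{(1)} = \h$ and $\h^{(i+1)} = [\h,\h^{(i)}]$ for a nilpotent Lie algebra $\h$, Abel summation together with the convention $\h^{(s+1)} = 0$ in step $s$ yields the identity
\[
\hdim(\h) = \sum_{i\geq 1} i \dim(\h^{(i)}/\h^{(i+1)}) = \sum_{i\geq 1} \dim \h^{(i)},
\]
so it suffices to estimate $\dim \g_\infty^{(i)}$ for each $i$. Since $\dim \g_n = d$ is fixed, the step is bounded by $d$ and the integers $\dim \g_n^{(i)}$ take only finitely many values; Lemma \ref{lem:finite.omega.const} then allows me to assume, after passing to an $\omega$-large subset of indices, that $d_i := \dim \g_n^{(i)}$ is independent of $n$ for every $i$.

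The key object is, for each $i\geq 1$, the linear subspace
\[
H_i := \{ \lim_\omega v_n \in \g_\infty : v_n \in \g_n^{(i)} \text{ for $\omega$-almost every } n \} \subseteq \g_\infty,
\]
which is the image of the metric ultralimit of the finite-dimensional normed spaces $(\g_n^{(i)}, \|\cdot\|_{B_n})$ under the natural map into $\g_\infty$ induced by the inclusions $\g_n^{(i)}\hookrightarrow \g_n$. The whole argument reduces to two claims: (a) $\dim H_i \leq d_i$, and (b) $\g_\infty^{(i)}\subseteq H_i$. Granting these, the proof concludes as
\[
\hdim(\g_\infty) = \sum_{i\geq 1}\dim \g_\infty^{(i)} \leq \sum_{i\geq 1}\dim H_i \leq \sum_{i\geq 1} d_i = \lim_\omega \hdim(\g_n),
\]
the interchange of sum and $\lim_\omega$ being legitimate because only finitely many terms are non-zero.

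Claim (a) is an instance of the general fact that any metric ultralimit of normed vector spaces of dimension at most $k$ has dimension at most $k$: given purported independent $w_1,\ldots,w_{k+1}\in H_i$ with representatives $w_j = \lim_\omega w_j^n$, $w_j^n \in \g_n^{(i)}$, the relation $\dim \g_n^{(i)} = d_i \leq k$ forces a non-trivial linear dependence $\sum_j c_j^n w_j^n = 0$ in each $\g_n^{(i)}$ with $\max_j|c_j^n| = 1$; the $c_j^n$ are bounded, so $c_j := \lim_\omega c_j^n$ exists, $\max_j|c_j| = 1$, and the $O_\omega(1)$ bound on $\|w_j^n\|_{B_n}$ shows that $\sum_j c_j w_j = 0$ holds in $\g_\infty$, contradicting independence.

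Claim (b) is the main conceptual content and I would prove it by induction on $i$. The base case $i=1$ is trivial since $H_1 = \g_\infty$. For the inductive step, $\g_\infty^{(i)}$ is linearly spanned by brackets $[u,v]$ with $u\in\g_\infty$ and $v\in\g_\infty^{(i-1)}\subseteq H_{i-1}$ (by induction); writing $u = \lim_\omega u_n$ and $v = \lim_\omega v_n$ with $v_n\in\g_n^{(i-1)}$, the bracket $[u_n,v_n]$ lies in $\g_n^{(i)}$, and the formula $[u,v] = \lim_\omega [u_n,v_n]$ furnished by Lemma \ref{lem:def.of.ultralim.of.Lie.alg} places $[u,v]$ in $H_i$. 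I do not foresee a serious obstacle: the content of (b) is merely that passing to an ultralimit cannot create new commutation relations, only destroy them, and this is captured precisely by the inclusion $\g_\infty^{(i)}\subseteq H_i$.
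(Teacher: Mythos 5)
Your argument is correct, and it takes a genuinely different route from the paper's. The paper works in the category of marked Lie algebras $(\g,\pi)$, where $\pi:\n\to\g$ is a surjection from a fixed free nilpotent Lie algebra, topologised via the Hausdorff distance between $\ker\pi\cap\partial\Omega$ and $\ker\pi'\cap\partial\Omega$; it first shows (Proposition \ref{prop:markedlimits/ultralimits}) that the norm-ultralimit agrees with the limit in this topology, and then proves lower semicontinuity of $\hdim$ (Proposition \ref{prop:LowerSemiCont}) by inductively quotienting by a chain of central relations, combining Lemma \ref{lem:xi} (quotienting by a central element of depth $\xi$ lowers $\hdim$ by exactly $\xi$) with upper semicontinuity of depth (Lemma \ref{lem:xiUpper}). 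You instead rewrite the Bass--Guivarc'h formula as $\hdim(\h)=\sum_{i\ge1}\dim\CC^i(\h)$ and bound each $\dim\g_\infty^{(i)}$ directly: the elementary fact that the trace $H_i$ of the subspaces $\g_n^{(i)}$ in the ultralimit has dimension at most $\lim_\omega\dim\g_n^{(i)}$, together with $\g_\infty^{(i)}\subseteq H_i$, which needs only that the bracket on $\g_\infty$ is computed coordinatewise (Lemma \ref{lem:def.of.ultralim.of.Lie.alg}) and that $H_i$ is a linear subspace. Your route is more self-contained---it avoids introducing the marked-algebra topology and the careful choice of a generating flag of central relations in $\ker\pi$---and makes transparent the intuitive statement that passing to an ultralimit can only kill commutators, never create them; the paper's route packages the same phenomenon as a reusable semicontinuity statement intrinsic to the space of marked Lie algebras. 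Both are correct and of comparable length.
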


The proof of Theorem \ref{thmUltraLimitLieAlge} is somewhat involved, and so we defer it until the next section. The proofs of Lemmas \ref{lem:def.of.ultralim.of.Lie.alg} and \ref{lem:normLieAlgebra} are more straightforward, and we present them shortly. First, however, let us put all of these results together to prove Theorem \ref{thm:homdimOfLimit}.

\begin{proof}[Proof of Theorem \ref{thm:homdimOfLimit}]
Lemma \ref{lem:normLieAlgebra} (i) implies that $(\g_{\infty},\|\cdot \|_n)$ satisfies the assumptions of Lemma \ref{lem:def.of.ultralim.of.Lie.alg}, making $\g_{\infty}$ a Lie algebra. 
Then Lemma \ref{lem:normLieAlgebra} (ii) and (iii) together with the Baker--Campbell--Hausdorff formula imply that $\exp$ and $\log$ commute with the ultralimits, and so $\g_{\infty}$ canonically identifies with the Lie algebra of $\lim_{\omega} (G_n,d_{Q_n}/j_n)$ (which exists and is a connected nilpotent Lie group equipped with a geodesic metric by Proposition \ref{prop:LimIsAGroup}). The desired bound on the homogeneous dimension therefore follows from Theorem \ref{thmUltraLimitLieAlge} and the fact that $\hdim(G_n)=\hdim(\g_n)$.

Note that the dimension is GH-continuous among normed vector spaces, and therefore that $\dim\lim_{\omega} (G_n,d_{Q_n}/j_n)=d$, as required. Indeed vector spaces of dimension $d$ are $O_d(1)$-bilipschitz equivalent to the euclidean space of dimension $d$. Therefore given a sequence of vector spaces of dimension $d$, its ultralimit is bilipschitz equivalent to the ultralimit of the constant sequence equal to the euclidean space of dimension $d$. 
\end{proof}

\begin{proof}[Proof of Lemma \ref{lem:def.of.ultralim.of.Lie.alg}]
The assumption that $[B_n,B_n]\subset O(1)B_n$ implies that the set of sequences $u_n$ such that $\|u_n\|_{B_n}=_{\omega}o(1)$ is an ideal in the Lie algebra $\{u_n: \|u_n\|_{B_n}=_{\omega}O(1)\}$, and this in turn implies that the Lie bracket is well defined, as claimed. Assume that the Lie algebras $\g_n$ are nilpotent. Since their dimension is bounded, this implies that they are $s$-nilpotent for some $s\in \N$. This clearly implies that $\g_{\infty}$ is $s$-nilpotent as well.
\end{proof}

\begin{proof}[Proof of Lemma \ref{lem:normLieAlgebra}]
Let $N=N_{d,d}$ be the free $d$-step nilpotent Lie group of  rank $d$, let $\n$ be its Lie algebra, let $f=(f_1,\ldots, f_d)$ be a basis of a complement of $[\n,\n]$, and let $x_i=\exp(f_i)$. We consider the projection $\pi_n:N\to G_n$  mapping $x$ to $u$.  As we already noted, $Q_n=P_\ord^\R(u,L)=\pi_n(P_\ord^\R(x,L)$.
 Now the lemma follows from Lemma \ref{lem:freeequivalences}. Indeed, let $\Omega_n=B(f,(j_nL)^{\chi})$, and let $B_n=\pi_n(\Omega_n)$. 
 The first statements is an obvious consequence of Lemma \ref{lem:freeequivalences}, and moreover we deduce that  $\exp(B_n)\approx Q_n^{j_n}$.
The proofs of (ii) and (iii) being similar, we focus on (iii).
  Let $g_n\in G_n$ be such that $g_n\in Q_n^{o(j_n)}$. Let $\tilde{g_n}\in P_\ord^\R(x,L)^{o(j_n)}$ be such that $\pi(\tilde{g}_n)=g_n$. By Lemma \ref{lem:freeequivalences}, $\log \tilde{g}_n\in B(f,(o(j_nL))^{\chi})$. we have that $\log \tilde{g}_n\in \Omega_{o(j_n)}\subset o(1)\Omega_n$. Hence projecting back to  $\g_n$, we see that $\log g_n\in o(1)B_n$, or in other words that $\|\log g_n\|_{B_n}=o(1).$
Conversely, assume that $w_n\in o(1)B_n$, that we lift via $\pi_n$ to an element $\tilde{w_n}\in o(1)\Omega_n$.  Multilinearity of the basic commutators implies that for all $0\leq \eps\leq 1$, we have $\eps B(f,(j_nL)^{\chi})\subset B(f,(\eps^{1/d}j_nL)^{\chi})$. Hence  $\tilde{w_n}\in B(f,(o(j_n)L)^{\chi})$, which by Lemma \ref{lem:freeequivalences}, implies that $\tilde{g}_n:=\exp(\tilde{w}_n)\in P_\ord^\R(x,L)^{o(j_n)}$, from which we deduce that $g_n:=\exp(w_n)\in Q_n^{o(j_n)},$ as required.
\end{proof}

\section{Marked Lie algebras}\label{sec:marked}
In this section we prove Theorem \ref{thmUltraLimitLieAlge}. We fix $s,d\in \N$, let $\n=\n_{s,d}$ be the free $s$-nilpotent Lie algebra of rank $d$, and let $x=(x_1,\ldots,x_d)$ be a basis of a complement of $[\n,\n]$.
The basic objects in consideration in this section are pairs $(\g,\pi)$, where $\g$ is an $s$-nilpotent Lie algebra of rank at most $d$, and $\pi:\n\to \g$ is a surjective morphism. Such a pair is equivalent to a pair $(\g,e)$, where $e$ is a family of $d$ vectors of $\g$ that generate $\g$ as a Lie algebra. We call such a pair a \emph{marked Lie algebra}. An isomorphism of marked Lie algebras $(\g,\pi)\to(\g',\pi')$ is an isomorphism between $\g$ and $\g'$ that commutes with $\pi$ and $\pi'$. Alternatively, an isomorphism of marked Lie algebras $(\g,e)\to(\g',e')$ is an isomorphism between $\g$ and $\g'$ that maps $e$ to $e'$. Isomorphism classes of marked Lie algebras are in one-to-one correspondence with ideals of $\n$.

We define a \emph{relation} in $(\g,e)$ to be a linear combination $\sum_i\lambda_ie_i$ of the basic commutators $e_i$ such that $\sum_i |\lambda_i|=1$, and satisfying $\sum_{i} \lambda_i e_i=0$. 
Alternatively, it is an element of $u\in \ker \pi$ satisfying $\|v\|_{\Omega}=1$ for the norm whose unit ball $\Omega$ is the convex hull of the $\pm e_i$.

We now define a distance on the set of isomorphism classes of marked Lie algebras.

\begin{definition}\label{defn:markedTopology}
Given $\eps>0$ we say that $d((\g,e),(\g',e'))\leq \eps$ if for every relation $u\in \ker \pi$ of norm $1$ there exists a relation $u'\in \ker \pi'$ of norm $1$ such that $\|u-u'\|_{\Omega}\leq \eps$, and for every relation $v'\in \ker \pi'$ of norm $1$ there exists a relation $v\in \ker \pi$ of norm $1$ such that $\|v-v'\|_{\Omega}\leq \eps$.
\end{definition}

\begin{remark}
Observe that this distance coincides with the Hausdorff distance between the compact subsets $\ker \pi\cap \partial \Omega$ and  $\ker \pi'\cap \partial \Omega$, where $\partial \Omega$ is the unit sphere for $\|\cdot\|_{\Omega}$. 
In particular, we deduce that the space of isomorphism classes of marked groups is compact. 
\end{remark}

Let $(\g_n,\pi_n)$ be a sequence of marked Lie algebras of rank at most $d$ and step at most $s$. We now have two natural ways to define an ultralimit of this sequence. First, we may consider the limit $(\bar{\g},\bar{\pi})$ along $\omega$ with respect to the topology defined in Definition \ref{defn:markedTopology}. Note that in this case $\bar\g=\n/\ker\bar\pi$, where
 $\ker\bar\pi$ denotes the subspace of $\n$ spanned by all those $u$ that appear as $\omega$-limits of sequences $u_n$ with $u_n\in\ker\pi_n\cap\partial\Omega$.

On the other hand, we may consider the ultralimit of a sequence of normed Lie algebras as in the previous section. Indeed, since $[\Omega,\Omega]\subset\Omega$ we also have $[\pi(\Omega),\pi(\Omega)]\subset\pi(\Omega)$, and so Lemma \ref{lem:def.of.ultralim.of.Lie.alg} implies that $\g_{\infty}$ is a well-defined Lie algebra. Moreover, the constant sequence $(\n,\|\cdot\|_{\Omega})$ satisfies $\lim_{\omega}(\n,\|\cdot\|_{\Omega})=(\n,\|\cdot\|_{\Omega})$, and it is then easy to verify that $\pi_n$ converges along $\omega$ to some surjective morphism $\pi_{\infty}:\n\to \g_{\infty}$ in the sense that for every bounded sequence $w_n\in (\n,\|\cdot\|_{\Omega})$ we have
$\pi_{\infty}(w_n)=\lim_{\omega}\pi_n(w_n)$.

Conveniently, these two definitions give the same limit, as follows.

%

\begin{prop}\label{prop:markedlimits/ultralimits}
The ultralimits $(\bar{g},\bar{\pi})$ and $(\g_{\infty},\pi_{\infty})$ are isomorphic.
\end{prop}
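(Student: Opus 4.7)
My strategy is to realise both $\bar{\g}$ and $\g_{\infty}$ as quotients $\n/K$ of the same free nilpotent Lie algebra, and then compare the two kernels directly. By construction $\bar\g = \n/\ker\bar\pi$. For the other limit, we have already observed that $\pi_{\infty}\colon \n \to \g_{\infty}$ is a surjective Lie algebra morphism determined on constant sequences by $\pi_{\infty}(u) = \lim_{\omega} \pi_n(u)$; surjectivity follows from compactness of bounded sets in the finite-dimensional space $\n$, by lifting any representative $g_n = \pi_n(v_n)$ of an element of $\g_\infty$ to a bounded sequence $v_n \in O_\omega(1)\Omega$ and taking an $\omega$-limit. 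The proposition then reduces to the equality $\ker\bar\pi = \ker\pi_{\infty}$, since once this is proved, the identity on $\n$ descends to a marked Lie algebra isomorphism between the two quotients.

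For $\ker\bar\pi \subseteq \ker\pi_{\infty}$, I would first note that each $\pi_n$ is $1$-Lipschitz from $(\n,\|\cdot\|_{\Omega})$ to $(\g_n,\|\cdot\|_{\pi_n(\Omega)})$: indeed, $v \in \lambda\Omega$ forces $\pi_n(v) \in \lambda\pi_n(\Omega)$. If $u$ is the $\omega$-limit in $(\n,\|\cdot\|_\Omega)$ of a sequence $u_n \in \ker\pi_n\cap\partial\Omega$, then $\pi_n(u) = \pi_n(u - u_n)$, so $\|\pi_n(u)\|_{\pi_n(\Omega)} \le \|u-u_n\|_{\Omega} = o_{\omega}(1)$, which means $\pi_{\infty}(u) = 0$. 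Linearity extends the inclusion to all of $\ker\bar\pi$.

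The converse inclusion $\ker\pi_{\infty} \subseteq \ker\bar\pi$ is the substantive step. Given $u \in \ker\pi_{\infty}$, the hypothesis $\|\pi_n(u)\|_{\pi_n(\Omega)} = o_{\omega}(1)$ lets us write, for $\omega$-almost every $n$, $\pi_n(u) = \eps_n\, \pi_n(v_n)$ with $v_n \in \Omega$ and $\eps_n = o_{\omega}(1)$, so that $w_n := u - \eps_n v_n \in \ker\pi_n$. This sequence is bounded in $(\n,\|\cdot\|_{\Omega})$ and $\omega$-converges to $u$. If $\lim_{\omega}\|w_n\|_{\Omega} = 0$ then $u = 0 \in \ker\bar\pi$; otherwise, setting $\alpha := \lim_{\omega}\|w_n\|_{\Omega} > 0$, compactness of the unit sphere $\partial\Omega$ in the finite-dimensional space $\n$ yields an $\omega$-limit $\bar w \in \partial\Omega$ of the vectors $w_n/\|w_n\|_{\Omega} \in \ker\pi_n\cap\partial\Omega$. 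By definition $\bar w \in \ker\bar\pi$, and then $u = \alpha\bar w \in \ker\bar\pi$.

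There is no real obstacle in this plan: once the two candidate kernels are compared inside the common ambient space $\n$, the first direction is pure Lipschitz bookkeeping and the second is a compactness argument that converts an ``almost-relation'' into a genuine one. The only mild point that needs care is the well-definedness and surjectivity of $\pi_{\infty}$ as a Lie algebra morphism, but both are already guaranteed by Lemma \ref{lem:def.of.ultralim.of.Lie.alg} combined with the finite-dimensionality of $\n$.
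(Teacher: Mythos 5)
Your proof is correct and takes essentially the same route as the paper: both establish the equality $\ker\bar\pi = \ker\pi_{\infty}$ inside $\n$, with the easy inclusion coming from the $1$-Lipschitzness of $\pi_n$ and the converse from perturbing a witness of $\pi_{\infty}(u)=0$ by a vanishingly small amount to land in $\ker\pi_n$ and then normalising. The paper phrases the argument in terms of coefficient vectors of relations while you phrase it directly in terms of the kernel subspaces, but the content is identical.
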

\begin{proof}
Let $\sum \lambda_i^{(n)}e_i^{(n)}$ be a sequence of relations in $\g_n$. Then since the $\|e_i^{(n)}\|_{B_n}\leq 1$ and $|\lambda_i^{(n)}|\leq 1$, the limits $\lim_{\omega}e_i^{(n)}= e_i^{\infty}$ and $\lim_{\omega}\lambda_i^{(n)}=\lambda_i^{(\infty)}$ exist, and  we have 
$$0=\lim_{\omega}\sum \lambda_i^{(n)}e_i^{(n)}=\sum \lambda_i^{(\infty)}e_i^{(\infty)},$$
from which we deduce that $\sum \lim_{\omega}\lambda_i^{(n)}e_i^{(n)}$ is a relation in $\g_{\infty}.$

Conversely, if $\sum_i \lambda_ie_i^{(\infty)}$ is a relation in $\g_{\infty}$, then this means that there exists a sequence $u_n$ such that $\|u_n\|_{B_n}=_{\omega}o(1)$ such that 
$\sum_i\lambda_ie_i^{(n)}-u_n=0$.
Note that the condition on $u_n$ means that $u_n=\sum \mu_i^{(n)}e_i^{(n)}$, with $\mu_i^{(n)}=_{\omega}o(1)$. Therefore we have for $\omega$-a.e.\ $n$,
$$\sum_i(\lambda_i -\mu_i^{(n)})e_i^{(n)}=0.$$
Dividing this sum by $\sum_i|\lambda_i-\mu_i^{(n)}|$ (which tends to $1$) shows that $\sum_i \lambda_ie_i^{(\infty)}$ is a limit of relations in $\g_n$.
\end{proof}

Proposition \ref{prop:markedlimits/ultralimits} reduces the proof of Theorem \ref{thmUltraLimitLieAlge} to the following elementary (and probably well-known) fact.

\begin{prop}\label{prop:LowerSemiCont}
The homogeneous dimension of marked Lie algebras is lower semicontinuous with respect to the topology implied by Definition \ref{defn:markedTopology}. 
\end{prop}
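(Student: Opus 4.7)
The plan is to translate the proposition into a statement about the kernels $I_n = \ker \pi_n \subset \n$ and then reduce it, via a classical computation of the homogeneous dimension, to a lower semicontinuity statement for the function $I \mapsto \dim(I \cap \n_i)$ on subspaces of $\n$.

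First, I would reformulate the topology: Definition~\ref{defn:markedTopology} is literally Hausdorff convergence of the compact sets $I_n \cap \partial \Omega$ inside the unit sphere, so that $\ker\pi_\infty$ is exactly the cone over the Hausdorff limit (this is essentially the observation made in the paragraph after Definition~\ref{defn:markedTopology}, restated in Proposition~\ref{prop:markedlimits/ultralimits}). Second, I would compute the homogeneous dimension in terms of $I_n$. Since $\pi_n$ is a surjective Lie algebra morphism, $\g_{n,i} = \pi_n(\n_i)$, where $\n_i$ denotes the lower central series of $\n$. From this one reads off
\[
\dim(\g_{n,i}/\g_{n,i+1}) = \dim(\n_i/\n_{i+1}) - \dim\bigl((I_n \cap \n_i)/(I_n \cap \n_{i+1})\bigr).
\]
Multiplying by $i$, summing, and using Abel summation with the boundary condition $I_n \cap \n_{s+1} = 0$ yields
\[
\hdim(\g_n) = \hdim(\n) - \sum_{i=1}^{s} \dim(I_n \cap \n_i).
\]
Therefore the proposition is equivalent to the statement that, for each fixed $i \in \{1,\ldots,s\}$, the function $I \mapsto \dim(I \cap \n_i)$ is lower semicontinuous on the space of ideals $I \subset \n$ (with respect to the topology induced by Definition~\ref{defn:markedTopology}), and this in turn, via Lemma~\ref{lem:finite.omega.const} (the quantities involved take only finitely many values), amounts to
\[
\dim(I_\infty \cap \n_i) \;\geq\; \lim_\omega \dim(I_n \cap \n_i).
\]

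The main content is this last inequality, which I would prove using compactness of the Grassmannian. Set $d_i := \lim_\omega \dim(I_n \cap \n_i)$, which is a fixed integer by Lemma~\ref{lem:finite.omega.const}, and for $\omega$-almost every $n$ pick a $d_i$-dimensional subspace $V_n \subset I_n \cap \n_i$. Since the Grassmannian $\mathrm{Gr}_{d_i}(\n_i)$ is compact, the $\omega$-limit $V_\infty := \lim_\omega V_n$ exists and is a $d_i$-dimensional subspace of $\n_i$ (dimension is preserved because convergence in the Grassmannian is the same as Hausdorff convergence of the intersections with the unit sphere of any auxiliary Euclidean norm). The key claim is then $V_\infty \subset I_\infty$: any unit vector $v \in V_\infty \cap \partial\Omega$ is, by the Grassmannian convergence, the limit of a sequence $v_n \in V_n$ that can be taken to have $\|v_n\|_\Omega = 1$ for $\omega$-almost every $n$; since $v_n \in I_n \cap \partial\Omega$, the defining convergence $d((\g_n,e_n),(\g_\infty,e_\infty)) \to_\omega 0$ forces $v \in I_\infty \cap \partial\Omega$. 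Hence $V_\infty \subset I_\infty \cap \n_i$ and $\dim(I_\infty \cap \n_i) \geq d_i$, which is exactly what is needed.

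The only subtlety, and the step I would be most careful about, is the extraction of a norm-$1$ approximating sequence $v_n \in V_n$ of a given norm-$1$ vector $v \in V_\infty$: this is why I framed convergence in the Grassmannian as Hausdorff convergence of unit spheres of the subspaces (with respect to any norm making $\n$ into a Euclidean space), so that normalization is compatible with the limit. Once this is in place, every step of the argument is routine, and combining it with the Abel summation formula for $\hdim$ delivers the proposition.
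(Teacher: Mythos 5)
Your proof is correct, and it takes a genuinely different route from the paper's. The paper's proof constructs, for each $n$, a generating set $u_1^{(n)},\ldots,u_l^{(n)}$ of $\ker\pi_n$ in which each $u_i^{(n)}$ is central modulo the ideal generated by the preceding ones, passes to the $\omega$-limits $\bar u_i$, and then argues by induction on $i$ using Lemmas~\ref{lem:xiUpper} and~\ref{lem:xi} (the upper semicontinuity of $u\mapsto\xi(u)$ and the identity $\hdim(\g/\langle u\rangle)=\hdim(\g)-\xi(u)$ for $u$ central). Your approach replaces this step-by-step reduction with the closed Abel-summation formula $\hdim(\g_n)=\hdim(\n)-\sum_{i=1}^s\dim(I_n\cap\n_i)$, which is valid because $\pi_n$ is a surjective Lie algebra morphism and hence $\CC^i(\g_n)=\pi_n(\n_i)$; after that the proposition reduces to the single inequality $\dim(I_\infty\cap\n_i)\ge\lim_\omega\dim(I_n\cap\n_i)$ for each $i$, which you dispatch by compactness of the Grassmannian and the identification of the marked topology with Hausdorff convergence of $\ker\pi_n\cap\partial\Omega$. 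What the formula buys you is that it isolates the linear-algebraic content very cleanly and avoids the somewhat delicate induction over a filtration (in particular the diagonal-limit step in the paper, where one must track centrality of $u_{i+1}^{(n)}$ modulo the limiting ideal $\langle\bar u_1,\ldots,\bar u_i\rangle$); the paper's proof in exchange avoids any Grassmannian language and stays entirely elementary. One small terminological slip in your write-up: the inequality $\dim(I_\infty\cap\n_i)\ge\lim_\omega\dim(I_n\cap\n_i)$ expresses \emph{upper}, not lower, semicontinuity of $I\mapsto\dim(I\cap\n_i)$; it is the minus sign in the Abel-summation formula that converts this into lower semicontinuity of $\hdim$. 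This is purely a matter of wording and does not affect the argument.
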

We start with two preliminary lemmas.
We let $\g$ be a nilpotent Lie algebra, and write $\g=\CC^1(\g)\subset\CC^2(\g)\subset\ldots$ for the lower central series. For each $u\in \g$ we write $\xi(u)$ for the maximal $k$ such that $u\in\CC^k(\g)$. 
The following lemma trivially follows from the fact that the $\CC^i(\g)$ are closed.
\begin{lemma}\label{lem:xiUpper}
Given a finite-dimensional Lie algebra $\g$, the map $u\to \xi(\pi(u))$ is upper semicontinuous.
\end{lemma}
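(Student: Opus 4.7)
The plan is simply to unpack the definition of upper semicontinuity for an $\N$-valued function and reduce the claim to the closedness of the terms of the lower central series. Concretely, upper semicontinuity of $u\mapsto \xi(\pi(u))$ is equivalent to the assertion that for every $k\in\N$ the superlevel set $\{u\in \n : \xi(\pi(u))\ge k\}$ is closed in $\n$ (endowed with any norm, since $\n$ is finite-dimensional).

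First I would observe that by the very definition of $\xi$, one has $\xi(v)\ge k$ if and only if $v\in \CC^k(\g)$; consequently the superlevel set in question coincides with the preimage $\pi^{-1}(\CC^k(\g))$. Next I would note that each $\CC^k(\g)$ is a linear subspace of the finite-dimensional vector space $\g$, and is therefore closed, and that $\pi$ is a linear map between finite-dimensional vector spaces, hence continuous. The preimage of a closed set under a continuous map is closed, and this concludes the proof.

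There is really no obstacle to overcome here; as the authors themselves signal, the statement ``trivially follows'' from the closedness of the $\CC^k(\g)$. The only point worth being careful about is to make explicit that the ambient topology on $\n$ and $\g$ is the canonical one coming from finite-dimensionality, so that both the closedness of $\CC^k(\g)$ and the continuity of the linear map $\pi$ are automatic.
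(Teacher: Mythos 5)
Your proof is correct and takes exactly the same approach as the paper's (one-sentence) justification, which simply invokes the closedness of the $\CC^k(\g)$; you have merely spelled out the routine reduction via superlevel sets and the continuity of the linear map $\pi$.
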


Write $Z(\g)$ for the centre of $\g$, and for all $k$ write $a_k(\g)=\dim \CC^k(\g)/\CC^{k+1}(\g)$.
\begin{lemma}\label{lem:xi}
Let $u\in Z(\g)$. Then for $k\neq \xi(u)$ we have $a_k(\g/\langle u\rangle) =a_k(\g)$, and 
and for $k= \xi(u)$ we have $a_k(\g/\langle u\rangle) =a_k(\g)-1$. In particular,
$$\hdim(\g/\langle u\rangle)=\hdim(\g)-\xi(u).$$
\end{lemma}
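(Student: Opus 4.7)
My plan is to compute $a_k(\g/\langle u\rangle)$ directly from $a_k(\g)$, using the functoriality of the lower central series under quotients.

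Let $q\colon\g\to\bar\g:=\g/\langle u\rangle$ denote the quotient map; since $u\in Z(\g)$, the line $\langle u\rangle=\R u$ is a (one-dimensional) ideal, so $\bar\g$ is indeed a Lie algebra. The first step is the standard observation that for every $k\ge1$ we have
\[
q(\CC^k(\g))=\CC^k(\bar\g),
\]
which follows by induction on $k$: the case $k=1$ is surjectivity of $q$, and the inductive step uses $\CC^{k+1}(\g)=[\g,\CC^k(\g)]$ together with $q([x,y])=[q(x),q(y)]$.

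The second step is to compute the dimension of $q(\CC^k(\g))=\CC^k(\g)/(\CC^k(\g)\cap\R u)$. By the definition of $\xi(u)$ we have $u\in\CC^k(\g)$ iff $k\le\xi(u)$, so
\[
\CC^k(\g)\cap\R u=\begin{cases}\R u&\text{if }k\le\xi(u),\\ 0&\text{if }k>\xi(u),\end{cases}
\]
giving $\dim\CC^k(\bar\g)=\dim\CC^k(\g)-\mathbf{1}_{k\le\xi(u)}$. Taking the difference of consecutive terms then yields
\[
a_k(\bar\g)=a_k(\g)-\bigl(\mathbf{1}_{k\le\xi(u)}-\mathbf{1}_{k+1\le\xi(u)}\bigr)=a_k(\g)-\mathbf{1}_{k=\xi(u)},
\]
which is exactly the first assertion of the lemma.

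The final step is purely formal: by definition of homogeneous dimension, $\hdim(\g)=\sum_{k\ge1} k\,a_k(\g)$, so
\[
\hdim(\g/\langle u\rangle)=\sum_{k\ge1}k\,a_k(\bar\g)=\sum_{k\ge1}k\,a_k(\g)-\xi(u)=\hdim(\g)-\xi(u).
\]
There is no real obstacle here: the only mildly delicate point is making sure that $q(\CC^k(\g))=\CC^k(\bar\g)$ holds on the nose (not merely with an inclusion), which is just functoriality of the lower central series under surjections.
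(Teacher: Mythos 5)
Your proof is correct and takes essentially the same approach as the paper: both rest on the functoriality $q(\CC^k(\g))=\CC^k(\g/\langle u\rangle)$ and then track how the dimensions drop depending on whether $u\in\CC^k(\g)$. Your version is a slightly cleaner packaging, computing $\dim\CC^k(\bar\g)=\dim\CC^k(\g)-\mathbf{1}_{k\le\xi(u)}$ directly and then differencing, whereas the paper splits into the cases $k\le\xi(u)$ (called obvious) and $k>\xi(u)$ (argued by contradiction), but the underlying idea is the same.
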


\begin{proof}
These statements are obvious for $k\leq \xi(u)$, and so we may assume that $k>\xi(u)$, and need to prove that $a_k(\g/\langle u\rangle) =a_k(\g)$. Write $p:\g\to \g/\langle u\rangle$. For all $j$ we have $p(\CC^j(\g))=\CC^j(p(\g))$, and so $a_k(p(\g))\leq a_k(\g)$. If this inequality were strict then there would exist $v\in \CC^k(\g)\setminus \CC^{k+1}(\g)$ such that $v+tu\in \CC^{k+1}(\g)$ for some $t\neq 0$. However, this would imply that $tu\in\CC^k(\g)$, and hence $u\in\CC^k(\g)$, contradicting the assumption that $k>\xi(u)$.  
\end{proof}

\begin{proof}[Proof of Proposition \ref{prop:LowerSemiCont}]
First, we claim that given a marked Lie algebra $(\g,\pi)$, we can always find a finite set of generators of $\ker \pi$ (as a vector space) $u_1,\ldots, u_l$ such that for all $1\leq i\leq l$ the element $u_i$ is central modulo the ideal generated by $u_{1},\ldots u_{i-1}$ and such that $\|u_i\|_{\Omega}=1$ ($u_i$ is a relator). To see that, start with an element in $\ker \pi$, and if it is not central in $\n$, take a non-trivial commutator with a unit vector of $\n$, normalise it and repeat this procedure until we get a non-trivial central unit vector in $\ker \pi$. This will give us the first  vector $u_1$. Then do the same replacing $\n$ by $\n/\langle u_1\rangle$, and so on until $(u_1,\ldots, u_l)$ generates the kernel. This proves the claim.


For all $n$, we let $(u_1^{(n)}, \ldots, u_{l_n}^{(n)})$ be a generating set of $\ker \pi_n$  satisfying the claim.  Note that since $l_n$ is bounded by the dimension of $\n$, we can assume that it is constant $=l$. Let $\bar{u}_i =\lim_{\omega} u_i^{(n)}$. Clearly, $(\bar{u}_1,\ldots, \bar{u}_l)$ also satisfies the above property. We claim that for all $1\leq i\leq l$, $$\hdim(\n/\langle  \bar{u}_1,\ldots,  \bar{u}_i\rangle)\leq \lim_{\omega} \hdim(\n/\langle  u_1^{(n)},\ldots,  u_i^{(n)}\rangle).$$
The proposition follows by applying this to $i=l$. This statement follows by induction on $i$. The case $i=0$ (corresponding to taking $\n$ all along) is obvious. And if we have proved it for $i$, then one easily sees that 
$$\n/\langle  \bar{u}_1,\ldots,  \bar{u}_{i+1}\rangle =\lim_{\omega} \n/\langle  \bar{u}_1,\ldots,  \bar{u}_{i}, u_{i+1}^{(n)}\rangle.$$
Hence the statement follows by Lemmas \ref{lem:xiUpper} and \ref{lem:xi}, using that $ u_{i+1}^{(n)}$ is central in $ \n/\langle  \bar{u}_1,\ldots,  \bar{u}_{i}\rangle$.
\end{proof}

\begin{proof}[Proof of Theorem \ref{thmUltraLimitLieAlge}]
This follows from Propositions \ref{prop:markedlimits/ultralimits} and \ref{prop:LowerSemiCont}.
\end{proof}

\section{Volume growth and Hausdorff dimension of the limit}\label{sec:Hausdorff}

In this section we examine to what extent the Hausdorff dimension of a cluster point arising from Theorem \ref{thm:relative} can be related to the exponent $D$. We have seen in the introduction that nothing can be said in general, unless the dimension of the limit equals $D$, in which case we shall show that it coincides with the Hausdorff dimension (or equivalently that the limiting metric is Finsler by  \cite[Theorem 13]{B'}).

We start recalling a few well-known facts about Carnot--Carath\'eodory metrics on simply connected nilpotent Lie groups (see for instance \cite{Br}). We often abbreviate the term `Carnot--Carath\'eodory metric' to simply `cc-metric'.

Let $\n$ be the Lie algebra of a simply connected nilpotent Lie group $N$, and let $\m$ be a vector subspace complementary to $[\n,\n]$ equipped with a norm $\|\cdot\|$. 
A smooth path $\gamma: [0,1]\to N$ is said to be horizontal if $\gamma(t)^{-1}\cdot \gamma'(t)$ belongs to  $\m$ for all $t\in [0,1]$. The length of $\gamma$
with respect to $\|\cdot \|$ is then defined by
\begin{equation}\label{eq:length}
l(\gamma)=\int_0^1\|\gamma(t)^{-1}\cdot \gamma'(t)\|dt.
\end{equation}
The Carnot--Carath\'eodory distance associated to $\|\cdot\|$ on $N$ is then defined by  
\begin{equation}\label{eq:distance} 
d(x,y)=\inf_{\gamma} \{l(\gamma): \gamma(0)=x,\; \gamma(1)=y\},
\end{equation}
where the infimum is taken over all piecewise-horizontal paths (i.e.\ concatenations of finitely many horizontal paths).
Note that if $N=\R^m$, so that $\m=\n$, then the Carnot--Carath\'eodory metric is just the distance associated to the norm $\|\cdot\|$ (one easily checks that up to isometry, this distance indeed only depends on the norm $\|\cdot\|$).

 Recall that the real Heisenberg group $\HHH(\R)$ is defined as the matrix group
$$\HHH(\R)=\left\{\left(\begin{array}{cccccc}
1 & u & w \\
0 &  1 & v\\
0 & 0 & 1
\end{array}\right); u,v,w\in \R\right\},$$ and that the discrete Heisenberg  $\HHH(\Z)$ sits inside $\HHH(\R)$ as the cocompact discrete subgroup
consisting of unipotent matrices with integral coefficients. The group $\HHH(\R)$ (resp.\ $\HHH(\Z)$) is 2-step nilpotent; indeed, its centre, which coincides with its derived subgroup, is isomorphic to $\R$ (resp.\ $\Z$), and consists in matrices whose only non-zero coefficient is the top right coefficient. It follows that  $\HHH(\R)/[\HHH(\R),\HHH(\R)]\cong\R^2$ (and similarly $\HHH(\Z)/[\HHH(\Z),\HHH(\Z)]\cong\Z^2$).
The group $\HHH(\R)$ comes with a one-parameter group of automorphisms $(\delta_t)_{t\in \R}$ defined as follows:
\[
\delta_t\left(\left(\begin{array}{cccccc}
1 & u & w \\
0 &  1 & v\\
0 & 0 & 1
\end{array}\right)\right)=\left(\begin{array}{cccccc}
1 & tu & t^2w \\
0 &  1 & tv\\
0 & 0 & 1
\end{array}\right).
\]

Given a norm $\|\cdot \|$ on $\R^2$, there exists a unique left-invariant Carnot--Carath\'eodory metric $d_{cc}$ on $\HHH(\R)$ that projects to $\|\cdot\|$ and is scaled by $\delta_t$, i.e.\ such that $d_{cc}(e,\delta_t(g))=td_{cc}(e,g)$ for all $t\in\R^*_+$ and all $g\in \HHH(\R)$.  Normalise the Haar measure on $\HHH(\R)$ so that the ball of radius $1$ has volume $1$. It follows from the formula defining $\delta_t$ that  for all $r>0$, the ball of radius $r$ for this metric has measure equal to $r^4$. In particular, the Hausdorff dimension of $\HHH(\R)$ equals $4$ (see \cite[Lemma C.3]{semmes}).

For every $j,k,l\in \N$ write
\[
P(j,k)=\left(\begin{smallmatrix}1 & [-j,j]\cap\Z & [-k,k]\cap\Z\\0&1&[-l,l]\cap\Z\\0&0&1\end{smallmatrix}\right)
\quad\text{and}\quad
P_\R(j,k)=\left(\begin{smallmatrix}1 & [-j,j] & [-k,k]\\0&1&[-l,l]\\0&0&1\end{smallmatrix}\right),
\]
and write $S(j,k)=P(j,k)\cup P(j,k)^{-1}\subset \HHH(\Z)$
and $S_{\R}(j,k)=P_{\R}(j,k)\cup P_{\R}^{-1}(j,k)\subset \HHH(\R)$.  A straightforward calculation shows that $P^{-1}(j,k)\subset P(j,j^2+k)$, and so if $k\geq j^2$ we have
\begin{equation}\label{eq:PtoS}
P(j,k)\subset S(j,k)\subset P(j,2k),
\end{equation}
and similarly 
\[
P_{\R}(j,k)\subset S_{\R}(j,k)\subset P_{\R}(j,2k).
\]
The following lemma is an easy consequence of a celebrated result of Pansu \cite{Pansu} (strictly speaking, we use \cite[Theorem 1.4 ]{Br} which is a slight generalisation due to Breuillard). 
\begin{lemma}\label{lem:squeezing}
Let $j_n$ and $q_n$ be sequences of positive integers such that $q_n\to \infty$, and let $S_n=S(j_n,j_n^2)$. Then $(\HHH(\Z), d_{S_n}/q_n)$ GH-converges to $\HHH(\R)$ endowed with the cc-metric associated to the $\ell^{\infty}$-norm on $\R^2.$ 
\end{lemma}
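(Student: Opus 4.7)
The plan is to reduce Lemma \ref{lem:squeezing} to Pansu's scaling theorem via the Heisenberg dilation $\delta_t$, which is a Lie group automorphism of $\HHH(\R)$ that fixes the identity and sends $S_\R(1,1)$ to $S_\R(t,t^2)$.

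First I would use the dilation $\delta_{1/j_n}$ to \emph{normalise} the generating set. Since $\delta_{1/j_n}$ is a Lie group automorphism of $\HHH(\R)$ fixing the identity and sending $S_\R(j_n,j_n^2)$ to $S_\R(1,1)$, it restricts to a group isomorphism from $\HHH(\Z)$ to the cocompact lattice $\Lambda_n := \delta_{1/j_n}(\HHH(\Z)) \subset \HHH(\R)$, taking the generating set $S_n = S_\R(j_n,j_n^2) \cap \HHH(\Z)$ to the symmetric generating set $T_n := S_\R(1,1) \cap \Lambda_n$ of $\Lambda_n$. After rescaling both sides by $q_n$, we obtain a pointed isometry
\[
(\HHH(\Z),\,d_{S_n}/q_n,\,e)\ \cong\ (\Lambda_n,\,d_{T_n}/q_n,\,e),
\]
so that Lemma \ref{lem:squeezing} reduces to identifying the pointed GH limit of the sequence on the right.

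Second, I would apply Pansu's theorem in the form of Breuillard's refinement \cite[Theorem 1.4]{Br} to the sequence $(\Lambda_n,T_n)$. Each $\Lambda_n$ is a lattice in $\HHH(\R)$ and each $T_n$ sits inside the fixed compact set $S_\R(1,1) \subset \HHH(\R)$. Moreover, the projection of $T_n$ to the abelianisation $\Lambda_n/[\Lambda_n,\Lambda_n] \subset \R^2$ is a symmetric finite set whose convex hull in $\R^2 \cong \HHH(\R)/[\HHH(\R),\HHH(\R)]$ is the $\ell^\infty$ unit ball $[-1,1]^2$. Breuillard's theorem then yields the pointed GH convergence
\[
(\Lambda_n,\,d_{T_n}/q_n,\,e)\ \to_{GH}\ (\HHH(\R),\,d_{cc,\ell^\infty},\,e)
\]
as $q_n \to \infty$. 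Composing with the isometry of the first step gives the lemma.

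The main obstacle is the \emph{uniformity} required when applying Pansu's theorem to the varying lattices $\Lambda_n$. If $j_n$ is bounded, then up to passing to a subsequence $\Lambda_n$ is constant and classical Pansu suffices; but if $j_n \to \infty$, the lattice $\Lambda_n$ becomes increasingly dense in $\HHH(\R)$ and is not commensurable with a fixed lattice, so a single application of Pansu's original theorem does not suffice. The point of invoking \cite[Theorem 1.4]{Br} is precisely that it is formulated for compact symmetric generating sets inside a fixed ambient nilpotent Lie group, which is exactly our situation with $T_n \subset S_\R(1,1) \subset \HHH(\R)$, and therefore provides the uniformity needed to handle both regimes simultaneously.
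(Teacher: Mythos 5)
Your first step — applying the dilation $\delta_{1/j_n}$ to transport $(\HHH(\Z),S_n)$ to $(\Lambda_n,T_n)$ with $T_n\subset S_\R(1,1)$ — is exactly the normalisation used in the paper's proof, and the reduction is correct. The difficulty, which you correctly identify, is in the second step, and your resolution of it is where the gap lies. Breuillard's \cite[Theorem 1.4]{Br} is stated for a \emph{single} periodic (pseudo)metric on a fixed Lie group of polynomial growth, and the error estimates it produces depend on that metric, not merely on a compact set that contains the generating set. Containment $T_n\subset S_\R(1,1)$ only bounds the word metric $d_{T_n}$ \emph{from below} by $d_{S_\R(1,1)}$; it gives no control from above, and in principle a generating set that is small or poorly spread inside $S_\R(1,1)$ could have a very large word metric. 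So the assertion that the theorem ``is formulated for compact symmetric generating sets inside a fixed ambient nilpotent Lie group, and therefore provides the uniformity needed'' is not something the cited theorem actually gives you; this is precisely the point that needs an argument.

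The paper's proof supplies the missing argument by a squeeze: it extends the discrete word metric to a periodic pseudometric $d^\R_{S(j_n,j_n^2)}$ on $\HHH(\R)$, sandwiches it between two metrics, and then pushes the whole sandwich through $\delta_{j_n^{-1}}$ so that the two bounding metrics become \emph{fixed}, $n$-independent periodic metrics on $\HHH(\R)$ (the continuous word metric $d_{S_\R(1,1)}$ on one side, and the pseudometric extension of a fixed discrete word metric on $\HHH(\Z)\subset\Lambda_n$ on the other). Breuillard's theorem is then applied only to those two fixed metrics; since the convex hulls of their abelianised generating sets are both $[-1,1]^2$, both converge (after rescaling by $q_n$) to the same $\ell^\infty$ cc-metric, and the sandwiched, $n$-dependent middle term is forced to converge as well. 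To repair your argument you would need to make the same kind of sandwich explicit: show that $d_{S_\R(1,1)}\big|_{\Lambda_n}\le d_{T_n}\le d^\R_{S_1}$ up to $O(1)$ additive error uniformly in $n$, where $S_1=\delta_{1/j_n}(S_{j_n})$ is a fixed finite generating set of $\HHH(\Z)\subset\Lambda_n$ whose abelianised convex hull is again $[-1,1]^2$, and then invoke Breuillard for the two fixed bounding metrics rather than for the varying $(\Lambda_n,T_n)$ directly.
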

\begin{proof}
We consider a relatively compact subset $D\subset \HHH(\R)$ such that $\HHH(\R)$ is the disjoint union of left $\HHH(\Z)$-translates of $D$.  
For every left-invariant distance $d$ on $\HHH(\Z)$, define a $\HHH(\Z)$-invariant pseudo distance $d_{\R}$ on $\HHH(\R)$, defined as $d^{\R}(g,h)=d(\gamma,\lambda)$ where $g\in \Gamma D$ and $h\in \lambda D$. Note that the embedding $(\HHH(\Z),d)\to (\HHH(\R),d^{\R})$ is an isometry (in particular it is essentially surjective, in the sense that every point of $\HHH(\R)$ lies at distance zero from a point of $\HHH(\Z)$).
For every $j\in \N$,
we denote  
\[P_{j}=\left(\begin{smallmatrix}1 & \pm j & 0\\ 0 & 1 &\pm j \\0&0&1\end{smallmatrix}\right),\]
and $S_{j}=P_{j}\cup P_j^{-1}$.

Note that we have 
\[d^{\R}_{S(j_n,j_n^2)}\leq d^{\R}_{S(j_n,j_n^2)}\leq d_{S_{\R}(j_n,j_n^2)}.\]
 Applying the automorphism $\delta_{j_n^{-1}}$ to $\HHH(\R)$, we obtain 
\[d^{\R}_{S(1,1)}\leq d^{\R}_{S(j_n,j_n^2)}(\delta_{j_n^{-1}}(\cdot),\delta_{j_n^{-1}}(\cdot))\leq d_{S_{\R}(1,1)}.\]

But then it follows from \cite[Theorem 1.4]{Br} that 
$(\HHH(\R),d^{\R}_{S(j_n,j_n^2)}(\delta_{j_n^{-1}}(\cdot),\delta_{j_n^{-1}}(\cdot))/q_n)$ GH-converges  to $\HHH(\R)$ endowed with the cc-metric associated to the $\ell^{\infty}$-norm on $\R^2$. Now  $\delta_{j_n}$ defines an isometric isomorphism from $(\HHH(\R),d^{\R}_{S(j_n,j_n^2)}(\delta_{j_n^{-1}}(\cdot),\delta_{j_n^{-1}}(\cdot))/q_n)$ to 
$(\HHH(\R),d^{\R}_{S(j_n,j_n^2)}/q_n)$. And we have seen at the beginning of this proof that the injection $(\HHH(\Z),d_{S(j_n,j_n^2)}/q_n)\to (\HHH(\R),d^{\R}_{S(j_n,j_n^2)}/q_n)$ is an isometry as well. So we deduce that $(\HHH(\Z),d_{S(j_n,j_n^2)}/q_n)$ converges to $\HHH(\R)$ endowed with the cc-metric associated to the $\ell^{\infty}$-norm on $\R^2$, and 
the lemma follows. \end{proof}

\begin{proof}[Proof of Proposition \ref{prop:HausdorffNotCV}]
Pick a sequence $a_n$ such that $n^{a_n}\approx f(n)$. 
Let $S_n=S(n,n^{3-a_n})$.
Using \eqref{eq:PtoS}, one easily checks that $|S_n^n| \asymp n^8$ and $|S_n|\asymp n^{5-a_n}$, and so $|S_n^n|/|S_n|\ll n^{3} n^{a_n}\asymp f(n) n^3$.

We now prove that the limit is isometric to the cc-metric associated to the $\ell^{\infty}$-norm. This follows from Lemma \ref{lem:squeezing} together with the fact that there exists $t_n$ going to zero such that for all $t_n n\leq k\leq n$
\[S(n,n^{3-a_n})^{k} = S(n,n^2)^{k}.\]
Indeed, a direct calculation (left to the reader), shows that for all positive integers $i,j,m$ such that $(mi)^2\geq 10mj$, 
\[S(i,j)^m= S(i,i^2)^m.\]
Therefore, in order to find $t_n$, one has to solve in $t$ the equation
$t^4n^8= 10t^3n^{8-a_n}$,
giving $t=10n^{-a_n}.$
\end{proof}

We now move on to the proof of Proposition \ref{prop:LimitFinsler}, starting with an immediate consequence of Proposition \ref{prop:growth.lower.bound.dim}. 
\begin{lemma}\label{lem:lowergrowth}
Let $\Lambda$ be a torsion-free nilpotent group with Mal'cev completion $G$ of dimension $d$. Let $\Sigma$ be a symmetric finite generating set of $\Lambda$, let $n\geq 0$ and $C$ be such that 
\[
|\Sigma^n|\leq Cn^d|\Sigma|.
\]
Then for every $i,j$ with $0<i<j\leq n$ we have
\[
c(j/i)^d|\Sigma^i|\leq |\Sigma^j|\leq (C/c^2)(j/i)^d|\Sigma^i|,
\]
where $c$ is the constant coming from Proposition \ref{prop:growth.lower.bound.dim}.
\end{lemma}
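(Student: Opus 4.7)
The plan is to apply Proposition \ref{prop:growth.lower.bound.dim} at three scales, exploiting the fact that $\Sigma^i$ and $\Sigma^j$ are themselves symmetric finite generating sets of $\Lambda$. This yields, for every integer $m \geq 1$, the three lower bounds
\[
|\Sigma^m| \geq c m^d |\Sigma|, \qquad |\Sigma^{im}| \geq c m^d |\Sigma^i|, \qquad |\Sigma^{jm}| \geq c m^d |\Sigma^j|,
\]
while the hypothesis $|\Sigma^n| \leq Cn^d|\Sigma|$ remains the only source of an upper bound.

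For the lower bound in the lemma, I will apply the second inequality with $m = \lfloor j/i \rfloor \geq 1$: since $im \leq j$ we have $\Sigma^{im} \subseteq \Sigma^j$, and so
\[
|\Sigma^j| \geq c \lfloor j/i \rfloor^d |\Sigma^i| \geq c(j/i)^d |\Sigma^i|,
\]
with the factor $\lfloor j/i \rfloor \geq (j/i)/2$ (valid since $j \geq i$) absorbed into the $d$-dependent constant $c$.

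For the upper bound the strategy has two steps: first upgrade the single-scale hypothesis into an upper bound at scale $j$, then compare against $|\Sigma^i|$ via the first lower bound above. Applying the third lower bound with $m = \lfloor n/j \rfloor$ and using $jm \leq n$ together with the hypothesis, I obtain
\[
|\Sigma^j| \leq \frac{Cn^d|\Sigma|}{c\lfloor n/j \rfloor^d} \ll_d \frac{C}{c}\, j^d |\Sigma|,
\]
after absorbing the factor $\lfloor n/j \rfloor \geq n/(2j)$. The first lower bound then gives $|\Sigma| \leq |\Sigma^i|/(c i^d)$, and combining yields $|\Sigma^j| \leq (C/c^2)(j/i)^d |\Sigma^i|$, again up to a $d$-dependent constant that can be absorbed into $c = c(d)$.

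There is no serious obstacle here; the argument is essentially bookkeeping. The only points requiring a moment's care are checking that $\Sigma^i$ and $\Sigma^j$ satisfy the hypotheses of Proposition \ref{prop:growth.lower.bound.dim} (symmetry is immediate, and powers of a generating set still generate), and tracking the $2^d$ factors introduced by the floor functions, which are benign since $c$ is permitted to depend on $d$.
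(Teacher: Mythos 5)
Your argument is correct and is surely the ``immediate consequence'' the authors had in mind: the paper supplies no proof, and the natural route is exactly yours (apply Proposition~\ref{prop:growth.lower.bound.dim} to $\Sigma^i$ and to $\Sigma^j$, then combine with the hypothesis). Applying the proposition to $\Sigma^i$ is legitimate because $\Sigma^i$ is still symmetric and generates $\Lambda$, and the floor manipulations (using $\lfloor x\rfloor\ge x/2$ for $x\ge1$, which is valid for $j/i>1$ and for $n/j\ge1$) are exactly the right bookkeeping.

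The one point worth flagging is the constant. As stated, the lemma asserts the bounds with \emph{precisely} the constant $c$ of Proposition~\ref{prop:growth.lower.bound.dim}, whereas your floor estimates actually give
\[
\frac{c}{2^d}(j/i)^d|\Sigma^i|\le|\Sigma^j|\le\frac{2^dC}{c^2}(j/i)^d|\Sigma^i|.
\]
You acknowledge this and propose absorbing the $2^d$ into $c$, but strictly speaking that contradicts the lemma's wording ``where $c$ is the constant coming from Proposition~\ref{prop:growth.lower.bound.dim}''. In practice this is harmless: the only place the lemma is invoked is the proof of Proposition~\ref{prop:LimitFinsler}, where it is used to conclude that $|B(e,r)|\asymp r^D|B(e,1)|$ in the ultralimit, and a $2^{O(d)}$ constant factor does not affect that conclusion (nor the subsequent appeal to~\cite[Lemma C.3]{semmes}). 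So this is a minor imprecision in the paper's constant-tracking rather than a gap in your argument.
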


\begin{proof}[Proof of Proposition \ref{prop:LimitFinsler}]
We first reduce to the case where $\Gamma_n$ has no torsion and its Mal'cev completion has dimension $D$.
Indeed, combining Theorem \ref{thm:ReducTorsionFreeD}
and Lemma \ref{cor:ReducToP}, we are reduced to the case where $\Gamma_n$ is a torsion-free nilpotent group. In that case, the connected Lie group $G_n$ of Proposition \ref{prop:reducConnected} is simply its Mal'cev completion (this is Corollary \ref{cor:limreal}). Finally we deduce from Theorem \ref{thm:homdimOfLimit} that the dimension of $G_n$ equals the dimension of the ultralimit, i.e.\ $D$.   

It then follows from Lemma \ref{lem:lowergrowth} that in the ultralimit $G_{\infty}$ (with $m_n=n$) we have
\[
(c^2/C) r^D|B(e,1)|\leq |B(e, r)| \leq (1/c)r^D|B(e,1)|
\]
for every $r\leq 1$. This is well known to imply that the Hausdorff dimension equals $D$  (see \cite[Lemma C.3]{semmes}). Finally we deduce from \cite[Theorem 13]{B'} that the metric is Finsler. 
\end{proof}

     \appendix

\section{Detailed growth of nilprogressions}
The purpose of this appendix is to prove Theorem \ref{thm:tao}. We essentially reproduce Tao's original proof of the theorem in \cite[\S4]{tao}, but substitute in various results from the present paper and its predecessor \cite{proper.progs} to make the argument finitary.

Proposition \ref{prop:inverse} essentially reduces Theorem \ref{thm:tao} to the study of the growth of Lie progressions. The key result on that topic is the following.
\begin{prop}\label{prop:tao.prog.version}
Let $P$ be a Lie progression of rank $d$ in $C$-upper-triangular form for some $C$. Then there exists a non-decreasing continuous piecewise-linear function $f:[0,\infty)\to[0,\infty)$ with $f(0)=0$ and at most $O_d(1)$ distinct linear pieces, each with a slope that is a natural number at most $O_d(1)$, such that
\[
\log|P^m|=\log|P|+f(\log m)+O_d(1)
\]
for every $m\in\N$.
\end{prop}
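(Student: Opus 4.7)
The plan is to analyze the growth of $|P^m|$ by transferring it to a volume/lattice-point computation in the Lie algebra, using the $C$-upper-triangular structure to get an explicit formula for the effective dimensions of $P^m$ that depends piecewise polynomially on $m$.

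First I would use Corollary \ref{cor:QIreal} to pass from the Lie progression $P = P_\ord(y; L)$ to the real ordered progression $P_\R = P_\ord^\R(u; L)$ inside the associated simply connected nilpotent Lie group $G$. Since $\langle P \rangle$ embeds as a lattice $\Lambda \subset G$ via the quasi-isometry provided by that corollary, $|P^m|$ is comparable (up to $O_{d,C}(1)$-factors) to the number of lattice points of $\Lambda$ in $P_\R^m$. Then Lemma \ref{lem:freeequivalences} together with Lemma \ref{lem:convexReal} identifies $P_\R^m$ (up to $O_d(1)$-powers) with $\exp(\Omega_m)$ for a symmetric convex body $\Omega_m \subset \g$ whose shape is controlled by the box $B_\R(\overline{f}; (mL)^\chi)$ in the free nilpotent Lie algebra, projected to $\g$.

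Next, I would invoke Proposition \ref{prop:Powergood} to rewrite $P^m$ (up to $O_d(1)$) as a Lie progression $P^{(m)} = P_\ord(u^{(m)}; L^{(m)})$ with $(e^{(m)}; L^{(m)})$ in $1$-upper-triangular form, which by Remark \ref{rem:two.types.of.tri.form} and \cite[Lemma 2.1]{proper.progs} is automatically $\Omega_d(1)$-proper. Hence $|P^m| \asymp_d |P^{(m)}| \asymp_d \prod_{i=1}^{d} L^{(m)}_i$. Unwinding the construction in Lemma \ref{lem:convexReal} via the basic commutator basis $\overline{f}$ of the free $s$-nilpotent Lie algebra $\n_{d,s}$ (with $s \leq O_d(1)$), one finds that each $L^{(m)}_i$ is controlled by $\max(1, m^{w_i} M_i)$ for a positive integer weight $w_i \leq O_d(1)$ (namely the total weight of some basic commutator $f_{j_i}$ contributing to $e^{(m)}_i$) and a positive real $M_i$ depending only on the original $L$. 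The multilinearity of the basic commutators (used essentially in the proof of Lemma \ref{lem:normLieAlgebra}) is what guarantees that $w_i$ is an integer and the scaling is exactly $m^{w_i}$.

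Putting these together, $\log |P^m| = \sum_{i=1}^{d} [\,w_i \log m + \log M_i\,]_+ + O_d(1)$, where $[x]_+ = \max(0,x)$. This is a finite sum of ``hinge'' functions of $\log m$, each with a single transition at $\log m = -(\log M_i)/w_i$ where the slope jumps by $w_i$. Consequently the sum is continuous, non-decreasing, and piecewise linear in $\log m$ with at most $d+1 = O_d(1)$ distinct linear pieces, each slope being a subset-sum of the $w_i$ and thus a natural number bounded by $\sum_i w_i \leq O_d(1)$. Setting $f(\log m) := \log |P^m| - \log |P|$ makes $f(0) = 0$, and the resulting $f$ has exactly the properties claimed.

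The main obstacle will be the clean identification of the weights $w_i$ and base values $M_i$ in the second step, i.e.\ showing that the construction in Lemma \ref{lem:convexReal} produces $L_i^{(m)}$ with the exact $\max(1, m^{w_i} M_i)$ form rather than a more complicated convex polynomial in $m$. This is a bookkeeping issue: one must show that after the passage through $\n_{d,s}$ and the projection back to $\g$, each new coordinate picks up its length from a single dominant basic commutator. The $C$-upper-triangular hypothesis is exactly what makes one commutator dominant in each direction, but verifying this requires carefully checking that the constants absorbed into $\approx_d$ at each step do not disrupt the hinge structure.
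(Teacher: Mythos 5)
Your proposal has two genuine gaps, the first of which is fatal and corresponds exactly to the hard part of the paper's proof.

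The main problem is that your entire argument takes place inside the model simply connected Lie group $G$: you pass from $P$ to $P_\R$ via Corollary \ref{cor:QIreal}, identify $P^m$ with lattice points, compute a volume, etc. But a Lie progression $P=P_\ord(y;L)$ lives in the ambient group $\Gamma$, and the homomorphism $\varphi:\Lambda\to\langle y_1,\dots,y_d\rangle$ from the model lattice is only injective on $P_\ord(u;L)^j$ for $j$ up to the injectivity radius. Once $m$ exceeds that radius, elements of the model progression start to collide in $\Gamma$, and $|P^m|$ can be strictly smaller than the lattice-point count your argument computes. Concretely: the volume-and-Cramer computation gives $|P^m|\asymp_d f(m)$ for a polynomial $f$ of degree $O_d(1)$ \emph{only for $m$ up to the injectivity radius}; it is simply false beyond that. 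Your plan never confronts this, and the conclusion you want (with $O_d(1)$ \emph{distinct} slopes, some of which account precisely for the drop in growth past the injectivity radius) cannot be reached without it. The paper's proof is structured around exactly this difficulty: it proves the polynomial estimate for $m$ at most the injectivity radius $k$, then uses Proposition \ref{prop:Powergood}, Remark \ref{rem:inj/proper} and Proposition \ref{prop:reduce.dim.when.not.proper} to replace $P^{ck}$ by a Lie coset progression of strictly smaller rank, and closes by induction on $d$. There is no counterpart to this rank-reduction step in your proposal.

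Second, even in the infinitely proper case, the factorization $L_i^{(m)}\asymp\max(1,m^{w_i}M_i)$ with a \emph{fixed} integer $w_i$ per coordinate does not hold, as you suspect at the end. In the Heisenberg example with $L=(1,1,L_3)$ and $L_3$ large, the third length behaves like $\max(m^2,mL_3)=m\max(m,L_3)$, so the dominant basic commutator contributing to that coordinate changes with $m$: you get effective weight $1$ for $m<L_3$ and $2$ for $m>L_3$. The paper sidesteps this entirely by not factoring: Proposition \ref{prop:prog.as.nilbox} reduces $|P^m|$ to $\vol(B_\R(\overline e;(mL)^\chi))$, Lemma \ref{lem:cramer} shows this volume is $\asymp_d$ a polynomial in $m$ with nonnegative coefficients (a sum over $d$-subsets of basic commutators, \emph{not} a product over coordinates), and Lemma \ref{lem:poly=piecewise.mono} converts that polynomial into a piecewise-monomial function by taking a maximum of its monomials. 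The piecewise-linear structure of $f$ comes from this maximum, not from a coordinatewise hinge decomposition. If you want to salvage your factorization picture, you would need each $L^{(m)}_i$ to be piecewise-monomial with $O_d(1)$ pieces and bounded slopes, which is plausible but not what you claimed and would require a separate argument.

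In short: the volume-to-polynomial-to-piecewise-monomial pipeline is indeed the right computation, but only for $m$ below the injectivity radius, and the genuinely new content of the proposition is the inductive rank-reduction that handles larger $m$. Your proposal is missing that inductive step entirely.
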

\begin{remark}In fact, we do not need the Lie progression $P$ appearing in Proposition  \ref{prop:tao.prog.version} to be in upper-triangular form; it would be enough for the lattice $\Lambda$ generated in the Lie algebra by the basis defining $P$ to satisfy $[\Lambda,\Lambda]\subset\Lambda$ as in Proposition \ref{prop:Powergood}.
\end{remark}

We prove Proposition \ref{prop:tao.prog.version} are using material from Section \ref{section:reducConnected} and the following results, which are almost identical in spirit to part of Tao's argument in \cite[\S4]{tao}.

\begin{prop}\label{prop:prog.growth.poly}
Let $G$ be a connected, simply connected nilpotent Lie group with Lie algebra $\g$ with basis $e_1,\ldots,e_d$, and write $\Lambda=\langle e_1,\ldots,e_d\rangle$. Suppose that $\exp\Lambda$ is a subgroup of $G$. Write $u_i=\exp e_i$ for each $i$, let $L_1,\ldots,L_d$ be positive integers, and write $P=P_\ord(u;L)$. Then there exists a polynomial $f$ of degree $O_d(1)$ with positive coefficients such that
\[
|P^m|\asymp_d f(m).
\]
\end{prop}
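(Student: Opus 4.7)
The plan is to reduce, via Proposition \ref{prop:Powergood}, to counting elements in a progression in $1$-upper-triangular form, for which the cardinality can be read off from the lengths directly.

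First, for each $m\in\N$ I apply Proposition \ref{prop:Powergood} with $k=m$ to obtain a basis $e^{(m)}_1,\ldots,e^{(m)}_d$ of $\g$ and positive integers $L^{(m)}_1,\ldots,L^{(m)}_d$ such that $(e^{(m)};L^{(m)})$ is in $1$-upper-triangular form and $P^m\approx_d P_\ord(u^{(m)};L^{(m)})$ with $u^{(m)}_i=\exp e^{(m)}_i$.

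Second, I would establish a counting lemma: for any basis $(e';L')$ in $1$-upper-triangular form with $\exp\langle e'_1,\ldots,e'_d\rangle$ a subgroup of $G$, we have $|P_\ord(u';L')|\asymp_d\prod_i L'_i$. The upper bound is simply the number of exponent tuples $(\ell_1,\ldots,\ell_d)$ with $|\ell_i|\le L'_i$. For the lower bound, the upper-triangular condition together with the Baker--Campbell--Hausdorff formula implies that the parameterization $(\ell_1,\ldots,\ell_d)\mapsto (u'_1)^{\ell_1}\cdots(u'_d)^{\ell_d}$ has multiplicity bounded by a constant depending only on $d$ (this parallels the properness analysis in Section \ref{sec:free.nilp}).

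Third, I would analyze how the $L^{(m)}_i$ scale with $m$ by tracing the construction in Lemma \ref{lem:convexReal}, which passes through the free nilpotent cover $N_{d,s}$ and uses the nilbox $B_\R(\overline f;(mL)^\chi)$ as a model. In this way, each $L^{(m)}_i$ is comparable, up to $O_d(1)$ factors, to a monomial $m^{w_i}\cdot M_i$, where $w_i\in\N$ is the weight attached to the position of $e^{(m)}_i$ in the lower central series of $\g$ and $M_i$ depends only on the original lengths $L_j$. Multiplying, $\prod_i L^{(m)}_i$ expands to a polynomial in $m$ with positive coefficients of degree at most $\sum_i w_i\le \hdim\g$, which is $O_d(1)$ by a Bass--Guivarc'h bound on the homogeneous dimension in terms of $d$ and the step.

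The main obstacle is in bridging the first step with the second: the relation $P^m\approx_d P_\ord(u^{(m)};L^{(m)})$ is an inclusion of sets up to bounded powers, but since $P$ is not assumed to be an approximate group, the ratios $|P^{cn}|/|P^n|$ for $c=O_d(1)$ are not a priori uniformly bounded, so $\approx_d$ does not automatically pass to cardinalities. Overcoming this requires a more careful size comparison --- for instance, by using the nilbox formula (Lemma \ref{lem:freeequivalences}) to count lattice points directly in the free nilpotent cover via the lift afforded by Proposition \ref{prop:FreeReducCase}, and tracking how the kernel of the projection to $G$ contributes to the count.
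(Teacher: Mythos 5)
You correctly identify the central obstruction yourself: the relation $P^m\approx_d P_\ord(u^{(m)};L^{(m)})$ does not immediately transfer to cardinalities, because $P$ is not assumed to be an $O_d(1)$-approximate group and hence quantities like $|P^{cm}|/|P^m|$ are not a priori bounded. As you write the proposal, this gap is flagged but not closed, so the argument is incomplete.

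There is a second, more subtle error in your step 3. You assert that each $L^{(m)}_i$ scales as $m^{w_i}M_i$ for a fixed weight $w_i$ and then conclude that $\prod_iL^{(m)}_i$ ``expands to a polynomial.'' A product of monomials in $m$ is a monomial, not a polynomial, and indeed the growth function $|P^m|$ can be a genuinely non-monomial polynomial: for instance with the Heisenberg progression $P=P_\ord(u_1,u_2,u_3;1,1,T)$ where $[e_1,e_2]=e_3$, one has $|P^m|\asymp m^3T+m^4$, whose dominant term changes from $m^3T$ to $m^4$ at scale $m\approx T$. The reason your monomial heuristic fails is precisely that the basis $e^{(m)}$ in Proposition \ref{prop:Powergood} is re-chosen for each $m$: as $m$ varies, the convex body $\Omega_m$ from Lemma \ref{lem:convexReal} changes shape, and the successive-minima basis adapted to it rotates. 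One cannot therefore speak of ``the'' weight attached to position $i$. What is stable is the \emph{product} $\prod_iL^{(m)}_i\asymp_d\vol\Omega_m$, and the content of the missing argument is exactly that this volume is comparable to a polynomial in $m$.

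The paper's proof (Propositions \ref{prop:prog.as.nilbox} and the lemma following it) sidesteps both issues at once. Instead of passing through $\approx_d$ between different progressions, it compares $|P^m|$ directly to $\vol\left(B_\R(\overline e;(mL)^\chi)\right)$ using van der Corput's lattice-point theorem, which requires no doubling hypothesis: the lower bound $\vol(K)\le 2^d|\Lambda\cap K|$ holds for any symmetric convex body. The upper bound is obtained by inflating the lattice points by a fundamental cube. The polynomial form is then extracted from the volume via the linear-algebra Lemma \ref{lem:cramer}: the volume of the projected nilbox is, up to $O_d(1)$, the maximum over $d$-element subsets $\{i_1,\ldots,i_d\}$ of $\vol\left(B_\R(e_{i_1},\ldots,e_{i_d};(mL)^\chi)\right)$, each of which is an honest monomial in $m$ of degree $\sum_j|\chi(i_j)|$; the sum over all subsets is the polynomial $f$. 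This is where the multi-term polynomial structure actually comes from, and it is the step your sketch does not reproduce. Your closing suggestion---to count lattice points in the free nilpotent cover and track the kernel---is indeed pointing in the direction of the paper's argument, but you would still need both the van der Corput comparison (to make the lattice count control $|P^m|$ without a doubling hypothesis) and the Cramer-type volume decomposition to arrive at a polynomial.
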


\begin{lemma}\label{lem:poly=piecewise.mono}
Let $f$ be a polynomial of degree $k$ with no negative coefficients. Then there exists a continuous, piecewise-monomial function $h$ of increasing degree with at most $k+1$ pieces, each of which has a positive coefficient, such that
\[
f(x)\asymp_kh(x)
\]
for $x>0$.
\end{lemma}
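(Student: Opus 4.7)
The plan is to take $h$ to be the pointwise maximum of the nonzero monomial terms of $f$. Writing $I = \{i \in \{0, 1, \ldots, k\} : a_i > 0\}$, I would define
\[
h(x) := \max_{i \in I} a_i x^i.
\]
Since $f(x) = \sum_{i \in I} a_i x^i$ is a sum of at most $|I| \le k+1$ nonnegative monomials, we trivially have $h(x) \le f(x) \le (k+1) h(x)$ for every $x > 0$, which gives the required equivalence $f(x) \asymp_k h(x)$.

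It then remains to verify that $h$ is continuous and piecewise-monomial with at most $k+1$ pieces of strictly increasing degree, each having a positive coefficient. The cleanest way to do this is to pass to log-log coordinates: setting $y = \log x$, one has
\[
F(y) := \log h(e^y) = \max_{i \in I}\bigl(\log a_i + i y\bigr),
\]
which is the upper envelope of a finite family of affine functions with slopes $i \in I$. This is automatically continuous, piecewise-affine, and convex, so the slopes of its successive linear pieces (read from left to right) form a strictly increasing subsequence of $I \subset \{0, 1, \ldots, k\}$; in particular there are at most $|I| \le k+1$ such pieces. Translating back via $x = e^y$, each linear piece of $F$ of slope $i$ corresponds to a monomial piece $a_i x^i$ of $h$: these appear in order of strictly increasing degree, agree at each breakpoint (so $h$ is continuous), and have positive coefficients $a_i$ by construction.

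I do not anticipate any serious obstacle. The entire argument reduces to the elementary observation that the upper envelope of finitely many affine functions of distinct slopes is piecewise-affine and convex, with slopes arranged in increasing order from left to right. The only conceptual point is the standard one that in log-log coordinates, monomials become affine functions and their pointwise maximum becomes the usual upper envelope.
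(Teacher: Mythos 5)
Your proposal is correct and matches the paper's proof exactly: both take $h(x)=\max_i a_i x^i$ and note that a sum of at most $k+1$ nonnegative terms is comparable to its maximum. The log-log verification of the piecewise-monomial structure is left implicit in the paper but is the same standard observation.
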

\begin{proof}
Following Tao \cite[\S4]{tao}, write $f(x)=\alpha_0+\alpha_1x+\ldots+\alpha_kx^k$, and note that for every $a,b>0$ we have $a+b\asymp\max\{a,b\}$, so that
\[
f(x)\asymp_k\max_i\alpha_ix^i.
\]
We may therefore take $h(x)=\max_i\alpha_ix^i$.
\end{proof}

Before we prove Proposition \ref{prop:prog.growth.poly}, let us see how it combines with Lemma \ref{lem:poly=piecewise.mono} to imply Proposition \ref{prop:tao.prog.version}.

\begin{proof}[Proof of Proposition \ref{prop:tao.prog.version}]
We proceed by induction on $d$, taking the trivial case $d=0$ as the base. If $P$ is infinitely proper then the result follows from Proposition \ref{prop:prog.growth.poly} and Lemma \ref{lem:poly=piecewise.mono}. If not then $P$ has finite injectivity radius, say $k\in\N$, and Proposition \ref{prop:prog.growth.poly} and Lemma \ref{lem:poly=piecewise.mono} imply that there exists a non-decreasing continuous piecewise-linear function $f:[0,\infty)\to[0,\infty)$ with $f(0)=0$ and at most $O_d(1)$ distinct linear pieces, each with a slope that is a natural number at most $O_d(1)$, such that
\begin{equation}\label{eq:m<k}
\log|P^m|=\log|P|+f(\log m)+O_d(1)
\end{equation}
for every $m\le k$. Let $c>0$ be a constant to be determined by depending only on $d$. Proposition \ref{prop:Powergood} implies that there exists a Lie progression $P_0$ of rank $d$ in $1$-upper-triangular form and with injectivity radius $O_d(1)$ such that
\begin{equation}\label{eq:P^k->P}
P^{ck}\subset P_0\subset P^{O_d(ck)}.
\end{equation}
Remark \ref{rem:inj/proper} implies that there exists $q\ll_d1$ such that $P_0$ is not $q$-proper. Proposition \ref{prop:reduce.dim.when.not.proper} therefore implies that there exists a Lie coset progression $HP_1$ of rank strictly less than $d$ in $O_d(1)$-upper-triangular form such that $P_0\subset HP_1\subset P_0^{O_d(1)}$, and then \eqref{eq:P^k->P} implies that $P^{ck}\subset HP_1\subset P^{O_d(ck)}$. We may thus choose $c$ so that
\begin{equation}\label{eq:P->P'}
HP_1^r\subset P^{rk}\subset HP_1^{O_d(r)}
\end{equation}
for every $r\in\N$.

Following Tao \cite[\S4]{tao}, write $\hat P$ for the image of $P_1$ in the quotient $\langle HP_1\rangle/H$ and note that
\begin{equation}\label{eq:deal.with.H}
|HP_1^r|=|H||\hat P^r|
\end{equation}
for every $r\in\N$. By induction we may assume that there exists a non-decreasing continuous piecewise-linear function $h:[0,\infty)\to[0,\infty)$ with $h(0)=0$ and at most $O_d(1)$ distinct linear pieces, each with a slope that is a natural number at most $O_d(1)$, such that
\[
\log|\hat P^r|=\log|\hat P|+h(\log r)+O_d(1)
\]
for every $r\in\N$. This implies in particular that $|\hat P^{\ell r}|\ll_{\ell,r}|\hat P^r|$ for every $\ell,r\in\N$, and hence, combined with \eqref{eq:P->P'} and \eqref{eq:deal.with.H}, that $|P^{rk}|\asymp_d|H||\hat P^r|$ for every $r\in\N$. It follows that
\[
\log|P^{rk}|=\log|P^k|+h(\log r)+O_d(1)
\]
for every $r\in\N$. The monotonicity of $\log|P^m|$ in $m$ therefore implies that
\[
\log|P^k|+h(\log\textstyle\lfloor\frac{m}{k}\rfloor)-O_d(1)\le\log|P^m|\le\log|P^k|+h(\log\textstyle\lceil\frac{m}{k}\rceil)+O_d(1)
\]
for every $m\ge k$, and so the fact that the slope of $h$ is bounded in terms of $d$ implies that $\log|P^m|=\log|P^k|+h(\log\textstyle{\frac{m}{k}})+O_d(1)$ for every $m\ge k$. In light of \eqref{eq:m<k}, this implies that
\[
\log|P^m|=\log|P|+f(\log k)+h(\log m-\log k)+O_d(1)
\]
for every $m\ge k$. Since $h(0)=0$, this combines with \eqref{eq:m<k} to give the desired result for all $m$.
\end{proof}

We start our proof of Proposition \ref{prop:prog.growth.poly} with a general lemma from linear algebra, which was essentially present in \cite[\S4]{tao}.
\begin{lemma}\label{lem:cramer}
Let $r\ge d$, $x_1,\ldots x_r$ to span $\R^d$, and $M_1,\ldots,M_r>0$. There exist $i_1<\ldots<i_d$ such that
\[
B_\R(x;M)\subset r\cdot B_\R(x_{i_1},\ldots,x_{i_d};M_{i_1},\ldots,M_{i_d}).
\]
\end{lemma}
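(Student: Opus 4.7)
The plan is to use the classical maximum-volume (Auerbach-type) basis selection, combined with Cramer's rule, after first absorbing the weights $M_j$ into the vectors so that the problem becomes purely linear-algebraic.

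First I would rescale by setting $y_j:=M_j x_j$ for each $j$. Then $\sum_j\lambda_j x_j$ with $|\lambda_j|\le M_j$ is the same thing as $\sum_j\alpha_j y_j$ with $|\alpha_j|\le 1$, and similarly $r\cdot B_\R(x_{i_1},\ldots,x_{i_d};M_{i_1},\ldots,M_{i_d})$ is the image of $\{\sum_k\beta_k y_{i_k}:|\beta_k|\le r\}$ under the trivial identity $y_{i_k}=M_{i_k}x_{i_k}$. So the statement reduces to finding indices $i_1<\ldots<i_d$ such that every element of $B_\R(y_1,\ldots,y_r;1,\ldots,1)$ lies in $r\cdot B_\R(y_{i_1},\ldots,y_{i_d};1,\ldots,1)$.

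Next I would choose $I=\{i_1<\ldots<i_d\}$, among those $d$-element subsets of $\{1,\ldots,r\}$ for which $(y_{i_1},\ldots,y_{i_d})$ is a basis of $\R^d$, so as to maximise $|\det(y_{i_1},\ldots,y_{i_d})|$; such a basis exists because the $x_j$ span $\R^d$, hence so do the $y_j$. For any $j\notin I$ expand $y_j=\sum_{k=1}^d c_{jk}\,y_{i_k}$. Cramer's rule gives
\[
c_{jk}=\frac{\det\bigl(y_{i_1},\ldots,y_{i_{k-1}},y_j,y_{i_{k+1}},\ldots,y_{i_d}\bigr)}{\det(y_{i_1},\ldots,y_{i_d})},
\]
and the maximality of the chosen determinant forces $|c_{jk}|\le 1$ for every $j\notin I$ and every $k$.

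Finally, given any $\sum_{j=1}^r\alpha_j y_j$ with $|\alpha_j|\le 1$, substitute the expansions of the $y_j$ for $j\notin I$ to rewrite it as $\sum_{k=1}^d\bigl(\alpha_{i_k}+\sum_{j\notin I}\alpha_j c_{jk}\bigr)y_{i_k}$, and note that each new coefficient has absolute value at most $1+(r-d)\le r$. Undoing the rescaling $y_{i_k}=M_{i_k}x_{i_k}$ then exhibits the element as lying in $r\cdot B_\R(x_{i_1},\ldots,x_{i_d};M_{i_1},\ldots,M_{i_d})$, as required. There is no real obstacle here: the entire argument is the standard max-volume parallelepiped trick, and the factor $r$ (rather than, say, $d+1$ or $r-d+1$) is deliberately generous so as to avoid bookkeeping.
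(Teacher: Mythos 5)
Your proof is correct and takes essentially the same route as the paper's: both select a $d$-element subset maximising the (weighted) determinant/volume and then invoke Cramer's rule to get that the expansion coefficients of each remaining weighted vector $M_jx_j$ in the chosen basis are bounded by $1$ in absolute value, which yields the factor $r$. The explicit rescaling $y_j=M_jx_j$ is a cosmetic simplification, and your final bookkeeping (bounding each coefficient by $1+(r-d)\le r$) is just a slightly more direct way of packaging the paper's convexity step.
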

\begin{proof}
Pick the $i_1<\ldots<i_d$ that maximise
\[
\vol(B_\R(x_{i_1},\ldots,x_{i_d};M_{i_1},\ldots,M_{i_d}))
\]
(where the volume is defined with respect to the canonical basis of $\R^d$). 
On reordering the $x_i$'s, we can assume that $i_j=j$ for all $1\leq j\leq d$. 
It suffices to show that for a given $x_k$ we have
\begin{equation}\label{eq:one.basis.vector}
M_kx_k\in  B_\R(x_{1},\ldots,x_{d};M_{1},\ldots,M_{d}).
\end{equation}
View each $x_i$ as a column vector with coordinates with respect to the basis $x_1,\ldots,x_d$, and write $A$ for the $d\times d$ matrix with columns $M_{1}x_{1}\ldots,M_{d}x_{d}$. Cramer's rule implies that the solution $y\in\R^d$ to the equation $Ay=M_kx_k$ satisfies
\[
|y_j|=\frac{\vol(B_\R(x_{1},\ldots,x_{j-1},x_k,x_{j+1},\ldots,x_{d};M_{1},\ldots,M_{j-1},M_k,M_{j+1},\ldots,M_{d}))}{\vol(B_\R(x_{1},\ldots,x_{d};M_{1},\ldots,M_{d}))}
\]
(i.e.\ with $x_{j}$ replaced by $x_k$ in the numerator). By maximality, this implies in particular that $|y_j|<1$, which gives \eqref{eq:one.basis.vector}, as required.
\end{proof}

From now on in this section we adopt the notation of Proposition \ref{prop:prog.growth.poly}. We also extend $e=(e_1,\ldots,e_d)$ to the list $\overline{e}=(e_1,\ldots,e_r)$ of basic commutators, as defined in Section \ref{sec:free.nilp}. Given a measurable subset $B$ of $\g$ we write $\vol(B)$ for the measure of $B$, normalised so that the lattice $\Lambda$ has determinant $1$.

\begin{prop}\label{prop:prog.as.nilbox}
For every $m\in\N$ we have
\[
|P^m|\asymp_d\vol(B_\R(\overline e;(mL)^\chi)).
\]
\end{prop}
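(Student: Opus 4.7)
The plan is to relate $|P^m|$ to the lattice-point count $|\Lambda \cap B_\R(\overline e; (mL)^\chi)|$ in the nilbox, and then to bound that count by the volume of the nilbox via a standard convex-body argument.

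First I would lift the progression $P$ to the free $s$-step nilpotent group $N = N_{d,s}$ (for $s$ the step of $\g$) via the projection $\pi: N \to \exp\Lambda$ sending $x_i \mapsto u_i$, together with the induced Lie algebra projection $\pi_\n: \n \to \g$ sending the list of basic commutators $\overline f$ to $\overline e$. Applying Lemma \ref{lem:freeequivalences}(i) in $N^\R$ and projecting to $G$ yields a constant $C = O_d(1)$ such that
\[
\exp(B_\R(\overline e; (mL)^\chi)) \subset P_\ord^\R(u;L)^{Cm} \subset \exp(B_\R(\overline e; C(mL)^\chi)).
\]
Since $\exp:\g \to G$ is a bijection carrying $\Lambda$ onto $\exp\Lambda$, and since replacing $(mL)^\chi$ by $C(mL)^\chi$ changes the nilbox volume by at most a factor $C^{O_d(1)} = O_d(1)$ (by the zonotope formula together with Lemma \ref{lem:real.B.upper-tri}), intersecting with $\exp\Lambda$ gives
\[
|P_\ord^\R(u;L)^m \cap \exp\Lambda| \asymp_d |\Lambda \cap B_\R(\overline e; (mL)^\chi)|.
\]

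Next I would connect $|P^m|$ to $|P_\ord^\R(u;L)^m \cap \exp\Lambda|$. The containment $P^m \subset P_\ord^\R(u;L)^m \cap \exp\Lambda$ is immediate, and the reverse containment up to a bounded power is the content of Corollary \ref{cor:QIreal}: after reducing (using Proposition \ref{prop:Powergood}) to the case that $(e;L)$ is in upper-triangular form, the inclusion of Cayley graphs $(\exp\Lambda, P) \hookrightarrow (G, P_\ord^\R(u;L))$ is an $O_d(1)$-quasi-isometry, so $P_\ord^\R(u;L)^m \cap \exp\Lambda \subset P^{O_d(m)}$. Since $P$ has polynomial growth of degree $O_d(1)$ (as $\exp\Lambda$ is nilpotent of step $s = O_d(1)$), we have $|P^{O_d(m)}| \asymp_d |P^m|$, and hence $|P^m| \asymp_d |\Lambda \cap B_\R(\overline e; (mL)^\chi)|$.

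Finally I would compare the lattice-point count to the volume. Since each $L_i \geq 1$, the nilbox $K := B_\R(\overline e; (mL)^\chi)$ contains the parallelepiped $B_\R(e_1,\ldots,e_d; mL_1,\ldots,mL_d)$ spanned by the $\Lambda$-basis $e_1,\ldots,e_d$ with side lengths $mL_j \geq 1$, and in particular $K$ contains a fundamental domain of $\Lambda$. The standard convex-body bound that for a symmetric convex body $K$ containing a fundamental domain of $\Lambda$ one has $|\Lambda \cap K| \asymp_d \vol(K)/\det\Lambda$, together with the normalization $\det\Lambda = 1$, then gives $|\Lambda \cap K| \asymp_d \vol(K)$, completing the argument.

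The main obstacle will be the middle step: handling the lifting from $\exp\Lambda$ back to $N$ in a way that respects the real word-length bound, so that Proposition \ref{prop:FreeReducCase} can be applied. Invoking Corollary \ref{cor:QIreal} directly requires the upper-triangular-form hypothesis, which is available after passing to a bounded power of $P$ via Proposition \ref{prop:Powergood}; one has to verify that this preliminary reduction is compatible with the $\asymp_d$ asymptotics claimed for $|P^m|$, using that both sides of the desired estimate scale in a controlled way under taking bounded powers.
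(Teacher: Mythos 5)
Your overall strategy mirrors the paper's: relate $|P^m|$ to the lattice-point count in the nilbox via the inclusions furnished by Lemma \ref{lem:freeequivalences}, and relate that count to the volume by a convex-body argument. The details diverge in two places. First, for the lattice-to-volume comparison, you invoke a two-sided bound $|\Lambda\cap K|\asymp_d\vol(K)$ for $K$ containing a fundamental domain, whereas the paper uses van der Corput's one-sided bound $|\Z^d\cap K|\ge 2^{-d}\vol(K)$ for the direction $\vol\ll|P^m|$ and a separate, elementary ``thicken by a unit cube'' argument for $|P^m|\ll\vol$. Both are standard, and your version is cleaner conceptually; but note that as written, your step~2 conclusion (that the lattice counts in $P_\ord^\R(u;L)^m\cap\exp\Lambda$ and in $\Lambda\cap B_\R(\overline e;(mL)^\chi)$ are comparable) already relies on the convex-body bound you only state in step~4, since you only know that the \emph{volumes} change by $O_d(1)$ under rescaling, not the lattice counts directly. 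Second, for the inclusion $\exp\Lambda\cap P_\ord^\R(u;L)^m\subset P^{O_d(m)}$, you reach for Corollary \ref{cor:QIreal} and correctly flag that this needs the upper-triangular-form hypothesis, to be imported via Proposition \ref{prop:Powergood}. This is a legitimate and in fact necessary move: the paper's proof cites Proposition \ref{prop:FreeReducCase} here, but that proposition is purely about the free group $N$, and transporting it to $G$ (Proposition \ref{prop:goodReducReal}) genuinely uses the upper-triangular hypothesis, so the paper is being terse. Your remaining worry---that the reduction via Proposition \ref{prop:Powergood} must be compatible with both sides of the asserted estimate---is real but resolvable: from the proof of Proposition \ref{prop:Powergood} via Lemma \ref{lem:convexReal} (with $k=1$) one obtains $P_\ord^\R(u;L)\approx_d P_\ord^\R(u';L')$ directly, which transfers both $|P^m|$ and, by Lemma \ref{lem:freeequivalences}(i) and measure-preservation of $\exp$, the nilbox volume, up to $O_d(1)$ factors. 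With those two points tightened, your argument is sound.
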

\begin{proof}
A result of van der Corput \cite{vdC} states that for any convex body $K\subset\R^d$ we have $|\Z^d\cap K|\ge\frac{1}{2^d}\vol(K)$, which in this case implies that
\[
\vol(B_\R(\overline e;(mL)^\chi))\le2^d|\Lambda\cap B_\R(\overline e;(mL)^\chi)|
\]
for every $m\in\N$. Since
\begin{align*}
\exp\big(\Lambda\cap B_\R(\overline e;(mL)^\chi)\big)
    &\subset\exp\Lambda\cap P_\ord^\R(u;L)^m&\text{(by Lemma \ref{lem:freeequivalences} (i))}\\
    &\subset P^{O_d(m)}&\text{(by Proposition \ref{prop:FreeReducCase}),}
\end{align*}
this implies that for some constant $B_d>0$ we have
\begin{equation}\label{eq:vol<}
\vol(B_\R(\overline e;(mL)^\chi))\ll_d|P^{B_dm}|
\end{equation}
for every $m\in\N$. Since
\[
\vol(B_\R(\overline e;(\alpha mL)^\chi))\ll\alpha^{\sum_i|\chi(i)|}\vol(B_\R(\overline e;(mL)^\chi))
\]
for every $\alpha\ge1$ and every $m$, on replacing $m$ by $B_d^{-1}m$ in \eqref{eq:vol<} we obtain
\[
\vol(B_\R(\overline e;(mL)^\chi))\ll_d|P^m|,
\]
which is one direction of what we need to prove.

For the other direction, write $Q=[0,1]^d$ and note that
\begin{align*}
P^m&\subset\exp\Lambda\cap P_\ord^\R(u;L)^m&\text{(since $\exp\Lambda$ is a group)}\\
   &\subset\exp\big(\Lambda\cap B_\R(\overline e;(O_d(mL))^\chi)\big)&\text{(by Lemma \ref{lem:freeequivalences} (i))},
\end{align*}
and hence
\begin{align*}
|P^m|&\le|\Lambda\cap B_\R(\overline e;(O_d(mL))^\chi)|\\
    &=\vol\big(\big(\Lambda\cap B_\R(\overline e;(O_d(mL))^\chi)\big)+Q\big)\\
    &\le\vol B_\R(\overline e;(O_d(mL))^\chi)&\text{(since $Q\subset B_\Z(e;L)$)}\\
    &\ll_d\vol B_\R(\overline e;(mL)^\chi),
\end{align*}
as required.
\end{proof}

\begin{lemma}
There exists a polynomial $f$ of degree $O_d(1)$ with no negative coefficients such that
\[
\vol(B_\R(\overline e;(mL)^\chi))\asymp_df(m).
\]
\end{lemma}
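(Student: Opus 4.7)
The plan is to sandwich $\vol(B_\R(\overline e;(mL)^\chi))$ between the volumes of coordinate parallelepipeds spanned by $d$-element subsets of $\overline e$, using Lemma \ref{lem:cramer} for the upper bound and a trivial inclusion for the lower bound. For each $d$-subset $S=\{i_1<\ldots<i_d\}\subset\{1,\ldots,r\}$, the parallelepiped $B_\R(e_{i_1},\ldots,e_{i_d};(mL)^{\chi(i_1)},\ldots,(mL)^{\chi(i_d)})$ sits inside $B_\R(\overline e;(mL)^\chi)$ by setting the coordinates outside $S$ to zero, giving a lower bound on the volume. Conversely, Lemma \ref{lem:cramer} applied with $x_i=e_i$ and $M_i=(mL)^{\chi(i)}$ produces some $S=S(m)$ for which $B_\R(\overline e;(mL)^\chi)\subset r\cdot B_\R(e_{i_1},\ldots,e_{i_d};(mL)^{\chi(i_1)},\ldots,(mL)^{\chi(i_d)})$, giving an upper bound of $r^d$ times the same parallelepiped volume. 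Since $r=O_d(1)$ and there are $\binom{r}{d}=O_d(1)$ such subsets, we deduce
\[
\vol(B_\R(\overline e;(mL)^\chi))\;\asymp_d\;\sum_{|S|=d}\vol\bigl(B_\R(e_S;(mL)^{\chi|_S})\bigr).
\]

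The volume of each coordinate parallelepiped is explicit:
\[
\vol\bigl(B_\R(e_S;M_S)\bigr)=2^d\,|\det(e_{i_1},\ldots,e_{i_d})|\prod_{j\in S}M_j,
\]
with the determinant computed in the basis $e_1,\ldots,e_d$ of $\g$ (the normalisation in which $\Lambda$ has determinant one), which vanishes precisely when $e_{i_1},\ldots,e_{i_d}$ are linearly dependent. Substituting $M_i=m^{|\chi(i)|}L^{\chi(i)}$ turns each such volume into a monomial in $m$ with non-negative coefficient depending on $\g$, $L$ and $S$, of degree $|\chi(i_1)|+\ldots+|\chi(i_d)|\le ds=O_d(1)$ (since the nilpotency class $s$ of $\g$ is at most $d$). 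Summing these monomials over all $d$-subsets $S$ yields the polynomial $f(m)$ we seek: it has no negative coefficients, degree $O_d(1)$, and satisfies the claim.

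There is no real obstacle beyond pattern-matching to Lemma \ref{lem:cramer}. The only point requiring a glance is the hypothesis of that lemma that $\overline e$ spans $\g$, which is automatic because its initial segment $e_1,\ldots,e_d$ is already a basis. Implicit constants absorb the dependence on $L$ and on the structure constants of $\g$, which are treated as fixed throughout this appendix.
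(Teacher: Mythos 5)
Your proof is correct and follows essentially the same route as the paper's: you take $f(m)$ to be the sum over $d$-element subsets of $\overline e$ of the volumes of the coordinate parallelepipeds, use Lemma~\ref{lem:cramer} for the upper bound and the trivial inclusion for the lower bound, and observe each summand is a monomial in $m$ with non-negative coefficient. The only difference is that you spell out the determinant computation behind the monomial form, which the paper leaves implicit.
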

\begin{proof}
We continue to follow Tao \cite[\S4]{tao}. Set
\[
f(m)=\sum_{1\le i_1<\ldots<i_d\le r}\vol(B_\R(e_{i_1},\ldots,e_{i_d};(mL)^\chi)),
\]
and note that
\[
f(m)=\sum_{1\le i_1<\ldots<i_d\le r}\vol(B_\R(e_{i_1},\ldots,e_{i_d};L^\chi))m^{\sum_{j=1}^d|\chi(i_j)|},
\]
which is certainly of the required form. It follows from Lemma \ref{lem:cramer} that
\[
\vol(B_\R(\overline e;(mL)^\chi))\le r^df(m),
\]
whilst the fact that $B_\R(e_{i_1},\ldots,e_{i_d};(mL)^\chi)\subset B_\R(\overline e;(mL)^\chi)$ for every $i_1,\ldots,i_d$ implies that
\[
f(m)\le\textstyle{r\choose d}\vol(B_\R(\overline e;(mL)^\chi)).
\]
\end{proof}

\begin{proof}[Proof of Theorem \ref{thm:tao}]
It follows from Proposition \ref{prop:inverse} that there exist $|X|\ll_D1$, and a Lie coset progression $HP$ of rank at most $O_D(1)$ in $O_D(1)$-upper-triangular form such that 
\begin{equation}\label{eq:tao.red.to.prog}
HP^m\subset S^{mn}\subset XHP^{O_D(m)},
\end{equation}
for every $m\in\N$. Write $\hat P$ for the image of $P$ in the quotient $\langle HP\rangle/H$, and note that $|HP^m|=|H||\hat P^m|$ for every $m\in\N$. Since Proposition \ref{prop:tao.prog.version} implies that $|\hat P^{rm}|\ll_{r,D}|\hat P^m|$ for every $r,m\in\N$, this combines with \eqref{eq:tao.red.to.prog} to imply that
\[
|S^{mn}|\asymp_D|H||\hat P^m|
\]
for every $m\in\N$, and so the theorem follows from Proposition \ref{prop:tao.prog.version}.
\end{proof}

\begin{remark}\label{rem:bgt.location}
The ineffectiveness in Theorem \ref{thm:tao} arises from the single use of Proposition \ref{prop:inverse} at the beginning of the proof; the ineffectiveness of Proposition \ref{prop:inverse} in turn arises from a single use of \cite[Theorem 1.6]{bgt} (see also \cite[Remark 8.6]{proper.progs}).
\end{remark}

\end{document}